\newcommand\svnid[1]{}
\newcommand*\defindex[2][]{\emph{#2}}
\journalname{to be submtited}
\begin{document}

\title{Factorisations of some partially ordered sets and small categories%
}


\author{Tobias Schlemmer}


\institute{Tobias Schlemmer \at
  TU Dresden \\
  \email{Tobias.Schlemmer@tu-dresden.de}           
}

\date{Received: date / Accepted: date}

\maketitle

\begin{abstract}
  Orbits of automorphism groups of partially ordered sets are not
  necessarily congruence classes, i.e. images of an order
  homomorphism. Based on so-called orbit categories a framework of
  factorisations and unfoldings is developed that preserves the
  antisymmetry of the order Relation. Finally some suggestions are
  given, how the orbit categories can be represented by simple
  directed and annotated graphs and annotated binary relations. These
  relations are reflexive, and, in many cases, they can be chosen to
  be antisymmetric. From these constructions arise different
  suggestions for fundamental systems of partially ordered sets and
  reconstruction data which are illustrated by examples from
  mathematical music theory.

  \keywords{ordered set \and factorisation \and po-group \and small category}
  \subclass{06F15 \and 18B35 \and 00A65}
\end{abstract}

\section{Introduction}

In general, the orbits of automorphism groups of partially ordered
sets cannot be considered as equivalence classes of a convenient
congruence relation of the corresponding partial orders. If the orbits
are not convex with respect to the order relation, the factor relation
of the partial order is not necessarily a partial order. However, when
we consider a partial order as a directed graph, the direction of the
arrows is preserved during the factorisation in many cases, while the
factor graph of a simple graph is not necessarily simple. Even if the
factor relation can be used to anchor unfolding information
\cite{Borchmann:2009}, this structure is usually not visible as a
relation.

According to the common mathematical usage a fundamental system is a
structure that generates another (larger structure) according to some
given rules. In this article we will discuss several suggestions for
such fundamental systems. For example, orbit categories and
transversal categories can be considered as fundamental systems.

However, ordered sets are considered as relational structures,
which is not always possible for the above mentioned categories. When
we map them into simple categories we often lose the direction
information. This can sometimes be avoided when we throw away
redundant information that can be regained during the reconstruction
process.

For ordered sets local orders come into mind as possible fundamental
systems. The usual definition defines a local order as a binary
relation that is an order relation in every environment of some set
covering. However, there is no natural covering for an ordered
set. This means that any reflexive and antisymmetric relation can be
considered as a local order. In the same way as order relations
generalise to categories, local orders generalise to partial
categories.

The current work is inspired by the objects of Mathematical Music
Theory. Some of them (e.g.\ pitch and time intervals) have a natural
notion of order \cite{NeumaierWille1990,NeumaierWille1989}. While
pitch interval groups are usually commutative the time information is
often equivalent to the affine group of the straight line
\cite{davi2007generalized}. In both cases some kind of periodicity
plays an important role. The most prominent one for interval systems
is the octave identification and the most obvious one in the time
dimension is the metric periodicity. With the Shepard tones and the
forward/backward directions we have aspects of the order relations
that can still be discussed using the factor structures even though
the usual factorisation of relations contradicts this fact
\cite{shepard:2346}.

It is a well known fact that the factor groups of po-groups with
respect to convex normal subgroups are again po-groups. However, the
above mentioned examples do not belong to this class.

Orbit factorisations of linearly ordered sets can be described using
cyclic orders~\cite{0146.41805}. However, this fails in case of higher
order dimension. This can be seen when we factor
$\tupel{\mathbb Z,\leq}\times\tupel{\mathbb Z,\leq}$ by its subgroup
$\tupel{4\mathbb Z,\leq}\times\tupel{3\mathbb Z,\leq}$. In that case
we have the two paths
\begin{align*}
  \paar{0}{0} &\to \paar{0}{1} \to \paar11 \to \paar{2}{1}\to\paar
                31\text{ and}\\
  \paar{0}{0} &\to \paar 10 \to \paar{2}{0} \to \paar{3}{0}\to\paar{3}{1}\to\paar{0}{1}.
\end{align*}
Conseqeuntly, the factored cyclic order relation contains the two triplets
\begin{align*}
  \bigl(\paar{0}{0}, \paar{0}{1}, \paar 31\bigr)\text{ and }
  \bigl(\paar{0}{0}, \paar{3}{1}, \paar{0}{1}\bigr).
\end{align*}
This violates the antisymmetry condition of cyclic orders and
cyclically ordered groups.

In the previous case we still get a result when we consider the
neighbourhood relation. Its factorisation leads again to a similar
relation. So it is a good candidate when it comes to positive facts
about a proper definition of a factor relation.

A drawback of this approach is that it does not work with orders
containing infinite closed intervals like the product order
$\paar{\mathbb Q}{\leq}\times\paar{\mathbb Q}{\leq}$.

In the approach of the current article during factorisation as much
information is to be preserved as possible. Thus, the order relation
is considered as a category. This also allows chaining of
factorisations.

For non-commutative po-groups there is a difference between the group
acting on itself via left and via right multiplication. For example
David Lewin~\cite{davi2007generalized} distinguishes between both operations
as “transpositions” and “interval preserving functions”, such that
they get different interpretations. This suggests that either of these
actions can be used independently of the other to describe the
factorisations. Thus, it suffices to consider the factorisations of
right po-groups. This simplifies the construction a lot as we can
restrict ourselves to semi-regular group actions on ordered sets.

After a preliminary section (Section~\ref{priliminaries}), in
Section~\ref{factorisation} we will introduce orbiit categories that
model factored po-groups. This will be enriched in
Section~\ref{representation} with additional information that is used
to define and study basic properties of an unfolding operation based
on group extensions. The actual reconstruction and its isomorphy to
the original po-group is considered in
Section~\ref{reconstruction}. The following three sections describe
modifications of the previous results which use the isomorphic
category structure of the orbits. In
Section~\ref{sec:flat-orbit-categ} conditions are studied that allow
to factor out the vertex monoids from the orbit categories. These so
called “flat orbit categories” are used to define corresponding flat
category representations and their unfolding in
Section~\ref{sec:flat-categ-repr}. In this section it is also proved
that unfoldings of category representations and flat category
representations are isomorphic. Finally, in
Section~\ref{sec:flat-representations} settings are discussed which
allow to use partial categories as part of the representations. A list
of additional properties of flat representations is introduced,
including conditions under which the partial categories can be
antisymmetric graphs. The article closes with some remarks about
motivations and applications in mathematical music theory in
Section~\ref{sec:furth-results-appl} and some outlook in
Section~\ref{sec:furth-rese-topics}

\section{Preliminaries}\label{priliminaries}
Throughout this paper we consider an order relation to be a symmetric,
reflexive and transitive relation. When it is not explicitly stated an
order relation needs not to be linearly ordered. A partially ordered
group (po-group) $\OrdGruppe{\gruppe}{\cdot}{\multinverses}{1}{\leq}$
is a group with an order relation that for any $x,y,a,b∈\gruppe$
fulfils the law $a\leq b$ iff $xay\leq xby$. In the same way the
wording “right po-group” is a group that fulfils the one sided
relation $a\leq b$ iff $ax \leq bx$. An element $a∈\gruppe$ is called
positive iff $1\leq a$ holds. The set of all positive elements
$\poskegel {\gruppe}\definiert \Menge{a∈\gruppe}{1\leq a}$ is called
positive cone. Likewise an element $a∈\gruppe$ is called negative iff
$a\leq 1$ holds.  The symbol $\normalteilervon$ denotes a normal
subgroup, while an ordinary subgroup is denoted by the order relation
of groups $\leq$. A linearly ordered set is called a chain.

A left associative group action of a group $\gruppe$ is a mapping
$\wabbildung{\gruppe\times M\to M}{\paar ga}{g^a}$ such that the
equations $a^1 =a$ and $(a^g)^h=a^{(gh)}$ hold. The orbits are denoted
by $a^{\gruppe}\definiert\Menge{a^g}{g∈\gruppe}$.  In the same way, a
mapping $\wabbildung{\gruppe\times M\to M}{\paar ga}{{}^ag}$ with
${}^1a =a$ and ${}^h({}^ga)={}^{(gh)}a$ is called right associative
group action. The corresponding orbits are denoted by ${}^{\gruppe}a$.
In the same way a right associative group action is denoted by
${}^ga$ and the right associative orbit with ${}^{\gruppe}a$,
respectively. The group acts semi-regular if for any set elements $a$
and any group elements $g$ the equation $a=a^g$ implies $g=1$. It acts
transitive $a^{\gruppe} = M$. The action is called regular if it is
semi-regular and transitive.

We will consider a category $\kategorie C$ as a (not necessarily
simple) directed graph with vertex set $\Ob\kategorie C$ and edge set
$\Morall{\kategorie C}$.\footnote{Thus, in this paper all categories
  are small categories.} We refer to the edges as arrows. The set of
arrows between two vertices $x,y∈\Ob\kategorie C$ is denoted by
$\Mor{\kategorie C}xy$. The start and end vertex of an arrow
$\klmorphismus a$ are denoted by $\moranfang a$ and $\morende a$.  For
the concatenation $*$ of a category $\kategorie C$ a covariant
notation has been chosen, leading to
$\moranfang( \klmorphismus a*\klmorphismus b) = \moranfang
\klmorphismus a$ and
$\morende (\klmorphismus a*\klmorphismus b) =\morende \klmorphismus
b$. The identity loop of a vertex $x$ is denoted by $\id x$.

Given a relation $ϱ\subseteq M\times N$, a mapping
$\abbildung{φ}{M\times N}{M'\times N'}$ is called homomorphism into a
relation $ϱ'\subseteq M'\times N'$, if for all elements $x∈M$ and
$y∈N$ the equivalence $x\mathrel{ϱ} y⇒φx\mathrel{ϱ'} φy$ holds. If
furthermore $φ$ is bijective and $x\mathrel{ϱ} y⇔φx\mathrel{ϱ'} φy$
holds, $φ$ is called an isomorphism. Considering two products sets
$M_1\times M_2 \times \ldots \times M_n$ and
$(\cdots(M_1\times M_2)\times \cdots \times M_{n-1} )\times M_n)$ as
equal we can extend these definitions to arbitrary relations. In this
setting each $\abbildung fMN$ is represented as a pair of relations
$ϱ_f$ and $ϱ_{\bar f}$ with $f(x) = y ⇔ x\mathrel{ϱ}_f y$ and
$f(x) \neq y ⇔ x\mathrel{ϱ}_{\bar f} y$. It is easy to see, that
homomorphisms and isomorphisms of such pairs of relations fulfil the
properties of the usual homomorphisms and isomorphisms of
operations. Given two sets $M,N$, a set $R$ of relations on $M$ and a
set $S$ of a relations on $N$ together with an injective mapping
$\abbildung{Ψ}{R}{S}$ a mapping $\abbildung{φ}{M}{N}$ is called a
homomorphism (isomorphism) from $\paar{M}{R}$ to
$\paar{N}{S}$, iff for each relation $ϱ∈R$ the mapping
$φ$ is a homomorphism (isomorphism) from $ϱ$ into
$Ψ(ϱ)$. An automorphism is an isomorphism from $\paar MR$ into itself. The
set of automorphisms of a certain structure $S$ is denoted by
$\Aut S$.

A congruence relation is the kernel of a homomorphism. The kernel of a
mapping $f$ is denoted by $\ker f$.

For a subset $A$ of a structure $B$ let $\erzgruppe{A}_B$ denote the
substructure generated by A.

Let $M$ be a set and $\equiv$ an equivalence relation, then
$\faktorisiert M\equiv \definiert \{[x]_\equiv\mid x∈M\}$ is the
partition into equivalence classes of $M$. A set $T\subseteq M$ is
called \defindex{transversal} of the partition of $\equiv$, if it
contains exactly one element in each equivalence class, i.e. for all
$x∈M$ the equation $|[x]_\equiv\cap T| = 1$ holds.  The
\defindex{factor relation} of a relation $ϱ∈M^n$ is defined by the set
\begin{align*}
    [ϱ]_\equiv:=\{([x_1]_\equiv,\ldots,[x_n]_\equiv)\mid (x_1,\ldots,x_n)∈ϱ\}
\end{align*}

For a tuple $t$ we denote the projection to the $n$th place with the symbol $π_nt$.

\section{Factorisation}\label{factorisation}
In this section we will define a structure that can be considered as a
factorisation of po-groups. This work is inspired by binary
relation orbifolds as discussed in
\cite{Zickwolff1991,Zickwolff1991a,Borchmann:2009}. However, these
ideas are based on factor relations while we use a description here,
that is more focused on the internal structure of the factor
structure. For simplicity, we focus on semi-regular group actions, that
generate orbits which are pairwise isomorphic with respect to the order
relation. A well-known example for such group actions are po-groups.

In a right po-group $\Halbord{\gruppe}{\leq}$ every group element
$x∈\gruppe$ acts as an automorphism on the order relation by
simultaneous right multiplication to all group elements. Cancelability
of the group tells us that $\gruppe$ acts regular on
$\Halbord{\gruppe}{\leq}$. Thus every subgroup $\gruppe[U]$ of
$\gruppe$ acts semi-regular on $\gruppe$. For po-groups the same is
also true for the left multiplication. The elements of the group are
either strictly negative, strictly positive, the neutral element or
incomparable to the latter one, which means that for all elements
$a,b∈\gruppe$ and $x∈\gruppe[U]$ the inequality $a\leq xa$ holds iff
for all elements $b∈\gruppe$ the same inequality $b\leq xb$
holds. Furthermore, when we fix $a$ and $b$, the mapping
$\wabbildung{φ}{x}{xa^{-1}b}$ defines an isomorphism between the
cosets $\gruppe[U]a$ and $\gruppe[U]b$, both with respect to the group
action of $\gruppe[U]$ and with respect to the order relation. We will
call this property translative, if the isomorphism commutes with the
group operation in $\gruppe[U]$. We define it in the more general
setting of small categories which allows us to express nested
factorisations.

The factor relation of an order relation is not necessarily an order
any more. Thus, we will focus on the orbits of the tuples of the
relation. This may lead to a structure that has more than one
arrow that may connect two objects. As a relation can always be
considered as a simple directed graph we could generalise to
non-simple graphs, in our case small categories, at least for
intermediate results. For this view we need a notion that resembles
the properties of right po-groups.

We call a category foldable by a permutation group iff its orbits can
be considered again as a category in an obvious way.
\begin{definition}\label{def:foldable}
  Let $\kategorie K$ be a category and $\gruppe$ be a group that acts on
  $\kategorie K$. Then $\kategorie K$ is called \defindex{foldable} by
  $\gruppe$ iff for any four arrows
  $\klmorphismus a,\klmorphismus b,\klmorphismus c,\klmorphismus d
  ∈\Morall{\kategorie K}$ with
  $\klmorphismus a^{\gruppe}=\klmorphismus c^{\gruppe}$ and
  $\klmorphismus b^{\gruppe}=\klmorphismus d^{\gruppe}$ the following
  equation holds whenever $\klmorphismus a * \klmorphismus b$ and
  $\klmorphismus c * \klmorphismus d$ exist:
  \begin{equation}
    \label{eq:1}
    (\klmorphismus a * \klmorphismus b)^{\gruppe} = (\klmorphismus c *\klmorphismus d)^{\gruppe}.
  \end{equation}
\end{definition}

\noindent Actually, not every category is foldable with respect to every
automorphism group as it can be seen in the following example.

\begin{example}
  Suppose the following category $\kategorie K$:

  {\centering%
    \begin{tikzcd}[column sep=20ex]
      1\arrow{rd}{\klmorphismus a}\arrow{rr}{\klmorphismus a*\klmorphismus c}%
      \arrow[bend angle=10,bend right,swap,near start]{rrdd}{\klmorphismus a*\klmorphismus d}&&4\\
      &3\arrow{ru}{\klmorphismus c}\arrow[near end]{rd}{\klmorphismus d}&\\
      2\arrow[near start]{ru}{\klmorphismus b}\arrow[swap]{rr}{\klmorphismus b*\klmorphismus d}%
      \arrow[bend angle=10,bend right,swap,near end]{rruu}{\klmorphismus b*\klmorphismus c}&&5
    \end{tikzcd}\\
  }

  Then,
  \begin{align*}
    φ&\definiert (12)(\klmorphismus a \klmorphismus b) \bigl((\klmorphismus a*\klmorphismus
       c)(\klmorphismus b*\klmorphismus c)\bigr) \bigl((\klmorphismus
       a*\klmorphismus d)(\klmorphismus b*\klmorphismus d)\bigr)\text{ and}\\
    ψ&\definiert (45)(\klmorphismus
       c\klmorphismus d) \bigl((\klmorphismus a*\klmorphismus
       c)(\klmorphismus a*\klmorphismus d)\bigr) \bigl((\klmorphismus
       b*\klmorphismus c)(\klmorphismus b*\klmorphismus d)\bigr)
  \end{align*}
  are two automorphisms with $φ^2 = 1$, $ψ^2=1$ and $φψ=ψφ$. The
  cyclic automorphism group $\gruppe = \erzgruppe {φψ}$ has four orbits of
  arrows. These are $\menge{\klmorphismus a,\klmorphismus b}$,
  $\menge{\klmorphismus c,\klmorphismus d}$,
  $\menge{\klmorphismus a*\klmorphismus c,\klmorphismus
    b*\klmorphismus d}$ and
  $\menge{\klmorphismus a*\klmorphismus d,\klmorphismus
    b*\klmorphismus c}$.

  Obviously, we get $\klmorphismus c^{\gruppe} = d^{\gruppe}$, but
  $(\klmorphismus a * \klmorphismus c)^{\gruppe} \neq (\klmorphismus a
  * \klmorphismus d)^{\gruppe}$.  Consequently $\kategorie K$ is not
  foldable with respect to $\gruppe$.

  A different result gives the group
  $\gruppe[H] = \erzgruppe {\menge{φ,ψ}}$. It has three orbits of
  arrows: $\menge{\klmorphismus a,\klmorphismus b}$,
  $\menge{\klmorphismus c,\klmorphismus d}$,
  $\menge{\klmorphismus a*\klmorphismus c,\klmorphismus
    b*\klmorphismus d,\klmorphismus a*\klmorphismus d,\klmorphismus
    b*\klmorphismus c}$. The category is obviously foldable by
  $\gruppe[H]$.
\end{example}

\noindent So we can consider the set of orbits as a category:

\begin{definition}\label{def:Bahnkategorie}
  Let $\kategorie{K}$ be a category with the concatenation $∘$
  that is foldable by an automorphism group
  $\gruppe ≤\Aut\kategorie K$. Then the category $\orbitfaltigkeit{\kategorie K}{\gruppe}$ with
  \begin{subequations}
    \begin{align}
      \Ob({\orbitfaltigkeit{\kategorie K}{\gruppe}})
      &\definiert\orbitfaltigkeit {(\Ob{\kategorie{K}})}{\gruppe}\label{eq:D64:1}\\
      \Mor{\orbitfaltigkeit{\kategorie K}{\gruppe}}{x^{\gruppe}}{y^{\gruppe}}
      &\coloneqq\Mor{\kategorie K}{x}{y}^{\gruppe}\label{eq:D64:3}\\
      \id_{x^{\gruppe}}
      &\definiert (\id_x)^{\gruppe}\label{eq:D64:2}
        \intertext{and the concatenation}
        \klmorphismus a^{\gruppe}*\klmorphismus b^{\gruppe}
      &\definiert (\klmorphismus a \circ \klmorphismus b)^{\gruppe}\text{ whenever $\klmorphismus a * \klmorphismus b$ exists,}
    \end{align}
  \end{subequations}
  is called \defindex{orbit category} of $\kategorie K$ by $\gruppe$.
\end{definition}

\begin{lemma}
  For every category $\kategorie K$ and every semi-regular
  automorphism group $\gruppe\leq\Aut{\kategorie K}$ there exists an
  orbit category $\orbitfaltigkeit{\kategorie K}{\gruppe}$.
\end{lemma}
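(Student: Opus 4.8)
The plan is to reduce everything to a single point. The construction in Definition~\ref{def:Bahnkategorie} presupposes that $\kategorie K$ is foldable by $\gruppe$, so the real content of the lemma is the implication \emph{semi-regular $\Rightarrow$ foldable}; once foldability is in hand, the concatenation of orbits is exactly the map whose well-definedness is guaranteed by Equation~\eqref{eq:1}, and the remaining category axioms are transported from $\kategorie K$ along the orbit projection. I would therefore first establish foldability, and only then run through the (routine) verification of the axioms.

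For foldability, let $\klmorphismus a^{\gruppe}=\klmorphismus c^{\gruppe}$ and $\klmorphismus b^{\gruppe}=\klmorphismus d^{\gruppe}$, and suppose both $\klmorphismus a*\klmorphismus b$ and $\klmorphismus c*\klmorphismus d$ exist. Pick $g,h\in\gruppe$ with $\klmorphismus c=\klmorphismus a^{g}$ and $\klmorphismus d=\klmorphismus b^{h}$. Existence of $\klmorphismus a*\klmorphismus b$ says $\morende\klmorphismus a=\moranfang\klmorphismus b=:v$; since the elements of $\gruppe$ act as (covariant) functors, they commute with $\moranfang$ and $\morende$, whence $\morende\klmorphismus c=v^{g}$ and $\moranfang\klmorphismus d=v^{h}$. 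Existence of $\klmorphismus c*\klmorphismus d$ then forces $v^{g}=v^{h}$, i.e. $v^{gh^{-1}}=v$, and applying the automorphism $gh^{-1}$ to the unique identity loop at $v$ gives $(\id_v)^{gh^{-1}}=\id_{v^{gh^{-1}}}=\id_v$. This is the one place where semi-regularity is used: it upgrades the fixed arrow $\id_v$ of $gh^{-1}$ to the equality $gh^{-1}=1$, so $g=h$. Hence $\klmorphismus c*\klmorphismus d=\klmorphismus a^{g}*\klmorphismus b^{g}=(\klmorphismus a*\klmorphismus b)^{g}$, and passing to orbits yields $(\klmorphismus c*\klmorphismus d)^{\gruppe}=(\klmorphismus a*\klmorphismus b)^{\gruppe}$, which is Equation~\eqref{eq:1}.

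It then remains to confirm that the data of Definition~\ref{def:Bahnkategorie} form a category. Because functors preserve sources and targets, all arrows of a fixed orbit $\klmorphismus a^{\gruppe}$ share a source orbit and a target orbit, so $\moranfang$ and $\morende$ descend and the hom-sets are well-defined; the identities are representative-independent since $\id_{x^{g}}=(\id_x)^{g}$; and the concatenation is well-defined by the foldability just proved. Composable orbit arrows always lift to composable arrows of $\kategorie K$: if the target orbit of $\klmorphismus a^{\gruppe}$ equals the source orbit of $\klmorphismus b^{\gruppe}$, choose $g$ with $\moranfang\klmorphismus b=(\morende\klmorphismus a)^{g}$, so that $\klmorphismus a^{g}*\klmorphismus b$ exists; iterating this, associativity and the identity laws follow by lifting a composable chain to $\kategorie K$, using the laws there, and projecting back (foldability making the outcome independent of the lift). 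The only real obstacle is thus the implication $v^{gh^{-1}}=v\Rightarrow g=h$; that semi-regularity is genuinely needed here is visible in the preceding example, whose first, non-foldable group fails to be semi-regular because its generator fixes the object $3$.
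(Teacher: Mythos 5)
Your proof is correct and takes essentially the same route as the paper's: semi-regularity yields unique lifts of composable chains to $\kategorie K$, from which well-definedness and associativity of the orbit concatenation follow. You are merely more explicit in isolating the foldability step (the paper leaves it implicit in the claimed uniqueness of the lifted diagram), but the underlying mechanism is identical.
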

\begin{proof}
  Let
  \begin{tikzcd}w^{\gruppe}\arrow{r}{\klmorphismus a^{\gruppe}}
    &x^{\gruppe}\arrow{r}{\klmorphismus b^{\gruppe}}
    &y^{\gruppe}\arrow{r}{\klmorphismus c^{\gruppe}}
    &z^{\gruppe}
  \end{tikzcd} be a diagram of $\orbitfaltigkeit{\kategorie
    K}{\gruppe}$. Then, because of the semi-regularity of the group
  action there are unique vertices $\hat x, \hat y, \hat z$ and arrows
  $\hat {\klmorphismus b}∈\klmorphismus b^{\gruppe}$ and
  $\hat {\klmorphismus c}∈\klmorphismus c^{\gruppe}$ such that
  \begin{tikzcd}
    w\arrow{r}{\hat{\klmorphismus a}}
    &\hat x\arrow{r}{\hat{\klmorphismus b}}
    &\hat y\arrow{r}{\hat{\klmorphismus c}}
    &\hat z
  \end{tikzcd} is a diagram of $\kategorie K$. So
  $\klmorphismus a^{\gruppe}*\klmorphismus
  b^{\gruppe}*\klmorphismus c^{\gruppe}=(\klmorphismus a \circ
  \klmorphismus b \circ \klmorphismus c)^{\gruppe}$. In particular it shows
  \begin{align*}
    (\klmorphismus a^{\gruppe}*\klmorphismus
    b^{\gruppe})*\klmorphismus c^{\gruppe}
    =\bigl((\klmorphismus a \circ
      \klmorphismus b) \circ \klmorphismus c\bigr)^{\gruppe}
    =\bigl(\klmorphismus a \circ
      (\klmorphismus b \circ \klmorphismus c\bigr)^{\gruppe})
    =\klmorphismus a^{\gruppe}*(\klmorphismus
      b^{\gruppe}*\klmorphismus c^{\gruppe}).
  \end{align*}
  So the orbit category is indeed a category.
  \qed
\end{proof}

\noindent Subgroups of po-groups act semi-regular via left
multiplication on their supergroups. So we can work with the orbit
category.

The vertex monoids of an orbit category may not be isomorphic to each
other. If they were isomorphic we could separate them from the
category, which would allow an additional compression of the
mathematical structure. As we will see later, all vertex monoids of
po-groups are pairwise isomorphic. This information can be seen as a
property of an automorphism group.

\begin{definition}%
  \label{def:translative}%
  Let $\kategorie K$ a category and
  $\gruppe\leq\Aut{\kategorie K}$ an automorphism group. Then
  $\gruppe$ \defindex{acts translatively} on $\kategorie K$, iff
  $\gruppe$ acts semi-regular on $\kategorie K$ and for every two
  elements $a,b∈\Ob\kategorie K$ there exists a category isomorphism
  between the orbits $\erzkategorie{\kategorie K}{a^{\gruppe}}$ and
  $\erzkategorie{\kategorie K}{b^{\gruppe}}$ that commutes with the
  group action of $\gruppe$ so that for all automorphisms $g∈\gruppe$
  the following diagram commutes:

  \noindent{%
    \hfill
    \begin{tikzcd}
      \erzkategorie{\kategorie K}{a^{\gruppe}}\arrow{r}{g}\arrow{d}{φ}
      &\erzkategorie{\kategorie K}{a^{\gruppe}}\arrow{d}{φ} \\
      \erzkategorie{\kategorie K}{b^{\gruppe}}\arrow{r}{g}
      &\erzkategorie{\kategorie K}{b^{\gruppe}}
    \end{tikzcd}
    \hfill
    \strut
  }
\end{definition}

In the following we will use this definition in order to separate the
group action of a po-group on itself from the order relation:

\begin{lemma}\label{lemma:orderd-group-translative}
  Every subgroup $\gruppe[S]\normalteilervon\gruppe$ of a po-group $\gruppe$ acts translatively on $\gruppe$ via right
  multiplication.
\end{lemma}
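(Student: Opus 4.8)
The plan is to realise the order relation of the po-group $\gruppe$ as a category $\kategorie K$ whose objects are the group elements and which has a single arrow $x\to y$ exactly when $x\leq y$, and then to produce the required isomorphisms by \emph{left} multiplication, exploiting that in a po-group left and right translations are mutually commuting order automorphisms.

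First I would check that $\gruppe[S]$ satisfies the hypotheses of Definition~\ref{def:translative}. Right multiplication $x^s=xs$ is an order automorphism because the right po-group law gives $x\leq y\Leftrightarrow xs\leq ys$; hence each $s\in\gruppe[S]$ induces a category automorphism of $\kategorie K$ and $\gruppe[S]\leq\Aut\kategorie K$. Moreover this action is semi-regular, since $xs=x$ forces $s=1$ by cancellation, as already noted in the preliminaries. The object orbit of $a$ is the coset $a^{\gruppe[S]}=a\gruppe[S]$, and I read $\erzkategorie{\kategorie K}{a^{\gruppe[S]}}$ as the full subcategory of $\kategorie K$ carried by this coset.

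Next, for two objects $a,b$ I would take $φ$ to be left multiplication by $ba^{-1}$, i.e. $\wabbildung{φ}{x}{ba^{-1}x}$, and verify three things. (i) $φ$ sends the orbit of $a$ onto that of $b$, since $ba^{-1}\cdot a\gruppe[S]=b\gruppe[S]$, and $φ$ is a bijection with inverse $x\mapsto ab^{-1}x$. (ii) $φ$ is a category isomorphism of the two full subcategories: because $\gruppe$ is a two-sided po-group, the left po-group law gives $x\leq y\Leftrightarrow ba^{-1}x\leq ba^{-1}y$, so $φ$ preserves and reflects arrows, identities and composition, and therefore restricts to an isomorphism $\erzkategorie{\kategorie K}{a^{\gruppe[S]}}\to\erzkategorie{\kategorie K}{b^{\gruppe[S]}}$. (iii) $φ$ commutes with the $\gruppe[S]$-action: for every $s\in\gruppe[S]$ and every object $x$ one has $φ(x^s)=ba^{-1}(xs)=(ba^{-1}x)s=(φx)^s$, so the square of Definition~\ref{def:translative} commutes on objects, and since an order category has at most one arrow between any two objects it then commutes on arrows as well.

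The load-bearing observation is (iii): left and right translations commute by associativity, which is precisely why the order-isomorphism $φ$ between cosets can be chosen to intertwine the right-multiplication action. The step demanding the most care is (ii), namely treating $\erzkategorie{\kategorie K}{a^{\gruppe[S]}}$ as the full subcategory on the orbit and confirming that $φ$ respects its arrows; this is where the left po-group law, and thus the two-sidedness of $\gruppe$ rather than merely the right po-group property, is used. I would also note that normality of $\gruppe[S]$ is not actually needed, since the construction goes through verbatim for an arbitrary subgroup; the hypothesis $\gruppe[S]\normalteilervon\gruppe$ is therefore stronger than required.
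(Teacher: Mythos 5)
Your proposal is correct and takes essentially the same route as the paper's own proof: the orbits are the cosets, left multiplication by $ba^{-1}$ (the paper's $st^{-1}$ for transversal elements) supplies the order isomorphism between them, and it commutes with the right action by associativity. Your extra observation that normality of the subgroup is never used also matches the paper, whose argument likewise goes through for an arbitrary subgroup.
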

\begin{proof}
  Let $\gruppe[U]$ act left-associative on $\gruppe$. Furthermore
  chose a transversal $T$ of the orbits of $\gruppe[U]$ on $\gruppe$.
  Then, each orbit can be represented in the form $t\gruppe[U]$ where
  $t∈T$. For another element $s∈T$ multiplication from the left by
  $st^{-1}$ forms an order isomorphism from $t\gruppe[U]$ to
  $s\gruppe[U]$. As $\gruppe[U]$ acts from right on itself, and the
  elements from $T$ from the left, both commute with each other.
  \qed
\end{proof}

\section{Representation and Unfolding}\label{representation}

In the preceding section far we have made a category from a larger one. In a similar
way to group extensions~\cite{Schreier:1926,Schreier:1925} we may
define a representation of the larger category using the orbit
category and some additional mathematical magic similar to
\cite{Zickwolff1991,BorchmannDipl,Borchmann:2009}.

Throughout this paper we will use the name \defindex{annotation} for a
contravariant homomorphism from a category into a group. Consequently
whenever there is a (covariant) homomorphism $F$ from one category
$\kategorie K$ into another one $\kategorie L$ and an annotation
$\abbildung A{\kategorie L}{\gruppe}$, then also the concatenation
$F\circ A$ is an annotation.

\begin{definition}
  Let $\kategorie K$ be a category, $\gruppe$ a be group and
  $\abbildung A{\kategorie K}{\gruppe}$ be an annotation. The triplet
  $\tupel{\kategorie K, A, \gruppe}$ is called a
  \defindex{representation} (of $\kategorie K$).

  If $A$ is faithful, $\tupel{\kategorie K, A, \gruppe}$ is said to be
  \defindex{faithful}.
\end{definition}

\noindent In our case the representation represents a category. First we
introduce the concatenation, and the we define the unfolding as the
object the representation refers to.

\begin{lemma}
  Let $\tupel{\kategorie K, A, \gruppe}$ be a representation and
  $R=\Morall{\kategorie K}\times\gruppe$. Then the partial operation
  defined by
  \begin{align}
    \wabbildung{\abbildung {⊙&}{R\times R}{R}}
                               {\paar[Big]{\paar {\klmorphismus a}{g}}{\paar[big] {\klmorphismus b}{A(\klmorphismus a)g}}}
                               {\paar{\klmorphismus a*\klmorphismus b}{g}}\text{, whenever $\klmorphismus a *\klmorphismus b$ exists}
  \end{align}
  is associative.
\end{lemma}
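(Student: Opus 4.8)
The plan is to verify the strong form of associativity for a partial operation: for any three elements $\paar{\klmorphismus a}{g},\paar{\klmorphismus b}{h},\paar{\klmorphismus c}{k}\in R$, the product $(\paar{\klmorphismus a}{g}\odot\paar{\klmorphismus b}{h})\odot\paar{\klmorphismus c}{k}$ is defined exactly when $\paar{\klmorphismus a}{g}\odot(\paar{\klmorphismus b}{h}\odot\paar{\klmorphismus c}{k})$ is, and that in this case the two values agree. I would first note that $\odot$ encodes two independent requirements on its factors: a \emph{category} requirement (the two arrows must be composable in $\kategorie K$) and a \emph{group} requirement (the second coordinate of the right factor must equal the annotation of the left arrow times the second coordinate of the left factor). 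These two requirements I would track separately.

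Working out the left bracketing first, $\paar{\klmorphismus a}{g}\odot\paar{\klmorphismus b}{h}$ is defined iff $h=A(\klmorphismus a)g$ and $\klmorphismus a*\klmorphismus b$ exists, in which case it equals $\paar{\klmorphismus a*\klmorphismus b}{g}$. Composing on the right with $\paar{\klmorphismus c}{k}$ then requires $k=A(\klmorphismus a*\klmorphismus b)\,g$ together with the existence of $(\klmorphismus a*\klmorphismus b)*\klmorphismus c$. Here is the one place where something genuine must be checked: since $A$ is an annotation, hence a \emph{contravariant} homomorphism, we have $A(\klmorphismus a*\klmorphismus b)=A(\klmorphismus b)A(\klmorphismus a)$, so the constraint $k=A(\klmorphismus a*\klmorphismus b)g$ rewrites, using $h=A(\klmorphismus a)g$, as $k=A(\klmorphismus b)h$.

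Doing the same for the right bracketing, $\paar{\klmorphismus b}{h}\odot\paar{\klmorphismus c}{k}$ is defined iff $k=A(\klmorphismus b)h$ and $\klmorphismus b*\klmorphismus c$ exists, giving $\paar{\klmorphismus b*\klmorphismus c}{h}$; composing $\paar{\klmorphismus a}{g}$ on the left then demands $h=A(\klmorphismus a)g$ and the existence of $\klmorphismus a*(\klmorphismus b*\klmorphismus c)$. Comparing the two lists, the group constraints for both bracketings are literally the pair $h=A(\klmorphismus a)g$ and $k=A(\klmorphismus b)h$, while the category constraints are in each case that consecutive arrows be composable, i.e.\ $\morende \klmorphismus a=\moranfang \klmorphismus b$ and $\morende \klmorphismus b=\moranfang \klmorphismus c$, the same incidence condition for either bracketing. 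Hence each product is defined precisely when the other is, and in that case both equal $\paar{\klmorphismus a*\klmorphismus b*\klmorphismus c}{g}$ by associativity of composition in $\kategorie K$.

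The main, and essentially only, obstacle is the group bookkeeping of the second coordinate: everything hinges on the contravariance of the annotation, which is exactly what makes $A(\klmorphismus a*\klmorphismus b)g$ collapse to $A(\klmorphismus b)h$ so that the two definedness conditions coincide. The category side is routine, being nothing more than associativity of $*$ and the matching of source and target incidences, both inherited directly from $\kategorie K$.
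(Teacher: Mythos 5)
Your proof is correct and follows essentially the same route as the paper's: a direct computation of both bracketings for a composable triple, with the key step being the contravariance identity $A(\klmorphismus a*\klmorphismus b)=A(\klmorphismus b)A(\klmorphismus a)$ that makes the second coordinates line up. Your version is slightly more explicit about verifying that the two bracketings have identical definedness conditions, which the paper leaves implicit, but the substance is the same.
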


\begin{proof}
  Let
  $\klmorphismus a, \klmorphismus b,\klmorphismus c∈\Morall{\kategorie
    K}$ such that $\klmorphismus a *\klmorphismus b$ and
  $\klmorphismus b * \klmorphismus c$ exist. Then
  \begin{align*}
    \bigl(\paar {\klmorphismus a}g⊙\paar{\klmorphismus b}{A(\klmorphismus a)g}\bigr)⊙\paar{\klmorphismus c}{A(\klmorphismus b)A(\klmorphismus a)g}
    &= \paar {\klmorphismus a*\klmorphismus b}g⊙\paar{\klmorphismus c}{A(\klmorphismus a *\klmorphismus b)g}\\
    &=\paar {\klmorphismus a*\klmorphismus b*\klmorphismus c}g\\
    &=\paar {\klmorphismus a}g⊙\paar {\klmorphismus b*\klmorphismus c}{A(\klmorphismus a)g}\\
    &=\paar {\klmorphismus a}g⊙\bigl(\paar {\klmorphismus b}{A(\klmorphismus a)g}⊙\paar {\klmorphismus c}{A(\klmorphismus b)A(\klmorphismus a)g}\bigr)
  \end{align*}
  \qed
\end{proof}

\begin{definition}\label{def:Entfaltung}
  Let $\tupel{\kategorie K, A, \gruppe}$ be a representation.

  The category $\kategorie U$ with vertices
  $\Ob{\kategorie U}\definiert(\Ob{\kategorie K})\times\gruppe$ and
  arrows $\Morall {\kategorie U}\definiert\Morall{\kategorie K}\times\gruppe$ and
  start and end of the arrows according to
  \begin{subequations}
    \begin{align}
      \moranfang{\paar{\klmorphismus a}g}
      &\definiert\paar{\moranfang{\klmorphismus a}}{g},\\
      \morende{\paar{\klmorphismus a}{g}}&\definiert\paar[big]{\morende\klmorphismus a}{A(\klmorphismus a)g}\\
      \intertext {and the concatenation for $\morende{\klmorphismus a} = \moranfang{\klmorphismus b}$}
      \paar {\klmorphismus a}{g}⊙\paar[big] {\klmorphismus b}{A(\klmorphismus a)g}&\definiert
      \paar{\klmorphismus a*\klmorphismus b}{g}
    \end{align}
  \end{subequations}
  is called the \defindex{unfolding} $\entfaltung{\kategorie K}A\gruppe$ of
  $\tupel{\kategorie K, A, \gruppe}$. In this case, we call
  $\tupel{\kategorie K, A, \gruppe}$
  a \defindex{representation} of $\kategorie U$.
\end{definition}

\noindent The group of a representation has an induced automorphism action on its unfolding.

\begin{lemma}\label{lemma:L159}
  Let $\tupel{\kategorie K,A,\gruppe}$ be a representation.
  Then, the pair of mappings $\paar{Φ}{Ψ}$ with
  \begin{subequations}
    \begin{align}
      \wabbildung{\abbildung{Φ&}{\gruppe×\bigl(\Ob{(\entfaltung{\kategorie K}{A}{\gruppe})}\bigr)}{\Ob({\entfaltung{\kategorie K}{A}{\gruppe}})}}
      {\paar[big]{g}{\paar{x}{h}}}{\paar{x}{hg}}.\\
      \wabbildung{\abbildung{Ψ&}{\gruppe×\bigl(\Morall{(\entfaltung{\kategorie K}{A}{\gruppe})}\bigr)}{\Morall{\entfaltung{\kategorie K}{A}{\gruppe}}}}
      {\paar[big]{g}{\paar{\klmorphismus x}{h}}}{\paar{\klmorphismus x}{hg}}.
    \end{align}
  \end{subequations}
  defines a right associative automorphism action on
  $\entfaltung{\kategorie K}{A}{\gruppe}$.
\end{lemma}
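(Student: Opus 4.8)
The plan is to verify two things: that $\paar{\Phi}{\Psi}$ obeys the axioms of a right associative group action in the sense of the preliminaries, and that for each fixed $g∈\gruppe$ the pair $\paar{\Phi(g,-)}{\Psi(g,-)}$ is a category automorphism of $\entfaltung{\kategorie K}{A}{\gruppe}$. Since both $\Phi$ and $\Psi$ act only on the group coordinate, by right multiplication, and leave the $\kategorie K$-component untouched, the action axioms are immediate. For the unit we obtain $\paar{x}{h\cdot 1}=\paar{x}{h}$ and $\paar{\klmorphismus x}{h\cdot 1}=\paar{\klmorphismus x}{h}$, and right associativity follows from associativity in $\gruppe$: applying first $g$ and then $g'$ sends $\paar{x}{h}$ to $\paar{x}{(hg)g'}=\paar{x}{h(gg')}$, which is exactly the image under $gg'$, and likewise on arrows. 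It is precisely the choice of right multiplication $hg$ (rather than $gh$) that makes the action right associative.

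For the functoriality of $\Psi(g,-)$ I would check compatibility with the source and target maps, with composition, and with identities. Compatibility with the source is clear, since the source discards the annotation. For the target one computes $\morende{\Psi(g,\paar{\klmorphismus a}{h})}=\paar{\morende{\klmorphismus a}}{A(\klmorphismus a)(hg)}$ while $\Phi(g,\morende{\paar{\klmorphismus a}{h}})=\paar{\morende{\klmorphismus a}}{(A(\klmorphismus a)h)g}$, and these agree by associativity in $\gruppe$. The same associativity underlies compatibility with composition: a composable pair in the unfolding has the form $\paar{\klmorphismus a}{h}$ and $\paar{\klmorphismus b}{A(\klmorphismus a)h}$, and applying $\Psi(g,-)$ produces $\paar{\klmorphismus a}{hg}$ and $\paar{\klmorphismus b}{A(\klmorphismus a)hg}=\paar{\klmorphismus b}{A(\klmorphismus a)(hg)}$, which is again composable with product $\paar{\klmorphismus a*\klmorphismus b}{hg}=\Psi(g,\paar{\klmorphismus a*\klmorphismus b}{h})$. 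Identity loops $\paar{\id_x}{h}$ are preserved because an annotation, being a homomorphism, satisfies $A(\id_x)=1$, so $\Psi(g,\paar{\id_x}{h})=\paar{\id_x}{hg}=\id_{\paar{x}{hg}}$.

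Bijectivity of $\Phi(g,-)$ and $\Psi(g,-)$ is immediate from the action axioms, as $\Phi(g^{-1},-)$ and $\Psi(g^{-1},-)$ are two-sided inverses; the inverse of a structure-preserving functor is again structure preserving, so each group element acts as an automorphism, which is what the statement asserts. The only point demanding care is the interplay between the annotation $A(\klmorphismus a)$, which enters the group coordinate on the left, and the acting element $g$, which enters on the right: every identity above collapses to the associativity $A(\klmorphismus a)(hg)=(A(\klmorphismus a)h)g$, so the left-sided annotation and the right-sided action genuinely commute. This is the main — and essentially the only — obstacle; once it is noted, the remaining verification is routine bookkeeping.
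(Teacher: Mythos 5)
Your proof is correct and follows essentially the same route as the paper's: both reduce every verification to the single observation that the annotation acts on the group coordinate from the left while the group element acts from the right, so that $A(\klmorphismus a)(hg)=(A(\klmorphismus a)h)g$ makes the shifted arrow land between the shifted vertices and preserves composability. Your write-up merely reorders the steps (action axioms first, then functoriality, then bijectivity) and is, if anything, slightly more explicit about identity preservation.
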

\begin{proof}
  Let
  \begin{align*}
    \paar{\klmorphismus a}{h}&∈\Mor{\entfaltung{\kategorie K}{A}{\gruppe}}{\paar[big]
    {x}{h}}{\paar{y}{{h'}}}\text{ and}\\
    \paar{\klmorphismus b}{{h'}}&∈\Mor{\entfaltung{\kategorie K}{A}{\gruppe}}{\paar[big]
    {y}{{h'}}}{\paar{z}{{h''}}}
  \end{align*}
  be two arrows from $\entfaltung{\kategorie K}{A}{\gruppe}$. Then:
  \begin{align*}
    A\paar{\klmorphismus a}{h} &= {h'}\multnegpot{h}& A\paar{\klmorphismus b}{h'} &= {h''}\multnegpot{{h'}}&
    A\bigl(\paar{\klmorphismus a}{h}*\paar{\klmorphismus b}{h'}\bigr) &= {h''}\multnegpot{{h'}}{h'}\multnegpot{h} = {h''}\multnegpot{h}\\
    \moranfang\paar{\klmorphismus a}h &= \paar x{h} & \moranfang\paar{\klmorphismus b}{h'}&=\paar y{{h'}}&
    \moranfang\bigl(\paar{\klmorphismus a}{h}*\paar{\klmorphismus b}{h'}\bigr) &= \paar x{h} \\
    \morende\paar{\klmorphismus a}{h} &= \paar y{{h'}} & \morende\paar{\klmorphismus b}{h'}&=\paar z{{h''}}&
    \morende\bigl(\paar{\klmorphismus a}{h} *\paar{\klmorphismus b}{h'})&=\paar z{{h''}}
  \end{align*}
  Applying $Φ$ and $Ψ$ together with a group element $g∈\gruppe$, we get
  \begin{align*}
    Φ\paar[big]{g}{\moranfang\paar{\klmorphismus a}{h}} &= \paar x{hg}
    &Φ\paar[big]{g}{\moranfang\paar{\klmorphismus b}{{h'}}}&=\paar y{{h'}g}\\
    Φ\paar[Big]{g}{\moranfang\bigl(\paar{\klmorphismus a}{h}*\paar{\klmorphismus b}{{h'}}\bigr)} &= \paar x{hg} \\
    Φ\paar[big]{g}{\morende\paar{\klmorphismus a}{h}} &= \paar y{{h'}g}
    & Φ\paar[big]{g}{\morende\paar{\klmorphismus b}{{h'}}}&=\paar z{{h''}g}\\
    Φ\paar[Big]{g}{\morende\bigl(\paar{\klmorphismus a}{h} *\paar{\klmorphismus b}{{h'}}\bigr)}&=\paar z{{h''}g}\\
  \end{align*}
  \sloppypar For start and end of an arrow $\paar{\klmorphismus a}{h}$ we get the equation
  ${h'}g\cdot \multnegpot{(hg)} = {h'}g\multnegpot
  g\multnegpot{h}={h'}\multnegpot{h} = A(\klmorphismus a)$. Thus,
  $\paar{\klmorphismus a}{hg}$ is an arrow in the set
  $\Mor{\entfaltung{\kategorie K}{A}{\gruppe}}{\paar
    {x}{hg}}{\paar{y}{{h'}g}}$, and the equations
  $\moranfang\bigl(Ψ\paar{\klmorphismus
    a}{h}\bigr)=Φ\bigl(\moranfang\paar{\klmorphismus a}{h}\bigr)$ and
  $\morende\bigl(Ψ\paar{\klmorphismus
    a}{h})=Φ\bigl(\morende\paar{\klmorphismus a}{h}\bigr)$ hold.

  Applying another group element $g'∈\gruppe$ to these formulas
  substitutes $g$ with $gg'$ since $g$ is used only as factor in the
  right multiplication.  For $h=\multnegpot g$ we get the identity
  automorphism. Consequently, the two mappings are right associative
  group actions.

  In combination with Definition~\ref{def:Entfaltung}, this leads to the equations
  \begin{align*}
    Φ\paar g{\moranfang{\klmorphismus a}} = \moranfangΨ\paar g{\klmorphismus a},\qquad
    Φ\paar g{\morende{\klmorphismus a}} = \morendeΨ\paar g{\klmorphismus a},\qquad
    ψ\paar g{\klmorphismus a * \klmorphismus b}
    = ψ\paar g{\klmorphismus a} * ψ\paar g{\klmorphismus b}
  \end{align*}
  
  Thus, $Φ$ and $Ψ$ together form a right associative automorphism.
  \qed
\end{proof}

\noindent In the following we call the automorphism action of the previous lemma
\defindex{induced automorphism action} of $\gruppe$ on
$\entfaltung{\kategorie K}A{\gruppe}$.

The next two lemmas show, that
orbit category and unfolding correspond to each other.

\begin{lemma}\label{lemma:L183}
  Let $\tupel{\kategorie K,A,\gruppe}$ denote a representation. Then
  \begin{align*}
    \wabbildung{\wabbildung{\abbildung{π&}{\Ob{\entfaltung{\kategorie K}A{\gruppe}}}{\Ob{\kategorie K}}}
                                          {\paar{x}{g}}{x}\\\nonumber
    \abbildung{&}{\Morall{\entfaltung{\kategorie K}A{\gruppe}}}{\Morall{\kategorie K}}}
                                          {\paar{\klmorphismus a}{g}}{\klmorphismus a}
  \end{align*}
  is a full faithful category homomorphism.
\end{lemma}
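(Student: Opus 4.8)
The plan is to verify that $π$ satisfies the three requirements of a full faithful category homomorphism: compatibility with the categorical structure, injectivity on each morphism set, and the lifting property. First I would read off functoriality directly from Definition~\ref{def:Entfaltung}. From $\moranfang\paar{\klmorphismus a}{g}=\paar{\moranfang\klmorphismus a}{g}$ and $\morende\paar{\klmorphismus a}{g}=\paar{\morende\klmorphismus a}{A(\klmorphismus a)g}$ one obtains $π(\moranfang\paar{\klmorphismus a}{g})=\moranfang\klmorphismus a=\moranfang(π\paar{\klmorphismus a}{g})$ and likewise for $\morende$, so $π$ commutes with source and target. Whenever the composite $\paar{\klmorphismus a}{g}⊙\paar{\klmorphismus b}{A(\klmorphismus a)g}$ is defined it equals $\paar{\klmorphismus a*\klmorphismus b}{g}$, whose image $\klmorphismus a*\klmorphismus b$ is exactly $π\paar{\klmorphismus a}{g}*π\paar{\klmorphismus b}{A(\klmorphismus a)g}$; and since an annotation, being a homomorphism into a group, sends identities to the neutral element, $\paar{\id_x}{g}$ is a loop at $\paar{x}{g}$ serving as its identity, so $π(\id_{\paar{x}{g}})=\id_x=\id_{π\paar{x}{g}}$. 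Thus $π$ preserves source, target, composition and identities.

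Faithfulness I would obtain from the fact that the group coordinate of an arrow is already determined by its source. An element of $\Mor{\entfaltung{\kategorie K}{A}{\gruppe}}{\paar{x}{g}}{\paar{y}{h}}$ is necessarily of the form $\paar{\klmorphismus a}{g}$ with $\klmorphismus a∈\Mor{\kategorie K}{x}{y}$, the second coordinate being forced to $g$ by the source condition $\paar{\moranfang\klmorphismus a}{g}=\paar{x}{g}$. Hence two arrows of a common morphism set with the same image under $π$ must agree in both coordinates, so $π$ is injective on each morphism set.

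Fullness rests on the lifting property of the unfolding, and this is the step whose reading must be fixed carefully. For any $\klmorphismus a∈\Mor{\kategorie K}{x}{y}$ and any element $g$ of the fibre over $x$, the pair $\paar{\klmorphismus a}{g}$ is an arrow of $\entfaltung{\kategorie K}{A}{\gruppe}$ with source $\paar{x}{g}$ and $π\paar{\klmorphismus a}{g}=\klmorphismus a$; combined with faithfulness this shows that $π$ restricts, for each lift $\paar{x}{g}$ of $x$, to a bijection between the arrows of the unfolding issuing from $\paar{x}{g}$ and those of $\kategorie K$ issuing from $x$. The main obstacle is precisely that the target of such a lift is forced to be $\paar{y}{A(\klmorphismus a)g}$: with both endpoints $\paar{x}{g}$ and $\paar{y}{h}$ prescribed, $π$ hits exactly the arrows $\klmorphismus a$ with $A(\klmorphismus a)=h\multnegpot{g}$, so here \emph{full} must be understood as this star bijection (the covering property) rather than as morphism-set surjectivity for arbitrarily and independently chosen lifts of source and target. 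Once that reading is adopted, the remaining verifications are the routine substitutions indicated above.
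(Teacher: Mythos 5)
Your proof is correct and follows essentially the same route as the paper's: read off preservation of source, target and composition from Definition~\ref{def:Entfaltung}, get faithfulness from the fact that the group coordinate of an arrow is forced by its source, and get fullness from the observation that every arrow $\klmorphismus a$ of $\kategorie K$ lifts to $\menge{\klmorphismus a}\times\gruppe$. Your closing remark is actually a point of added care: the paper's own fullness argument only establishes surjectivity of $π$ on arrows (equivalently your ``star bijection''), and with source and target lifts chosen independently the induced map on a fixed Hom-set hits only the arrows with $A(\klmorphismus a)=h\multnegpot{g}$, so the qualification you make is needed for the statement to be read correctly.
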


\begin{proof}
  Let
  $\paar{\klmorphismus a}{g},\paar[big]{\klmorphismus
    b}{A(a)g}∈\Morall{\entfaltung{\kategorie K}A{\gruppe}}$. Then the following equations hold:
  \begin{align*}
    \moranfang\bigl(π\paar{\klmorphismus a}{g})
    &=\moranfang \klmorphismus a
      = π\paar[big]{\moranfang (\klmorphismus a)}{g}
      = π\bigl(\moranfang\paar{\klmorphismus a}{g}\bigr)\\
    \morende\bigl(π\paar{\klmorphismus a}{g})
    &=\morende \klmorphismus a
      = π\paar[big]{\morende(\klmorphismus a)}{A(\klmorphismus a)g}
      = π\bigl(\morende\paar{\klmorphismus a}{g}\bigr).
      \intertext{for the concatenation we get}
      π\paar{\klmorphismus a}{g}*π\paar[big]{\klmorphismus b}{A(g\klmorphismus a)}
    &=\klmorphismus a*\klmorphismus b
      = π\paar{\klmorphismus a* \klmorphismus b}{g}
  \end{align*}
  Thus $π$ is a homomorphism. Two arrows are mapped to the same image
  iff they coincide in their first component, which means that they
  are either identical or have different second components. In the
  latter case they start at different vertices.

  For every arrow $\klmorphismus a$ of the category $\kategorie K$ the
  set $\menge{\klmorphismus a}\times\gruppe$ is the subset of the
  arrows of the unfolding, which maps to the set
  $\menge{\klmorphismus a}$. So the homomorphism is full, too.
  \qed
\end{proof}

\begin{lemma}
  Under the conditions of Lemma~\ref{lemma:L183} we get the following equation
  \begin{equation}
    \label{eq:L54:1}
    \faktorisiert{(\entfaltung{\kategorie K}{A}{\gruppe})}{\ker{π}} =
    \orbitfaltigkeit{(\entfaltung{\kategorie K}{A}{\gruppe})}{\gruppe}
  \end{equation}
\end{lemma}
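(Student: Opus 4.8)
The plan is to recognise both sides of~(\ref{eq:L54:1}) as quotients of one and the same category $\kategorie U\definiert\entfaltung{\kategorie K}{A}{\gruppe}$ and to show that they are formed from the very same partition, so that the induced category structures must agree. First I would record that the induced automorphism action of $\gruppe$ on $\kategorie U$ from Lemma~\ref{lemma:L159} is semi-regular: an object $\paar{x}{h}$ fixed by $g$ satisfies $\paar{x}{hg}=\paar{x}{h}$, whence $hg=h$ and therefore $g=1$, and the identical computation on an arrow $\paar{\klmorphismus a}{h}$ again yields $g=1$. Consequently the orbit category $\orbitfaltigkeit{\kategorie U}{\gruppe}$ on the right-hand side is well defined by the orbit-category existence lemma.

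The key step is to identify the two equivalence relations. On objects, $\ker{π}$ relates $\paar{x}{g}$ and $\paar{x'}{g'}$ exactly when $π\paar{x}{g}=π\paar{x'}{g'}$, that is, when $x=x'$; its classes are therefore the fibres $\Menge{\paar{x}{g}}{g∈\gruppe}$. On the other hand, the $\gruppe$-orbit of $\paar{x}{h}$ is $\Menge{\paar{x}{hg}}{g∈\gruppe}$, and since $g\mapsto hg$ is a bijection of $\gruppe$ this set equals $\Menge{\paar{x}{g'}}{g'∈\gruppe}$, which is precisely the $\ker{π}$-class of $\paar{x}{h}$. The same computation on arrows shows that the $\ker{π}$-class and the $\gruppe$-orbit of $\paar{\klmorphismus a}{h}$ both equal $\Menge{\paar{\klmorphismus a}{g}}{g∈\gruppe}$. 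Hence $\ker{π}$ and the orbit equivalence of the induced action coincide, both on $\Ob{\kategorie U}$ and on $\Morall{\kategorie U}$, so the two quotients possess literally the same objects and the same arrows.

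It then remains to compare the two category structures carried by this common partition. Both are induced from $\kategorie U$ in the same fashion: a class (orbit) of arrows is composed by choosing representatives whose $\kategorie U$-composite exists and projecting that composite to its class, while start, end and identity descend along the quotient map. For the orbit category this is exactly Definition~\ref{def:Bahnkategorie}, whose composition is well defined by semi-regularity as in the existence lemma; for the factor by the congruence $\ker{π}$ it is the standard induced structure of a quotient of a relational structure. Since both descend from the same operations on $\kategorie U$ through the same classes, the induced source, target, identity and partial composition agree, which gives the equality~(\ref{eq:L54:1}).

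I expect the main obstacle to be bookkeeping rather than anything conceptual: one must take care to compare $\faktorisiert{\kategorie U}{\ker{π}}$ and $\orbitfaltigkeit{\kategorie U}{\gruppe}$ as \emph{categories} — matching source, target, identities and the \emph{partially defined} composition — and not merely as collections of classes. In particular one should check that a composite of two classes is defined in the factor precisely when it is defined in the orbit category, which again follows because both are governed by the existence of a composite of representatives in $\kategorie U$.
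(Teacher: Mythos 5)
Your proposal is correct and follows essentially the same route as the paper: both identify the $\ker π$-classes with the $\gruppe$-orbits as the fibres $\menge{\cdot}\times\gruppe$ (the paper does this inside its computation of $[\paar{\klmorphismus a}{g}]_{\ker π}*[\paar{\klmorphismus b}{h}]_{\ker π}$) and then check that the induced concatenations agree. Your additional remark on the semi-regularity of the induced action is a harmless extra that the paper leaves implicit.
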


\begin{proof}
  Let
  $\paar {\klmorphismus a}{g},\paar {\klmorphismus b}{h}∈\Morall{\entfaltung{\kategorie K}A{\gruppe}}$
  two arrows.
  If the concatenation $\paar {\klmorphismus a}{g}*\paar {\klmorphismus b}{h}$ exists, the following
  equations hold:
  \begin{align*}
    [\paar{\klmorphismus a}{g}]_{\ker{π}}*[\paar{\klmorphismus b}{h}]_{\ker{π}}
    &= [\paar{\klmorphismus a}{g}*\paar{\klmorphismus b}{h}]_{\ker{π}} =
      [\paar{\klmorphismus a*\klmorphismus b}{g}]_{\ker{π}} \\
    &=
    \Menge{\paar {\klmorphismus c}{ξ}}{\klmorphismus c
      = \klmorphismus a*\klmorphismus b, ξ∈\gruppe}
    = \menge{\klmorphismus a*\klmorphismus b}\times {\gruppe}\\
    &= \paar{\klmorphismus a*\klmorphismus b}{g}^{\gruppe},
      \intertext{in particular,}
    [\paar{\klmorphismus a}{g}]_{\ker{π}}*[\paar{\id_{\morende\klmorphismus a}}{h}]_{\ker{π}}
    &= \paar{\klmorphismus a}{g}^{\gruppe}\text{ and}
    [\paar{\id_x}{g}]_{\ker{π}}*[\paar{\id_x}{h}]_{\ker{π}}
    &= \paar{\id_x}{g}^{\gruppe},      
  \end{align*}
  so \eqref{eq:L54:1} holds.
  \qed
\end{proof}

\begin{corollary}
  The orbit category
  $\orbitfaltigkeit{(\entfaltung{\kategorie K}A{\gruppe})}{\gruppe}$
  of an unfolding $\entfaltung{\kategorie K}A{\gruppe}$ of a category
  $\kategorie K$ with respect to the same group \gruppe{} and the group action from 
  Lemma~\ref{lemma:L159} is isomorphic to the original category $\kategorie K$.
\end{corollary}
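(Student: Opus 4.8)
The plan is to apply the homomorphism theorem to the projection $\pi$ of Lemma~\ref{lemma:L183}, combined with the identification of the factor by $\ker\pi$ with the orbit category from \eqref{eq:L54:1}. First I would note that $\pi$ is surjective: since $\gruppe$ contains the neutral element $1$, each object $x\in\Ob{\kategorie K}$ is the image of $\paar{x}{1}$ and each arrow $\klmorphismus a\in\Morall{\kategorie K}$ is the image of $\paar{\klmorphismus a}{1}$. By Lemma~\ref{lemma:L183} the map $\pi$ is moreover full and faithful, so it is a full faithful homomorphism from $\entfaltung{\kategorie K}A{\gruppe}$ onto $\kategorie K$.

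Next I would make the induced map explicit. Under the action of Lemma~\ref{lemma:L159} the orbit of an arrow is $\paar{\klmorphismus a}{g}^{\gruppe}=\menge{\klmorphismus a}\times\gruppe$ and the orbit of an object is $\paar{x}{g}^{\gruppe}=\menge{x}\times\gruppe$, because right translation by the whole of $\gruppe$ sweeps out every second component. These orbits are exactly the fibres of $\pi$, hence exactly the $\ker\pi$-classes. By \eqref{eq:L54:1} we may therefore work in $\orbitfaltigkeit{(\entfaltung{\kategorie K}A{\gruppe})}{\gruppe}$ and define $\overline{\pi}$ by $\paar{\klmorphismus a}{g}^{\gruppe}\mapsto\klmorphismus a$ on arrows and $\paar{x}{g}^{\gruppe}\mapsto x$ on objects; this is well defined and bijective precisely because the orbits coincide with the fibres $\menge{\klmorphismus a}\times\gruppe$ and $\menge{x}\times\gruppe$.

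It then remains to check that $\overline{\pi}$ is a category isomorphism. Preservation of start, end and identities is inherited from the homomorphism property of $\pi$ established in Lemma~\ref{lemma:L183}, and preservation of concatenation is read off the computation $[\paar{\klmorphismus a}{g}]_{\ker\pi}*[\paar{\klmorphismus b}{h}]_{\ker\pi}=\paar{\klmorphismus a*\klmorphismus b}{g}^{\gruppe}$ of the preceding lemma, whose image under $\overline{\pi}$ is $\klmorphismus a*\klmorphismus b$. The step that needs the most care --- and the main obstacle --- is to confirm that $\overline{\pi}$ not only preserves but also reflects the defining relations of a category, so that it is an isomorphism in the relational sense of Section~\ref{priliminaries} and not merely a bijective homomorphism; this is where fullness of $\pi$ is indispensable, since it guarantees that every arrow, every composition, and every incidence of start and end in $\kategorie K$ lifts to $\entfaltung{\kategorie K}A{\gruppe}$ and hence descends to the orbit category. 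I would settle this cleanly by exhibiting the inverse assignment $\klmorphismus a\mapsto\paar{\klmorphismus a}{1}^{\gruppe}$, $x\mapsto\paar{x}{1}^{\gruppe}$ and verifying, via the concatenation rule of Definition~\ref{def:Bahnkategorie}, that it sends $\klmorphismus a*\klmorphismus b$ to $\paar{\klmorphismus a*\klmorphismus b}{1}^{\gruppe}=\paar{\klmorphismus a}{1}^{\gruppe}*\paar{\klmorphismus b}{1}^{\gruppe}$; as two mutually inverse homomorphisms, $\overline{\pi}$ and this map make the orbit category isomorphic to $\kategorie K$.
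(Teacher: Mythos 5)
Your argument is correct and follows essentially the same route as the paper: factor the projection $π$ of Lemma~\ref{lemma:L183} through its kernel, identify the kernel classes with the $\gruppe$-orbits via \eqref{eq:L54:1}, and use fullness (together with surjectivity on objects) to conclude that the induced map onto the image, which is all of $\kategorie K$, is an isomorphism. Your version merely spells out the inverse assignment $\klmorphismus a\mapsto\paar{\klmorphismus a}{1}^{\gruppe}$ explicitly, which the paper leaves implicit.
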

\begin{proof}
  The projection $π$ can be divided into an epimorphism
  $\abbildung {φ}{\entfaltung{\kategorie
      K}A{\gruppe}}{\ker{π}=\orbitfaltigkeit{(\entfaltung{\kategorie
        K}A{\gruppe})}{\gruppe}}$ and an isomorphism between $\ker π$
  and the image of $π$. Since $π$ is full, the image of $π$ is the
  whole category $\kategorie K$.
  \qed
\end{proof}

\noindent Now, as we can re-fold an unfolded representation into the
corresponding category we can hope that we can find a similar
equivalence for unfolding folded categories.

\section{Reconstruction}\label{reconstruction}

As we have seen, there is a relationship between orbit categories, the
unfolding and their representations. So far we did not answer the
following question: Which categories can be folded into a
representation such that the unfolding is isomorphic to the original
category? This shall be addressed in the current section.

One part of the unfolding is given by the orbit category. It is
sufficient to find an annotation which unfolds into a category
annotation that is natural to the original category.

One such candidate is hidden in the group action. If
$\Halbord{\gruppe}{\leq}$ is a po-group. Then the mapping
\[
  \wabbildung{\abbildung{A}{\gruppe\times\gruppe}{\gruppe}}
  {\paar gh}{hg^{-1}}
\]
is an annotation of $\Halbord{\gruppe}{\leq}$. If
$\gruppe[S]\leq\gruppe$ is a subgroup of $\gruppe$. The orbits of the
right associative group action of $\gruppe[S]$ results in the coset
partition $\Menge{g\gruppe[S]}{g∈\gruppe}$. By fixing one Element
$g_0$ we can transfer the group structure of $\gruppe[S]$ to
$g_0\gruppe[S]$, where $g_0$ is the neutral element and
$g_0s\cdot_{g_0}g_st\definiert (g_0s)g_0^{-1}(g_0t)=g_0st$. In that
way we get a similar construction for every orbit. We formalise this
idea with the help of a transversal set.

For a category $\kategorie K$ and a translative automorphism
group $\gruppe\leq\Aut\kategorie K$ a \defindex{transversal (set)}
$T\subseteq\Ob\kategorie K$ is a set such that
$T^{\gruppe}=\Ob\kategorie K$ and $∀g∈\gruppe:|g^{\gruppe}\cap T| =1$.
Obviously, a transversal always exists. As the group acts
semi-regular, for every vertex $x∈\Ob\kategorie K$ there exists a
unique automorphism $g_T(x)$ such that $x^{\big(g_T(x)\bigr)^{-1}}∈T$
holds. This automorphism is called \defindex{canonical}, in the same
way as the pair $\paar[big]{x^{\bigl(g_T(x)\bigr)^{-1}}}{g_T(x)}$ is
denoted by the name \defindex{canonical vertex
  annotation}. Consequently, for every vertex, every translative
automorphism group and every transversal there exists a unique vertex
annotation. In the same way we call the pair
$\paar{\klmorphismus a^{\gruppe}}{g_T(\moranfang{\klmorphismus a})}$
\defindex{canonical arrow annotation}. The canonical arrow annotation
is unique for every arrow.

In the following the  mapping
$\wabbildung{\abbildung{A_T}{\Morall{\kategorie K}}{\gruppe}}{\klmorphismus a}{g_T(\morende\klmorphismus
a)\bigl(g_T(\moranfang\klmorphismus a)\bigr)^{-1}}$ will be called
\defindex{natural annotation}. Obviously the natural annotation is
constant on every orbit of $\Morall {\kategorie K}$ under the action
of $\gruppe$.

With these ingredients we construct a representation, now. First we
rebuild the orbit category from the transversal and all arrows which
start in the transversal. Let $\kategorie C$ be a category such that
\begin{align*}
  \Ob\kategorie C
  &\definiert T\\
  \Mor{\kategorie C}{x}{y}
  &\definiert \bigcup_{z∈y^{\gruppe}}\Mor{\kategorie K}{x}{z}
\end{align*}
\begin{align*}
  \klmorphismus a *_{\kategorie C}\klmorphismus b&\definiert
  \klmorphismus a*\klmorphismus b^{A_T(\klmorphismus a)}.
\end{align*}
Then we get
\begin{align*}
  \moranfang_{\kategorie C} \klmorphismus a
  &= \moranfang_{\kategorie K} \klmorphismus a ∈ T
    \intertext{and – by abuse of notation –}
  \morende_{\kategorie C} \klmorphismus a
  &= \morende_{\kategorie K} \klmorphismus a^{\gruppe}\cap T
\end{align*}

As the natural annotation is constant on the orbits of the arrows of
$\kategorie K$, we can define
\[
  A(\klmorphismus a)
  \definiert A_T(\klmorphismus a).
\]

Finally, the tuplet $\tupel{\kategorie C,A,\gruppe}$ describes
a representation. For the unfolding of this representation we can prove the following:

\begin{lemma}
  The unfolding $\entfaltung{\kategorie C}A{\gruppe}$ is isomorphic to $\kategorie K$.
\end{lemma}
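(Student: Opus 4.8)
The plan is to exhibit an explicit candidate isomorphism $F$ from the unfolding $\entfaltung{\kategorie C}A\gruppe$ onto $\kategorie K$ and then to verify that it is a bijective functor. Recall that the objects of the unfolding are the pairs $\paar tg$ with $t∈T$ and $g∈\gruppe$, and that its arrows are the pairs $\paar{\klmorphismus a}g$ where $\klmorphismus a$ is an arrow of $\kategorie C$, i.e. an arrow of $\kategorie K$ whose start vertex $\moranfang\klmorphismus a$ lies in $T$. I would define $F$ on objects by $\paar tg\mapsto t^g$ and on arrows by $\paar{\klmorphismus a}g\mapsto\klmorphismus a^g$, using that each $g∈\gruppe$ is an automorphism of $\kategorie K$.

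First I would establish bijectivity. Since $\gruppe$ acts semi-regularly and $T$ is a transversal, every vertex $x∈\Ob\kategorie K$ is of the form $t^g$ for a unique $t∈T$ and the unique automorphism $g=g_T(x)$, which gives bijectivity on objects. For arrows, given $\klmorphismus b∈\Morall\kategorie K$ I would set $g\definiert g_T(\moranfang\klmorphismus b)$ and $\klmorphismus a\definiert\klmorphismus b^{g^{-1}}$, so that $\moranfang\klmorphismus a∈T$, whence $\klmorphismus a$ is an arrow of $\kategorie C$ and $\klmorphismus a^g=\klmorphismus b$; uniqueness again follows from the transversal property together with semi-regularity applied to the start vertex.

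Functoriality is where the bookkeeping with the natural annotation enters, and the key simplification is that an arrow $\klmorphismus a$ of $\kategorie C$ has $\moranfang\klmorphismus a∈T$, hence $g_T(\moranfang\klmorphismus a)=1$ and therefore $A(\klmorphismus a)=A_T(\klmorphismus a)=g_T(\morende\klmorphismus a)$. Compatibility with $\moranfang$ is then immediate, since $\moranfang\klmorphismus a$ is fixed by the transversal and $g$ is an automorphism, giving $\moranfang(\klmorphismus a^g)=(\moranfang\klmorphismus a)^g=F\paar{\moranfang\klmorphismus a}g$. For $\morende$ I would use that $\morende_{\kategorie C}\klmorphismus a$ is the transversal representative of the orbit of $\morende\klmorphismus a$, so that $(\morende_{\kategorie C}\klmorphismus a)^{g_T(\morende\klmorphismus a)}=\morende\klmorphismus a$; combined with the identity $A(\klmorphismus a)=g_T(\morende\klmorphismus a)$ this yields $\morende(\klmorphismus a^g)=(\morende\klmorphismus a)^g=F\paar[big]{\morende_{\kategorie C}\klmorphismus a}{A(\klmorphismus a)g}$, which is precisely the $F$-image of the end prescribed by the unfolding. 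Identities are preserved because $A(\id_t)=1$ and automorphisms send identity loops to identity loops.

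Finally, for composition I would expand $F$ on $\paar{\klmorphismus a}g⊙\paar{\klmorphismus b}{A(\klmorphismus a)g}=\paar{\klmorphismus a*_{\kategorie C}\klmorphismus b}g$ using the definition $\klmorphismus a*_{\kategorie C}\klmorphismus b=\klmorphismus a*\klmorphismus b^{A_T(\klmorphismus a)}$ and the left-associative action law $(\klmorphismus x^h)^g=\klmorphismus x^{hg}$, obtaining $(\klmorphismus a*\klmorphismus b^{A_T(\klmorphismus a)})^g=\klmorphismus a^g*\klmorphismus b^{A_T(\klmorphismus a)g}=F\paar{\klmorphismus a}g*F\paar{\klmorphismus b}{A(\klmorphismus a)g}$. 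The main obstacle I anticipate is not any one computation but keeping the two group-valued second components consistent: one must verify that the $\kategorie K$-composite underlying $*_{\kategorie C}$ exists exactly when the unfolding prescribes a composite, which reduces once more to $A(\klmorphismus a)=g_T(\morende\klmorphismus a)$ together with the defining property $(\morende_{\kategorie C}\klmorphismus a)^{g_T(\morende\klmorphismus a)}=\morende\klmorphismus a$ of the canonical automorphism. Once all four checks succeed, $F$ is a bijective functor and hence the desired isomorphism.
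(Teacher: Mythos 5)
Your proposal is correct and follows essentially the same route as the paper: the same explicit pair of maps $\paar xg\mapsto x^g$ and $\paar{\klmorphismus a}g\mapsto\klmorphismus a^g$, bijectivity from semi-regularity plus the transversal property, and the same computation $(\klmorphismus a*\klmorphismus b^{A_T(\klmorphismus a)})^g=\klmorphismus a^g*\klmorphismus b^{A_T(\klmorphismus a)g}$ for compatibility with the concatenation. Your added bookkeeping via $g_T(\moranfang\klmorphismus a)=1$ and $A(\klmorphismus a)=g_T(\morende\klmorphismus a)$ only makes explicit what the paper leaves implicit in its treatment of $\morende_{\kategorie C}$.
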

\begin{proof}
  Let $\kategorie K'\definiert \entfaltung{\kategorie C}A{\gruppe}$ denote
  the unfolding of $\tupel{\kategorie C,A,\gruppe}$. Then:
  \begin{align*}
    \Ob\kategorie K'
    &= \Menge{\paar xg}{g∈\gruppe, x∈T}\\
    \Mor{\kategorie K'}[big]{\paar xg}{\paar yh}
    &= \Menge{\paar{\klmorphismus a}{g}}
      {\klmorphismus a∈\Mor{\kategorie C}xy,A_T(\klmorphismus a) = hg^{-1}}
      \intertext{with the concatenation}
      \paar {\klmorphismus a}{g}⊙\paar {\klmorphismus b}{h}&\definiert
      \paar{\klmorphismus a*_{\kategorie C}\klmorphismus b}{g}
  \end{align*}
  We observe that for any arrow $\paar{\klmorphismus a}{g}∈\Morall{\kategorie K'}$ the following equations hold:
  \begin{align*}
    \moranfang_{\kategorie K'}{\paar{\klmorphismus a}{g}}
    &= \paar{\moranfang \klmorphismus a}{g}\\
    \morende_{\kategorie K'}{\paar{\klmorphismus a}{g}}
    &= \paar[big]{\moranfang \klmorphismus a}{A_T(\klmorphismus a)g}\\
  \end{align*}

  It is easy to see that the two mappings
  \begin{align*}
    \wabbildung{\abbildung{Φ&}{\Ob\kategorie K'}{\Ob\kategorie K}}
                              {\paar xg}{x^g},\\
    \wabbildung{\abbildung{Ψ&}{\Morall{\kategorie K'}}
                              {\Morall{\kategorie K}}}
                              {\paar {\klmorphismus a}{g}}
                              {\klmorphismus a^g}
  \end{align*}
  form an isomorphism between $\kategorie K$ and $\kategorie K'$ with
  the inverse mappings 
  \begin{align*}
    \wabbildung{\abbildung{Φ^{-1}&}{\Ob\kategorie K}{\Ob\kategorie K'}}
                              {x}{\paar[big] {x^{(g_T(x))^{-1}}}{g_T(x)},\\
    \wabbildung{\abbildung{Ψ^{-1}&}{\Morall{\kategorie K}}
                              {\Morall{\kategorie K'}}}
                              {\klmorphismus a}
                              {\paar[big]{\klmorphismus a^{(g_T(\moranfang\klmorphismus a))^{-1}}}{g_T(\moranfang\klmorphismus a)}}},
  \end{align*}
  Each vertex can be uniquely described as a pair of a transversal
  element and an automorphism, since the automorphism group acts
  semi-regular on $\kategorie K$. This means that for each fixed
  transversal element the group action is a bijection between the
  automorphism group and the orbit of that transversal element.

  The same is true for the orbits of the arrows while we use the
  starting vertex of every arrow as index into the arrow orbits. So
  also $Ψ$ is bijective. Let
  $\paar {\klmorphismus a}g∈\Morall {\kategorie K'}$ and
  $\paar {\klmorphismus b}h∈\Morall {\kategorie K'}$ two arrows. Then
  $\paar {\klmorphismus a}g ⊙ \paar {\klmorphismus b}h$ is defined iff
  $\klmorphismus a*\klmorphismus b$ exists and 
  $h = A_T(\klmorphismus a)g$ holds. Then, we get
  \begin{align*}
    Ψ\bigl(\paar {\klmorphismus a}g ⊙ \paar {\klmorphismus b}h\bigr)
    &= Ψ\paar {\klmorphismus a *_{\kategorie C}\klmorphismus b}g 
      = Ψ\paar {\klmorphismus a * \klmorphismus b^{A_T(\klmorphismus a)}}g
      = (\klmorphismus a * \klmorphismus b^{A_T(\klmorphismus a)}) ^g\\
    &= \klmorphismus a^g * (\klmorphismus b^{A_T(\klmorphismus a)g})
    = \klmorphismus a^g * \klmorphismus b^{h}
    = Ψ\paar {\klmorphismus a}g*Ψ \paar {\klmorphismus b}h
  \end{align*}
  Thus, the pair $\paar{Φ}{Ψ}$ is an isomorphism between the
  categories $\kategorie K$ and $\kategorie K'$.
  \qed
\end{proof}

\noindent So we have some way to reconstruct the category from a transversal
representation. This means that the category $\kategorie C$ above can be
considered as a fundamental system of $\kategorie K$. Now, we replace
the transversal category by the orbit category.

\begin{lemma}
  The categories $\orbitfaltigkeit{\kategorie K}{\gruppe}$ and $\kategorie C$ are isomorphic.
\end{lemma}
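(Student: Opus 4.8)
The plan is to realise the isomorphism as the map that passes to $\gruppe$-orbits. I would define it on objects by $x \mapsto x^{\gruppe}$ for $x \in T = \Ob{\kategorie C}$ and on arrows by $\klmorphismus a \mapsto \klmorphismus a^{\gruppe}$; call the resulting pair $F$. The point to keep in mind is that, by construction, the arrows of $\kategorie C$ are exactly the arrows of $\kategorie K$ whose start vertex lies in $T$, that is $\Morall{\kategorie C} = \{\klmorphismus a \in \Morall{\kategorie K} : \moranfang \klmorphismus a \in T\}$, while $\Morall{\orbitfaltigkeit{\kategorie K}{\gruppe}}$ is the set of all arrow-orbits of $\kategorie K$. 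So $F$ is nothing but the restriction of the orbit projection to this distinguished set of representatives, and its inverse sends an object-orbit to its transversal element and an arrow-orbit to its unique representative starting in $T$.

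I would then check that $F$ is a bijection and respects the source and target maps. On objects bijectivity is the transversal property $T^{\gruppe} = \Ob{\kategorie K}$ together with $|x^{\gruppe} \cap T| = 1$. On arrows, given an orbit $\klmorphismus a^{\gruppe}$ its start vertices form the object-orbit $(\moranfang \klmorphismus a)^{\gruppe}$, which meets $T$ in a single vertex $x$; semi-regularity (guaranteed by translativity, Definition~\ref{def:translative}) then yields a unique $g \in \gruppe$ with $(\moranfang \klmorphismus a)^g = x$, hence a unique representative $\klmorphismus a^g$ of the orbit lying in $\Morall{\kategorie C}$. The same uniqueness shows $F$ is injective on arrows. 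Compatibility with source and target is immediate once one recalls that in $\kategorie C$ the start of $\klmorphismus a$ is $\moranfang \klmorphismus a \in T$ and its end is the transversal element of $(\morende \klmorphismus a)^{\gruppe}$, which matches the source-orbit $(\moranfang \klmorphismus a)^{\gruppe}$ and target-orbit $(\morende \klmorphismus a)^{\gruppe}$ of $\klmorphismus a^{\gruppe}$.

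Finally I would verify functoriality. Identities pose no difficulty: for $x \in T$ we have $g_T(x) = 1$, so $F(\id_x) = (\id_x)^{\gruppe} = \id_{x^{\gruppe}}$ by Definition~\ref{def:Bahnkategorie}. The crucial step is compatibility with composition. For composable $\klmorphismus a \in \Mor{\kategorie C}{x}{y}$ and $\klmorphismus b \in \Mor{\kategorie C}{y}{w}$ the identity $\moranfang \klmorphismus a = x \in T$ collapses the natural annotation to $A_T(\klmorphismus a) = g_T(\morende \klmorphismus a)$, the automorphism carrying the transversal vertex $y = \moranfang \klmorphismus b$ to $\morende \klmorphismus a$. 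Hence $\moranfang(\klmorphismus b^{A_T(\klmorphismus a)}) = \morende \klmorphismus a$, the product $\klmorphismus a *_{\kategorie C} \klmorphismus b = \klmorphismus a * \klmorphismus b^{A_T(\klmorphismus a)}$ is a genuine arrow of $\kategorie K$, and I would compute
\[
\begin{aligned}
  F(\klmorphismus a *_{\kategorie C} \klmorphismus b)
  &= (\klmorphismus a * \klmorphismus b^{A_T(\klmorphismus a)})^{\gruppe}
   = \klmorphismus a^{\gruppe} * (\klmorphismus b^{A_T(\klmorphismus a)})^{\gruppe} \\
  &= \klmorphismus a^{\gruppe} * \klmorphismus b^{\gruppe}
   = F(\klmorphismus a) * F(\klmorphismus b),
\end{aligned}
\]
where the middle equality is the composition rule of the orbit category (available because translativity implies semi-regularity, so the orbit category exists by the existence lemma at the start of Section~\ref{factorisation}) and the third uses the orbit-invariance $(\klmorphismus b^{A_T(\klmorphismus a)})^{\gruppe} = \klmorphismus b^{\gruppe}$. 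I expect this composition step to be the main obstacle, since one has to see the double role played by the twist $A_T(\klmorphismus a)$: it leaves $\klmorphismus b$ in its own orbit, so the orbit-category product is unchanged, while at the same time realigning the endpoints so that the chosen representatives genuinely compose in $\kategorie K$. Everything else is routine bookkeeping with the transversal and semi-regularity.
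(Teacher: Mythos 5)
Your proposal is correct and follows essentially the same route as the paper: the same orbit-projection map $x\mapsto x^{\gruppe}$, $\klmorphismus a\mapsto\klmorphismus a^{\gruppe}$ with the same inverse picking the unique representative starting in $T$, and the same key computation $(\klmorphismus a*\klmorphismus b^{A_T(\klmorphismus a)})^{\gruppe}=\klmorphismus a^{\gruppe}*\klmorphismus b^{\gruppe}$ justified by semi-regularity. Your explicit observation that $A_T(\klmorphismus a)=g_T(\morende\klmorphismus a)$ when $\moranfang\klmorphismus a\in T$ is a slightly more detailed account of why the chosen representatives genuinely compose in $\kategorie K$, but the argument is the paper's.
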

\begin{proof}
  Consider the mappings
  \begin{align*}
    \wabbildung{\abbildung{Φ&}{\Ob\kategorie C}{\Ob{\orbitfaltigkeit{\kategorie K}{\gruppe}}}}{x}{x^{\gruppe}}\\
    \wabbildung{\abbildung{Ψ&}{\Morall{\kategorie C}}{\Morall{{\orbitfaltigkeit{\kategorie K}{\gruppe}}}}}{\klmorphismus a}{\klmorphismus a^{\gruppe}}.
  \end{align*}
  Both are obviously bijective with the inverse mappings (when we identify singleton sets with their elements)
  \begin{align*}
    \wabbildung{\abbildung{Φ^{-1}&}{\Ob{\orbitfaltigkeit{\kategorie K}{\gruppe}}}{\Ob\kategorie C}}{x}{x\cap T}\\
    \wabbildung{\abbildung{Ψ^{-1}&}{\Morall{{\orbitfaltigkeit{\kategorie K}{\gruppe}}}}{\Morall{\kategorie C}}}{\klmorphismus a}{\hat{\klmorphismus  a}∈\klmorphismus a: \moranfang{\hat{\klmorphismus a}}∈T}.
  \end{align*}
  As all arrows in one orbit differ pairwise in their starting vertex,
  the inverse mappings are well defined.

  Obviously the mappings act as a homomorphism on the starting
  points of the arrows. For the end points we get
  \begin{align*}
    Φ(\morende \klmorphismus a) = (\morende \klmorphismus a )^{\gruppe} =
    \morende (\klmorphismus a^{\gruppe}) = \morende Ψ(\klmorphismus a).
  \end{align*}
  And for the concatenation $\klmorphismus a *_{\kategorie C}\klmorphismus b$ of two arrows $\klmorphismus a,\klmorphismus b∈\kategorie C$ we get:
  \begin{align*}
    Ψ(\klmorphismus a*_{\kategorie C} \klmorphismus b)
    &=
      (\klmorphismus a*_{\kategorie C}\klmorphismus b)^{\gruppe}
      = (\klmorphismus a*\klmorphismus b^{A_T(\klmorphismus a)})^{\gruppe}
      \intertext{since $\gruppe$ acts semi-regular both on $\klmorphismus b^{\gruppe}$ and on $(\moranfang \klmorphismus b^{A_T(\klmorphismus a)})^{\gruppe}$, we get}
      Ψ(\klmorphismus a*_{\kategorie C} \klmorphismus b)
    &=\klmorphismus a^{\gruppe}*_{\orbitfaltigkeit{\kategorie K}{\gruppe}}\klmorphismus b^{\gruppe}
      =Ψ\klmorphismus a*_{\orbitfaltigkeit{\kategorie K}{\gruppe}}Ψ\klmorphismus b
  \end{align*}
  The concatenation $\klmorphismus a*_{\kategorie C} \klmorphismus b$
  is defined iff there exists some group element $g$ such that
  $\klmorphismus a*\klmorphismus b^g$ is defined, which is the case
  iff
  $\klmorphismus a^{\gruppe}*_{\orbitfaltigkeit{\kategorie
      K}{\gruppe}}\klmorphismus b^{\gruppe}$ is defined. So, the pair
  $\paar {Φ}{Ψ}$ is an isomorphism between
  $\orbitfaltigkeit{\kategorie K}{\gruppe}$ and $\kategorie C$.
  \qed
\end{proof}

\noindent As $A_T$ is constant on the orbits, we get for every
arrow
$\klmorphismus a∈\Morall {\orbitfaltigkeit{\kategorie K}{\gruppe}}$
the equation $A_T[\klmorphismus a] = A(Φ(\klmorphismus a))$.

Consequently we can prove the unfolding.
\begin{theorem}
  \[
    \entfaltung {(\orbitfaltigkeit{\kategorie K}{\gruppe})}{A_T}{\gruppe}\isomorph \kategorie K.
  \]
\end{theorem}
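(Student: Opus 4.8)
The plan is to assemble the statement from the two lemmas immediately preceding it, glued together by the annotation identity recorded just above. Those lemmas provide a category isomorphism $\paar{Φ}{Ψ}$ between $\kategorie C$ and $\orbitfaltigkeit{\kategorie K}{\gruppe}$ (sending $x$ to $x^{\gruppe}$ and $\klmorphismus a$ to $\klmorphismus a^{\gruppe}$) and an isomorphism $\entfaltung{\kategorie C}A{\gruppe}\isomorph\kategorie K$. It therefore suffices to prove $\entfaltung{(\orbitfaltigkeit{\kategorie K}{\gruppe})}{A_T}{\gruppe}\isomorph\entfaltung{\kategorie C}A{\gruppe}$ and then to compose the three isomorphisms by transitivity.

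First I would promote the category isomorphism $\paar{Φ}{Ψ}$ to an isomorphism of representations, i.e.\ verify that it intertwines the annotations, so that $A_T(Ψ\klmorphismus a)=A(\klmorphismus a)$ holds for every arrow $\klmorphismus a$ of $\kategorie C$. This is exactly the identity stated in the remark above: $A_T$ is constant on each orbit, so it descends to a well-defined map on the arrows of $\orbitfaltigkeit{\kategorie K}{\gruppe}$, and since $A$ was defined on $\kategorie C$ by $A(\klmorphismus a)=A_T(\klmorphismus a)$, evaluating the descended map on $Ψ\klmorphismus a=\klmorphismus a^{\gruppe}$ returns $A_T(\klmorphismus a)=A(\klmorphismus a)$. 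Hence no new computation is required at this step.

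Next I would show that an annotation-compatible isomorphism lifts to the unfoldings. Define $Φ'\paar xg\definiert\paar{Φx}{g}$ on objects and $Ψ'\paar{\klmorphismus a}g\definiert\paar{Ψ\klmorphismus a}{g}$ on arrows, leaving the $\gruppe$-coordinate fixed. Bijectivity is immediate because $\paar{Φ}{Ψ}$ is bijective and the second coordinate is untouched, and preservation of source is clear from $\moranfang\paar{\klmorphismus a}g=\paar{\moranfang\klmorphismus a}{g}$. The annotation identity enters in the two remaining checks. For the target, Definition~\ref{def:Entfaltung} gives $\morende\paar{\klmorphismus a}g=\paar{\morende\klmorphismus a}{A(\klmorphismus a)g}$, whose image under $Φ'$ is $\paar{\morende(Ψ\klmorphismus a)}{A_T(Ψ\klmorphismus a)g}$, and this is precisely the target of $Ψ'\paar{\klmorphismus a}g$ in $\entfaltung{(\orbitfaltigkeit{\kategorie K}{\gruppe})}{A_T}{\gruppe}$ since $A_T(Ψ\klmorphismus a)=A(\klmorphismus a)$. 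For composition, the same identity ensures that $Ψ'$ carries composable pairs to composable pairs, after which functoriality of $Ψ$ gives $Ψ'(\paar{\klmorphismus a}g⊙\paar{\klmorphismus b}{A(\klmorphismus a)g})=Ψ'\paar{\klmorphismus a}g⊙Ψ'\paar{\klmorphismus b}{A(\klmorphismus a)g}$.

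Composing $\entfaltung{(\orbitfaltigkeit{\kategorie K}{\gruppe})}{A_T}{\gruppe}\isomorph\entfaltung{\kategorie C}A{\gruppe}\isomorph\kategorie K$ then proves the theorem. I expect the only delicate point to be the bookkeeping of the $\gruppe$-coordinate in the target and composition checks, where the equality $A_T(Ψ\klmorphismus a)=A(\klmorphismus a)$ must be invoked to make the two second coordinates agree; everything else is a direct transport of the defining equations of the unfolding through the functor $\paar{Φ}{Ψ}$. In particular, the genuine mathematical work has already been done in the two preceding lemmas and in the observation that the two annotations are really identified under $\paar{Φ}{Ψ}$, rather than merely both being \emph{natural} in an informal sense.
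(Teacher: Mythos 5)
Your proposal is correct and follows essentially the same route as the paper: the paper's proof likewise composes the isomorphism $\entfaltung{\kategorie C}{A}{\gruppe}\isomorph\kategorie K$ with the annotation-preserving isomorphism $\orbitfaltigkeit{\kategorie K}{\gruppe}\isomorph\kategorie C$, replacing the elements of $\kategorie C$ by their images. You merely spell out in more detail the (routine) verification that an annotation-compatible category isomorphism lifts to the unfoldings, which the paper leaves implicit.
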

\begin{proof}
  The unfolding $\entfaltung {\kategorie C}{A}{\gruppe}$ is isomorphic
  to the category $\kategorie K$.  As
  $\orbitfaltigkeit{\kategorie K}{\gruppe}$ is isomorphic to
  $\kategorie C$ and this isomorphism preserves the annotation, we can
  replace the elements (vertices and arrows) of $\kategorie C$ in
  $\entfaltung {\kategorie C}{A}{\gruppe}$ by their isomorphic image
  and get
  $\entfaltung {(\orbitfaltigkeit{\kategorie
      K}{\gruppe})}{A_T}{\gruppe}$.
  \qed
\end{proof}

\section{Vertex categories}
The idea behind a fundamental system is to reduce the information that
is managed by a certain structure. If an orbit category has cycles
that don't occur in the original category, a possibly infinite number
of arrows is preserved between any two vertices of the cycle. In some
case the information about their structure can be stored more
efficiently in the preimage of the vertex monoids. These are the
generated subcategories of vertex orbits. We call them vertex
categories.

According to the definition, for a translative automorphism group
$\gruppe∈\Aut \kategorie K$ of a category $\kategorie K$, any
two orbits $x^{\gruppe},y^{\gruppe}∈\kategorie K^{\gruppe}$ are
isomorphic to each other. As the group action is semi-regular, for
every arbitrary, but fixed vertex $x∈\Ob\kategorie K$ there is a
bijection
\begin{equation}
  \label{eq:6}
  \wabbildung{\abbildung{φ_x}{\gruppe}{x^{\gruppe}}}{g}{x^g}
\end{equation}
between the group $\gruppe$ and the orbit $x^{\gruppe}$.
So we can define a category $\kategorie G$ in the following way:
\begin{definition}
  Let $\kategorie K$ be a category and $\gruppe∈\Aut \kategorie K$
  a translative automorphism group. Then, for every vertex
  $x∈\Ob\kategorie K$ the right groupal category
  $\kategorie G_x\definiert \erzkategorie{\kategorie K}{x^{\gruppe}}$
  with the additional group action
  $\abbildung{\cdot_{\kategorie G_x}}{\kategorie G_x\times \Ob\kategorie G}{\kategorie G_x}$
  defined via
  \begin{enumerate}
  \item $∀g,h∈\gruppe: x^g\cdot_{\kategorie G_x}x^h \definiert x^{g\cdot_{\gruppe}h}$
  \item $∀\klmorphismus a∈\Morall{\kategorie G_x},h∈\gruppe: \klmorphismus a\cdot_{\kategorie G_x}x^g \definiert \klmorphismus a^{g}$
  \end{enumerate}
  is called \defindex{vertex category}.
\end{definition}

If the category $\kategorie K$ is an ordered set, the vertex category
is a right po-group. If it is a po-group, which
is factorised by one of its normal subgroups, the vertex categories
are po-groups that are isomorphic to the normal subgroup with
respect to the group operation and the order relation.

Every po-group can be considered as a category whose objects act on
itself via a left- and a right-associative automorphism actions. Both
actions commute due to the associativity of the group operation. For
simple categories this actions also define a binary operator on the
arrows.  In general such a category with such a binary operator is
called “groupal category”. Here, we consider only strict associativity.

\begin{definition}
  Let $\kategorie K$ be a category. Furthermore let
  $\abbildung \cdot {\kategorie K\times\Ob\kategorie K}{\kategorie K}$
  a binary operation that acts as a group operation on the objects
  $\Ob\kategorie K$ and as a left-associative automorphism action on
  $\kategorie K$.  Then $\paar {\kategorie K}{\cdot}$ is called a
  \defindex{right-groupal category}. Dually, the term
  \defindex{left-groupal category} denotes such a category if the
  group action is right-associative.

  The categroy is called a \defindex{groupal category}, if it is both
  left groupal and right groupal, such that both group actions commute
  with each other and the following equation holds for all arrows
  $\klmorphismus a,\klmorphismus b ∈\kategorie K$:

  \begin{equation}
    \label{eq:23}
    \klmorphismus a ^{\moranfang\klmorphismus b} * {}^{\morende\klmorphismus a}\klmorphismus b
    = {}^{\moranfang\klmorphismus a}\klmorphismus b * \klmorphismus a^{\morende\klmorphismus b}
  \end{equation}
\end{definition}

\begin{corollary}
  Every vertex category is a right-groupal category.
\end{corollary}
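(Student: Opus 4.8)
The plan is to check, straight from the definitions, the two conditions that make $\paar{\kategorie G_x}{\cdot_{\kategorie G_x}}$ a right-groupal category: that $\cdot_{\kategorie G_x}$ restricts to a group operation on the object set $\Ob\kategorie G_x = x^{\gruppe}$, and that it provides a left-associative automorphism action of this object group on the whole category $\kategorie G_x$. Both parts will be obtained by transporting the structure of $\gruppe$ along the orbit map $φ_x$ of \eqref{eq:6}, using only semi-regularity together with the defining laws $a^1 = a$ and $(a^g)^h = a^{(gh)}$ of a left-associative action.

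For the object operation I would first invoke semi-regularity, which makes $φ_x\colon g\mapsto x^g$ a bijection from $\gruppe$ onto $x^{\gruppe}$; hence every object of $\kategorie G_x$ carries a unique exponent $g\in\gruppe$. This uniqueness is exactly what makes the rule $x^g\cdot_{\kategorie G_x}x^h\definiert x^{g\cdot_{\gruppe}h}$ well defined, and by construction $φ_x$ becomes a group isomorphism. Associativity, the neutral object $x = x^1$, and inverses $x^{g^{-1}}$ then transfer verbatim from $\gruppe$, so $\cdot_{\kategorie G_x}$ is a group operation on $\Ob\kategorie G_x$.

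For the action on arrows I would verify the two action laws for $\klmorphismus a\cdot_{\kategorie G_x}x^g\definiert\klmorphismus a^g$. The identity law is immediate, since $\klmorphismus a\cdot_{\kategorie G_x}x^1 = \klmorphismus a^1 = \klmorphismus a$, and compatibility with the object operation follows directly from the original action: $(\klmorphismus a\cdot_{\kategorie G_x}x^g)\cdot_{\kategorie G_x}x^h = (\klmorphismus a^g)^h = \klmorphismus a^{(gh)} = \klmorphismus a\cdot_{\kategorie G_x}x^{gh} = \klmorphismus a\cdot_{\kategorie G_x}(x^g\cdot_{\kategorie G_x}x^h)$. It then remains to observe that each object $x^g$ really acts by an automorphism of $\kategorie G_x$: since the orbit $x^{\gruppe}$ is $\gruppe$-invariant, so is the generated subcategory $\kategorie G_x = \erzkategorie{\kategorie K}{x^{\gruppe}}$, so every $g\in\gruppe$ restricts from an automorphism of $\kategorie K$ to one of $\kategorie G_x$.

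These verifications are routine once the definitions are unwound; the only point that truly needs care is the well-definedness of both halves of $\cdot_{\kategorie G_x}$, which rests entirely on semi-regularity of the $\gruppe$-action. Without the uniqueness of the exponent $g$ the assignments $x^g\cdot_{\kategorie G_x}x^h\definiert x^{gh}$ and $\klmorphismus a\cdot_{\kategorie G_x}x^g\definiert\klmorphismus a^g$ would be ambiguous. Since translativity of $\gruppe$ already includes semi-regularity, this hypothesis is available for the vertex category, and the corollary follows.
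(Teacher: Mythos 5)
Your verification is correct and is exactly the routine unwinding of the definitions that the paper leaves implicit (the corollary is stated without proof there). In particular you correctly identify semi-regularity — guaranteed by translativity — as the one hypothesis doing real work, namely making both halves of $\cdot_{\kategorie G_x}$ well defined via the bijection $φ_x$.
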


\noindent In the following we will denote the neutral element with respect to the group operation of a right groupal category by the symbol “1”.

It is a well-known fact that every right po-group can be considered as
a right-groupal category and every po-group can be considered as a
groupal category.

In the same way as with right po-groups, right-groupal categories
are completely defined by the set of arrows that start or end in the
neutral element $1$. For that to prove we use the two operators for a category $\kategorie K$:
\begin{equation}
  \ordnungsideal[\kategorie K]{x} \definiert \bigdisjverein_{y∈\Ob\kategorie K}\Mor{\kategorie K}yx\text{ and }
  \ordnungsfilter[\kategorie K]{x} \definiert \bigdisjverein_{y∈\Ob\kategorie K}\Mor{\kategorie K}xy,
\end{equation}
which can be interpreted as the objects of the slice category and the
coslice category of a given object. Note that both operators
correspond to each other by the duality principle. Both
$\ordnungsideal[\kategorie K]{x}$ and
$\ordnungsfilter[\kategorie K]{x}$ contain the vertex monoid
$\Mor{\kategorie K}xx$ as a subset. Then, $\Morall{\kategorie K}$ is
completely defined by $\Ob\kategorie K$ and
$\ordnungsideal[\kategorie K]1$. The concatenation is defined by the
concatenation on
$\ordnungsideal[\kategorie K]{1}\cup\ordnungsfilter[\kategorie K]{1}$.

\begin{lemma}
  Let $\paar{\kategorie K}{\cdot}$ be a right-groupal category. Then, 
  \begin{equation}\label{eq:27}
    \Mor{\kategorie K}xy = \Mor{\kategorie K}1{y\cdot_{\kategorie K}x^{-1}}\cdot_{\kategorie K} x
    = \Mor{\kategorie K}{x\cdot_{\kategorie K}y^{-1}}1\cdot_{\kategorie K} y,
  \end{equation}
  and for all arrows
  $\klmorphismus a, \klmorphismus b∈\Morall{\kategorie K}$ with
  $\morende \klmorphismus a = \moranfang\klmorphismus b$ there exist
  arrows $\hat{\klmorphismus a}∈\ordnungsideal[{\kategorie K}]1$ and
  $\hat{\klmorphismus b}∈\ordnungsfilter[{\kategorie K}] 1$ such that
  $\klmorphismus a * \klmorphismus b =(\hat{\klmorphismus
    a}*\hat{\klmorphismus b})\cdot \morende\klmorphismus a$.
\end{lemma}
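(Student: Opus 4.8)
The plan is to exploit that, for every object $g\in\Ob\kategorie K$, the translation $\klmorphismus a\mapsto\klmorphismus a\cdot g$ is a category automorphism of $\kategorie K$ whose action on objects is the group multiplication. First I would record its two structural consequences. Equivariance of source and target gives $\moranfang(\klmorphismus a\cdot g)=(\moranfang\klmorphismus a)\cdot g$ and $\morende(\klmorphismus a\cdot g)=(\morende\klmorphismus a)\cdot g$, and functoriality of the automorphism gives $(\klmorphismus a*\klmorphismus b)\cdot g=(\klmorphismus a\cdot g)*(\klmorphismus b\cdot g)$ whenever $\klmorphismus a*\klmorphismus b$ exists. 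Both are immediate from the defining property that $\cdot$ is simultaneously the group operation on $\Ob\kategorie K$ and a left-associative automorphism action on $\kategorie K$.

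For the two identities in \eqref{eq:27} I would apply the automorphism $\cdot x$ to the hom-set $\Mor{\kategorie K}1{y\cdot x^{-1}}$. By equivariance together with the group laws $1\cdot x=x$ and $(y\cdot x^{-1})\cdot x=y\cdot(x^{-1}x)=y$, each such arrow is carried into $\Mor{\kategorie K}xy$; since $\cdot x$ is a bijection on arrows with inverse $\cdot x^{-1}$, this restricts to a bijection between the two hom-sets, which is exactly the first equality. The second equality follows symmetrically by applying $\cdot y$ to $\Mor{\kategorie K}{x\cdot y^{-1}}1$, now using $(x\cdot y^{-1})\cdot y=x$ and $1\cdot y=y$.

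For the factorisation claim I would set $m\definiert\morende\klmorphismus a=\moranfang\klmorphismus b$ and translate the composable pair by $m^{-1}$, putting $\hat{\klmorphismus a}\definiert\klmorphismus a\cdot m^{-1}$ and $\hat{\klmorphismus b}\definiert\klmorphismus b\cdot m^{-1}$. Equivariance of the target gives $\morende\hat{\klmorphismus a}=m\cdot m^{-1}=1$, so $\hat{\klmorphismus a}\in\ordnungsideal[\kategorie K]1$, and equivariance of the source gives $\moranfang\hat{\klmorphismus b}=m\cdot m^{-1}=1$, so $\hat{\klmorphismus b}\in\ordnungsfilter[\kategorie K]1$; in particular $\hat{\klmorphismus a}*\hat{\klmorphismus b}$ is defined. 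Functoriality of $\cdot m^{-1}$ then yields $\hat{\klmorphismus a}*\hat{\klmorphismus b}=(\klmorphismus a*\klmorphismus b)\cdot m^{-1}$, and applying the inverse automorphism $\cdot m$ together with $\bigl((\klmorphismus a*\klmorphismus b)\cdot m^{-1}\bigr)\cdot m=(\klmorphismus a*\klmorphismus b)\cdot(m^{-1}m)=\klmorphismus a*\klmorphismus b$ gives the desired $(\hat{\klmorphismus a}*\hat{\klmorphismus b})\cdot\morende\klmorphismus a=\klmorphismus a*\klmorphismus b$.

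I expect no genuine obstacle beyond bookkeeping: the only points that must be verified with care are that the source and target maps are equivariant for the translation action and that each translation is a functor. Both are built into the definition of a right-groupal category, so the whole argument collapses to the group identities $x^{-1}x=1$ and $m^{-1}m=1$ combined with the action axioms $(\klmorphismus a\cdot g)\cdot h=\klmorphismus a\cdot(g\cdot h)$ and $\klmorphismus a\cdot 1=\klmorphismus a$.
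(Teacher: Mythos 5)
Your argument is correct and follows the same route as the paper's proof: both establish \eqref{eq:27} by translating hom-sets with the group action, and both obtain the factorisation by translating the composable pair by the inverse of the middle vertex $\morende\klmorphismus a=\moranfang\klmorphismus b$ so that the junction sits at $1$, then translating back. Your version merely spells out the equivariance and functoriality of the translation action that the paper leaves implicit.
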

\begin{proof}
  Let $x,y∈\Ob\kategorie K$ then \eqref{eq:27} follows directly from
  the definition.  Now, we consider the
  diagram \begin{tikzcd}x\arrow{r}{\klmorphismus
      a}&y\arrow{r}{\klmorphismus b}&z\end{tikzcd}. This is the same
  as
  $(\begin{tikzcd}x\cdot y^{-1}\arrow{r}{\klmorphismus a\cdot
      {y^{-1}}}&1\arrow{r}{\klmorphismus b\cdot y^{-1}}&z\cdot
    y^{-1}\end{tikzcd})\cdot y$. Consequently
  $\klmorphismus a*\klmorphismus b = (\klmorphismus a\cdot
  y^{-1}*\klmorphismus b \cdot y^{-1})\cdot y$. As
  $\klmorphismus a\cdot y^{-1}∈\ordnungsideal[\kategorie K]{1}$ and
  $\klmorphismus b \cdot y^{-1}∈\ordnungsfilter[\kategorie K]1$ hold,
  the lemma is proved.
  \qed
\end{proof}

\noindent As with groups, also the group action of a right groupal category can
be used to define a group operation that mainly shifts the neutral
element.

\begin{lemma}\label{lemma:vertex-categories-move-neutral-element}
  Let $\kategorie K$ be a right groupal category and $a∈\Ob\kategorie K$
  a vertex. Then the binary operator
  \begin{equation}
    \wabbildung{\abbildung{\cdot_a}{\Ob\kategorie K\times \Ob\kategorie
        K}{\Ob\kategorie K}}{\paar xy}{xa^{-1}y}
  \end{equation}
  is a group operation on $\Ob\kategorie K$ such that
  $\wabbildung{\abbildung{φ}{\kategorie K}{\kategorie K}}{x}{xa}$ is a
  right groupal category isomorphism which maps the group operations
  to each other.
\end{lemma}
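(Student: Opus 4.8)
The plan is to recognise $φ$ as right multiplication by the object $a$ and to realise $\cdot_a$ as the operation that $φ$ transports $\cdot$ onto; with this viewpoint both claims become almost immediate and the only genuine work is a short non-commutative bookkeeping check.

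First I would use the right-groupal structure itself: by definition $\cdot$ restricts to a left-associative automorphism action on $\kategorie K$, so for every object $g∈\Ob\kategorie K$ right multiplication by $g$ is a category automorphism of $\kategorie K$. In particular $φ$, which is right multiplication by $a$ on objects and on arrows, is a category automorphism whose inverse is right multiplication by $a^{-1}$; hence $φ$ is bijective on $\Ob\kategorie K$ and on $\Morall{\kategorie K}$ and preserves $\moranfang$, $\morende$, the identities and the concatenation $*$.

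Next I would check that $\cdot_a$ is a group operation on $\Ob\kategorie K$. Associativity is immediate, since $(x\cdot_a y)\cdot_a z$ and $x\cdot_a(y\cdot_a z)$ both reduce to $xa^{-1}ya^{-1}z$; the object $a$ is neutral because $a\cdot_a x=aa^{-1}x=x=xa^{-1}a=x\cdot_a a$; and the $\cdot_a$-inverse of $x$ is $ax^{-1}a$. Conceptually none of this is accidental: since $φ^{-1}(x)=xa^{-1}$ we have $x\cdot_a y=φ\bigl(φ^{-1}(x)\cdot φ^{-1}(y)\bigr)$, so $\cdot_a$ is simply the transport of the group operation $\cdot$ along the bijection $φ$ and is therefore automatically a group operation with neutral element $φ(1)=a$.

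Finally I would verify that $φ$ is a right-groupal category isomorphism carrying $\cdot$ to $\cdot_a$. On objects, $φ(x\cdot y)=(xy)a=(xa)a^{-1}(ya)=φ(x)\cdot_aφ(y)$, so $φ$ intertwines $\cdot$ with $\cdot_a$ and sends the neutral element $1$ to the neutral element $a$. To regard $\paar{\kategorie K}{\cdot_a}$ as a right-groupal category I would extend $\cdot_a$ to arrows by $\klmorphismus b\cdot_a g\definiert\klmorphismus b\cdot(a^{-1}g)$ --- equivalently, transport the arrow action along $φ$ --- and a one-line computation shows this is again a left-associative automorphism action and that $φ(\klmorphismus b\cdot g)=φ(\klmorphismus b)\cdot_aφ(g)$. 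Combined with the category-automorphism property from the first step, this exhibits $φ$ as a right-groupal category isomorphism. I expect the only delicate point to be the placement of $a^{-1}$ in the non-commutative setting, and in particular keeping straight that $φ$ sends $\cdot$ to $\cdot_a$ rather than the reverse; the transport-of-structure viewpoint is precisely what keeps this bookkeeping honest.
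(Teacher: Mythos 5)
Your proposal is correct and follows essentially the same route as the paper: the key identity $φ(xy)=xya=xaa^{-1}ya=φ(x)\cdot_aφ(y)$ together with the observation that the object action of $a$ and $a^{-1}$ already gives category automorphisms, so $φ$ is a category isomorphism. The extra detail you supply (explicit verification of the group axioms for $\cdot_a$ and the transport of the arrow action) is a harmless elaboration of what the paper leaves implicit.
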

\begin{proof}
  First we show that $φ$ is a group isomorphism: It is bijective as
  the group operation is bijective in each argument. Let
  $a,b∈\Ob\kategorie K$. Then
  \begin{align*}
    φ(xy) = xya = xaa^{-1}ya = φ(x)\cdot_aφ(y).
  \end{align*}
  Thus $φ$ is a group isomorphism.

  Let further $\klmorphismus x,\klmorphismus y∈\Morall{\kategorie K}$
  be two arrows such that $\klmorphismus x*\klmorphismus y∈\kategorie K$
  exists. Then both
  \begin{align*}
    (\klmorphismus x*\klmorphismus y)^a=\klmorphismus x^a*\klmorphismus y^a&∈\Morall{\kategorie K},\\
    (\klmorphismus x*\klmorphismus y)^{a^{-1}}=\klmorphismus x^{a^{-1}}*\klmorphismus y^{a^{-1}}&∈\Morall{\kategorie K}\text{ and}\\
    \bigl((\klmorphismus x*\klmorphismus y)^a\bigr)^{a^{-1}}=\klmorphismus x*\klmorphismus y&∈\Morall{\kategorie K}
  \end{align*}
  hold. Thus, the bijection $φ$ is also with respect to the category structure an isomorphism.
  \qed
\end{proof}

\noindent The group operations of two vertex categories of the same
category with the same translative automorphism group can be chosen in
a way that is compatible with the translative group action. So, we can
consider the vertex categories of all vertices as isomorphic right
groupal cateogies.

\begin{lemma}\label{lemma:vertex-categories-isomorphic-right-groupal-categories}
  Let $\kategorie K$ and $\kategorie L$ be right groupal categories and
  $\abbildung{φ}{\kategorie K}{\kategorie L}$ a category isomorphism
  between them with the following properties:
  \begin{align}
    φ(xy) = φ(x)\bigl(φ(1)\bigr)^{-1}φ(y).\label{eq:13}
  \end{align}

  Then, the mapping
  $\wabbildung{\abbildung{ψ}{\kategorie K}{\kategorie L}}{x}{φ(1)}$ is
  an isomorphism of right groupal categories.
\end{lemma}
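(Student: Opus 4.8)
Write $e\definiert φ(1)∈\Ob\kategorie L$ and read the (evidently truncated) rule for $ψ$ as $ψ(x)=φ(x)e^{-1}$, that is, the image $φ(x)$ acted on the right by the object $e^{-1}$. This is the only way to repair $φ$ using solely the right action $\abbildung{\cdot}{\kategorie L\times\Ob\kategorie L}{\kategorie L}$ that a right groupal category provides, since a left-handed correction is not even defined on arrows. The plan is to decompose $ψ=τ\circ φ$, where $\wabbildung{\abbildung{τ}{\kategorie L}{\kategorie L}}{y}{ye^{-1}}$ is right-translation by $e^{-1}$, and to read \eqref{eq:13} as the statement that $φ$ intertwines the operation of $\kategorie K$ with the shifted operation $\cdot_{e}$ of Lemma~\ref{lemma:vertex-categories-move-neutral-element} on $\kategorie L$; the map $τ$ then converts $\cdot_{e}$ back into the unshifted $\cdot$, so the composite preserves the original right groupal structure on both sides.

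First I would settle that $ψ$ is a category isomorphism. By Lemma~\ref{lemma:vertex-categories-move-neutral-element} applied to $\kategorie L$ with $a=e^{-1}$, the right-translation $τ$ is a category isomorphism, and $φ$ is one by hypothesis, so $ψ=τ\circ φ$ is a category isomorphism; in particular it automatically respects domains, codomains and concatenation, leaving nothing to check for the underlying functor. Next I would verify that $ψ$ carries the right groupal operation of $\kategorie K$ onto that of $\kategorie L$. For $x∈\kategorie K$ and an object $y∈\Ob\kategorie K$ the computation
\begin{align*}
  ψ(xy) &= φ(xy)e^{-1} = φ(x)\bigl(φ(1)\bigr)^{-1}φ(y)e^{-1}
          = φ(x)e^{-1}φ(y)e^{-1}\\
        &= \bigl(φ(x)e^{-1}\bigr)\bigl(φ(y)e^{-1}\bigr) = ψ(x)ψ(y)
\end{align*}
uses \eqref{eq:13} in the first line and the associativity of the left-associative action in $\kategorie L$ to regroup in the second, treating $φ(x)$ as an arrow and $e^{-1},φ(y),e^{-1}$ as objects whose product is reassociated. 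Together with $ψ(1)=φ(1)e^{-1}=ee^{-1}=1$, this shows $ψ$ is a homomorphism of the right groupal structure that fixes the neutral element.

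The main obstacle is not any single identity but the one-sided bookkeeping forced by the signature of $\cdot$: every product must be parsed as an iterated right action by objects, and \eqref{eq:13} has to be invoked for arrows $x$ as well as for objects — which is legitimate precisely because there the second factor $y$ ranges over $\Ob\kategorie K$, so $φ(y)$ is an object and the reassociation stays inside the action. Once this is respected, the category-isomorphism property from the first step combines with the operation-preservation and $ψ(1)=1$ to give that $ψ$ is an isomorphism of right groupal categories, as claimed.
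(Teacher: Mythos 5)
Your proposal is correct and follows essentially the same route as the paper: both read the (truncated) definition of $ψ$ as $φ$ followed by right translation by $φ(1)^{-1}$, interpret \eqref{eq:13} as saying that $φ$ is an isomorphism onto $\kategorie L$ equipped with the shifted operation $\cdot_{φ(1)}$, and invoke Lemma~\ref{lemma:vertex-categories-move-neutral-element} to see that the translation converts this back to the standard operation. Your explicit verification that $ψ(xy)=ψ(x)ψ(y)$ spells out what the paper leaves implicit, but it is not a different method.
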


\begin{proof}
  The equation \eqref{eq:13} can be rewritten in the form:
  \begin{align*}
    φ(xy) = φ(x) \cdot_{φ(1)} φ(y)
  \end{align*}
  That means that $φ$ is also a group isomorphism with respect to the
  group operation $\cdot_{φ(1)}$ on $\Ob\kategorie L$. Thus $φ$ is a
  right groupal category isomorphism between $\kategorie K$ with the
  standard group operation and $\kategorie L$ with the group operation
  $\cdot_{φ(1)}$.

  Let
  $\wabbildung{\abbildung{ψ}{\kategorie L}{\kategorie L}}{x}{xφ(1)}$
  the right groupal isomorphism on $\kategorie L$ that makes $φ(1)$ the neutral
  element. Then $φ\circ ψ^{-1}$ is a right groupal isomorphism between
  $\kategorie K$ and $\kategorie L$ both with their standard group
  operation.
  \qed
\end{proof}

\noindent For each orbit we can find a representative, so that the group
operation created above coincides with the normal group operation of a
vertex category.

\begin{lemma}\label{lemma:vertex-category-1}
  Let $\kategorie K$ be a category and $\gruppe \leq\Aut\kategorie K$ a
  translative automorphism group. Then for any two vertices
  $a,b∈\Ob\kategorie K$ a third vertex $c∈b^{\gruppe}$ exists such
  that there exists a right groupal category isomorphism between the
  vertex categories $\kategorie G_a$ and $\kategorie G_c$ that
  commutes with the automorphisms from $\gruppe$.
\end{lemma}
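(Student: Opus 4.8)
The plan is to start from the category isomorphism supplied by translativity and to use the freedom in the choice of $c$ to repair the one defect it has, namely that it need not send a neutral element to a neutral element. By Definition~\ref{def:translative} there is a category isomorphism $\abbildung{φ}{\erzkategorie{\kategorie K}{a^{\gruppe}}}{\erzkategorie{\kategorie K}{b^{\gruppe}}}$ commuting with the action of $\gruppe$, so that $φ(x^g)=(φ(x))^g$ holds for every object or arrow $x$ and every $g∈\gruppe$. This $φ$ is already a $\gruppe$-equivariant isomorphism of the underlying categories $\kategorie G_a$ and $\kategorie G_b$, but it is in general only a category isomorphism, not a right groupal one, because it maps the neutral element $a$ of $\kategorie G_a$ to some element $φ(a)∈b^{\gruppe}$ that may differ from the neutral element $b=b^1$ of $\kategorie G_b$. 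First I would therefore set $c\definiert φ(a)$; since $c∈b^{\gruppe}$ we have $c^{\gruppe}=b^{\gruppe}$, so $\kategorie G_c$ and $\kategorie G_b$ share the same underlying subcategory and differ only in their distinguished neutral element and hence in their transported group operation.

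Next I would verify that, after this recalibration, $φ$ respects the group operation on objects. The point is the identity $c^g=(φ(a))^g=φ(a^g)$, which follows from $\gruppe$-equivariance and says exactly that $φ\circ φ_a=φ_c$ as maps from $\gruppe$. Since $φ_a$ and $φ_c$ are group isomorphisms from $\gruppe$ onto $(\Ob\kategorie G_a,\cdot_{\kategorie G_a})$ and $(\Ob\kategorie G_c,\cdot_{\kategorie G_c})$ respectively, the restriction of $φ$ to objects equals $φ_c\circ φ_a^{-1}$ and is therefore itself a group isomorphism carrying the neutral element $a$ to the neutral element $c$.

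It then remains to check compatibility with the right groupal action of objects on arrows. For an arrow $\klmorphismus u∈\Morall{\kategorie G_a}$ and an object $a^g$ I would compute, using the defining rule $\klmorphismus u\cdot_{\kategorie G_a}a^g=\klmorphismus u^g$, the equivariance of $φ$, and the identity $φ(a^g)=c^g$,
\begin{align*}
  φ(\klmorphismus u\cdot_{\kategorie G_a}a^g)
  &= φ(\klmorphismus u^g)
  = (φ(\klmorphismus u))^g
  = φ(\klmorphismus u)\cdot_{\kategorie G_c}c^g
  = φ(\klmorphismus u)\cdot_{\kategorie G_c}φ(a^g),
\end{align*}
so that $φ$ is compatible with both the object multiplication and the arrow action; together with its $\gruppe$-equivariance this makes it the desired right groupal category isomorphism commuting with the automorphisms from $\gruppe$. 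The hard part is conceptual rather than computational: one must notice that the translative isomorphism generically shifts the neutral element within the orbit, and that choosing $c=φ(a)$ absorbs precisely this shift. That this correction is legitimate is underwritten by Lemma~\ref{lemma:vertex-categories-move-neutral-element}, which guarantees that passing from $\kategorie G_b$ to $\kategorie G_c$ by relocating the neutral element within $b^{\gruppe}$ is itself a right groupal isomorphism.
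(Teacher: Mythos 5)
Your proposal is correct and follows essentially the same route as the paper: take the translative isomorphism $φ$, set $c\definiert φ(a)$, use $\gruppe$-equivariance to get $φ(a^g)=c^g$ and hence compatibility with the transported group operation, and invoke Lemma~\ref{lemma:vertex-categories-move-neutral-element} to justify relocating the neutral element within the orbit. Your explicit check of the arrow action is a slightly more complete write-up of the same argument the paper sketches at the end of its proof.
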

\begin{proof}
  Since $\gruppe$ is translative, there exists a category isomorphism
  $\abbildung{φ}{\kategorie G_b}{\kategorie G_a}$. It induces a group
  structure on $\kategorie G_b$, whose neutral element will be denoted
  by $c$ and the group operation by $\cdot$. It remains to show that
  the induced group structure is the same as in $\kategorie G_c$. Let
  $x,y ∈\Ob\kategorie G_c$ two vertices in $\kategorie G_c$. Then,
  there exist two elements $g,h∈\gruppe$ with $c^g = x$ and $c^h=y$,
  such that
  \begin{align*}
    x\cdot_{\kategorie G_c}y
    &= c^g\cdot_{\kategorie G_c}c^h = c^{gh} = φ^{-1}\bigl(φ(c^{gh})\bigr)
      = φ^{-1}(a^{gh}) = φ^{-1}(a^g\cdot_{\kategorie G_a}a^h)\\
    &= φ^{-1}(a^g)\cdot φ^{-1}(a^h) = c^g \cdot c^h\\
    &= x \cdot y.
  \end{align*}
  \qed
\end{proof}
\begin{proof}
  Let $a,b∈\Ob\kategorie K$ be two vertices. As $\gruppe$ acts
  translative on $\kategorie K$, there exists a category isomorphism
  $φ$ between $\kategorie G_a$ and $\kategorie G_b$ which commutes with the
  group action. That means for any two group elements
  $g,h∈\gruppe$ the following equations hold:
  \begin{align*}
    φ(a^ga^h) &= φ(a)^{gh}
    \intertext{Let $b^f = c \definiert φ(a)$:}
                φ(a^ga^h) &= b^{fgh}= c^gc^{-1}c^h\\
              &= φ(a^g)φ(a)^{-1}φ(a^h).
  \end{align*}
  As $a$ is the neutral element in $\kategorie G_a$, the previous
  lemmas~\ref{lemma:vertex-categories-move-neutral-element}
  and~\ref{lemma:vertex-categories-isomorphic-right-groupal-categories}
  ensure the existence of a right groupal category isomorphism between
  the vertex categories $\kategorie G_a$ and $\kategorie G_c$.  Now, we
  look at the group action of $h∈\gruppe$. Let $x=a^g∈a^{\gruppe}$,
  then
  \begin{align*}
    φ\bigl((a^g)^h\bigr) &= φ(a^{gh}) = φ(a)^{gh}= (c^g)^h= c^gc^{-1}c^h.
  \end{align*}
  \qed
\end{proof}

\noindent
As we can see in the last proof the role of the orbit vertices with
respect to the automorphism action of the group depends on the choice
of the neutral element. However, the action of the automorphism group
does not depend on that choice.

\section{Flat orbit categories}\label{sec:flat-orbit-categ}
In some cases the orbit categories contain lots of redundant
information. For example for po-groups we are often more interested in
the local structure than in far relationships. As the latter are mainly a
result of transitivity, we may omit them and reconstruct them from
transitivity.

As all vertex monoids of the orbit category are isomorphic, every
orbit category is the homomorphic image of the product of a vertex
monoid with some other category. Since a translative group acts
semi-regular on vertices, we can encode the information of the vertex
monoids in the automorphism group.

In analogy to normal sub(semi)groups we can also consider vertex
monoids as normal, if we find a way to exchange the operands of the
concatenation.

\begin{definition}\label{def:normal-group-action-category}
  Let $\kategorie K$ denote a category and
  $\gruppe\leq\Aut\kategorie K$ be a translative automorphism group.
  Then, $\gruppe$ is called \defindex{right-normal} on $\kategorie K$
  (in symbols: $\gruppe \normalteilervon \kategorie K$), iff for every
  two arrows $\klmorphismus a ∈\Morall{\kategorie K}$ and
  $\klmorphismus x∈\Morall{\kategorie G_{\moranfang \klmorphismus a}}$
  there exists an arrow
  $\klmorphismus C\paar{\klmorphismus a^{\gruppe}}{\klmorphismus
    x^{\gruppe}}∈\Morall{\orbitfaltigkeit{\kategorie G_{\morende
        \klmorphismus a}}{\gruppe}}$ such that in the orbit category
  $\orbitfaltigkeit{\kategorie K}{\gruppe}$ the following equations
  hold:
  \begin{subequations}
    \begin{align}
      \label{eq:12a}
      \klmorphismus x^{\gruppe}*\klmorphismus a^{\gruppe}
      &=\klmorphismus a^{\gruppe}*\klmorphismus C\paar{\klmorphismus a^{\gruppe}}{\klmorphismus x^{\gruppe}},\\
      \id_{\morende\klmorphismus a}^{\gruppe}
      &= \klmorphismus C\paar{\klmorphismus a^{\gruppe}}{\id_{\moranfang\klmorphismus a}^{\gruppe}}\label{eq:36}.
    \end{align}
  \end{subequations}
\end{definition}

We can express right normal group actions in terms of the underlying
category.

\begin{lemma}\label{lemma:normal-auch-representant}
  Let $\kategorie K$ denote a category and $\gruppe\leq\Aut\kategorie K$ be a
  translative automorphism group.  Then, $\gruppe$ is right-normal,
  iff for every two arrows $\klmorphismus a ∈\Morall{\kategorie K}$
  and
  $\klmorphismus x∈\Morall{\kategorie G_{\moranfang \klmorphismus a}}$
  with
  $\morende\klmorphismus x = \moranfang \klmorphismus a$ there exists
  an automorphism $g∈\gruppe$ and an arrow
  $\klmorphismus C'\paar{\klmorphismus a}{\klmorphismus
    x}∈\Morall{\kategorie G_{\morende \klmorphismus a}}$
  such that
  \begin{subequations}
    \begin{align}
      \label{eq:12b}
      \klmorphismus x*\klmorphismus a
      &=\klmorphismus a^{g}*\klmorphismus C'\paar{\klmorphismus a}{\klmorphismus x},\\
      \id_{\morende\klmorphismus a}
      &= \klmorphismus C'\paar{\klmorphismus a}{\id_{\moranfang\klmorphismus a}}
        \label{eq:36b}.
    \end{align}
  \end{subequations}
\end{lemma}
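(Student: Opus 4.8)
The plan is to prove both implications by passing back and forth between composition in the orbit category $\orbitfaltigkeit{\kategorie K}{\gruppe}$ and composition of representatives in $\kategorie K$, with semi-regularity supplying canonical representatives. First I would record the bookkeeping facts that drive the translation. Since $\klmorphismus x\in\Morall{\kategorie G_{\moranfang\klmorphismus a}}$ has source in $(\moranfang\klmorphismus a)^{\gruppe}$ and $\gruppe$ acts semi-regularly and transitively on each orbit, there is a unique $h\in\gruppe$ with $(\morende\klmorphismus x)^{h}=\moranfang\klmorphismus a$; passing from $\klmorphismus x$ to $\klmorphismus x^{h}$ changes neither the orbit $\klmorphismus x^{\gruppe}$ nor membership in $\kategorie G_{\moranfang\klmorphismus a}$, so in computing $\klmorphismus x^{\gruppe}*\klmorphismus a^{\gruppe}$ I may assume $\morende\klmorphismus x=\moranfang\klmorphismus a$. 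Dually, any $\klmorphismus C\in\Morall{\orbitfaltigkeit{\kategorie G_{\morende\klmorphismus a}}{\gruppe}}$ has a unique representative $\hat{\klmorphismus C}$ with $\moranfang\hat{\klmorphismus C}=\morende\klmorphismus a$, so that $\klmorphismus a^{\gruppe}*\klmorphismus C=(\klmorphismus a*\hat{\klmorphismus C})^{\gruppe}$. Finally, every $g\in\gruppe$ permutes $(\morende\klmorphismus a)^{\gruppe}$ and hence restricts to an automorphism of $\kategorie G_{\morende\klmorphismus a}$, so the image of an arrow of $\kategorie G_{\morende\klmorphismus a}$ under $g$ stays inside $\kategorie G_{\morende\klmorphismus a}$.

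For the forward direction I assume $\gruppe\normalteilervon\kategorie K$ and rewrite \eqref{eq:12a}. By the first fact its left side is $(\klmorphismus x*\klmorphismus a)^{\gruppe}$, and by the second its right side is $(\klmorphismus a*\hat{\klmorphismus C})^{\gruppe}$. Equality of these orbits produces $g\in\gruppe$ with $\klmorphismus x*\klmorphismus a=(\klmorphismus a*\hat{\klmorphismus C})^{g}=\klmorphismus a^{g}*\hat{\klmorphismus C}^{g}$; comparing sources gives $(\moranfang\klmorphismus a)^{g}=\moranfang\klmorphismus x$, which fixes $g$ uniquely by semi-regularity. Putting $\klmorphismus C'\paar{\klmorphismus a}{\klmorphismus x}\definiert\hat{\klmorphismus C}^{g}$, which lies in $\Morall{\kategorie G_{\morende\klmorphismus a}}$ by the third fact, yields \eqref{eq:12b}. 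For the normalisation \eqref{eq:36b} I specialise to $\klmorphismus x=\id_{\moranfang\klmorphismus a}$: here \eqref{eq:36} forces $\hat{\klmorphismus C}=\id_{\morende\klmorphismus a}$, so $\klmorphismus a=\klmorphismus x*\klmorphismus a=\klmorphismus a^{g}*\id_{(\morende\klmorphismus a)^{g}}=\klmorphismus a^{g}$, whence $g=1$ and $\klmorphismus C'\paar{\klmorphismus a}{\id_{\moranfang\klmorphismus a}}=\id_{\morende\klmorphismus a}$.

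For the backward direction I assume \eqref{eq:12b} and \eqref{eq:36b} and construct the orbit arrow $\klmorphismus C$ by folding. Given $\klmorphismus x^{\gruppe}$ and $\klmorphismus a^{\gruppe}$, the first fact lets me take the representative $\klmorphismus x$ with $\morende\klmorphismus x=\moranfang\klmorphismus a$, so $\klmorphismus x*\klmorphismus a$ exists. Applying \eqref{eq:12b} gives $g$ and $\klmorphismus C'\in\Morall{\kategorie G_{\morende\klmorphismus a}}$ with $\klmorphismus x*\klmorphismus a=\klmorphismus a^{g}*\klmorphismus C'$, and since $\klmorphismus a^{g}$ and $\klmorphismus C'$ are composable representatives, the well-definedness of orbit composition (the orbit category exists because $\gruppe$ is semi-regular) gives $\klmorphismus x^{\gruppe}*\klmorphismus a^{\gruppe}=(\klmorphismus x*\klmorphismus a)^{\gruppe}=(\klmorphismus a^{g}*\klmorphismus C')^{\gruppe}=\klmorphismus a^{\gruppe}*(\klmorphismus C')^{\gruppe}$. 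Setting $\klmorphismus C\paar{\klmorphismus a^{\gruppe}}{\klmorphismus x^{\gruppe}}\definiert(\klmorphismus C')^{\gruppe}$, an arrow of $\orbitfaltigkeit{\kategorie G_{\morende\klmorphismus a}}{\gruppe}$, establishes \eqref{eq:12a}; taking $\klmorphismus x=\id_{\moranfang\klmorphismus a}$ and reading off $\klmorphismus C'=\id_{\morende\klmorphismus a}$ from \eqref{eq:36b} gives \eqref{eq:36}.

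The main obstacle I expect is organisational: keeping straight which representative is chosen at each step, and the fact that the mediating automorphism $g$, visible in the representative identities \eqref{eq:12b} and \eqref{eq:36b}, is absorbed into the orbit once one folds. The genuinely delicate point in both directions is the normalisation \eqref{eq:36}/\eqref{eq:36b}, where semi-regularity must be used to force $g=1$ in the case $\klmorphismus x=\id_{\moranfang\klmorphismus a}$; I would also verify that the construction of $\klmorphismus C$ in the backward direction can be made uniformly in $\klmorphismus x^{\gruppe}$, so that its value at the identity orbit is genuinely $\id_{\morende\klmorphismus a}^{\gruppe}$ and not merely one of several admissible arrows.
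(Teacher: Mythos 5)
Your proof is correct and follows essentially the same route as the paper's: both directions translate between the orbit identity \eqref{eq:12a} and the representative identity \eqref{eq:12b} by choosing composable representatives, using semi-regularity (equivalently, that a translative action is regular on each orbit) to pin down the unique mediating automorphism $g$ from a comparison of sources, and defining $\klmorphismus C'$ (respectively $\klmorphismus C$) as the suitably shifted representative (respectively the orbit) of the other. Your explicit handling of the normalisation equations \eqref{eq:36} and \eqref{eq:36b}, which the paper dispatches as ``analogous'', is consistent with the rest of the argument and does not constitute a different approach.
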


\begin{proof}
  Let first \eqref{eq:12a} be true. As a semi-regular group action is
  regular on its orbits there exist unique homomorphisms $g,h∈\gruppe$ such
  that the concatenation $\klmorphismus x*\klmorphismus a^h$ exists
  and the equation
  $\moranfang\klmorphismus x=\moranfang\klmorphismus a^g$ holds. Let
  $\klmorphismus y∈\klmorphismus C\paar{\klmorphismus
    a^{\gruppe}}{\klmorphismus x^{\gruppe}}$. Then, an automorphism
  $f∈\gruppe$ exists such that the concatenation
  $\klmorphismus a^g*\klmorphismus y^f$ exists. As $\gruppe$ acts
  translative, the orbit
  $\klmorphismus a^{\gruppe}*\klmorphismus y^{\gruppe}$ has exactly
  one arrow starting in $\moranfang \klmorphismus x$, which is by
  construction the arrow $\klmorphismus a^g*\klmorphismus y^f$. So we
  can define
  $\klmorphismus C'\paar{\klmorphismus a^h}{\klmorphismus x}\definiert
  \klmorphismus y^f$, which fulfils the condition \eqref{eq:12b}.

  Conversely suppose Equation~\eqref{eq:12b} holds under the mentioned conditions.
  As $\gruppe$ is translative there exists a choice function
  $\abbildung{c}{\Morall{\orbitfaltigkeit{\kategorie
        K}{\gruppe}}^2}{\Morall{\kategorie K}^2}$ such that for
  $\paar{\hat{\klmorphismus a}}{\hat{\klmorphismus x}} = c\paar{\klmorphismus
    a^{\gruppe}}{\klmorphismus x^{\gruppe}}$ with
  $\klmorphismus
  x^{\gruppe}∈\Morall{\erzkategorie{\kategorie K}{\moranfang\klmorphismus
      a^{\gruppe}}}$ the equation
  $\morende\hat{\klmorphismus x}=\moranfang\hat{\klmorphismus a}$ holds.

  For two arrows
  $\klmorphismus a^{\gruppe}\Morall{\orbitfaltigkeit{\kategorie
      K}{\gruppe}}$ and
  $\klmorphismus x^{\gruppe}∈\Mor{\orbitfaltigkeit{\kategorie
      K}{\gruppe}}{\moranfang\klmorphismus
    a^{\gruppe}}{\moranfang\klmorphismus a^{\gruppe}}$ we chose a pair
  of arrows
  $\paar{\hat{\klmorphismus a}}{\hat {\klmorphismus x}} = c
  \paar{\klmorphismus a^{\gruppe}}{\klmorphismus x^{\gruppe}}$. Then,
  \[\klmorphismus x^{\gruppe}*\klmorphismus a^{\gruppe} = (\hat{\klmorphismus x}*\hat{\klmorphismus a})^{\gruppe}
    =(\hat{\klmorphismus a}^{g}*\klmorphismus C'\paar{\klmorphismus
      a}{\klmorphismus x})^{\gruppe} = \hat{\klmorphismus
    a}^{\gruppe}*\klmorphismus C'\paar{\hat{\klmorphismus
      a}}{\hat{\klmorphismus x}}^{\gruppe} = \klmorphismus a^{\gruppe}*\klmorphismus C'\bigl(c\paar{\klmorphismus a^{\gruppe}}{\klmorphismus x^{\gruppe}}\bigr)^{\gruppe}.\]
  So we can define
  $\klmorphismus C\paar{\kategorie a^{\gruppe}}{\klmorphismus
    x^{\gruppe}}\definiert\klmorphismus C'\bigl(c\paar{\klmorphismus
    a^{\gruppe}}{\klmorphismus x}\bigr)^{\gruppe}$, which fulfils Equation~\eqref{eq:12a}.

  The equations \eqref{eq:36b} and \eqref{eq:36} can be proved in an analogous way.
  \qed
\end{proof}

\begin{corollary}
  If a partial mapping $\klmorphismus C''$ fulfils Equation~\eqref{eq:12b}, then the partial mapping
  \begin{align}
    \klmorphismus C'\paar{\klmorphismus a}{\klmorphismus x}\definiert
    \begin{cases}
      \id_{\morende\klmorphismus a},&\klmorphismus x = \id_{\moranfang\klmorphismus a}\\
      \klmorphismus C''\paar{\klmorphismus a}{\klmorphismus x},&\text{else}
    \end{cases}
  \end{align}
  fulfils Equations~\eqref{eq:12b} and~\eqref{eq:36b}
\end{corollary}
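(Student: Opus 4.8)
The plan is to verify the two asserted equations separately, exploiting that the passage from $\klmorphismus C''$ to $\klmorphismus C'$ alters the value only at the single argument $\klmorphismus x = \id_{\moranfang\klmorphismus a}$. First I would dispose of Equation~\eqref{eq:36b}, which holds by construction: substituting $\klmorphismus x = \id_{\moranfang\klmorphismus a}$ into the case distinction selects the first branch, so that $\klmorphismus C'\paar{\klmorphismus a}{\id_{\moranfang\klmorphismus a}} = \id_{\morende\klmorphismus a}$ is immediate, which is exactly~\eqref{eq:36b}.

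For Equation~\eqref{eq:12b} I would split into the two branches of the definition. When $\klmorphismus x \neq \id_{\moranfang\klmorphismus a}$ we have $\klmorphismus C'\paar{\klmorphismus a}{\klmorphismus x} = \klmorphismus C''\paar{\klmorphismus a}{\klmorphismus x}$, so~\eqref{eq:12b} is inherited directly from the hypothesis that $\klmorphismus C''$ satisfies it, with the same automorphism $g$ and the same membership $\klmorphismus C''\paar{\klmorphismus a}{\klmorphismus x} \in \Morall{\kategorie G_{\morende\klmorphismus a}}$. The only branch requiring genuine checking is $\klmorphismus x = \id_{\moranfang\klmorphismus a}$. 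Here the concatenation $\klmorphismus x * \klmorphismus a$ is defined, since $\morende\id_{\moranfang\klmorphismus a} = \moranfang\klmorphismus a$, and the left-hand side collapses to $\id_{\moranfang\klmorphismus a} * \klmorphismus a = \klmorphismus a$. On the right-hand side the patched value is $\id_{\morende\klmorphismus a}$, so the expression reads $\klmorphismus a^{g} * \id_{\morende\klmorphismus a}$.

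To finish this case I would choose $g = 1$, the identity automorphism of $\gruppe$. Then $\klmorphismus a^{1} = \klmorphismus a$ gives $\morende(\klmorphismus a^{1}) = \morende\klmorphismus a$, so the right-hand concatenation is defined and equals $\klmorphismus a * \id_{\morende\klmorphismus a} = \klmorphismus a$, matching the left-hand side. The attendant membership requirement is satisfied because $\morende\klmorphismus a$ is a vertex of the generated subcategory $\kategorie G_{\morende\klmorphismus a}$, whence $\id_{\morende\klmorphismus a} \in \Morall{\kategorie G_{\morende\klmorphismus a}}$.

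There is no substantial obstacle: the verification is routine, and the sole content of the corollary is that imposing the normalisation~\eqref{eq:36b} by hand at the identity argument does not disturb the multiplicative relation~\eqref{eq:12b}, precisely because the identity case of~\eqref{eq:12b} is automatically compatible with the trivial choice $g = 1$. The care needed is only in recording that the identity branch both satisfies the domain condition $\morende\klmorphismus x = \moranfang\klmorphismus a$ and yields the correct codomain $\Morall{\kategorie G_{\morende\klmorphismus a}}$.
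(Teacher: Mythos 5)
Your verification is correct: the paper states this corollary without proof, treating it as immediate, and your case analysis (inheriting~\eqref{eq:12b} from $\klmorphismus C''$ off the identity branch, and checking the identity branch directly with $g=1$ so that both sides reduce to $\klmorphismus a$) is exactly the routine argument being left implicit. Nothing is missing.
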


\begin{lemma}\label{lemma:subgroup-of-groupal-is-right-normal}
  Every subgroup of a groupal category is right-normal on its
  category structure.
\end{lemma}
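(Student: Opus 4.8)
The plan is to regard the groupal category $\kategorie K$ through its group of objects $G\definiert\Ob\kategorie K$ and to let the subgroup $\gruppe[S]\leq G$ act on $\kategorie K$ by the right-groupal action $\klmorphismus a\mapsto\klmorphismus a^{s}$, i.e.\ by right multiplication, exactly as a subgroup of a right po-group acts on its supergroup. First I would record that this action satisfies the standing hypotheses of right-normality, in complete analogy with Lemma~\ref{lemma:orderd-group-translative}: it is semi-regular since $x^{s}=x$ forces $s=1$ by cancellation in $G$, and it is translative since left multiplication by $ba^{-1}$ sends the orbit $a^{\gruppe[S]}=a\gruppe[S]$ onto $b^{\gruppe[S]}=b\gruppe[S]$, restricts to a category isomorphism $\kategorie G_{a}\to\kategorie G_{b}$, and commutes with the $\gruppe[S]$-action because the left and right actions of a groupal category commute. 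Having secured this, I would verify right-normality through the representative-level criterion of Lemma~\ref{lemma:normal-auch-representant}.

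So let $\klmorphismus a\in\Morall{\kategorie K}$ and $\klmorphismus x\in\Morall{\kategorie G_{\moranfang\klmorphismus a}}$ with $\morende\klmorphismus x=\moranfang\klmorphismus a$ be given, and write $p\definiert\moranfang\klmorphismus a=\morende\klmorphismus x$ and $q\definiert\morende\klmorphismus a$. Since $\moranfang\klmorphismus x$ lies in the orbit $p\gruppe[S]$, there is a unique $s_{1}\in\gruppe[S]$ with $\moranfang\klmorphismus x=ps_{1}=(\moranfang\klmorphismus a)^{s_{1}}$, and I take the automorphism $g$ to be right multiplication by $s_{1}$, so that $\klmorphismus a^{g}=\klmorphismus a^{s_{1}}$ runs from $ps_{1}$ to $qs_{1}$. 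For the compensating arrow I set $\klmorphismus C'(\klmorphismus a,\klmorphismus x)\definiert{}^{qp^{-1}}\klmorphismus x$. This is nothing but the image of $\klmorphismus x$ under the translating isomorphism $\kategorie G_{\moranfang\klmorphismus a}\to\kategorie G_{\morende\klmorphismus a}$ (left multiplication by $qp^{-1}$) used above, so it is genuinely an arrow of $\kategorie G_{\morende\klmorphismus a}$, and it runs from $qs_{1}$ to $q$.

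Equation~\eqref{eq:12b} should then fall out as a single instance of the groupal relation~\eqref{eq:23}. Applying \eqref{eq:23} to $\klmorphismus u\definiert\klmorphismus a^{p^{-1}}$ and $\klmorphismus v\definiert\klmorphismus x$ and using $\moranfang\klmorphismus u=1$, $\morende\klmorphismus u=qp^{-1}$, $(\klmorphismus a^{p^{-1}})^{ps_{1}}=\klmorphismus a^{s_{1}}$ and $(\klmorphismus a^{p^{-1}})^{p}=\klmorphismus a$ turns the left-hand side into $\klmorphismus a^{s_{1}}*{}^{qp^{-1}}\klmorphismus x$ and the right-hand side into $\klmorphismus x*\klmorphismus a$, which is exactly \eqref{eq:12b}. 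Condition~\eqref{eq:36b} is immediate, for $\klmorphismus x=\id_{\moranfang\klmorphismus a}$ forces $s_{1}=1$ and ${}^{qp^{-1}}\id_{p}=\id_{q}=\id_{\morende\klmorphismus a}$. The main obstacle I anticipate is purely one of bookkeeping at the two places where the choice of action matters: one must check that $\klmorphismus C'$ really lies in $\kategorie G_{\morende\klmorphismus a}$, which is why I route it through the translativity isomorphism rather than through a bare orbit count, and one must check that the substitution into \eqref{eq:23} yields the honest right-multiplication automorphism $\klmorphismus a^{s_{1}}$ and not an inner-conjugated variant; once the right-groupal action is fixed as the acting group, no normality of $\gruppe[S]$ in $G$ is needed and the remaining identities are forced by the group axioms on $\Ob\kategorie K$.
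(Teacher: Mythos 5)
Your proof is correct and follows essentially the same route as the paper's: both arguments reduce the claim to a single instance of the groupal exchange relation \eqref{eq:23}, take $\klmorphismus C'\paar{\klmorphismus a}{\klmorphismus x}$ to be the left translate ${}^{\morende\klmorphismus a(\moranfang\klmorphismus a)^{-1}}\klmorphismus x$ with $g=(\morende\klmorphismus x)^{-1}\moranfang\klmorphismus x$, and then invoke Lemma~\ref{lemma:normal-auch-representant}. The differences are only presentational — you normalise $\klmorphismus a$ to start at $1$ where the paper normalises $\klmorphismus x$ into $\kategorie G_1$ and shifts $\klmorphismus a$ accordingly — and you additionally verify the standing hypotheses of semi-regularity and translativity of the right action, which the paper leaves implicit.
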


\begin{proof}
  Let $\kategorie K$ a groupal category, $\gruppe\leq\Ob\kategorie K$
  and $\klmorphismus a ∈\Morall{\kategorie K}$
  and
  $\klmorphismus x∈\Morall{\kategorie G_{1}}$. Then,
  \begin{align*}
    {}^{\moranfang\klmorphismus a} \klmorphismus x*\klmorphismus a^{\morende\klmorphismus x}
    &= \klmorphismus a^{\moranfang\klmorphismus x}*{}^{\morende\klmorphismus a}\klmorphismus x\\
    \klmorphismus x*{}^{\moranfang\klmorphismus a^{-1}} \klmorphismus a^{\morende\klmorphismus x}
    &= {}^{\moranfang\klmorphismus a^{-1}} \klmorphismus a^{\moranfang\klmorphismus x}*{}^{\moranfang\klmorphismus a^{-1}\morende\klmorphismus a}\klmorphismus x
      \intertext{Substituting $\klmorphismus b\definiert {}^{\moranfang\klmorphismus a^{-1}} \klmorphismus a^{\morende\klmorphismus x}$ leads to ${}^{\moranfang\klmorphismus a^{-1}}\klmorphismus a = \klmorphismus b^{\morende\klmorphismus x^{-1}}$}
    \klmorphismus x* \klmorphismus b
    &=  \klmorphismus b^{\morende\klmorphismus x^{-1}\moranfang\klmorphismus x}*{}^{\morende\klmorphismus b\morende\klmorphismus x^{-1}}\klmorphismus x=  \klmorphismus b^{\morende\klmorphismus x^{-1}\moranfang\klmorphismus x}*{}^{\morende\klmorphismus b\moranfang\klmorphismus b^{-1}}\klmorphismus x
  \end{align*}
  When we define
  $g\definiert \morende\klmorphismus x^{-1}\moranfang\klmorphismus x$
  and
  $\klmorphismus C'\paar{\klmorphismus b}{\klmorphismus x}\definiert
  {}^{\morende\klmorphismus b\moranfang\klmorphismus
    b^{-1}}\klmorphismus x$, then also
  $\klmorphismus C'\paar{\klmorphismus b}{\id_{\moranfang\klmorphismus
      b}} = {}^{\morende\klmorphismus b\moranfang\klmorphismus
    b^{-1}}\id_{\moranfang\klmorphismus b} =
  \id_{\morende\klmorphismus b}$ holds. Since every arrow
  $\klmorphismus b$ can be represented in the form
  $\klmorphismus b=\hat {\klmorphismus b}^{\morende{\klmorphismus
      x^{-1}\moranfang\klmorphismus x}}$, we can apply
  Lemma~\ref{lemma:normal-auch-representant}.
  \qed
\end{proof}

\begin{corollary}
  Every subgroup of a po-group acts right-normal on its
  category structure.
\end{corollary}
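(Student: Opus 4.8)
The plan is to derive this immediately from Lemma~\ref{lemma:subgroup-of-groupal-is-right-normal} by invoking the identification of a po-group with a groupal category that was already recalled above. First I would view a po-group $\gruppe$ as a groupal category $\kategorie K$: the objects are the group elements, each vertex carries only its identity loop, and the left- and right-associative automorphism actions are given by left and right multiplication. By associativity of the group operation these two actions commute, and the groupal identity~\eqref{eq:23} holds, so $\kategorie K$ is indeed a groupal category. Under this identification any subgroup $\gruppe[S]\leq\gruppe$ becomes a subgroup of the groupal category $\kategorie K$ in exactly the sense required by the preceding lemma.

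Next I would check that the notion of right-normality is even defined for $\gruppe[S]$, i.e.\ that $\gruppe[S]$ acts as a translative automorphism group. This is supplied by Lemma~\ref{lemma:orderd-group-translative}: every subgroup of a po-group acts translatively via right multiplication. Consequently the orbit category $\orbitfaltigkeit{\kategorie K}{\gruppe[S]}$ and the vertex categories $\kategorie G_x$ occurring in Definition~\ref{def:normal-group-action-category} are all well-defined, so the statement ``$\gruppe[S]$ acts right-normal'' has meaning.

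Finally, applying Lemma~\ref{lemma:subgroup-of-groupal-is-right-normal} to the groupal category $\kategorie K$ and the subgroup $\gruppe[S]$ yields directly that $\gruppe[S]$ is right-normal on the category structure of the po-group. The only point that needs care — and it is the mild obstacle here — is verifying that the hypothesis ``subgroup of a groupal category'' is met by the category-theoretic avatar of a po-group; but this is precisely the content of the well-known fact recalled above together with the translativity of Lemma~\ref{lemma:orderd-group-translative}, so no further computation is required.
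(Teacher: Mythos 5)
Your proposal is correct and follows exactly the route the paper intends: the corollary is an immediate specialisation of Lemma~\ref{lemma:subgroup-of-groupal-is-right-normal} under the standard identification of a po-group with a groupal category (left and right multiplication commuting by associativity). Your additional check that translativity holds, via Lemma~\ref{lemma:orderd-group-translative}, so that right-normality is even well-defined, is a sensible piece of diligence the paper leaves implicit but does not constitute a different argument.
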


\noindent In many cases normal group actions define isomorphisms
between their vertex categories. As an example we prove it for simple
categories.

\begin{lemma}\label{lemma:13a}
  Let $\kategorie K$ be a category and
  $\gruppe \leq \Aut\kategorie K$ a translative automorphism group,
  and $\klmorphismus C'$ defined as in
  Lemma~\ref{lemma:normal-auch-representant}.  Then for all arrows
  $\klmorphismus a ∈\Morall{\kategorie K}$ and
  $\klmorphismus x,\klmorphismus y∈\kategorie
  G_{\moranfang\klmorphismus a}$, there exist automorphisms
  $g,h∈\gruppe$ such that the following equations hold:
  \begin{subequations}
    \begin{align}
      \klmorphismus a*\klmorphismus C'(\klmorphismus a,\klmorphismus x*\klmorphismus y)
      &=
        \klmorphismus a*\klmorphismus C'(\klmorphismus a,\klmorphismus x) *\klmorphismus C'(\klmorphismus a^h,\klmorphismus y)
      \label{eq:53}\\
      \klmorphismus a
      &=\klmorphismus a * \klmorphismus C'\paar{\klmorphismus a}{\id_{\moranfang\klmorphismus a}}\label{eq:54}\\
      \klmorphismus x
      &=\klmorphismus C'\paar{\id_{\morende\klmorphismus x}}{\klmorphismus x}\label{eq:55}
    \end{align}
  \end{subequations}
\end{lemma}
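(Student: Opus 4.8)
The plan is to treat the three identities separately: \eqref{eq:54} and \eqref{eq:55} will follow almost immediately from the defining relations \eqref{eq:12b} and \eqref{eq:36b} of $\klmorphismus C'$, while the homomorphism-type identity \eqref{eq:53} requires associativity together with two successive applications of \eqref{eq:12b}.

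First I would settle \eqref{eq:54}. By \eqref{eq:36b} we have $\klmorphismus C'(\klmorphismus a,\id_{\moranfang\klmorphismus a}) = \id_{\morende\klmorphismus a}$, hence $\klmorphismus a * \klmorphismus C'(\klmorphismus a,\id_{\moranfang\klmorphismus a}) = \klmorphismus a * \id_{\morende\klmorphismus a} = \klmorphismus a$, which is exactly \eqref{eq:54} (the automorphisms $g,h$ do not enter here). For \eqref{eq:55} I would instantiate \eqref{eq:12b} with its first argument equal to the identity loop $\id_{\morende\klmorphismus x}$; since $\moranfang\id_{\morende\klmorphismus x} = \morende\klmorphismus x$, the hypothesis of Lemma~\ref{lemma:normal-auch-representant} is met and the relation reads $\klmorphismus x * \id_{\morende\klmorphismus x} = (\id_{\morende\klmorphismus x})^{g} * \klmorphismus C'(\id_{\morende\klmorphismus x},\klmorphismus x)$ for a suitable $g\in\gruppe$. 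The left-hand side is $\klmorphismus x$, and $(\id_{\morende\klmorphismus x})^{g}$ is again an identity loop which, since the displayed concatenation exists, must sit at the source of $\klmorphismus C'(\id_{\morende\klmorphismus x},\klmorphismus x)$ and is therefore absorbed. This yields \eqref{eq:55}.

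The substantive step is \eqref{eq:53}, which I would obtain by evaluating the arrow $(\klmorphismus x * \klmorphismus y) * \klmorphismus a$ in two ways. Definedness of the $\klmorphismus a$-prefixed concatenation on the left of \eqref{eq:53} forces the source of $\klmorphismus C'(\klmorphismus a,\klmorphismus x * \klmorphismus y)$ to equal $\morende\klmorphismus a$, which by semi-regularity is possible only when the accompanying automorphism is the identity; hence $\klmorphismus x * \klmorphismus y$ is a loop at $\moranfang\klmorphismus a$ and \eqref{eq:12b} gives $\klmorphismus a * \klmorphismus C'(\klmorphismus a,\klmorphismus x * \klmorphismus y) = (\klmorphismus x * \klmorphismus y) * \klmorphismus a$, the left-hand side. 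On the other hand I would use associativity to write this as $\klmorphismus x * (\klmorphismus y * \klmorphismus a)$ and apply \eqref{eq:12b} twice. The factors $\klmorphismus x,\klmorphismus y$ need not individually be loops — only their product is — so commuting the factor adjacent to $\klmorphismus a$ through $\klmorphismus a$ introduces a translate $\klmorphismus a^{h}$ together with one $\klmorphismus C'$-term, and commuting the remaining factor through $\klmorphismus a^{h}$ produces the other $\klmorphismus C'$-term together with a second automorphism $g\in\gruppe$ whose effect is to return the leading factor to $\klmorphismus a$. Reassembling yields the product of the two $\klmorphismus C'$-terms prefixed by $\klmorphismus a$, one of them formed with the translate $\klmorphismus a^{h}$, which is the right-hand side of \eqref{eq:53}; the two commutations furnish precisely the automorphisms $g,h$ whose existence is claimed.

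The main obstacle is the vertex- and automorphism-bookkeeping in this last step. Since $\klmorphismus C'$ is only determined up to the choice function of Lemma~\ref{lemma:normal-auch-representant}, I must argue that the two decompositions of $(\klmorphismus x * \klmorphismus y) * \klmorphismus a$ agree term by term and not merely as composite arrows; this is exactly where semi-regularity enters, each connecting automorphism being the unique element of $\gruppe$ carrying the source of $\klmorphismus a$ (or of its translate) onto the source of the arrow being commuted. The composite leading power collapses back to $\klmorphismus a$ precisely because the whole arrow again starts at $\moranfang\klmorphismus a$, and one must additionally verify that every intermediate concatenation is defined, i.e. that the translate $\klmorphismus a^{h}$ has the source and target needed for the displayed products to exist.
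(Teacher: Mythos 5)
Your proposal is correct and follows essentially the same route as the paper: \eqref{eq:54} and \eqref{eq:55} drop out of \eqref{eq:36b} and \eqref{eq:12b} applied to identity loops, and \eqref{eq:53} is obtained exactly as in the paper by rewriting $(\klmorphismus x * \klmorphismus y) * \klmorphismus a$ once directly via \eqref{eq:12b} and once by peeling off $\klmorphismus y$ and then $\klmorphismus x$ through successive applications of \eqref{eq:12b}, the two commutations supplying the automorphisms $g,h$. Your additional care about definedness and semi-regularity pinning down the leading power of $\klmorphismus a$ is bookkeeping the paper leaves implicit, but it is consistent with its argument.
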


\begin{proof}
  \begin{align*}
    \klmorphismus a*\klmorphismus C'\paar{\klmorphismus a}{\klmorphismus x*\klmorphismus y}
    = \klmorphismus x * \klmorphismus y *\klmorphismus a ^{g}
    = \klmorphismus x * \klmorphismus a ^h * \klmorphismus C'\paar{\klmorphismus a^h}{\klmorphismus y}
    = \klmorphismus a * \klmorphismus C'\paar {\klmorphismus a}{\klmorphismus x} *
    \klmorphismus C'\paar{\klmorphismus a^h}{\klmorphismus y}
  \end{align*}
  \begin{align*}
    \klmorphismus a = \id_{\moranfang \klmorphismus a}*\klmorphismus a
    = \klmorphismus a * \klmorphismus C'\paar{\klmorphismus a}{\id_{\moranfang\klmorphismus a}}
  \end{align*}
  \begin{align*}
    \klmorphismus x = \klmorphismus x * \id_{\morende\klmorphismus x}
    = \id_{\morende\klmorphismus x} * \klmorphismus C'\paar{\id_{\morende\klmorphismus x}}{\klmorphismus x}
    = \klmorphismus C'\paar{\id_{\morende\klmorphismus x}}{\klmorphismus x}
  \end{align*}
  \qed
\end{proof}

\begin{lemma}\label{lemma:13}
  Let $\kategorie K$ be a category,
  $\gruppe \leq \Aut\kategorie K$ a translative automorphism group,
  $\klmorphismus a∈\Morall{\kategorie K}$ and $\klmorphismus C'$ defined as in
  Lemma~\ref{lemma:normal-auch-representant}.  For every arrow $\klmorphismus x∈\Morall{\kategorie G_{\morende\klmorphismus a}}$ let
  $h_{\klmorphismus a}(\klmorphismus x)\definiert
  \morende{\klmorphismus x}\moranfang\klmorphismus a^{-1}$ denote the
  automorphism that shifts the arrow $\klmorphismus a$ such that
  $\klmorphismus a^{h_{\klmorphismus a}(\klmorphismus
    x)}*\klmorphismus x$ exists.  If the mapping
  \begin{equation}\label{eq:19}
    \wabbildung{s_{\klmorphismus a}}{\klmorphismus y}{\klmorphismus a^{h_{\klmorphismus a}(\klmorphismus y)} * \klmorphismus y}
  \end{equation}
  is injective, Eq.~\eqref{eq:12b} defines a
  category homomorphism
  \begin{equation}
    \wabbildung{\abbildung{φ_{\klmorphismus
          a}}{\Morall{\kategorie G_{\moranfang \klmorphismus
            a}}}{\Morall{\kategorie G_{\morende \klmorphismus
            a}}}}{\klmorphismus x}{\klmorphismus C'(\klmorphismus
      a^{h_{\klmorphismus a}(x)},\klmorphismus x)}.\label{eq:20}  
  \end{equation}
\end{lemma}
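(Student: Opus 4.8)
The plan is to check, one by one, the three conditions that make $φ_{\klmorphismus a}$ a category homomorphism between the vertex categories $\kategorie G_{\moranfang\klmorphismus a}$ and $\kategorie G_{\morende\klmorphismus a}$: that it preserves identities, that it is compatible with source and target, and that it is multiplicative. Each of these will be obtained by rewriting the corresponding equation of Lemma~\ref{lemma:13a} and then cancelling the leading copy of $\klmorphismus a$; the injectivity of $s_{\klmorphismus a}$ is exactly what makes this cancellation legitimate.

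First I would turn the injectivity hypothesis into a left-cancellation rule. Since $s_{\klmorphismus a}(\klmorphismus y) = \klmorphismus a^{h_{\klmorphismus a}(\klmorphismus y)} * \klmorphismus y$ is injective, the product of a shifted copy of $\klmorphismus a$ with an arrow of the vertex category determines that arrow uniquely. In particular the arrow $\klmorphismus C'$ produced by Lemma~\ref{lemma:normal-auch-representant} is the only arrow of $\Morall{\kategorie G_{\morende\klmorphismus a}}$ satisfying Equation~\eqref{eq:12b}, so $φ_{\klmorphismus a}$ is well defined, and moreover $\klmorphismus a * \klmorphismus u = \klmorphismus a * \klmorphismus v$ forces $\klmorphismus u = \klmorphismus v$ for arrows $\klmorphismus u, \klmorphismus v$ of the target vertex category once the corresponding shift automorphisms are matched.

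With cancellation in hand, identity preservation is immediate from Equation~\eqref{eq:54}: from $\klmorphismus a = \klmorphismus a * \klmorphismus C'\paar{\klmorphismus a}{\id_{\moranfang\klmorphismus a}}$ cancellation yields $φ_{\klmorphismus a}(\id_{\moranfang\klmorphismus a}) = \id_{\morende\klmorphismus a}$, so identities map to identities and the object part of $φ_{\klmorphismus a}$ is thereby determined; its compatibility with the group operation on objects then follows from the right-groupal structure of the vertex categories. For multiplicativity I would invoke Equation~\eqref{eq:53}, which reads $\klmorphismus a * \klmorphismus C'(\klmorphismus a, \klmorphismus x * \klmorphismus y) = \klmorphismus a * \klmorphismus C'(\klmorphismus a, \klmorphismus x) * \klmorphismus C'(\klmorphismus a^h, \klmorphismus y)$; cancelling the leading $\klmorphismus a$ gives exactly $φ_{\klmorphismus a}(\klmorphismus x * \klmorphismus y) = φ_{\klmorphismus a}(\klmorphismus x) * φ_{\klmorphismus a}(\klmorphismus y)$.

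The step I expect to be the main obstacle is the bookkeeping of the shift automorphisms $h_{\klmorphismus a}(\cdot)$. The definition of $φ_{\klmorphismus a}$ evaluates $\klmorphismus C'$ at the shifted arrow $\klmorphismus a^{h_{\klmorphismus a}(\klmorphismus x)}$, whereas Lemma~\ref{lemma:13a} is phrased for the unshifted $\klmorphismus a$, with an auxiliary automorphism $h$ intervening only in the second factor. Verifying that the automorphism forced by the composability constraints in \eqref{eq:53} is precisely the one that makes the two shifted arrows agree --- so that the cancellation is between genuinely equal composites rather than merely formally similar expressions --- is where the injectivity of $s_{\klmorphismus a}$ is truly used, and is the part demanding the most care.
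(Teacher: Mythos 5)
Your proposal is correct and follows essentially the same route as the paper's proof: the multiplicativity of $φ_{\klmorphismus a}$ is obtained by writing $(\klmorphismus x_1 * \klmorphismus x_2) * \klmorphismus a$ in two ways via Equation~\eqref{eq:53} of Lemma~\ref{lemma:13a} and then cancelling the leading shifted copy of $\klmorphismus a$ using the injectivity of $s_{\klmorphismus a}$. The only difference is that you additionally spell out identity preservation via Equation~\eqref{eq:54} and the matching of the shift automorphisms, which the paper leaves implicit.
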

\begin{proof}
  Let
  $\klmorphismus x_1,\klmorphismus x_2 ∈ \Morall{\kategorie G_{\moranfang \klmorphismus
      a}}$ be two arrows such that $\klmorphismus x_1*\klmorphismus x_2$ exists. Consequently an automorphism $g∈\gruppe$ exists such that with \ref{eq:53} the following equation holds:
  \begin{align*}
      \klmorphismus a^g *\klmorphismus C'\paar{\klmorphismus a^{h_{\klmorphismus a}(\klmorphismus x_1*\klmorphismus x_2)}}{\klmorphismus x_1 * \klmorphismus x_2}
    &=
  (\klmorphismus x_1*\klmorphismus x_2) * \klmorphismus a =
      \klmorphismus a^g *\bigl(\klmorphismus C'\paar{\klmorphismus a^{h_{\klmorphismus a}(\klmorphismus x_1)}}{\klmorphismus x_1} * \klmorphismus C'\paar{\klmorphismus a^{h_{\klmorphismus a}(\klmorphismus x_2)}}{\klmorphismus x_2}\bigr).
  \end{align*}
  As $s_{\klmorphismus a}$ is injective, this implies
  $φ_{\klmorphismus a}({\klmorphismus x_1 * \klmorphismus x_2})
  =
  φ_{\klmorphismus a}({\klmorphismus x_1}) * φ_{\klmorphismus a}({\klmorphismus x_2})$.
 So 
  the mapping $φ_{\klmorphismus a^{\gruppe}}$ is a category
  homomorphism.
  \qed
\end{proof}
\begin{corollary}
  If $\klmorphismus a$ is an epimorphism, $φ_{\klmorphismus a}$ is a homomorphism.
\end{corollary}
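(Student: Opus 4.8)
The plan is to reduce the corollary to Lemma~\ref{lemma:13}: it suffices to show that if $\klmorphismus a$ is an epimorphism then the shifting map $s_{\klmorphismus a}$ of Equation~\eqref{eq:19} is injective, since the homomorphism property of $φ_{\klmorphismus a}$ is then exactly the conclusion of that lemma. First I would record that being an epimorphism is a purely categorical right-cancellation property: in the covariant notation of this paper it reads $\klmorphismus a * \klmorphismus f = \klmorphismus a * \klmorphismus g \Rightarrow \klmorphismus f = \klmorphismus g$ for all composable $\klmorphismus f,\klmorphismus g$. Because every $g∈\gruppe$ acts as a category automorphism, and automorphisms preserve epimorphisms, each shifted arrow $\klmorphismus a^{g}$ in the orbit $\klmorphismus a^{\gruppe}$ is again an epimorphism.

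Next I would verify the injectivity of $s_{\klmorphismus a}$. Suppose $s_{\klmorphismus a}(\klmorphismus y_1) = s_{\klmorphismus a}(\klmorphismus y_2)$, that is $\klmorphismus a^{h_1} * \klmorphismus y_1 = \klmorphismus a^{h_2} * \klmorphismus y_2$ with $h_i \definiert h_{\klmorphismus a}(\klmorphismus y_i) = \morende\klmorphismus y_i\,(\moranfang\klmorphismus a)^{-1}$. Equal arrows share their end vertex, and the end vertex of the composite $\klmorphismus a^{h_i} * \klmorphismus y_i$ is $\morende\klmorphismus y_i$; hence $\morende\klmorphismus y_1 = \morende\klmorphismus y_2$, and the defining formula for $h_{\klmorphismus a}$ gives $h_1 = h_2$ at once. (Alternatively one obtains $h_1=h_2$ from the common starting vertex $(\moranfang\klmorphismus a)^{h_1}=(\moranfang\klmorphismus a)^{h_2}$ together with semi-regularity of $\gruppe$.) The two composites therefore have the common left factor $\klmorphismus a^{h_1}$, which is an epimorphism by the previous step, so right-cancellation yields $\klmorphismus y_1 = \klmorphismus y_2$. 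Thus $s_{\klmorphismus a}$ is injective, and Lemma~\ref{lemma:13} applies to make $φ_{\klmorphismus a}$ a category homomorphism.

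The only genuinely delicate point is the middle step: one must first establish that the two shifts agree, $h_1 = h_2$, before the epimorphism hypothesis can be used, since cancellation requires a single common left factor rather than two distinct members $\klmorphismus a^{h_1},\klmorphismus a^{h_2}$ of the orbit. The conceptual bridge is that "epimorphism'' is a categorical property stable under the $\gruppe$-action, so it transfers to every shift simultaneously; once the shifts are pinned down—either by the explicit formula for $h_{\klmorphismus a}$ or by semi-regularity—the argument collapses to a one-line cancellation.
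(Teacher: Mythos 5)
Your proof is correct and supplies exactly the argument the paper leaves implicit: the corollary is stated without proof, and the intended route is clearly ``epimorphism $\Rightarrow$ $s_{\klmorphismus a}$ injective $\Rightarrow$ Lemma~\ref{lemma:13} applies.'' Your handling of the one delicate point --- pinning down $h_1=h_2$ (most robustly via the common starting vertex of $\klmorphismus a^{h_1}$ and $\klmorphismus a^{h_2}$ together with semi-regularity of the translative action) before invoking right-cancellation against the single left factor $\klmorphismus a^{h_1}$, which is again an epimorphism because automorphisms preserve that property --- is exactly what is needed.
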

In general, the inverse implication is not true, as we are considering
only a subcategory, here.

\begin{lemma}
  In Lemma~\ref{lemma:13}
  $φ_\klmorphismus a$ commutes with every automorphism
  in $\gruppe$.
\end{lemma}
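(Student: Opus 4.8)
The plan is to feed an arbitrary automorphism $g\in\gruppe$ into the relation that defines $φ_{\klmorphismus a}$ and to exploit that $g$, being a category automorphism, commutes with the concatenation $*$ and intertwines the left-associative action through $(\klmorphismus b^{h})^{g}=\klmorphismus b^{hg}$. Write $\klmorphismus a'\definiert\klmorphismus a^{h_{\klmorphismus a}(\klmorphismus x)}$ for the shifted copy of $\klmorphismus a$ along which $φ_{\klmorphismus a}(\klmorphismus x)$ is formed. Then the defining instance of Equation~\eqref{eq:12b} is
\[
  \klmorphismus x*\klmorphismus a' = (\klmorphismus a')^{k}*φ_{\klmorphismus a}(\klmorphismus x),
\]
where $k\in\gruppe$ is the unique automorphism for which $(\klmorphismus a')^{k}$ starts at $\moranfang\klmorphismus x$, so that the right-hand side is composable. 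Since the orbit $(\moranfang\klmorphismus a)^{\gruppe}$ is $\gruppe$-invariant, $g$ maps the generated subcategory $\kategorie G_{\moranfang\klmorphismus a}$ to itself, so $\klmorphismus x^{g}$ again lies in the source of $φ_{\klmorphismus a}$ and $φ_{\klmorphismus a}(\klmorphismus x^{g})$ is meaningful.

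First I would record how the shift index reacts to $g$. The automorphism $h_{\klmorphismus a}(\klmorphismus x)$ is pinned down by an equation $w^{h_{\klmorphismus a}(\klmorphismus x)}=v$ relating the relevant endpoints $w$ of $\klmorphismus a$ and $v$ of $\klmorphismus x$. Applying $g$ gives $w^{h_{\klmorphismus a}(\klmorphismus x)g}=(w^{h_{\klmorphismus a}(\klmorphismus x)})^{g}=v^{g}$, which is exactly the defining equation of $h_{\klmorphismus a}(\klmorphismus x^{g})$; semi-regularity therefore forces
\[
  h_{\klmorphismus a}(\klmorphismus x^{g})=h_{\klmorphismus a}(\klmorphismus x)\,g,\qquad\text{hence}\qquad (\klmorphismus a')^{g}=\klmorphismus a^{h_{\klmorphismus a}(\klmorphismus x)g}=\klmorphismus a^{h_{\klmorphismus a}(\klmorphismus x^{g})}\definiert\klmorphismus a''.
\]
Applying $g$ to the defining relation and using $(\klmorphismus a')^{kg}=(\klmorphismus a'')^{g^{-1}kg}$ now yields
\[
  \klmorphismus x^{g}*\klmorphismus a''=(\klmorphismus a'')^{g^{-1}kg}*(φ_{\klmorphismus a}(\klmorphismus x))^{g}.
\]

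Next I would compare this with the defining relation for $φ_{\klmorphismus a}(\klmorphismus x^{g})$, namely $\klmorphismus x^{g}*\klmorphismus a''=(\klmorphismus a'')^{k'}*φ_{\klmorphismus a}(\klmorphismus x^{g})$ with $(\klmorphismus a'')^{k'}$ starting at $\moranfang(\klmorphismus x^{g})$. Starting from $\moranfang\klmorphismus a''=(\moranfang\klmorphismus a')^{g}$ and $(\moranfang\klmorphismus a')^{k}=\moranfang\klmorphismus x$, a short computation gives $(\moranfang\klmorphismus a'')^{g^{-1}kg}=(\moranfang\klmorphismus x)^{g}=\moranfang(\klmorphismus x^{g})$, so by semi-regularity the two conjugate automorphisms coincide, $k'=g^{-1}kg$. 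Thus both decompositions of $\klmorphismus x^{g}*\klmorphismus a''$ carry the identical leading factor $(\klmorphismus a'')^{k'}$.

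The last step, and the main obstacle, is to cancel this common leading factor, since a general category is not left-cancellative. This is where the injectivity hypothesis on $s_{\klmorphismus a}$ from Lemma~\ref{lemma:13} does the work: the common factor $(\klmorphismus a'')^{k'}$ is precisely the representative of $\klmorphismus a^{\gruppe}$ that ends where each trailing factor begins, so both $(\klmorphismus a'')^{k'}*φ_{\klmorphismus a}(\klmorphismus x^{g})$ and $(\klmorphismus a'')^{k'}*(φ_{\klmorphismus a}(\klmorphismus x))^{g}$ are values of $s_{\klmorphismus a}$. As these two values agree, injectivity of $s_{\klmorphismus a}$ forces $φ_{\klmorphismus a}(\klmorphismus x^{g})=(φ_{\klmorphismus a}(\klmorphismus x))^{g}$. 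Since $g\in\gruppe$ was arbitrary, $φ_{\klmorphismus a}$ commutes with every element of $\gruppe$, as claimed.
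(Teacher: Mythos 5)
Your proof is correct and follows essentially the same route as the paper's: apply $g$ to the defining instance of Eq.~\eqref{eq:12b}, use semi-regularity on the vertex orbit to identify the two shifted copies of $\klmorphismus a$ appearing as leading factors, and then cancel. You make explicit the final cancellation step via injectivity of $s_{\klmorphismus a}$, which the paper's proof leaves implicit, but the argument is the same.
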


\begin{proof}
  Let $g∈\gruppe$ be an automorphism. We use that the group action is
  semi-regular, so that distinct arrows of an orbit have distinct
  starting points. Following the notation of Lemma~\ref{lemma:13}, two
  automorphisms $\hat g,\hat{\hat g}∈\gruppe$ exist such that the
  application of equation~\eqref{eq:12b} leads to
  \begin{align*}
    \klmorphismus a^{h_{\klmorphismus a}(\klmorphismus x^g)\hat g}*\klmorphismus C'\paar{\klmorphismus a}{\klmorphismus x^g}
    &=\klmorphismus x^g*\klmorphismus a^{h_{\klmorphismus a}(\klmorphismus x^g)}
      =(\klmorphismus x*\klmorphismus a^{h_{\klmorphismus a}(\klmorphismus x)})^g
    =\klmorphismus a^{h_{\klmorphismus a}(\klmorphismus x)\hat{\hat g}g}*\klmorphismus C'\paar{\klmorphismus a}{\klmorphismus x}^g\\
    &=\klmorphismus a^{h_{\klmorphismus a}(\klmorphismus x^g)\hat g}*\klmorphismus C'\paar{\klmorphismus a}{\klmorphismus x}^g,
  \end{align*}
  \noindent since $\moranfang\klmorphismus a^{h_{\klmorphismus a}(\klmorphismus x)\hat{\hat g}g} =
  \moranfang\klmorphismus a^{h_{\klmorphismus a}(\klmorphismus x^g)\hat g}$. This implies
  \(
    φ_{\klmorphismus a}({\klmorphismus x^g})
    =φ_{\klmorphismus a}({\klmorphismus x})^g.
    \)
  \qed
\end{proof}

\noindent For groupal categories we may use the left multiplicaton for the construction of $\klmorphismus C$ in Eq.~\eqref{eq:12b} and \eqref{eq:36b}, which is shown by the following two lemmas.

\begin{lemma}\label{lemma:linksmult-ist-phi}
  Let $\kategorie K$ denote a groupal category and
  $\gruppe \leq \Ob\kategorie K$ be a translative right-associative
  automorphism group, then for any two vertices $x,y∈\Ob\kategorie K$
  the mapping
  \begin{equation}
    \label{eq:21}
    \wabbildung{\abbildung{φ_{x,y}(\klmorphismus x)}{\kategorie G_x}{\kategorie G_y}}{\klmorphismus x}{{}^{yx^-1}\klmorphismus x}
  \end{equation}
  is an isomorphism that commutes with the right group action of  $\gruppe$. 
\end{lemma}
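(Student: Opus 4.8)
The plan is to recognise $φ_{x,y}$ as the restriction to $\kategorie G_x$ of a single automorphism of $\kategorie K$, namely left multiplication by the object $z\definiert yx^{-1}$, and then to read off all three assertions from the groupal axioms. Since $z\in\Ob\kategorie K$, the prescript ${}^{z}(-)$ is one of the automorphisms supplied by the left-groupal structure of $\kategorie K$; as such it already preserves sources, targets, identities and concatenation, so $φ_{x,y}$ will be a category homomorphism as soon as we know that its image is contained in $\kategorie G_y$.

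First I would determine the image. The point is that ${}^{z}(-)$ sends the vertex $x$ to ${}^{z}x = zx = y$, and it commutes with every automorphism in $\gruppe$, since those act by right multiplication and in a groupal category left and right multiplication commute. Hence ${}^{z}(-)$ carries the orbit $x^{\gruppe}$ onto $({}^{z}x)^{\gruppe}=y^{\gruppe}$. Invoking the standard fact that an automorphism takes a generated subcategory to the subcategory generated by the images of its generators, I obtain $ {}^{z}\bigl(\erzkategorie{\kategorie K}{x^{\gruppe}}\bigr)=\erzkategorie{\kategorie K}{y^{\gruppe}}$, that is, $φ_{x,y}(\kategorie G_x)=\kategorie G_y$. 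This makes $φ_{x,y}$ a well defined category homomorphism from $\kategorie G_x$ to $\kategorie G_y$.

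For the isomorphism claim I would exhibit the inverse directly: left multiplication by $z^{-1}=xy^{-1}$ is exactly $φ_{y,x}$, and the two composites $φ_{y,x}\circ φ_{x,y}$ and $φ_{x,y}\circ φ_{y,x}$ are both left multiplication by the neutral object $1$, hence the identities on $\kategorie G_x$ and $\kategorie G_y$. Finally, compatibility with the right action is the one-line computation $φ_{x,y}(\klmorphismus a^{g})={}^{z}(\klmorphismus a^{g})=({}^{z}\klmorphismus a)^{g}=φ_{x,y}(\klmorphismus a)^{g}$ for $g\in\gruppe$ and $\klmorphismus a\in\Morall{\kategorie G_x}$, valid precisely because the left and right actions of a groupal category commute.

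The only step with genuine content, and hence the main obstacle, is the claim that ${}^{z}(-)$ maps $\kategorie G_x$ \emph{onto} $\kategorie G_y$ and not into a larger or smaller subcategory. This rests on two facts I would spell out: that a category automorphism ψ satisfies $ψ(\erzkategorie{\kategorie K}{S})=\erzkategorie{\kategorie K}{ψ(S)}$, and that ${}^{z}(-)$ commutes with the action of $\gruppe$, so that $\gruppe$-orbits go to $\gruppe$-orbits; here semi-regularity of $\gruppe$ guarantees that $({}^{z}x)^{\gruppe}$ really coincides with $y^{\gruppe}$. Everything else then follows mechanically from the definition of a groupal category.
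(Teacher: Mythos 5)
Your proposal is correct and follows essentially the same route as the paper's own proof: both recognise $φ_{x,y}$ as the restriction of the left-multiplication automorphism ${}^{yx^{-1}}(-)$ supplied by the groupal structure, check that it carries $x^{\gruppe}$ (and hence $\kategorie G_x$) onto $y^{\gruppe}$ (and $\kategorie G_y$), exhibit $φ_{y,x}$ as the explicit inverse, and derive compatibility with the right action from the commutation of left and right multiplication. Your treatment of surjectivity via preservation of generated subcategories is slightly more explicit than the paper's, but it is the same argument; the appeal to semi-regularity at the end is superfluous since ${}^{yx^{-1}}x=y$ already gives $({}^{yx^{-1}}x)^{\gruppe}=y^{\gruppe}$ directly.
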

\begin{proof}
  As $\kategorie K$ is groupal, the mapping $φ_{x,y}$ can be extended
  to an automorphism on $\kategorie K$ so it is  injective and invertible if it is well-defined.
  It remains to show that it is well-defined and surjective.
  
  The mapping is well-defined. Let $z∈x^{\gruppe}$
  then ${}^{yx^{-1}}z ∈ {}^{yx^{-1}}x^{\gruppe} = y^{\gruppe}$, which
  follows from the group properties of $\Ob\kategorie K$.

  In the same way the mapping $φ_{y,x}$ is well-defined, injective and
  invertible. Furthermore for
  $\klmorphismus y∈\Morall{\kategorie G_y}$
  \[
    φ_{x,y}\bigl(φ_{y,x}(\klmorphismus x)\bigr)
    = φ_{x,y}({}^{xy^{-1}}\klmorphismus y)
    = {}^{yx^{-1}xy^{-1}}\klmorphismus y = \klmorphismus y.
  \]
  So $φ_{x,y}$ is surjective and $φ_{y,x}$ is its inverse.
  
  As $φ_{x,y}$ is defined by applying a left group action of the
  objects, it commutes by definition with the right group action.
  \qed
\end{proof}

\begin{lemma}
  In Lemma~\ref{lemma:linksmult-ist-phi} for any arrow $\klmorphismus x∈\ordnungsfilter[\kategorie G_x]x$, any automorphism $g∈\gruppe$ and any arrow $\klmorphismus a∈\Mor{\kategorie K}{x}{y}$ the following equation holds:
  \begin{equation}
    \label{eq:22}
    \klmorphismus x^g * \klmorphismus a^{h_a(\klmorphismus x^g)}
    = \bigl(\klmorphismus a* φ_{x,y}(\klmorphismus x)\bigr)^g
  \end{equation}
  
\end{lemma}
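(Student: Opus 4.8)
The plan is to peel off the automorphism $\cdot^g$ first and then to recognise what remains as an instance of the groupal interchange law~\eqref{eq:23}. Since $\cdot^g$ is an automorphism of $\kategorie K$ it distributes over the concatenation, and since the left and right multiplications of a groupal category commute (this is exactly the equivariance of $φ_{x,y}$ recorded in Lemma~\ref{lemma:linksmult-ist-phi}), the right-hand side rewrites as
\[
  \bigl(\klmorphismus a* φ_{x,y}(\klmorphismus x)\bigr)^g
  = \klmorphismus a^g * \bigl({}^{yx^{-1}}\klmorphismus x\bigr)^g
  = \klmorphismus a^g * {}^{yx^{-1}}(\klmorphismus x^g).
\]
So it suffices to compare $\klmorphismus x^g * \klmorphismus a^{h_a(\klmorphismus x^g)}$ with $\klmorphismus a^g * {}^{yx^{-1}}(\klmorphismus x^g)$, where $\klmorphismus x^g$ starts at $x^g$ and $\klmorphismus a^g$ runs from $x^g$ to $y^g$; crucially $y^g(x^g)^{-1}=yx^{-1}$, so the left multiplier is unchanged by the shift.

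Next I would obtain this comparison from the interchange law. Factoring the arrows through the neutral object by means of~\eqref{eq:27}, the law~\eqref{eq:23} lets me carry $\klmorphismus x^g$ across $\klmorphismus a^g$: it converts the left multiplication ${}^{yx^{-1}}(\cdot)$ acting on $\klmorphismus x^g$ into a right shift acting on $\klmorphismus a$, and this right shift is by the object $\morende(\klmorphismus x^g)\,(\moranfang\klmorphismus a)^{-1}=h_a(\klmorphismus x^g)$ of Lemma~\ref{lemma:13}. Normalising the resulting equation by the right action of $(\moranfang\klmorphismus a)^{-1}$ — again an automorphism, hence distributing over $*$ — should produce exactly the two composites above. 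Semi-regularity then guarantees that the shifting object is the unique one for which both concatenations are defined, so the identification is forced rather than merely possible.

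The main obstacle is the exponent bookkeeping in the non-commutative object group $\Ob\kategorie K$. One has to verify simultaneously that the sources and targets match (so that both concatenations in the claim exist) and that the object delivered by the interchange law is precisely $h_a(\klmorphismus x^g)$, \emph{including} its dependence on $g$; the delicate point is that $g$ and $h_a(\klmorphismus x^g)$ need not commute, so this dependence must be tracked through the group multiplication rather than factored out. To pin down the identity contribution and the behaviour at $\id$ I would invoke the normalisation identities of Lemma~\ref{lemma:13a}, in particular~\eqref{eq:54} and~\eqref{eq:55}, and use the equivariance of $φ_{x,y}$ from Lemma~\ref{lemma:linksmult-ist-phi} to keep the left-multiplication factor aligned with $yx^{-1}$ throughout.
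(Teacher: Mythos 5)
Your plan is correct and follows essentially the same route as the paper: distribute the automorphism $\cdot^g$ over the concatenation, invoke the groupal interchange law~\eqref{eq:23} to trade the left multiplication ${}^{yx^{-1}}$ on $\klmorphismus x^g$ for the right shift of $\klmorphismus a$ by $h_a(\klmorphismus x^g)$, and use the normalisation $\moranfang\klmorphismus x = x = 1_{\kategorie G_x}$ together with the equivariance of $φ_{x,y}$ from Lemma~\ref{lemma:linksmult-ist-phi} to identify the result. The paper's proof is exactly this computation run in the opposite direction, starting from $(\klmorphismus x*\klmorphismus a^{\moranfang\klmorphismus a^{-1}\morende\klmorphismus x})^g$.
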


\begin{proof}
  As we consider the categories $\kategorie G_x$ and $\kategorie G_y$
  the vertices $x$ and $y$ play the role of the neutral element with
  respect to the group operation. Furthermore $\moranfang \klmorphismus x = x = 1_{\kategorie G_x}$. So we get 
  \begin{align*}
    (\klmorphismus x*\klmorphismus a ^{\moranfang\klmorphismus a^{-1}\morende\klmorphismus x})^g
    &= 
      \klmorphismus a^{\moranfang\klmorphismus a^{-1}g}
      * {}^{\morende\klmorphismus a(\moranfang\klmorphismus a)^{-1}}(\klmorphismus x)^{g}\\
    &= \klmorphismus a^{\moranfang\klmorphismus a^{-1}g}
      * φ_{\moranfang\klmorphismus a,\morende\klmorphismus a}(\klmorphismus x)^{g}\\
    &= \klmorphismus a^{\moranfang\klmorphismus a^{-1}g}
      * φ_{x,y}(\klmorphismus x)^g\\
  \end{align*}
  \qed
\end{proof}
\begin{lemma}
  Let $\kategorie K$ be a category,
  $\gruppe[S]\leq \gruppe\leq\Aut\kategorie K$ two translative
  automorphism groups with $\gruppe\normalteilervon \kategorie
  K$. Then, $\gruppe[S]\normalteilervon \kategorie K$.
\end{lemma}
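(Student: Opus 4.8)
The plan is to reduce the whole statement to the representative-level characterisation of right-normality furnished by Lemma~\ref{lemma:normal-auch-representant}, and then to show that the witnesses produced by the $\gruppe$-condition already lie in the smaller data attached to $\gruppe[S]$. Since $\gruppe[S]\leq\gruppe$, every $\gruppe[S]$-orbit is contained in the corresponding $\gruppe$-orbit, i.e.\ $x^{\gruppe[S]}\subseteq x^{\gruppe}$. As $\gruppe[S]$ is translative, the vertex categories $\erzkategorie{\kategorie K}{x^{\gruppe[S]}}$ are defined; each is the restriction of $\kategorie K$ to the sub-orbit $x^{\gruppe[S]}$, so an arrow of $\kategorie K$ belongs to $\erzkategorie{\kategorie K}{x^{\gruppe[S]}}$ precisely when both of its endpoints lie in $x^{\gruppe[S]}$. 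I would record this ``full subcategory'' remark at the outset, since both verifications below rest on it.

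First I would fix an arbitrary arrow $\klmorphismus a\in\Morall{\kategorie K}$ together with an arrow $\klmorphismus x\in\Morall{\erzkategorie{\kategorie K}{(\moranfang \klmorphismus a)^{\gruppe[S]}}}$ satisfying $\morende \klmorphismus x=\moranfang \klmorphismus a$, as required by Lemma~\ref{lemma:normal-auch-representant} applied to $\gruppe[S]$. By the orbit inclusion, this $\klmorphismus x$ is also an arrow of $\erzkategorie{\kategorie K}{(\moranfang \klmorphismus a)^{\gruppe}}$ with $\morende \klmorphismus x=\moranfang \klmorphismus a$, so the hypothesis $\gruppe\normalteilervon\kategorie K$ applies and yields an automorphism $g\in\gruppe$ and an arrow $\klmorphismus C'\paar{\klmorphismus a}{\klmorphismus x}\in\Morall{\erzkategorie{\kategorie K}{(\morende \klmorphismus a)^{\gruppe}}}$ with $\klmorphismus x*\klmorphismus a=\klmorphismus a^{g}*\klmorphismus C'\paar{\klmorphismus a}{\klmorphismus x}$. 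I would keep exactly this $\klmorphismus C'$ as the candidate witness for $\gruppe[S]$ and only check that it, and the exponent $g$, are of the type demanded by the $\gruppe[S]$-instance of the lemma.

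The two checks both use semi-regularity. For the exponent, comparing the start vertices of the two sides of $\klmorphismus x*\klmorphismus a=\klmorphismus a^{g}*\klmorphismus C'\paar{\klmorphismus a}{\klmorphismus x}$ gives $\moranfang \klmorphismus x=(\moranfang \klmorphismus a)^{g}$. Since $\klmorphismus x$ is an arrow of $\erzkategorie{\kategorie K}{(\moranfang \klmorphismus a)^{\gruppe[S]}}$, its start lies in $(\moranfang \klmorphismus a)^{\gruppe[S]}$, so there is a unique $s\in\gruppe[S]$ with $\moranfang \klmorphismus x=(\moranfang \klmorphismus a)^{s}$; then $(\moranfang \klmorphismus a)^{gs^{-1}}=\moranfang \klmorphismus a$, and semi-regularity of $\gruppe$ forces $g=s\in\gruppe[S]$. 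For the correction arrow, reading off its endpoints from the same equation gives $\morende \klmorphismus C'\paar{\klmorphismus a}{\klmorphismus x}=\morende \klmorphismus a$ and $\moranfang \klmorphismus C'\paar{\klmorphismus a}{\klmorphismus x}=(\morende \klmorphismus a)^{g}=(\morende \klmorphismus a)^{s}$; as $s\in\gruppe[S]$, both endpoints lie in $(\morende \klmorphismus a)^{\gruppe[S]}$, whence by the full-subcategory remark $\klmorphismus C'\paar{\klmorphismus a}{\klmorphismus x}\in\Morall{\erzkategorie{\kategorie K}{(\morende \klmorphismus a)^{\gruppe[S]}}}$.

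It then remains to secure the identity condition \eqref{eq:36b} for $\gruppe[S]$: taking $\klmorphismus x=\id_{\moranfang \klmorphismus a}$ gives $s=1$, hence $g=1$, so the defining equation collapses to $\klmorphismus a=\klmorphismus a*\klmorphismus C'\paar{\klmorphismus a}{\id_{\moranfang \klmorphismus a}}$, forcing $\klmorphismus C'\paar{\klmorphismus a}{\id_{\moranfang \klmorphismus a}}=\id_{\morende \klmorphismus a}$ (alternatively one invokes the Corollary after Lemma~\ref{lemma:normal-auch-representant} to normalise $\klmorphismus C'$ without affecting the argument). Having verified \eqref{eq:12b} and \eqref{eq:36b} for $\gruppe[S]$, Lemma~\ref{lemma:normal-auch-representant} gives $\gruppe[S]\normalteilervon\kategorie K$. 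I expect the only genuine subtlety --- the main obstacle --- to be the bookkeeping of which group the orbits and vertex categories refer to: one must confirm that the witness $g$ descends into the smaller group and that $\klmorphismus C'$ descends into the smaller vertex category, and both facts hinge precisely on the inclusion $x^{\gruppe[S]}\subseteq x^{\gruppe}$ together with semi-regularity.
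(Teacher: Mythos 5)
Your proposal is correct and follows essentially the same route as the paper: invoke the representative-level characterisation (Lemma~\ref{lemma:normal-auch-representant}) for $\gruppe$, use semi-regularity and the endpoint equations to show the exponent actually lies in $\gruppe[S]$ and the correction arrow has both endpoints in the same $\gruppe[S]$-orbit (hence lies in the $\gruppe[S]$-vertex category), and then re-apply the lemma for $\gruppe[S]$. Your explicit treatment of the identity condition via the normalising corollary is a slightly cleaner handling of a point the paper dispatches with an ``analogous way'' remark.
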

\begin{proof}
  Since $\gruppe\normalteilervon \kategorie K$, by
  Lemma~\ref{lemma:normal-auch-representant} for every arrow
  $\klmorphismus a∈\Morall{\kategorie K}$ and
  $\klmorphismus x∈\gruppe[S]_{\moranfang \klmorphismus a}$
  there exist automorphisms $g∈\gruppe[S]$, $h∈{\gruppe}$
  and an arrow
  $\klmorphismus C'\paar{\klmorphismus a^g}{\klmorphismus
    x}∈\gruppe_{\moranfang \klmorphismus a}$ such that
  \[\klmorphismus x*\klmorphismus a^g = \klmorphismus a^h*\klmorphismus
    C'\paar{\klmorphismus a^g}{\klmorphismus x}.\tag{*}\] Since
  $\moranfang \klmorphismus x = \moranfang \klmorphismus a^h$ and
  $\morende\klmorphismus x = \moranfang \klmorphismus a$ holds, we
  derive the equation
  $h = g\multnegpot{(\morende{\klmorphismus
      x})}\moranfang\klmorphismus x∈\gruppe[S]$. For start and
  end of $\klmorphismus C'\paar{\klmorphismus a^g}{\klmorphismus x}$
  we get the equations
  $\moranfang\klmorphismus C'\paar{\klmorphismus a^g}{\klmorphismus x}
  = \morende\klmorphismus a^h = \morende\klmorphismus
  a^{g\multnegpot{(\morende{\klmorphismus x})}\moranfang\klmorphismus
    x}$ and
  $\morende\klmorphismus C'\paar{\klmorphismus a^g}{\klmorphismus x} =
  \morende\klmorphismus a^h$. So
  $\moranfang\klmorphismus C'\paar{\klmorphismus a^g}{\klmorphismus x}
  ∈\morende \klmorphismus C'\paar{\klmorphismus a^g}{\klmorphismus
    x}^{\gruppe[S]}$, which means
  $\klmorphismus C'\paar{\klmorphismus a^g}{\klmorphismus
    x}∈\gruppe[S]_{\morende\klmorphismus a^g}$. So we can apply
  Lemma~\ref{lemma:normal-auch-representant} with respect to the group
  $\gruppe[S]$ to show that there exists a partial mapping
  $\klmorphismus C$ such that
  \begin{align*}
    \klmorphismus x^{\gruppe[S]}*\klmorphismus a^{\gruppe[S]}
    &= \klmorphismus a^{\gruppe[S]} * \klmorphismus C\paar{\klmorphismus a^{\gruppe[S]}}{\klmorphismus x}.
      \intertext{With the same lemma, we may shift Equation~\eqref{eq:36} to \eqref{eq:36b} and back to prove}
    \id_{\morende\klmorphismus a}^{\gruppe[S]}
    &= \klmorphismus C\paar{\klmorphismus a}{\id_{\moranfang\klmorphismus a}}.
  \end{align*}
  So we have shown $\gruppe[S]\normalteilervon\kategorie K$.
  \qed
\end{proof}

\noindent As we have seen, we can replace loops on the left hand side of a
concatenation in the orbit category by loops concatenated from the
right. Now, we introduce a possibility to use this knowledge to
transfer the vertex monoids to the unfolding group. We must identify
those arrows that have to be preserved in our simplified orbit
category.

\begin{definition}
  
  Let $\kategorie K$ be a category and $x,y∈\Ob\kategorie K$ vertices of
  $\kategorie K$. We say an arrow
  $\klmorphismus a∈\Mor{\kategorie K}xy$ is \defindex{reducible} iff
  arrows $\klmorphismus b∈\Mor{\kategorie K}xy$ and
  $\klmorphismus y∈\Mor{\kategorie K}yy\setminus\menge{\id_x}$ exist
  such that
  \begin{equation}
    \klmorphismus a = \klmorphismus b *\klmorphismus
    y.\label{eq:17}
  \end{equation}
  Otherwise $\klmorphismus a$ is called \defindex{irreducible}.
  
  If every non-loop arrow of $\kategorie K$ is the concatenation of an
  irreducible arrow and a loop, the category is called
  \defindex{representable by irreducible arrows}. It is called
  \defindex{uniquely representable by irreducible arrows}, iff for
  each arrow $\klmorphismus a∈\Morall{\kategorie K}$ there exists
  exactly one representation in the form \eqref{eq:17}.
\end{definition}
\begin{corollary}
  All non-identity loops are reducible.
\end{corollary}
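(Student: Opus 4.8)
The plan is to exhibit, for an arbitrary non-identity loop, a factorisation of the shape~\eqref{eq:17} that passes through the identity arrow, so that reducibility is witnessed by a completely degenerate decomposition. Recall that a non-identity loop is an arrow $\klmorphismus a∈\Mor{\kategorie K}xx$ at some vertex $x∈\Ob\kategorie K$ with $\klmorphismus a\neq\id_x$. In the defining situation of reducibility one specialises $y=x$, so that $\Mor{\kategorie K}yy=\Mor{\kategorie K}xx$ and the side condition on the loop factor reads $\klmorphismus y∈\Mor{\kategorie K}xx\setminus\menge{\id_x}$.

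First I would choose $\klmorphismus b\definiert\id_x$ and $\klmorphismus y\definiert\klmorphismus a$. Both arrows lie in $\Mor{\kategorie K}xx$, hence they are admissible as the two factors in~\eqref{eq:17}. By the identity law of a category, $\id_x*\klmorphismus a=\klmorphismus a$, so the equation $\klmorphismus a=\klmorphismus b*\klmorphismus y$ holds and has exactly the required form.

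It then remains only to verify the side condition that the loop factor differs from the identity. This is immediate: since $\klmorphismus y=\klmorphismus a$ and $\klmorphismus a$ was assumed to be a \emph{non-identity} loop, we have $\klmorphismus a\neq\id_x$, whence $\klmorphismus y∈\Mor{\kategorie K}xx\setminus\menge{\id_x}$. Therefore $\klmorphismus a$ is reducible.

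There is essentially no obstacle to overcome here; the only point worth confirming is that the definition of reducibility admits this degenerate choice, i.e.\ that it imposes no restriction on the first factor $\klmorphismus b$ beyond membership in $\Mor{\kategorie K}xy$. As it does not, the trivial factorisation $\klmorphismus a=\id_x*\klmorphismus a$ already witnesses reducibility for every non-identity loop, and the corollary follows.
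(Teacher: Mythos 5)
Your proof is correct and is precisely the argument the paper leaves implicit (the corollary is stated without proof): for a non-identity loop $\klmorphismus a$ at $x$, the degenerate factorisation $\klmorphismus a=\id_x*\klmorphismus a$ with $\klmorphismus b=\id_x$ and $\klmorphismus y=\klmorphismus a\neq\id_x$ satisfies the definition of reducibility. Your closing observation that the definition places no constraint on $\klmorphismus b$ beyond membership in $\Mor{\kategorie K}xy$ is exactly the point that makes this work.
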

\begin{corollary}
  Identity loops are irreducible.
\end{corollary}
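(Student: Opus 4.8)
The plan is to argue by contradiction, confining the entire discussion to a single vertex monoid. Fix an object $x\in\Ob\kategorie K$ and assume that the identity loop $\id_x$ is reducible. Since $\id_x\in\Mor{\kategorie K}xx$ has coinciding start and end object, the definition of reducibility must supply an arrow $\klmorphismus b\in\Mor{\kategorie K}xx$ and a loop $\klmorphismus y\in\Mor{\kategorie K}xx\setminus\menge{\id_x}$ with $\id_x=\klmorphismus b*\klmorphismus y$. The first thing I would record is that both factors are forced to be loops based at $x$, so this is an equation inside the vertex monoid $\Mor{\kategorie K}xx$; concretely, it exhibits $\klmorphismus y$ as a loop admitting a left factor $\klmorphismus b$ with $\klmorphismus b*\klmorphismus y=\id_x$. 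The goal is then to show that such a $\klmorphismus y$ must already equal $\id_x$, which contradicts $\klmorphismus y\neq\id_x$ and so excludes every decomposition of the prescribed form.

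As the purely formal part of the argument I would extract an idempotent from this relation: composing on the left with $\klmorphismus y$, i.e.\ forming $\klmorphismus y*\id_x=\klmorphismus y*(\klmorphismus b*\klmorphismus y)$, and using the identity axiom $\klmorphismus y*\id_x=\klmorphismus y$ gives $\klmorphismus y=(\klmorphismus y*\klmorphismus b)*\klmorphismus y$; a one-line check then shows that $e\coloneqq\klmorphismus y*\klmorphismus b$ satisfies $e*e=e$. Thus $\klmorphismus y$ is not merely left-invertible but — once $e$ is collapsed to $\id_x$ — a genuine unit of the vertex monoid, with two-sided inverse $\klmorphismus b$. This reduces the corollary to the single assertion that the only invertible loop at $x$ is $\id_x$ itself.

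That last reduction is where I expect the real obstacle to sit, because it cannot be established from the category axioms alone: a one-object group shows that in an arbitrary small category a non-identity loop may be invertible. The statement therefore has to draw on the antisymmetric, order-theoretic nature of the categories considered here. For a genuine ordered set the vertex monoid is trivial, $\Mor{\kategorie K}xx=\menge{\id_x}$, so no admissible $\klmorphismus y$ even exists and $\id_x$ is vacuously irreducible; for the orbit categories obtained by factorisation the vertex monoids are the generated positive loop monoids, in which a non-identity loop corresponds to a strictly ascending cycle that cannot be undone, so again the only unit is $\id_x$. Granting this positivity, $\klmorphismus b*\klmorphismus y=\id_x$ forces $\klmorphismus y=\id_x$, the contradiction is reached, and since $x$ was arbitrary every identity loop is irreducible.
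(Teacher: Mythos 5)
The paper states this corollary without any proof, treating it as immediate from the definition of reducibility, so there is no official argument to compare against; your analysis is essentially sound and, more importantly, it exposes the real content of the statement. From $\id_x=\klmorphismus b*\klmorphismus y$ one only learns that $\klmorphismus y$ is a loop admitting a one-sided inverse in the vertex monoid $\Mor{\kategorie K}xx$, and the corollary amounts to the claim that the only such loop is $\id_x$. As you observe, that claim is false for an arbitrary small category (a one-object category whose morphisms form a nontrivial group makes $\id$ reducible), so the corollary tacitly assumes that the vertex monoids have no nontrivial units --- which does hold in the situations the paper cares about, where the loop monoids are isomorphic to positive cones and $\klmorphismus b*\klmorphismus y=\id_x$ forces $\klmorphismus b=\klmorphismus y=\id_x$ by antisymmetry of the order. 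Two caveats. First, the idempotent detour is superfluous and its pivotal step is unjustified: $e=\klmorphismus y*\klmorphismus b$ is indeed idempotent, but there is no general reason for $e$ to ``collapse to $\id_x$'' (in the bicyclic monoid one has $by=1$ with $yb\neq 1$), so $\klmorphismus y$ need not be a genuine unit; fortunately you do not ultimately rely on this, since the positivity assumption kills the decomposition directly. Second, your assertion that the relevant vertex monoids have only the trivial unit deserves one explicit line in the orbit-category case: a loop at $x^{\gruppe}$ corresponds to a positive element $n$ of the acting group, concatenation corresponds to multiplication, and $1\leq n$ together with $1\leq n^{-1}$ forces $n=1$. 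With that added, your argument is a correct proof under the hypotheses the paper implicitly assumes, and a correct demonstration that without them the corollary fails as stated.
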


\begin{lemma}\label{lemma:15}
  Let $\Halbord{\gruppe}{\leq}$ be an $\ell$-group with a lattice
  ordered normal subgroup $N\normalteilervon\gruppe$. Then for each
  arrow orbit $\klmorphismus a$ in the preceding definition there exists at
  most one irreducible arrow $\klmorphismus b$ such that \eqref{eq:17}
  holds. In that case  arrow $\klmorphismus y$ in that equation is unique.
  iff $\klmorphismus b$ exists.
\end{lemma}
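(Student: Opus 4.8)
The plan is to read Lemma~\ref{lemma:15} inside the orbit category $\orbitfaltigkeit{\kategorie K}{N}$, where $\kategorie K$ is the order relation of $\gruppe$ viewed as a category and $N$ acts by right multiplication, and then to reduce the whole statement to an elementary observation about meets inside a single coset of $N$. The guiding idea is that a lattice-ordered normal subgroup of an $\ell$-group has cosets that are again sublattices, and that intersecting such a coset with an interval produces a meet-closed set whose minimal element is automatically unique.

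First I would fix bookkeeping for the arrows. Let $\klmorphismus a$ be an arrow orbit from $X = x^{N}$ to $Y = y^{N}$. Since $N$ acts semi-regularly by right multiplication, fixing the representative $x$ identifies the arrows of $\orbitfaltigkeit{\kategorie K}{N}$ from $X$ to $Y$ bijectively with $M \definiert \Menge{w \in yN}{x \leq w}$: each orbit has a unique representative of the shape $x \to z$ with $z \in yN$ and $x \leq z$. In the same way the non-identity loops at $Y$ are indexed by $\poskegel{N} \setminus \menge{1}$, and the factorisation $\klmorphismus a = \klmorphismus b * \klmorphismus y$ of \eqref{eq:17} translates into: $\klmorphismus b$ corresponds to some $z' \in M$ with $z' \leq z$, and $\klmorphismus y$ to the positive element $p = (z')^{-1}z \in \poskegel{N}$. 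Moreover $\klmorphismus b$ is irreducible precisely when $z'$ is a minimal element of $M$. Hence the lemma reduces to the two claims that $M_z \definiert \Menge{w \in yN}{x \leq w \leq z}$ has at most one minimal element, and that this minimal element, once it exists, determines $p$.

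The key step is that $M_z$ is closed under the meet of $\gruppe$, and this is where both hypotheses enter. Since $\gruppe$ is an $\ell$-group, left translation is a lattice automorphism, so for $w_1 = y n_1, w_2 = y n_2 \in yN$ one has $w_1 \wedge w_2 = y (n_1 \wedge n_2)$; because $N$ is lattice ordered, $n_1 \wedge n_2 \in N$, whence $w_1 \wedge w_2 \in yN$ and $yN$ is a sublattice of $\gruppe$. If in addition $w_1, w_2 \in M_z$, then $x$ is a common lower bound and $z$ a common upper bound of $w_1, w_2$, so $x \leq w_1 \wedge w_2 \leq z$ and thus $w_1 \wedge w_2 \in M_z$. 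I expect this translation step, rather than the lattice argument itself, to be the main obstacle: $\gruppe$ need not be commutative, so one must argue through the translation-invariance of $\wedge$ and keep careful track of which representative indexes a loop, instead of manipulating the group product directly.

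Once meet-closure is available, uniqueness is immediate. If $z'_1$ and $z'_2$ were two minimal elements of $M_z$, then $z'_1 \wedge z'_2 \in M_z$ lies below both, so minimality forces $z'_1 \wedge z'_2 = z'_1 = z'_2$ and hence $z'_1 = z'_2$; this gives the at-most-one statement for the irreducible $\klmorphismus b$. For the second claim, if such a $\klmorphismus b$ exists its representative $z'$ is the unique minimal element of $M_z$, and then $p = (z')^{-1} z$ is the only positive element with $z = z' p$, so the loop $\klmorphismus y$ is determined; conversely, if $M_z$ has no minimal element there is no admissible $\klmorphismus b$ and hence no factorisation at all, which yields the stated equivalence between the uniqueness of $\klmorphismus y$ and the existence of $\klmorphismus b$.
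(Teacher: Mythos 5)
Your proposal is correct and follows essentially the same route as the paper: both arguments reduce the question to endpoints inside a single coset $yN$, form the meet of two candidate endpoints, and use translation-invariance of $\wedge$ together with the sublattice property of $N$ (the paper's $\morende\klmorphismus b\wedge\morende\klmorphismus b\cdot n=\morende\klmorphismus b(1\wedge n)$ is your $yn_1\wedge yn_2=y(n_1\wedge n_2)$) to show this meet stays in the coset and above the source, contradicting irreducibility unless the endpoints coincide. Your explicit bijection between arrow orbits and coset elements, and the final appeal to simplicity of the order category for the uniqueness of the loop, match the paper's reasoning in substance.
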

\begin{proof}
  Suppose we have two descriptions
  $\klmorphismus a^{\gruppe} = \klmorphismus b^{\gruppe} *
  \klmorphismus y^{\gruppe} = \klmorphismus c^{\gruppe} *
  \klmorphismus z^{\gruppe}$, with arrows
  $\klmorphismus y^{\gruppe}, \klmorphismus
  z^{\gruppe}∈\Mor{\orbitfaltigkeit{\Halbord{\gruppe}{\leq}}N}{\morende\klmorphismus
    a^{\gruppe}}{\morende \klmorphismus a^{\gruppe}}$. W.l.o.g.{} the
  arrows can be chosen such that
  $\moranfang \klmorphismus a = \moranfang \klmorphismus b =
  \moranfang\klmorphismus c$. Then, the relations
  $\moranfang \klmorphismus a \leq \morende \klmorphismus b$ and
  $\moranfang \klmorphismus a\leq \morende \klmorphismus c$
  hold. Consequently an arrow $\klmorphismus d$ exists with
  $\moranfang \klmorphismus a = \moranfang\klmorphismus d$ and
  $\morende \klmorphismus d \definiert \morende \klmorphismus b
  \schnitt \morende \klmorphismus c\geq \moranfang \klmorphismus a$.

  On the other hand there exists an element $n∈N$ such that
  $\morende\klmorphismus b^n = \morende\klmorphismus c$ so that we get
  \begin{align*}
    \morende \klmorphismus d = \morende \klmorphismus b \schnitt \morende\klmorphismus c
    = \morende \klmorphismus b \schnitt \morende\klmorphismus b^n
    = \morende \klmorphismus b (1\schnitt n) \geq \moranfang \klmorphismus a.
  \end{align*}
  As $\Halbord{N}{\leq\eingeschrmenge N}$ is a sublattice of
  $\Halbord{\gruppe}{\leq}$, $1\schnitt n∈N$. With $1\schnitt n\leq 1$,
  it follows that $1\vereinigung n^{-1}=(1\schnitt n)^{-1}\geq 1$ and
  $\klmorphismus b = \klmorphismus d * \paar[big]{\morende
    \klmorphismus b (1 \schnitt n)}{\morende \klmorphismus b}$, where
  $\paar[big]{\morende\klmorphismus b(1\schnitt
    n)}{\morende\klmorphismus b}∈\kategorie G_{\morende\klmorphismus
    b}$. So $\klmorphismus d=\klmorphismus b$ or $\klmorphismus b$ is
  not irreducible. The same holds for $\klmorphismus c$. So, if
  $\klmorphismus b$ and $\klmorphismus c$ are irreducible,
  $\klmorphismus b = \klmorphismus c$ holds. As there is only one
  arrow between two vertices, the arrow
  $\paar[big]{\morende\klmorphismus b(1\schnitt n)}{\morende
    \klmorphismus a}$ is uniquely defined.
  \qed
\end{proof}

\noindent This lemma covers an important class of po-groups. However, it
depends on the uniqueness of certain greatest lower bounds. So we can
easily rephrase this lemma in terms of category theory and finite products.

The statement of Lemma~\ref{lemma:15} is not true in general for
normal subcategories. Take for example the po-group
$\ganzzahlen\times \ganzzahlen$ with the product order and the normal
subgroup $N$ that is generated by the set $\menge{\paar {1}{1},\paar 1{-1}}$. Then the
arrow $\paar[big]{\paar 00}{\paar 21}^N$ can be reduced to both
$\paar[big]{\paar 00}{\paar 01}^N$ and
$\paar[big]{\paar 00}{\paar 10}^N$ but none can be reduced to the
other since $\paar 01$ and $\paar 10$ are incomparable.

The uniqueness of irreducible arrows suggests that we can use for a
further compression of the orbit categories in some cases. For lattice
po-groups such a representation is unique. Before we can do that
we need some partial operations:
\begin{definition}\label{def:r-n}
  Let $\kategorie K$ be a category,
  $\gruppe \leq\Aut\kategorie K$ a translative automorphism group such that
  $\orbitfaltigkeit{\kategorie K}{\gruppe}$ is uniquely representable
  by irreducible arrows. Then we denote by
  \begin{align}
    \abbildung{\klmorphismus r}{\Morall{\orbitfaltigkeit{\kategorie K}{\gruppe}}}
    {\Morall{\orbitfaltigkeit{\kategorie K}{\gruppe}}}
  \end{align}
  the partial mapping that maps each arrow into the set of irreducible arrows, and 
  \begin{align}
    \abbildung{\klmorphismus n}{\Morall{\orbitfaltigkeit{\kategorie K}{\gruppe}}}
    {\Morall{\orbitfaltigkeit{\kategorie K}{\gruppe}}}
  \end{align}
  the complementary partial mapping such that for every reducible $\klmorphismus x∈\Morall{\orbitfaltigkeit{\kategorie K}{\gruppe}}$
  arrow the following equation holds:
  \begin{align}
    \klmorphismus x = \klmorphismus r(\klmorphismus x) * \klmorphismus n (\klmorphismus x).
  \end{align}
\end{definition}
\begin{corollary}\label{corr:r-n-1}
  Let $\Halbord{\gruppe}{\leq}$ denote an $\ell$-group with a normal lattice ordered subgroup
  $N\normalteilervon\gruppe$. Then, for each arrow $\klmorphismus a$
  and every arrow
  $\klmorphismus n∈\Mor{\orbitfaltigkeit
    {\Halbord{\gruppe}{\leq}}{N}}{\morende\klmorphismus
    a}{\morende\klmorphismus a}$ the following equation holds:
  \begin{align}
    \klmorphismus r(\klmorphismus a) = \klmorphismus r(\klmorphismus a * \klmorphismus n)
  \end{align}
  if $\klmorphismus a *\klmorphismus n$ is definied and $\klmorphismus r \klmorphismus a$ and $\klmorphismus r(\klmorphismus a * \klmorphismus n)$ exist.
\end{corollary}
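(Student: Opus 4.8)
The plan is to reduce the statement to the uniqueness part of Lemma~\ref{lemma:15} combined with associativity of the orbit category, so that almost no computation is needed. Since $\klmorphismus n$ is a loop at $\morende\klmorphismus a$, the concatenation $\klmorphismus a * \klmorphismus n$ is defined and shares its source and target with $\klmorphismus a$. Using Definition~\ref{def:r-n} I would first write $\klmorphismus a = \klmorphismus r(\klmorphismus a) * \klmorphismus n(\klmorphismus a)$, where $\klmorphismus r(\klmorphismus a)$ is irreducible and $\klmorphismus n(\klmorphismus a)$ is a loop at $\morende\klmorphismus a$. Associativity then yields
\[
  \klmorphismus a * \klmorphismus n
  = \bigl(\klmorphismus r(\klmorphismus a) * \klmorphismus n(\klmorphismus a)\bigr) * \klmorphismus n
  = \klmorphismus r(\klmorphismus a) * \bigl(\klmorphismus n(\klmorphismus a) * \klmorphismus n\bigr),
\]
and, setting $\klmorphismus m\definiert\klmorphismus n(\klmorphismus a) * \klmorphismus n$, the arrow $\klmorphismus m$ is again a loop at $\morende\klmorphismus a$, being a concatenation of two loops at that vertex.

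Next I would split into two cases according to whether $\klmorphismus m$ is an identity loop. If $\klmorphismus m = \id_{\morende\klmorphismus a}$, then $\klmorphismus a * \klmorphismus n = \klmorphismus r(\klmorphismus a)$ is itself irreducible, so by unique representability its reduced part is itself, giving $\klmorphismus r(\klmorphismus a * \klmorphismus n) = \klmorphismus a * \klmorphismus n = \klmorphismus r(\klmorphismus a)$. In the remaining case $\klmorphismus m \neq \id_{\morende\klmorphismus a}$, the displayed identity $\klmorphismus a * \klmorphismus n = \klmorphismus r(\klmorphismus a) * \klmorphismus m$ exhibits $\klmorphismus a * \klmorphismus n$ as an irreducible arrow followed by a non-identity loop, i.e.\ as a witness of the shape~\eqref{eq:17} for the arrow orbit $\klmorphismus a * \klmorphismus n$. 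By hypothesis $\klmorphismus r(\klmorphismus a * \klmorphismus n)$ exists, so it is a second such witness. Lemma~\ref{lemma:15} guarantees that for a fixed arrow orbit at most one irreducible arrow can occur in a decomposition of the form~\eqref{eq:17}, whence $\klmorphismus r(\klmorphismus a) = \klmorphismus r(\klmorphismus a * \klmorphismus n)$. This proves the claim in both cases.

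The genuinely substantial input — and the only place where the $\ell$-group structure and the lattice-orderedness of $N$ are used — is the uniqueness assertion of Lemma~\ref{lemma:15}; everything else is formal manipulation inside $\orbitfaltigkeit{\Halbord{\gruppe}{\leq}}{N}$. The point I would be most careful about is bookkeeping of sources and targets: one must check that $\klmorphismus m$ is composable after $\klmorphismus r(\klmorphismus a)$ (its source is $\morende\klmorphismus a = \morende\klmorphismus r(\klmorphismus a)$) and that $\klmorphismus r(\klmorphismus a) * \klmorphismus m$ really has the exact shape demanded by~\eqref{eq:17}, namely an irreducible arrow followed by a loop at the common target. I expect no obstacle beyond this, since the case distinction on $\klmorphismus m$ already covers the possibility that the two loops collapse to an identity.
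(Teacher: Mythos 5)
Your proof is correct and follows exactly the route the paper intends: the corollary is stated without proof as an immediate consequence of Lemma~\ref{lemma:15}, and your argument — rewriting $\klmorphismus a * \klmorphismus n = \klmorphismus r(\klmorphismus a) * \bigl(\klmorphismus n(\klmorphismus a)*\klmorphismus n\bigr)$ by associativity and then invoking the uniqueness of the irreducible factor from Lemma~\ref{lemma:15} (with the separate check for the degenerate case where the combined loop is the identity) — is precisely the justification the paper omits.
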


\begin{corollary}\label{corr:Rechenregeln-r-n}
  For the mappings $\klmorphismus r$ and $\klmorphismus n$ from Definition~\ref{def:r-n} the following holds:
  \begin{subequations}
    \begin{align}
      \klmorphismus r(\id_x)
      &= \id_x
      &  \klmorphismus n(\id_x)
      &= \id_x\label{eq:37}\\
      \klmorphismus r(\klmorphismus a * \id_{\morende\klmorphismus a})
      &= \klmorphismus r(\klmorphismus a)
      &\klmorphismus n(\klmorphismus a * \id_{\morende\klmorphismus a})
      &= \klmorphismus n(\klmorphismus a)\label{eq:38}\\
      \klmorphismus r(\id_{\moranfang\klmorphismus a} * \klmorphismus a )
      &= \klmorphismus r(\klmorphismus a)
      &\klmorphismus n(\id_{\morende\klmorphismus a} * \klmorphismus a )
      &= \klmorphismus n(\klmorphismus a)\label{eq:39}\\
      &&
         \klmorphismus n(\klmorphismus r \klmorphismus a) & = \klmorphismus \id_{\morende\klmorphismus a}\label{eq:40}
    \end{align} 
  \end{subequations}
\end{corollary}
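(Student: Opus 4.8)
The plan is to read off every identity directly from the uniqueness of the irreducible representation granted by Definition~\ref{def:r-n}. Recall that for each arrow $\klmorphismus a$ the pair $\paar{\klmorphismus r(\klmorphismus a)}{\klmorphismus n(\klmorphismus a)}$ is, by hypothesis, the \emph{unique} way of writing $\klmorphismus a = \klmorphismus r(\klmorphismus a) * \klmorphismus n(\klmorphismus a)$ with $\klmorphismus r(\klmorphismus a)$ irreducible and $\klmorphismus n(\klmorphismus a)$ a loop at $\morende\klmorphismus a$. Hence the whole argument reduces to exhibiting, for each left-hand side, one such decomposition and then appealing to uniqueness. The only genuinely structural inputs are that identity loops are irreducible (the corollary just above Lemma~\ref{lemma:15}) and the categorical unit laws; everything else is bookkeeping.

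First I would treat \eqref{eq:37}. Since $\id_x$ is irreducible, the equation $\id_x = \id_x * \id_x$ already displays $\id_x$ as an irreducible arrow composed with the loop $\id_x$ at $x = \morende\id_x$. By uniqueness this is exactly the decomposition computed by $\klmorphismus r$ and $\klmorphismus n$, so $\klmorphismus r(\id_x) = \id_x$ and $\klmorphismus n(\id_x) = \id_x$. The pairs \eqref{eq:38} and \eqref{eq:39} then follow immediately from the unit laws of $\kategorie K$: one has $\klmorphismus a * \id_{\morende\klmorphismus a} = \klmorphismus a = \id_{\moranfang\klmorphismus a} * \klmorphismus a$, so on each of these lines the argument of $\klmorphismus r$ (resp.\ $\klmorphismus n$) on the left is literally the same arrow as on the right, forcing equality of the values. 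For the $\klmorphismus n$-part of \eqref{eq:39} one only has to observe that $\id_{\morende\klmorphismus a} * \klmorphismus a$ is defined precisely when $\klmorphismus a$ is a loop, i.e.\ $\moranfang\klmorphismus a = \morende\klmorphismus a$, and in that case the composite again equals $\klmorphismus a$, so the same reasoning applies.

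The remaining identity \eqref{eq:40} is the one carrying actual content, so this is where I would spend the care. Here $\klmorphismus r(\klmorphismus a)$ is by construction irreducible, and every irreducible arrow $\klmorphismus b$ admits the decomposition $\klmorphismus b = \klmorphismus b * \id_{\morende\klmorphismus b}$; by uniqueness this forces $\klmorphismus n(\klmorphismus b) = \id_{\morende\klmorphismus b}$. Applying this with $\klmorphismus b = \klmorphismus r(\klmorphismus a)$ gives $\klmorphismus n(\klmorphismus r(\klmorphismus a)) = \id_{\morende\klmorphismus r(\klmorphismus a)}$, and since $\klmorphismus n(\klmorphismus a)$ is a loop at $\morende\klmorphismus a$ the factor $\klmorphismus r(\klmorphismus a)$ ends where $\klmorphismus a$ does, i.e.\ $\morende\klmorphismus r(\klmorphismus a) = \morende\klmorphismus a$, which yields the claim.

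The subtle point to watch throughout is that $\klmorphismus r$ and $\klmorphismus n$ must here be treated as defined on \emph{all} arrows, not merely the reducible ones: an irreducible $\klmorphismus a$ is assigned $\klmorphismus r(\klmorphismus a) = \klmorphismus a$ and $\klmorphismus n(\klmorphismus a) = \id_{\morende\klmorphismus a}$ through the very same unique-decomposition rule. Once this extension is fixed, the uniqueness clause of Definition~\ref{def:r-n} handles the identity-loop case and the reducible case uniformly, and I do not expect any further obstacle beyond verifying definedness of the composites in each displayed line.
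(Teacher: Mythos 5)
Your proposal is correct and follows exactly the route the paper intends: the corollary is stated without an explicit proof precisely because each identity reads off from the uniqueness clause of Definition~\ref{def:r-n}, the irreducibility of identity loops, and the categorical unit laws. Your one substantive observation — that $\klmorphismus r$ and $\klmorphismus n$ must be understood as extended to irreducible arrows by $\klmorphismus r(\klmorphismus a)=\klmorphismus a$ and $\klmorphismus n(\klmorphismus a)=\id_{\morende\klmorphismus a}$ for \eqref{eq:37} and \eqref{eq:40} to be meaningful — is exactly the reading the paper relies on, so no gap remains.
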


\noindent As a next step we will construct a concatenation based on a
category $\kategorie K$ that is uniquely representable by irreducible
arrows, so that $\klmorphismus r[\kategorie K]$ equipped with this
concatenation is a category again.

\begin{lemma}\label{lemma:eind.reduzibel-phi-injektiv}
  Under the conditions of Lemma~\ref{lemma:13}, if
  $\orbitfaltigkeit {\kategorie K}{\gruppe}$ is uniquely representable by
  irreducible arrows then for each arrow
  $\klmorphismus a∈\Morall{\kategorie K}$ the homomorphism
  $φ_{\klmorphismus r\klmorphismus a}$ from \eqref{eq:20} is
  injective.
\end{lemma}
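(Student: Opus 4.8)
The plan is to fix $\klmorphismus b\definiert\klmorphismus r\klmorphismus a$ and show that the vertex–category homomorphism $φ_{\klmorphismus b}$ of \eqref{eq:20} is injective. Since the preceding lemma shows that $φ_{\klmorphismus b}$ commutes with every automorphism in $\gruppe$ and $\gruppe$ acts semi-regularly, it suffices to prove injectivity after passing to the orbit category $\orbitfaltigkeit{\kategorie K}{\gruppe}$: there each orbit of loops at $\moranfang\klmorphismus b$ has a single representative, and the defining relation \eqref{eq:12b} reads $\klmorphismus x*\klmorphismus b=\klmorphismus b*φ_{\klmorphismus b}(\klmorphismus x)$ for every loop $\klmorphismus x$ at $\moranfang\klmorphismus b$. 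First I would record that $φ_{\klmorphismus b}$ is genuinely a homomorphism in the sense of Lemma~\ref{lemma:13}: because $\klmorphismus b=\klmorphismus r\klmorphismus a$ is irreducible and $φ_{\klmorphismus b}(\klmorphismus x)$ is a loop, the right–hand side $\klmorphismus b*φ_{\klmorphismus b}(\klmorphismus x)$ is a factorisation of $\klmorphismus x*\klmorphismus b$ of the form \eqref{eq:17}. Unique representability then forces $\klmorphismus r(\klmorphismus x*\klmorphismus b)=\klmorphismus b$ and $\klmorphismus n(\klmorphismus x*\klmorphismus b)=φ_{\klmorphismus b}(\klmorphismus x)$ (using $\klmorphismus r$, $\klmorphismus n$ from Definition~\ref{def:r-n}); in particular the loop factor is uniquely recoverable, which is exactly the injectivity of $s_{\klmorphismus b}$ that the hypothesis of Lemma~\ref{lemma:13} demands.

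Next I would carry out the injectivity argument itself. Suppose $φ_{\klmorphismus b}(\klmorphismus x)=φ_{\klmorphismus b}(\klmorphismus y)$ for two loops $\klmorphismus x,\klmorphismus y$ at $\moranfang\klmorphismus b$. Applying the displayed relation on both sides gives $\klmorphismus x*\klmorphismus b=\klmorphismus b*φ_{\klmorphismus b}(\klmorphismus x)=\klmorphismus b*φ_{\klmorphismus b}(\klmorphismus y)=\klmorphismus y*\klmorphismus b$, so the two composites coincide. It then remains to cancel $\klmorphismus b$ \emph{on the right}, i.e.\ to deduce $\klmorphismus x=\klmorphismus y$ from $\klmorphismus x*\klmorphismus b=\klmorphismus y*\klmorphismus b$; lifting this back from the orbit category through the $\gruppe$-equivariance of $φ$ would then give injectivity of $φ_{\klmorphismus r\klmorphismus a}$ on all of $\Morall{\kategorie G_{\moranfang\klmorphismus a}}$.

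I expect this right–cancellation to be the main obstacle. Irreducibility as defined in \eqref{eq:17} only governs how loops can be split off on the \emph{target} side, and by itself it does not turn $\klmorphismus b$ into a monomorphism; indeed $s_{\klmorphismus b}$ injective is a left–cancellation (epimorphism–type) property, whereas what is needed here is right–cancellation. The plan is therefore to supply the missing cancellation from the structure already at hand: by Lemma~\ref{lemma:vertex-categories-isomorphic-right-groupal-categories} the vertex categories are isomorphic right–groupal categories, so the loops at $\moranfang\klmorphismus b$ carry a cancellative group operation. Transporting $\klmorphismus x$ and $\klmorphismus y$ by this operation so that $\klmorphismus b$ starts at the neutral object, one rewrites $\klmorphismus x*\klmorphismus b=\klmorphismus y*\klmorphismus b$ as an equation of arrows out of $1$, and uses unique representability together with semi-regularity of $\gruppe$ to read off that the loop factors agree; cancellativity of the group operation then yields $\klmorphismus x=\klmorphismus y$. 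The decisive point throughout is that one works with the irreducible part $\klmorphismus b=\klmorphismus r\klmorphismus a$ rather than with $\klmorphismus a$ itself, since it is precisely irreducibility that makes $\klmorphismus b*φ_{\klmorphismus b}(\klmorphismus x)$ the canonical factorisation and thereby pins the loop factor down.
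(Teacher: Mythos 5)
Your first paragraph is, in substance, the paper's entire proof of this lemma: for two loops $\klmorphismus x,\klmorphismus y∈\Mor{\orbitfaltigkeit{\kategorie K}{\gruppe}}{\morende\klmorphismus a}{\morende\klmorphismus a}$ the arrows $\klmorphismus r\klmorphismus a*\klmorphismus x$ and $\klmorphismus r\klmorphismus a*\klmorphismus y$ are decompositions of the form \eqref{eq:17} with the same irreducible part, so unique representability forces $\klmorphismus x=\klmorphismus y$; equivalently $s_{\klmorphismus r\klmorphismus a}$ is injective and the loop factor $φ_{\klmorphismus r\klmorphismus a}(\klmorphismus x)$ is pinned down by the arrow $\klmorphismus x*\klmorphismus r\klmorphismus a$. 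Up to that point you and the paper coincide exactly, and the paper stops there.

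Where you go beyond the paper is in noticing that this only shows $φ_{\klmorphismus r\klmorphismus a}(\klmorphismus x)$ is determined by $\klmorphismus x*\klmorphismus r\klmorphismus a$, and that injectivity of $φ_{\klmorphismus r\klmorphismus a}$ still needs the cancellation $\klmorphismus x*\klmorphismus r\klmorphismus a=\klmorphismus y*\klmorphismus r\klmorphismus a⇒\klmorphismus x=\klmorphismus y$ for loops at $\moranfang\klmorphismus a$. That diagnosis is sharper than the source, which silently identifies injectivity of $φ_{\klmorphismus r\klmorphismus a}$ with injectivity of $s_{\klmorphismus r\klmorphismus a}$. However, your proposed way of closing the step does not work as sketched. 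Applying unique representability to $\klmorphismus x*\klmorphismus b=\klmorphismus y*\klmorphismus b$ (with $\klmorphismus b=\klmorphismus r\klmorphismus a$) in order to \emph{read off that the loop factors agree} only returns $φ_{\klmorphismus b}(\klmorphismus x)=φ_{\klmorphismus b}(\klmorphismus y)$, i.e.\ the hypothesis you started from, so the argument is circular. Moreover, the cancellative group operation supplied by Lemma~\ref{lemma:vertex-categories-isomorphic-right-groupal-categories} lives on the objects $\Ob\kategorie G_{\moranfang\klmorphismus b}$ and acts on arrows through the translative $\gruppe$-action; it permutes loops but cannot separate two distinct parallel loops $\klmorphismus x\neq\klmorphismus y$ with identical endpoints, which is exactly the situation to be excluded. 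To finish along your lines you would need an input that genuinely bounds the fibres of $\klmorphismus x\mapsto\klmorphismus x*\klmorphismus b$ — for instance the simplicity or $\ell$-group hypotheses used in Lemma~\ref{lemma:15}, or an explicit assumption that irreducible arrows can be cancelled on their source side — rather than the right-groupal structure of the vertex categories.
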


\begin{proof}
  Let
  $\klmorphismus x,\klmorphismus y∈\Mor{\orbitfaltigkeit{\kategorie
      K}{\gruppe}} {\morende\klmorphismus a}{\morende\klmorphismus
    a}$. If
  $\klmorphismus r\klmorphismus a *\klmorphismus x = \klmorphismus r\klmorphismus a*\klmorphismus
  y$, then $\klmorphismus r\klmorphismus a*\klmorphismus x$ is not uniquely representable or
  $\klmorphismus x = \klmorphismus y$.
  \qed
\end{proof}

\begin{lemma}\label{lemma:16}
  Let $\kategorie K$ be a category,
  $\gruppe\leq\Aut \kategorie K$ a translative automorphism group that acts normal on
  $\kategorie K$ such that
  $\orbitfaltigkeit{\kategorie K}{\gruppe}$ is uniquely representable
  by irreducible arrows.
  Then the partial mapping
  \begin{align}
    \wabbildung{\abbildung \bullet {\Morall{\orbitfaltigkeit{\kategorie K}{\gruppe}}\times\Morall{\orbitfaltigkeit{\kategorie K}{\gruppe}}}{\Morall{\orbitfaltigkeit{\kategorie K}{\gruppe}}}}{\paar{\klmorphismus x}{\klmorphismus y}}{\klmorphismus r(\klmorphismus x * \klmorphismus y)}
  \end{align}
  is a category concatenation.
\end{lemma}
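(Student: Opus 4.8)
The plan is to check that $\bullet$ satisfies the three requirements of a category concatenation on the irreducible arrows of $\orbitfaltigkeit{\kategorie K}{\gruppe}$: compatibility with source and target (including closure into the irreducible arrows), existence of identities, and associativity. The first two are routine; the real content is associativity, which is where right-normality of $\gruppe$ is used.

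First I would record the behaviour of $\klmorphismus r$ on source and target. For every arrow $\klmorphismus a$ the decomposition $\klmorphismus a = \klmorphismus r(\klmorphismus a)*\klmorphismus n(\klmorphismus a)$ of Definition~\ref{def:r-n}, together with the fact that $\klmorphismus n(\klmorphismus a)$ is a loop at $\morende\klmorphismus a$, gives $\moranfang\klmorphismus r(\klmorphismus a)=\moranfang\klmorphismus a$ and $\morende\klmorphismus r(\klmorphismus a)=\morende\klmorphismus a$. Hence, whenever $\morende\klmorphismus x=\moranfang\klmorphismus y$, the arrow $\klmorphismus x\bullet\klmorphismus y=\klmorphismus r(\klmorphismus x*\klmorphismus y)$ is an irreducible arrow from $\moranfang\klmorphismus x$ to $\morende\klmorphismus y$, so $\bullet$ is a well-defined partial operation on the irreducible arrows whose domain is exactly the set of composable pairs. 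For the units, identity loops are irreducible, and Equations~\eqref{eq:38} and~\eqref{eq:39} give $\klmorphismus x\bullet\id_{\morende\klmorphismus x}=\klmorphismus r(\klmorphismus x*\id_{\morende\klmorphismus x})=\klmorphismus r(\klmorphismus x)=\klmorphismus x$ and likewise $\id_{\moranfang\klmorphismus x}\bullet\klmorphismus x=\klmorphismus x$ for every irreducible $\klmorphismus x$.

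The key auxiliary step is a loop-absorption identity: for every arrow $\klmorphismus b$ and every loop $\klmorphismus q$ at $\morende\klmorphismus b$ one has $\klmorphismus r(\klmorphismus b*\klmorphismus q)=\klmorphismus r(\klmorphismus b)$. This follows from unique representability, since $\klmorphismus b*\klmorphismus q=\klmorphismus r(\klmorphismus b)*\bigl(\klmorphismus n(\klmorphismus b)*\klmorphismus q\bigr)$ exhibits $\klmorphismus b*\klmorphismus q$ as an irreducible arrow followed by the loop $\klmorphismus n(\klmorphismus b)*\klmorphismus q$; this generalises Corollary~\ref{corr:r-n-1} and Equation~\eqref{eq:38} from identity loops to arbitrary loops. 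With this in hand I would establish associativity by showing that both bracketings equal $\klmorphismus r(\klmorphismus x*\klmorphismus y*\klmorphismus z)$, using that $*$ is associative in $\orbitfaltigkeit{\kategorie K}{\gruppe}$. For the right bracketing this is immediate: $\klmorphismus x*(\klmorphismus y*\klmorphismus z)=\klmorphismus x*\klmorphismus r(\klmorphismus y*\klmorphismus z)*\klmorphismus n(\klmorphismus y*\klmorphismus z)$ already carries its leftover loop $\klmorphismus n(\klmorphismus y*\klmorphismus z)$ on the right, so loop absorption yields $\klmorphismus x\bullet(\klmorphismus y\bullet\klmorphismus z)=\klmorphismus r\bigl(\klmorphismus x*\klmorphismus r(\klmorphismus y*\klmorphismus z)\bigr)=\klmorphismus r(\klmorphismus x*\klmorphismus y*\klmorphismus z)$.

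The left bracketing is the main obstacle, because here the leftover loop sits in the wrong place. Writing $\klmorphismus x*\klmorphismus y=\klmorphismus r(\klmorphismus x*\klmorphismus y)*\klmorphismus n(\klmorphismus x*\klmorphismus y)$, the loop $\klmorphismus n(\klmorphismus x*\klmorphismus y)$ is a loop at $\morende\klmorphismus y=\moranfang\klmorphismus z$, hence lands to the \emph{left} of $\klmorphismus z$. Here I would invoke right-normality~\eqref{eq:12a} with $\klmorphismus z$ in the role of $\klmorphismus a$ and $\klmorphismus n(\klmorphismus x*\klmorphismus y)$ in the role of the loop (which is legitimate since $\klmorphismus n(\klmorphismus x*\klmorphismus y)\in\Morall{\kategorie G_{\moranfang\klmorphismus z}}$): it produces a loop $\klmorphismus C\paar{\klmorphismus z}{\klmorphismus n(\klmorphismus x*\klmorphismus y)}$ at $\morende\klmorphismus z$ with $\klmorphismus n(\klmorphismus x*\klmorphismus y)*\klmorphismus z=\klmorphismus z*\klmorphismus C\paar{\klmorphismus z}{\klmorphismus n(\klmorphismus x*\klmorphismus y)}$. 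Consequently $\klmorphismus x*\klmorphismus y*\klmorphismus z=\klmorphismus r(\klmorphismus x*\klmorphismus y)*\klmorphismus z*\klmorphismus C\paar{\klmorphismus z}{\klmorphismus n(\klmorphismus x*\klmorphismus y)}$, and absorbing the trailing loop gives $\klmorphismus r(\klmorphismus x*\klmorphismus y*\klmorphismus z)=\klmorphismus r\bigl(\klmorphismus r(\klmorphismus x*\klmorphismus y)*\klmorphismus z\bigr)=(\klmorphismus x\bullet\klmorphismus y)\bullet\klmorphismus z$. Combining the two computations shows $(\klmorphismus x\bullet\klmorphismus y)\bullet\klmorphismus z=\klmorphismus x\bullet(\klmorphismus y\bullet\klmorphismus z)$, so $\bullet$ is associative and therefore a category concatenation. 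The one point demanding care is exactly this loop-transport step, since it is the only place where the normality hypothesis — rather than mere unique representability — is indispensable.
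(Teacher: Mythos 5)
Your proposal is correct and follows essentially the same route as the paper's proof: both rest on the decomposition $\klmorphismus a = \klmorphismus r\klmorphismus a * \klmorphismus n\klmorphismus a$, the right-normality transport of the leftover loop past the next arrow via $\klmorphismus C$, absorption of trailing loops, and reduction of associativity of $\bullet$ to associativity of $*$. The only difference is organisational — the paper pivots on the identity $\klmorphismus x\bullet\klmorphismus y=\klmorphismus r\klmorphismus x\bullet\klmorphismus r\klmorphismus y$, while you show both bracketings equal $\klmorphismus r(\klmorphismus x*\klmorphismus y*\klmorphismus z)$ — which is immaterial.
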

\begin{proof}
  Using  Corollary~\ref{corr:r-n-1} and Definition~\ref{def:normal-group-action-category} we know:
  $\klmorphismus y∈\Mor{\orbitfaltigkeit{\kategorie
      K}{\gruppe}}{\morende \klmorphismus x}{\morende
    \klmorphismus x}$ that
  $\klmorphismus x\bullet\klmorphismus y=\klmorphismus
  r(\klmorphismus x)$.
  
  In general, 
  \begin{align*}
    \klmorphismus x\bullet\klmorphismus y
    &= \klmorphismus r(\klmorphismus r\klmorphismus x * \klmorphismus n\klmorphismus x*\klmorphismus r\klmorphismus y * \klmorphismus n\klmorphismus y)\\
    &= \klmorphismus r(\klmorphismus r\klmorphismus x *\klmorphismus r\klmorphismus y * \klmorphismus C'\paar{\klmorphismus r\klmorphismus y}{\klmorphismus n\klmorphismus x}* \klmorphismus n\klmorphismus y)\\
    &= \klmorphismus r(\klmorphismus r\klmorphismus x * \klmorphismus r\klmorphismus y)\\
    &= \klmorphismus r\klmorphismus x \bullet \klmorphismus r\klmorphismus y
  \end{align*}
  Consequently we can prove the associativity:
  \begin{align*}
    (\klmorphismus x\bullet\klmorphismus y) \bullet\klmorphismus z
    &= \klmorphismus r\bigl(\klmorphismus r(\klmorphismus r\klmorphismus x * \klmorphismus r\klmorphismus y) * \klmorphismus r\klmorphismus z\bigr)\\
    &= \klmorphismus r\bigl(\klmorphismus r(\klmorphismus r\klmorphismus x * \klmorphismus r\klmorphismus y) * \klmorphismus r(\klmorphismus r\klmorphismus z)\bigr)\\
    &= \klmorphismus r\bigl((\klmorphismus r\klmorphismus x * \klmorphismus r\klmorphismus y) * \klmorphismus z\bigr)\\
    &= \klmorphismus r\bigl(\klmorphismus r\klmorphismus x * (\klmorphismus r\klmorphismus y * \klmorphismus z)\bigr)\\
    &= \klmorphismus r\bigl(\klmorphismus r(\klmorphismus r\klmorphismus x) * (\klmorphismus r\klmorphismus y * \klmorphismus r\klmorphismus z)\bigr)\\
    &= \klmorphismus x\bullet(\klmorphismus y \bullet\klmorphismus z).
  \end{align*}
  \qed
\end{proof}
\begin{corollary}
  The mapping $\klmorphismus r$ is a category homomorphism from
  $\kategorie K$ with the concatenation $*$ to
  $\klmorphismus r[\kategorie K]$ with the concatenation $\bullet$.
\end{corollary}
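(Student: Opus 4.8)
The plan is to verify directly that $\klmorphismus r$ satisfies the four defining conditions of a category homomorphism, namely preservation of source, target, identities, and composition; since almost all of the necessary content is already contained in Lemma~\ref{lemma:16} and Corollary~\ref{corr:Rechenregeln-r-n}, the argument reduces to assembling these pieces. On objects $\klmorphismus r$ acts as the identity, and this is consistent with the arrow map because the irreducible representative of an arrow runs between the same pair of vertices: by the definition of reducibility an arrow $\klmorphismus a∈\Mor{\kategorie K}xy$ is written as $\klmorphismus a=\klmorphismus b*\klmorphismus y$ with $\klmorphismus b∈\Mor{\kategorie K}xy$ and $\klmorphismus y$ a loop at $y$, whence $\klmorphismus r(\klmorphismus a)=\klmorphismus b$ again goes from $x$ to $y$. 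Thus $\moranfang(\klmorphismus r\klmorphismus a)=\moranfang\klmorphismus a$ and $\morende(\klmorphismus r\klmorphismus a)=\morende\klmorphismus a$, so source and target are preserved.

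Next I would record that identities are preserved: Equation~\eqref{eq:37} of Corollary~\ref{corr:Rechenregeln-r-n} gives $\klmorphismus r(\id_x)=\id_x$, which — since $\klmorphismus r$ fixes objects — is exactly the required compatibility with identity loops. The only substantive point is compatibility with composition, i.e. $\klmorphismus r(\klmorphismus x*\klmorphismus y)=\klmorphismus r(\klmorphismus x)\bullet\klmorphismus r(\klmorphismus y)$ whenever $\klmorphismus x*\klmorphismus y$ is defined. Here I would unfold the definition of $\bullet$ from Lemma~\ref{lemma:16}, which reads $\klmorphismus r(\klmorphismus x)\bullet\klmorphismus r(\klmorphismus y)=\klmorphismus r\bigl(\klmorphismus r(\klmorphismus x)*\klmorphismus r(\klmorphismus y)\bigr)$, and then invoke the identity $\klmorphismus r(\klmorphismus x*\klmorphismus y)=\klmorphismus r\bigl(\klmorphismus r\klmorphismus x*\klmorphismus r\klmorphismus y\bigr)$ that was established in the course of proving Lemma~\ref{lemma:16} (using right-normality via $\klmorphismus C'$ to commute the loop factor $\klmorphismus n\klmorphismus x$ past $\klmorphismus r\klmorphismus y$ and then Corollary~\ref{corr:r-n-1} to absorb the trailing loops $\klmorphismus n\klmorphismus x$, $\klmorphismus n\klmorphismus y$). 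Chaining the two equalities yields the composition law.

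There is no genuine obstacle; the corollary is essentially a repackaging of Lemma~\ref{lemma:16}. The only subtleties worth flagging are that $\klmorphismus r$ is \emph{total} under the standing hypothesis that $\orbitfaltigkeit{\kategorie K}{\gruppe}$ is uniquely representable by irreducible arrows, so the homomorphism is everywhere defined, and that the composite $\klmorphismus r(\klmorphismus x)\bullet\klmorphismus r(\klmorphismus y)$ is defined exactly when $\klmorphismus x*\klmorphismus y$ is — a fact that follows immediately from the already-verified preservation of sources and targets. Collecting these observations completes the proof.
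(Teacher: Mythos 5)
Your proposal is correct and follows essentially the route the paper intends: the corollary is left without an explicit proof because its substance is the displayed computation in Lemma~\ref{lemma:16} showing $\klmorphismus x\bullet\klmorphismus y=\klmorphismus r\klmorphismus x\bullet\klmorphismus r\klmorphismus y$, which together with the definition $\klmorphismus x\bullet\klmorphismus y=\klmorphismus r(\klmorphismus x*\klmorphismus y)$ is exactly the composition law, and you correctly supplement this with the routine checks on objects, sources, targets and identities via Corollary~\ref{corr:Rechenregeln-r-n}.
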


\begin{definition}
  Let $\kategorie K$ be a category and $\gruppe\leq\Aut\kategorie K$ a
  translative automorphism group that acts normal on $\kategorie
  K$. Let further $\orbitfaltigkeit{\kategorie K}{\gruppe}$
  representable by irreducible arrows. Then we call the category
  $\klmorphismus r[\orbitfaltigkeit{\kategorie K}{\gruppe}]$ with the
  concatenation from Lemma~\ref{lemma:16} \defindex{flat orbit
    category} of $\kategorie K$.
\end{definition}

\noindent In a similar way as the orbit categories, also flat orbit categories
can be unfolded. and both unfoldings lead to isomorphic
categories. This isomorphism is presented here, while more information
about the resulting representations is given in the next section.

\begin{lemma}\label{lemma:18}
  Let $\kategorie K$ be a category, $\gruppe\leq \Aut\kategorie K$ a translative automorphism group that acts normal on $\kategorie K$ such that $\orbitfaltigkeit{\kategorie K}{\gruppe}$ is uniquely representable by irreducible arrows, $\tupel{\orbitfaltigkeit{\kategorie K}{\gruppe},A,\gruppe}$ a representation of $\kategorie K$,
  and $\klmorphismus C$ the mapping from Definition~\ref{def:normal-group-action-category}. 
  Under the conditions of Lemma~\ref{lemma:16} let
  \begin{equation}
    \label{eq:24}
    \wabbildung{\abbildung{\hat{\klmorphismus n}}{\Morall{\kategorie K}\times\Morall{\kategorie K}}{\Morall{\kategorie K}}}
    {\paar{\klmorphismus a}{\klmorphismus b}}{\klmorphismus n(\klmorphismus a*\klmorphismus b)}.
  \end{equation}
  Then, the mapping 
  \begin{equation}
    \wabbildung{\abbildung{φ}{\klmorphismus r[\orbitfaltigkeit{\kategorie K}{\gruppe}]\times \orbitfaltigkeit{\erzkategorie{\kategorie K}{\gruppe}}{\gruppe}}{\orbitfaltigkeit{\kategorie K}{\gruppe}}}
    {\paar {\klmorphismus x}{\klmorphismus y}}{\klmorphismus x*\klmorphismus y}\label{eq:29}
  \end{equation}
  from the orbit category $\orbitfaltigkeit {\kategorie K}{\gruppe}$ into the product category
  $\klmorphismus r[\orbitfaltigkeit{\kategorie K}{\gruppe}]\times \orbitfaltigkeit{\erzkategorie{\kategorie K}{\gruppe}}{\gruppe}$ with the concatenation
  \begin{equation}
    \label{eq:18}
    \paar{\klmorphismus r\klmorphismus a}{\klmorphismus x}*\paar{\klmorphismus r\klmorphismus b}{\klmorphismus y}
    = \paar[big]{\klmorphismus r(\klmorphismus r\klmorphismus a * \klmorphismus r\klmorphismus b)}
    {\hat{\klmorphismus n}\paar{\klmorphismus r\klmorphismus a}{\klmorphismus r\klmorphismus b}*\klmorphismus C\paar{\klmorphismus r\klmorphismus b}{\klmorphismus x}*\klmorphismus{y}}
  \end{equation}
  is an isomorphism.
\end{lemma}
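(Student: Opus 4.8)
The plan is to show that $φ$ is a bijection on objects and arrows and that it preserves the concatenation; since a bijective functor has a functorial inverse, this already gives that $φ$ is an isomorphism in the sense of the preliminaries, and it simultaneously shows that the twisted operation \eqref{eq:18} really is a category concatenation, its associativity and well-definedness being inherited from those of $\orbitfaltigkeit{\kategorie K}{\gruppe}$ by transport along $φ$. The device that drives everything is the factorisation coming from unique representability (Definition~\ref{def:r-n}): every arrow $\klmorphismus c$ of $\orbitfaltigkeit{\kategorie K}{\gruppe}$ splits uniquely as $\klmorphismus c=\klmorphismus r\klmorphismus c*\klmorphismus n\klmorphismus c$ into an irreducible arrow $\klmorphismus r\klmorphismus c$ and a loop $\klmorphismus n\klmorphismus c$, which yields the candidate inverse $\klmorphismus c\mapsto\paar{\klmorphismus r\klmorphismus c}{\klmorphismus n\klmorphismus c}$. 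Well-definedness of $φ$ is immediate: for a pair $\paar{\klmorphismus r\klmorphismus a}{\klmorphismus x}$ the loop $\klmorphismus x$ sits at $\morende\klmorphismus r\klmorphismus a$, so $\klmorphismus r\klmorphismus a*\klmorphismus x$ exists; on objects $φ$ is the evident identification and is bijective, so the content lies in the arrows.

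For bijectivity on arrows I would use uniqueness of the factorisation in both directions. If $\klmorphismus a$ is irreducible and $\klmorphismus x$ a loop, then $\klmorphismus r(\klmorphismus a*\klmorphismus x)=\klmorphismus a$ and $\klmorphismus n(\klmorphismus a*\klmorphismus x)=\klmorphismus x$, since $\klmorphismus a*\klmorphismus x$ is already displayed as irreducible-times-loop and that representation is unique (cf.\ Corollary~\ref{corr:r-n-1}). Hence $φ\paar{\klmorphismus r\klmorphismus c}{\klmorphismus n\klmorphismus c}=\klmorphismus c$, while conversely $\paar{\klmorphismus a}{\klmorphismus x}\mapsto\paar{\klmorphismus r(\klmorphismus a*\klmorphismus x)}{\klmorphismus n(\klmorphismus a*\klmorphismus x)}$ recovers $\paar{\klmorphismus a}{\klmorphismus x}$, so $φ$ and $\klmorphismus c\mapsto\paar{\klmorphismus r\klmorphismus c}{\klmorphismus n\klmorphismus c}$ are mutually inverse.

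The heart of the argument is preservation of concatenation. Applying $φ$ to the right-hand side of \eqref{eq:18}, using $\hat{\klmorphismus n}\paar{\klmorphismus r\klmorphismus a}{\klmorphismus r\klmorphismus b}=\klmorphismus n(\klmorphismus r\klmorphismus a*\klmorphismus r\klmorphismus b)$ from \eqref{eq:24} and the decomposition $\klmorphismus r\klmorphismus z*\klmorphismus n\klmorphismus z=\klmorphismus z$, I obtain
\begin{align*}
  φ\bigl(\paar{\klmorphismus r\klmorphismus a}{\klmorphismus x}*\paar{\klmorphismus r\klmorphismus b}{\klmorphismus y}\bigr)
  &= \klmorphismus r(\klmorphismus r\klmorphismus a*\klmorphismus r\klmorphismus b)*\klmorphismus n(\klmorphismus r\klmorphismus a*\klmorphismus r\klmorphismus b)*\klmorphismus C\paar{\klmorphismus r\klmorphismus b}{\klmorphismus x}*\klmorphismus y\\
  &= \klmorphismus r\klmorphismus a*\klmorphismus r\klmorphismus b*\klmorphismus C\paar{\klmorphismus r\klmorphismus b}{\klmorphismus x}*\klmorphismus y,
\end{align*}
whereas, pushing the loop $\klmorphismus x$ past the irreducible arrow $\klmorphismus r\klmorphismus b$ by the normality relation \eqref{eq:12a} of Definition~\ref{def:normal-group-action-category} (taken with $\klmorphismus a^{\gruppe}:=\klmorphismus r\klmorphismus b$ and the loop $\klmorphismus x$ at $\moranfang\klmorphismus r\klmorphismus b$),
\begin{align*}
  φ\paar{\klmorphismus r\klmorphismus a}{\klmorphismus x}*φ\paar{\klmorphismus r\klmorphismus b}{\klmorphismus y}
  &= \klmorphismus r\klmorphismus a*\klmorphismus x*\klmorphismus r\klmorphismus b*\klmorphismus y
   = \klmorphismus r\klmorphismus a*\klmorphismus r\klmorphismus b*\klmorphismus C\paar{\klmorphismus r\klmorphismus b}{\klmorphismus x}*\klmorphismus y.
\end{align*}
The two expressions coincide, and $φ$ visibly sends identities to identities, so $φ$ is a homomorphism and therefore an isomorphism.

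The step I expect to be the main obstacle is the bookkeeping in this last computation: one must check that every partial operation is applied where it is defined and that the hypotheses of \eqref{eq:12a} are met — in particular that, whenever the concatenation in \eqref{eq:18} is defined, the loop $\klmorphismus x$ really is a loop at $\moranfang\klmorphismus r\klmorphismus b$, so that $\klmorphismus C\paar{\klmorphismus r\klmorphismus b}{\klmorphismus x}$ exists and is again a loop at $\morende\klmorphismus r\klmorphismus b$. The reduction identities of Corollary~\ref{corr:Rechenregeln-r-n} and Lemma~\ref{lemma:16} handle the interplay of $\klmorphismus r$ and $\klmorphismus n$, while translativity and semi-regularity guarantee the orbit and source/target conditions; once these matching conditions are in place the algebra is forced.
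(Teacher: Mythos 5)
Your proposal is correct and follows essentially the same route as the paper: the homomorphism computation is the identical chain $φ\paar{\klmorphismus r\klmorphismus a}{\klmorphismus x}*φ\paar{\klmorphismus r\klmorphismus b}{\klmorphismus y}=\klmorphismus r\klmorphismus a*\klmorphismus r\klmorphismus b*\klmorphismus C\paar{\klmorphismus r\klmorphismus b}{\klmorphismus x}*\klmorphismus y=φ\bigl(\paar{\klmorphismus r\klmorphismus a}{\klmorphismus x}*\paar{\klmorphismus r\klmorphismus b}{\klmorphismus y}\bigr)$, using \eqref{eq:12a} to push the loop past the irreducible arrow and the $\klmorphismus r$/$\klmorphismus n$ splitting to recognise the result. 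For bijectivity the paper cites Lemma~\ref{lemma:13}, Lemma~\ref{lemma:eind.reduzibel-phi-injektiv} and Lemma~\ref{lemma:15}, whereas you exhibit the inverse $\klmorphismus c\mapsto\paar{\klmorphismus r\klmorphismus c}{\klmorphismus n\klmorphismus c}$ directly from unique representability — the same underlying fact, stated slightly more explicitly.
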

\begin{proof}
  Let
  $\paar {\klmorphismus r\klmorphismus a}{\klmorphismus x}, \paar {\klmorphismus
    r\klmorphismus b}{\klmorphismus y}∈\klmorphismus r[\orbitfaltigkeit{\kategorie K}{\gruppe}]\times \orbitfaltigkeit{\erzkategorie{\kategorie K}{\gruppe}}{\gruppe}$. Then,
  \begin{align*}
    φ\paar {\klmorphismus r\klmorphismus a}{\klmorphismus x}*φ \paar {\klmorphismus
    r\klmorphismus b}{\klmorphismus y}
    &= (\klmorphismus r\klmorphismus a*\klmorphismus x) * ({\klmorphismus
      r\klmorphismus b}*{\klmorphismus y})
    = \klmorphismus r\klmorphismus a * \klmorphismus
      r\klmorphismus b * \klmorphismus C\paar{\klmorphismus r\klmorphismus b}{\klmorphismus x} *\klmorphismus y\\
    &= \klmorphismus r(\klmorphismus r\klmorphismus a * \klmorphismus
      r\klmorphismus b) *\klmorphismus n(\klmorphismus r\klmorphismus a * \klmorphismus
      r\klmorphismus b) * \klmorphismus C\paar{\klmorphismus r\klmorphismus b}{\klmorphismus x} *\klmorphismus y\\
    &= φ\paar[big]{\klmorphismus r(\klmorphismus r\klmorphismus a * \klmorphismus
      r\klmorphismus b)}{\klmorphismus n(\klmorphismus r\klmorphismus a * \klmorphismus
      r\klmorphismus b) * \klmorphismus C\paar{\klmorphismus r\klmorphismus b}{\klmorphismus x} *\klmorphismus y}\\
    &= φ\bigl(\paar{\klmorphismus r\klmorphismus a}{\klmorphismus x}
      *\paar{\klmorphismus r\klmorphismus b}{\klmorphismus y}\bigr).
  \end{align*}

  According to
  Lemma~\ref{lemma:13} and Lemma~\ref{lemma:eind.reduzibel-phi-injektiv} a category homomorphism
  $\abbildung {ψ_{\klmorphismus
      x_2}}{\Morall{\erzkategorie{\orbitfaltigkeit{\kategorie
          K}{\gruppe}}{\moranfang\klmorphismus
        x_2}}}{\Morall{\erzkategorie{\orbitfaltigkeit{\kategorie
          K}{\gruppe}}{\morende\klmorphismus x_2}}}$ exists such that
  $\klmorphismus y_1 * \klmorphismus x_2 = \klmorphismus x_2 *
  ψ_{\klmorphismus x_2}\klmorphismus y_1$.
  Lemma~\ref{lemma:15} assures that this arrow is unique as the
  arrow $\klmorphismus x_2 * ψ_{\klmorphismus x_2}\klmorphismus y_1$
  has a unique representation in the form \eqref{eq:17}. For the same reason the representation
  $\klmorphismus x_1 * \klmorphismus x_2 *ψ_{\klmorphismus
    x_2}\klmorphismus y_1 * \klmorphismus y_2$ is unique.
  \qed
\end{proof}
\begin{corollary}\label{cor:eq:31}
  Let $\abbildung{A}{\orbitfaltigkeit{\kategorie K}{\gruppe}}{\gruppe}$ be an annotation. Then
  \begin{equation}
    \label{eq:31}
    A(\klmorphismus r\klmorphismus b)A(\klmorphismus r\klmorphismus a) = A(\klmorphismus r\klmorphismus a * \klmorphismus r\klmorphismus b) = A\bigl(\hat{\klmorphismus n}(\klmorphismus r\klmorphismus a * \klmorphismus r\klmorphismus b)\bigr)
    A\bigl(\klmorphismus r(\klmorphismus r \klmorphismus a *\klmorphismus r\klmorphismus b)\bigr)
  \end{equation}
\end{corollary}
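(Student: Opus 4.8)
The plan is to derive both equalities directly from two facts already established in the excerpt: that an annotation $A$ is by definition a \emph{contravariant} homomorphism into $\gruppe$, so that $A(\klmorphismus p * \klmorphismus q) = A(\klmorphismus q)\,A(\klmorphismus p)$ for every composable pair, and that $\klmorphismus r$ and $\klmorphismus n$ factor an arrow as in Definition~\ref{def:r-n}. No further machinery is needed; the corollary is essentially a bookkeeping consequence of these two properties.

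First I would prove the left equality. Since $\klmorphismus r\klmorphismus a * \klmorphismus r\klmorphismus b$ is a single composite arrow of $\orbitfaltigkeit{\kategorie K}{\gruppe}$, contravariance of $A$ applied with $\klmorphismus p = \klmorphismus r\klmorphismus a$ and $\klmorphismus q = \klmorphismus r\klmorphismus b$ yields $A(\klmorphismus r\klmorphismus a * \klmorphismus r\klmorphismus b) = A(\klmorphismus r\klmorphismus b)\,A(\klmorphismus r\klmorphismus a)$, which is exactly the first stated identity. For the right equality I would invoke the defining factorisation of Definition~\ref{def:r-n}, namely $\klmorphismus x = \klmorphismus r(\klmorphismus x) * \klmorphismus n(\klmorphismus x)$, with $\klmorphismus x \definiert \klmorphismus r\klmorphismus a * \klmorphismus r\klmorphismus b$. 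Using that $\hat{\klmorphismus n}$ from Lemma~\ref{lemma:18} satisfies $\hat{\klmorphismus n}\paar{\klmorphismus r\klmorphismus a}{\klmorphismus r\klmorphismus b} = \klmorphismus n(\klmorphismus r\klmorphismus a * \klmorphismus r\klmorphismus b)$, this gives $\klmorphismus r\klmorphismus a * \klmorphismus r\klmorphismus b = \klmorphismus r(\klmorphismus r\klmorphismus a * \klmorphismus r\klmorphismus b) * \hat{\klmorphismus n}\paar{\klmorphismus r\klmorphismus a}{\klmorphismus r\klmorphismus b}$, and applying $A$ contravariantly once more produces $A\bigl(\hat{\klmorphismus n}(\klmorphismus r\klmorphismus a * \klmorphismus r\klmorphismus b)\bigr)\,A\bigl(\klmorphismus r(\klmorphismus r\klmorphismus a * \klmorphismus r\klmorphismus b)\bigr)$, matching the third expression.

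The one point requiring a moment of care — and the only candidate for an obstacle — is the boundary case in which $\klmorphismus r\klmorphismus a * \klmorphismus r\klmorphismus b$ is itself already irreducible, so that the factorisation of Definition~\ref{def:r-n} is not immediately supplied by the ``reducible'' clause. Here I would note that $\klmorphismus n$ of an irreducible arrow is the appropriate identity loop (by \eqref{eq:40}, together with \eqref{eq:37}), whence the factorisation still holds trivially as $\klmorphismus r(\klmorphismus x) * \id$, and $A$ of an identity loop is the neutral element, so the product collapses consistently. Beyond this, the only thing to watch is the reversal of composition order forced by contravariance, ensuring the two factors appear in the order $A(\klmorphismus n\cdots)\,A(\klmorphismus r\cdots)$ rather than the reverse; once that is tracked correctly the chain of equalities closes immediately.
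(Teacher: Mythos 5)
Your proposal is correct and matches the argument the paper intends: the corollary is stated without proof precisely because it follows from the contravariance of the annotation applied to the composite $\klmorphismus r\klmorphismus a * \klmorphismus r\klmorphismus b$ and to its factorisation $\klmorphismus r(\cdot)*\klmorphismus n(\cdot)$ from Definition~\ref{def:r-n}. Your extra remark about the irreducible boundary case (where $\klmorphismus n$ yields an identity loop annotated by the neutral element) is a sensible piece of due diligence that the paper leaves implicit.
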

\begin{corollary}\label{cor:eq:41}
  The mapping $\hat{\klmorphismus n}$ from \eqref{eq:24} fulfils the following equation:
  \begin{subequations}
    \begin{align}
      \hat{\klmorphismus n}\paar{\klmorphismus a}{\id_{\morende\klmorphismus a}}
      &= \klmorphismus \id_{\morende\klmorphismus a}\label{eq:41}\\
      \hat{\klmorphismus n}\paar{\id_{\moranfang\klmorphismus a}}{\klmorphismus a }
      &= \klmorphismus \id_{\morende\klmorphismus a}\label{eq:41a}
    \end{align} 
  \end{subequations}
\end{corollary}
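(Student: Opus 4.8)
The plan is to derive both identities by a direct computation, unfolding the definition \eqref{eq:24} of $\hat{\klmorphismus n}$ and then invoking the already established calculation rules of Corollary~\ref{corr:Rechenregeln-r-n}. The decisive point is that the arrow $\klmorphismus a$ occurring here is irreducible: in the product concatenation \eqref{eq:18} of Lemma~\ref{lemma:18} the map $\hat{\klmorphismus n}$ is only ever applied to arrows of the flat orbit category $\klmorphismus r[\orbitfaltigkeit{\kategorie K}{\gruppe}]$, so we may assume $\klmorphismus a = \klmorphismus r\klmorphismus a$. This irreducibility is exactly what forces the right-hand sides to collapse to the identity loop rather than to a non-trivial vertex loop.

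For \eqref{eq:41} I would first use \eqref{eq:24} to write
\[
  \hat{\klmorphismus n}\paar{\klmorphismus a}{\id_{\morende\klmorphismus a}}
  = \klmorphismus n\bigl(\klmorphismus a * \id_{\morende\klmorphismus a}\bigr).
\]
Since $\id_{\morende\klmorphismus a}$ is the identity loop at the terminal vertex of $\klmorphismus a$, the right identity law gives $\klmorphismus a * \id_{\morende\klmorphismus a} = \klmorphismus a$, so by \eqref{eq:38} the right-hand side equals $\klmorphismus n(\klmorphismus a)$. As $\klmorphismus a$ is irreducible we have $\klmorphismus a = \klmorphismus r\klmorphismus a$, whence \eqref{eq:40} yields $\klmorphismus n(\klmorphismus a) = \id_{\morende\klmorphismus a}$, proving the claim. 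For \eqref{eq:41a} I would argue dually: \eqref{eq:24} gives $\hat{\klmorphismus n}\paar{\id_{\moranfang\klmorphismus a}}{\klmorphismus a} = \klmorphismus n(\id_{\moranfang\klmorphismus a} * \klmorphismus a)$, the left identity law together with \eqref{eq:39} reduces $\klmorphismus n(\id_{\moranfang\klmorphismus a} * \klmorphismus a)$ to $\klmorphismus n(\klmorphismus a)$, and again \eqref{eq:40} gives $\klmorphismus n(\klmorphismus a) = \id_{\morende\klmorphismus a}$.

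There is essentially no genuine obstacle: the statement is a bookkeeping consequence of \eqref{eq:38}, \eqref{eq:39} and \eqref{eq:40}, and the only step that requires attention is recording that the normalisation $\klmorphismus n$ of an irreducible arrow is the corresponding identity loop, i.e. that we are entitled to invoke \eqref{eq:40}. I expect this corollary to be used in turn to verify that the pairs $\paar{\id_x}{\id_x}$ act as neutral elements for the product concatenation \eqref{eq:18}, so phrasing the two equations symmetrically (one for a right, one for a left identity factor) is exactly what is needed there.
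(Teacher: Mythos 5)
Your proof is correct and is exactly the immediate verification the paper intends (the corollary is stated there without proof): unfold the definition \eqref{eq:24}, absorb the identity factor via \eqref{eq:38} resp.\ \eqref{eq:39}, and conclude with \eqref{eq:40}. Your observation that invoking \eqref{eq:40} requires $\klmorphismus a$ to be irreducible — justified because $\hat{\klmorphismus n}$ is only ever applied to arrows of $\klmorphismus r[\orbitfaltigkeit{\kategorie K}{\gruppe}]$ in the concatenation \eqref{eq:18} — is a genuine and necessary precision that the paper leaves implicit, since for a reducible $\klmorphismus a$ the value $\klmorphismus n(\klmorphismus a)$ would be a non-identity loop.
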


\noindent Recall, that we aim at a description of orbit categories of
po-groups. As they form groups with binary relations on them, it is an
interesting question, in which case the orbit category can be
described by a binary relation. So, in which case is the orbit
category or the flat orbit category a binary relation aka simple
category? The following lemma should help.

\begin{lemma}\label{lemma:simele-flat-orbit-category-when-simple}
  Let $\kategorie K$ denote a category and $\gruppe\leq\Aut\kategorie K$ a
  translative automorphism group that acts normal on $\kategorie
  K$. Let further $\orbitfaltigkeit{\kategorie K}{\gruppe}$ be
  representable by irreducible arrows. If $\kategorie K$ is a simple
  category and for some vertex $x∈\Ob\kategorie K$ for any two
  vertices $x',x''∈\Ob\kategorie G_x$ there exists a vertex
  $y∈\morende[\ordnungsfilter[\kategorie G_x]{x'}]\cap
  \morende[\ordnungsfilter[\kategorie G_x]{x''}]$, then the flat orbit
  category $\klmorphismus r[\orbitfaltigkeit{\kategorie K}{\gruppe}]$
  is a simple category.
\end{lemma}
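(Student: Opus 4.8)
The plan is to prove that the flat orbit category is simple by showing that between any two of its objects there is at most one arrow. Since the arrows of $\klmorphismus r[\orbitfaltigkeit{\kategorie K}{\gruppe}]$ are exactly the irreducible arrows of $\orbitfaltigkeit{\kategorie K}{\gruppe}$, this amounts to bounding the number of irreducible arrows $x^{\gruppe}\to y^{\gruppe}$ by one. The case $x^{\gruppe}=y^{\gruppe}$ is already settled by the two corollaries to the definition of reducibility: every non-identity loop is reducible and every identity loop is irreducible, so each object carries a single irreducible loop. Hence I would concentrate on two distinct objects $x^{\gruppe}$ and $y^{\gruppe}$.

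First I would set up a concrete description of $\Mor{\orbitfaltigkeit{\kategorie K}{\gruppe}}{x^{\gruppe}}{y^{\gruppe}}$. Fix a representative $x$ of $x^{\gruppe}$. Because $\gruppe$ acts semi-regularly, each such arrow orbit has a unique representative $\klmorphismus a$ with $\moranfang\klmorphismus a=x$, and since $\kategorie K$ is simple this representative is pinned down by its endpoint $\morende\klmorphismus a$, which I call $y'\in y^{\gruppe}$. Thus these arrow orbits correspond bijectively to $Y\definiert\Menge{y'\in y^{\gruppe}}{\Mor{\kategorie K}{x}{y'}\neq\emptyset}$. The decisive translation is that the orbit of $\klmorphismus a$ with endpoint $y'$ is reducible exactly when some $y''\in Y$ with $y''\neq y'$ admits an arrow $y''\to y'$ inside $y^{\gruppe}$: as $\kategorie K$ is simple its only loops are identities, so a non-identity loop at $y^{\gruppe}$ lifts to a genuine arrow between two distinct vertices of $y^{\gruppe}$, and peeling it off writes $\klmorphismus a$ as $\klmorphismus b$ (reaching $y''$) followed by $y''\to y'$. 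Consequently an arrow orbit is irreducible if and only if its endpoint $y'$ is minimal in $Y$, in the sense that no distinct $y''\in Y$ maps into it.

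Next I would use the hypothesis. Since $\gruppe$ is translative, all vertex categories are pairwise isomorphic, so the directedness assumed for $\kategorie G_x$ carries over to $\kategorie G_y$: any two vertices of $y^{\gruppe}$ possess a common target in $y^{\gruppe}$. Let $\klmorphismus a_1$ and $\klmorphismus a_2$ (with endpoints $y'_1$ and $y'_2$) represent two irreducible arrows $x^{\gruppe}\to y^{\gruppe}$, so that $y'_1$ and $y'_2$ are minimal in $Y$. Directedness yields $w\in y^{\gruppe}$ together with arrows $\klmorphismus n_1\colon y'_1\to w$ and $\klmorphismus n_2\colon y'_2\to w$; composing $x\to y'_i\to w$ shows $w\in Y$. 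Since $\kategorie K$ is simple there is a single arrow $x\to w$, so $\klmorphismus a_1*\klmorphismus n_1=\klmorphismus a_2*\klmorphismus n_2$ in $\kategorie K$, and passing to orbits gives one arrow $\klmorphismus c^{\gruppe}$ of $\orbitfaltigkeit{\kategorie K}{\gruppe}$ with $\klmorphismus c^{\gruppe}=\klmorphismus a_1^{\gruppe}*\klmorphismus n_1^{\gruppe}=\klmorphismus a_2^{\gruppe}*\klmorphismus n_2^{\gruppe}$, each $\klmorphismus n_i^{\gruppe}$ being a loop at $y^{\gruppe}$.

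Finally I would invoke unique representability. These two equalities display $\klmorphismus c^{\gruppe}$ as an irreducible arrow followed by a loop in two ways, with irreducible parts $\klmorphismus a_1^{\gruppe}$ and $\klmorphismus a_2^{\gruppe}$. Uniqueness of such a representation---which is exactly what makes the reduction map $\klmorphismus r$, and hence the flat orbit category, well defined---forces both to equal $\klmorphismus r(\klmorphismus c^{\gruppe})$, so $\klmorphismus a_1^{\gruppe}=\klmorphismus a_2^{\gruppe}$, as desired. I expect the main obstacle to be the interplay of the two ingredients in the last two paragraphs: directedness is needed to fuse two a priori unrelated irreducible arrows into factorisations of one common arrow, while unique representability is needed to conclude from there. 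Neither can be dropped: for the non-sublattice subgroup $\erzgruppe{\menge{\paar 12,\paar 21}}$ of $\ganzzahlen\times\ganzzahlen$ the orbits are still directed, yet $Y$ acquires two minimal elements and $\orbitfaltigkeit{\kategorie K}{\gruppe}$ ceases to be uniquely representable, so the flat orbit category is not even defined.
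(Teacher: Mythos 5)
Your proposal is correct and follows essentially the same route as the paper's proof: lift two putative distinct irreducible arrows to representatives with a common source, use the directedness hypothesis (transported to $\kategorie G_y$ by translativity) to find a common target of their endpoints, identify the two composites by simplicity of $\kategorie K$, and conclude from unique representability that the irreducible parts coincide. You are somewhat more careful than the paper in spelling out the loop case, the transfer of directedness between vertex categories, and the fact that \emph{unique} representability (which the lemma's hypothesis states only as ``representable'' but which is needed for $\klmorphismus r$ and hence the flat orbit category to be defined at all) is what closes the argument.
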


\begin{proof}
  Suppose $\klmorphismus r[\orbitfaltigkeit{\kategorie K}{\gruppe}]$
  is not a simple category. Then there are two vertices
  $x,y∈\Ob\klmorphismus r[\orbitfaltigkeit{\kategorie K}{\gruppe}]$
  and two different arrows
  $\klmorphismus a,\klmorphismus b ∈\Mor{\klmorphismus
    r[\orbitfaltigkeit{\kategorie K}{\gruppe}]}xy$. Additionally, we
  can find vertices $x'∈x$ and $y',y''∈y$ and arrows
  $\klmorphismus a'∈\Mor{\kategorie K}{x'}{y'}\cap\klmorphismus a$ and
  $\klmorphismus b'∈\Mor{\kategorie K}{x'}{y''}\cap\klmorphismus
  b$. As $\kategorie K$ is a simple category from
  $\klmorphismus a\neq\klmorphismus b$ follows $y'\neq y''$. Then we know the following facts:

  \begin{itemize}
  \item There is no direct arrow from $y'$ to $y''$. Otherwise, the identity $\klmorphismus a = \klmorphismus
    b$ holds, as $\orbitfaltigkeit{\kategorie K}{\gruppe}$ is representable by irreducible arrows. And,
  \item If there exists a vertex $z∈y$ with arrows
    \begin{tikzcd}[column sep=small]
      y'\arrow{r}{\klmorphismus z_1}&z&y''\arrow[swap]{l}{\klmorphismus z_2},
    \end{tikzcd}
    then the equation
    $\klmorphismus a'*\klmorphismus z_1 = \klmorphismus
    b'*\klmorphismus z_2$ holds as $\kategorie K$ is a simple
    category. Since $\orbitfaltigkeit{\kategorie K}{\gruppe}$ is
    uniquely representable by irreducible arrows at least one of
    $\klmorphismus a'$ or $\klmorphismus b'$ must be reducible. Thus,
    $\klmorphismus a = \klmorphismus b$.
  \end{itemize}

  As there cannot be two distinct arrows in the same direction between
  the same two vertices, the category
  $\klmorphismus r[\orbitfaltigkeit{\kategorie K}{\gruppe}]$ is a
  simple category.
  \qed
\end{proof}

\begin{corollary}
  If $\kategorie K$ is a directed po-group, the flat orbit
  category is a simple category.
\end{corollary}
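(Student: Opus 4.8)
The plan is to obtain the corollary as a specialisation of Lemma~\ref{lemma:simele-flat-orbit-category-when-simple}: the task is to check that a directed po-group $\kategorie K$, together with a normal subgroup $\gruppe$ acting by right multiplication, satisfies every hypothesis of that lemma.

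I would first dispose of the structural hypotheses. Viewed as a category, a po-group has at most one arrow between any two objects, because a second arrow $x\to y$ would duplicate the single witness of $x\leq y$; thus $\kategorie K$ is a simple category. That $\gruppe$ acts translatively is Lemma~\ref{lemma:orderd-group-translative}, and that it acts right-normal is the corollary to Lemma~\ref{lemma:subgroup-of-groupal-is-right-normal}. Representability of $\orbitfaltigkeit{\kategorie K}{\gruppe}$ by irreducible arrows is not a further obligation, since it is presupposed by the very phrase ``flat orbit category''.

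What remains is the directedness condition, and this is the step I expect to carry the real weight. Fixing a vertex $x$, I would use that the vertex category $\kategorie G_x=\erzkategorie{\kategorie K}{x^{\gruppe}}$ of a po-group factored by a normal subgroup is again a po-group, whose order is the restriction of $\leq$ to the orbit $x^{\gruppe}$; comparisons $xn_1\leq xn_2$ inside this coset reduce to comparisons $n_1\leq n_2$ in the acting subgroup. For two objects $x',x''\in\Ob\kategorie G_x$ the condition of the lemma asks for a common upper bound that again lies in $x^{\gruppe}$, namely a vertex $y\in\morende[\ordnungsfilter[\kategorie G_x]{x'}]\cap\morende[\ordnungsfilter[\kategorie G_x]{x''}]$. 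Directedness of $\kategorie K$ supplies a common upper bound of $x'$ and $x''$ in the ambient group; the main obstacle is to secure such a bound already within the single orbit, because the coslice operators $\ordnungsfilter[\kategorie G_x]{\cdot}$ only register arrows internal to $\kategorie G_x$. I would resolve this by transporting the directedness through the identification of $\kategorie G_x$ with the acting subgroup under its induced order and producing the bound there. Once the common upper bound is located inside $\kategorie G_x$, all premisses of Lemma~\ref{lemma:simele-flat-orbit-category-when-simple} hold, and its conclusion is exactly that the flat orbit category $\klmorphismus r[\orbitfaltigkeit{\kategorie K}{\gruppe}]$ is simple.
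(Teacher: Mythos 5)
Your overall strategy --- reading the corollary as a specialisation of Lemma~\ref{lemma:simele-flat-orbit-category-when-simple} and checking its hypotheses one by one --- is certainly what the paper intends, since it offers no separate proof. The routine hypotheses are discharged correctly: a po-group viewed as a category is simple, translativity is Lemma~\ref{lemma:orderd-group-translative}, right-normality is the corollary to Lemma~\ref{lemma:subgroup-of-groupal-is-right-normal}, and representability by irreducible arrows is built into the phrase ``flat orbit category''.

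The gap is in the one step you yourself flag as carrying the weight. The lemma demands a common upper bound of $x'$ and $x''$ \emph{inside the orbit} $x^{\gruppe}$, i.e.\ directedness of the vertex category $\kategorie G_x$, which under the identification $n\mapsto xn$ is directedness of the acting subgroup in its \emph{induced} order. Your proposal to ``transport the directedness through the identification with the acting subgroup and produce the bound there'' is not an argument but a restatement of the obligation, and the transport fails in general: a subgroup of a directed po-group need not be directed in the induced order. Take $\kategorie K=\ganzzahlen\times\ganzzahlen$ with the product order (directed, even an $\ell$-group) and the normal subgroup $\gruppe$ generated by $\paar{1}{-1}$; the induced order on $\gruppe$ is equality, so no two distinct orbit elements have a common upper bound in the orbit. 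In this example all vertex monoids of $\orbitfaltigkeit{\kategorie K}{\gruppe}$ are trivial, so the orbit category is (vacuously) uniquely representable by irreducible arrows, $\klmorphismus r$ is the identity, and the flat orbit category retains the two parallel arrows $\paar{\paar 00}{\paar 10}^{\gruppe}$ and $\paar{\paar 00}{\paar 01}^{\gruppe}$ from $\paar 00^{\gruppe}$ to $\paar 10^{\gruppe}$ --- it is not simple. So the step cannot be repaired from directedness of $\kategorie K$ alone; one needs the additional hypothesis that the acting subgroup is itself directed under the induced order (as it is, e.g., for the lattice-ordered normal subgroups of Lemma~\ref{lemma:15}). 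Your proof should either supply and use that hypothesis explicitly or flag that the corollary as stated presupposes it.
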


\noindent Also circles simplify the structure of flat orbit categories a lot.

\begin{lemma}\label{lemma:flat-categories-no-parallel-arrows-in-circles}
  Let $\kategorie K$ a category that is representable by
  irreducible arrows. Then each subcategory of
  $\klmorphismus r[\kategorie K]$ that is generated by a finite circle, is
  simple.
\end{lemma}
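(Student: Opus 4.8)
The plan is to exploit that, in the flat category $\klmorphismus r[\kategorie K]$, every arrow is irreducible, which forces all loops to be identities. First I would note that every arrow of $\klmorphismus r[\kategorie K]$ lies in the image of $\klmorphismus r$ and is hence irreducible by Definition~\ref{def:r-n}, and that the concatenation $\klmorphismus a\bullet\klmorphismus b=\klmorphismus r(\klmorphismus a*\klmorphismus b)$ of Lemma~\ref{lemma:16} is again a value of $\klmorphismus r$, thus irreducible as well. Since every non-identity loop is reducible while identity loops are irreducible, the only loops occurring in $\klmorphismus r[\kategorie K]$ are the identities $\id_x$; recall also $\klmorphismus r(\id_x)=\id_x$ from~\eqref{eq:37}.

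Next I would fix a finite directed circle with pairwise distinct vertices $v_0,\dots,v_{k-1}$, where $v_k=v_0$, and generating arrows $\klmorphismus a_i\in\Mor{\klmorphismus r[\kategorie K]}{v_{i-1}}{v_i}$. Because the vertices are distinct, two generators $\klmorphismus a_i$ and $\klmorphismus a_j$ are composable only when $j=i+1$ modulo $k$; hence every composite of generators is a forward walk around the circle. Writing $\klmorphismus p_m^{(i)}$ for the $\bullet$-composite of $m$ consecutive generators starting at $v_i$ (with $\klmorphismus p_0^{(i)}=\id_{v_i}$), the subcategory generated by the circle consists exactly of these arrows, and $\klmorphismus p_m^{(i)}$ runs from $v_i$ to $v_{(i+m)\bmod k}$. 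The decisive observation is that the once-around composite $\klmorphismus p_k^{(i)}$ is a loop at $v_i$; being a value of $\bullet$ it is irreducible, so $\klmorphismus p_k^{(i)}=\id_{v_i}$.

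Finally I would conclude simplicity by showing that the walk $\klmorphismus p_m^{(i)}$ depends only on $m$ modulo $k$. Indeed, appending a full turn merely glues on an identity loop: $\klmorphismus p_{m+k}^{(i)}=\klmorphismus p_m^{(i)}\bullet\klmorphismus p_k^{((i+m)\bmod k)}=\klmorphismus p_m^{(i)}$. Therefore, between any two circle vertices $v_i$ and $v_j$ there is precisely one arrow in the generated subcategory, namely $\klmorphismus p_{(j-i)\bmod k}^{(i)}$, which is exactly what simplicity asserts. The step I expect to carry the real weight is the collapse in this last paragraph: a priori the forward walks $\klmorphismus p_m^{(i)}$ with $m\equiv j-i$ could yield infinitely many distinct arrows from $v_i$ to $v_j$, and it is precisely the loop-is-identity fact from the second paragraph that makes them all coincide; the accompanying index arithmetic modulo $k$ is then routine.
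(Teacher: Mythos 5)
Your proof is correct, but it follows a genuinely different route from the one in the paper. The paper argues locally: given two parallel arrows $\klmorphismus a,\klmorphismus b$ from $x$ to $y$ in the generated subcategory and a return arrow $\klmorphismus c$ from $y$ to $x$, it notes that $\klmorphismus a*\klmorphismus c$ and $\klmorphismus c*\klmorphismus b$ are loops and computes $\klmorphismus a=\klmorphismus r\bigl(\klmorphismus a*(\klmorphismus c*\klmorphismus b)\bigr)=\klmorphismus r\bigl(\klmorphismus b*\klmorphismus C\paar{\klmorphismus b}{\klmorphismus a*\klmorphismus c}\bigr)=\klmorphismus b$, i.e.\ it uses the right-normality map $\klmorphismus C$ to push the loop $\klmorphismus a*\klmorphismus c$ past $\klmorphismus b$ and lets $\klmorphismus r$ absorb the resulting trailing loop. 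You instead enumerate the generated subcategory as forward walks around the cycle, observe that the once-around composite is an irreducible loop and hence an identity, and conclude by periodicity. Both arguments live in the same implicit setting: the lemma's stated hypothesis does not literally suffice, since $\klmorphismus r$, $\bullet$ and its associativity presuppose unique representability and a normal translative action as in Lemma~\ref{lemma:16}, so your appeal to associativity smuggles $\klmorphismus C$ back in, just less visibly. What your route buys is the sharper structural fact that the full turn collapses to the identity, together with a proof that never manipulates $\klmorphismus C$ directly. What the paper's route buys is independence from the shape of the circle: it does not care whether the circle revisits vertices or how the two parallel arrows arise as composites, whereas your composability analysis (every composite is a forward walk) relies on the vertices being pairwise distinct, so a reader who takes \emph{circle} to mean an arbitrary closed walk would still need the paper's reduction to two antiparallel arrows.
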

\begin{proof}
  In a category every circle can be reduced to two antiparallel arrows
  between two arbitrary vertices of the circle. If
  $\klmorphismus r[\kategorie K]$ contains a circle with two parallel
  arrows, then we find vertices $x,y∈\Ob\klmorphismus r[\kategorie K]$
  and arrows
  $\klmorphismus a, \klmorphismus b∈\Mor{\klmorphismus r[\kategorie
    K]}{x}y$ and
  $\klmorphismus c∈\Mor{\klmorphismus r[\kategorie K]}yx$.  Then in
  $\kategorie K$ the arrows
  $\klmorphismus a *_{\kategorie K} \klmorphismus c$ and
  $\klmorphismus c *_{\kategorie K} \klmorphismus b$ are loops. This
  leads to the equation
  \begin{align*}
    \klmorphismus a = \klmorphismus r\bigl(
    \klmorphismus a *_{\kategorie K} (\klmorphismus c *_{\kategorie K} \klmorphismus b)\bigr)
    = \klmorphismus r\bigl((\klmorphismus a *_{\kategorie K} \klmorphismus c) *_{\kategorie K}\klmorphismus b\bigr)
    = \klmorphismus r\bigl(\klmorphismus b *_{\kategorie K}
    \klmorphismus C\paar{\klmorphismus b}{\klmorphismus a *_{\kategorie K} \klmorphismus c}\bigr)
    = \klmorphismus b
  \end{align*}
  Consequently, there can be only one arrow in each direction between
  two vertices of the circle in $\klmorphismus r[\kategorie K]$.
  \qed
\end{proof}

\noindent It is easy to see that the flat orbit category of an
$\ell$-group is isomorphic to the factor relation that arises from the
underlying simple category of $\kategorie K$ via factorisation by the
orbit partition of a normal subgroup $\kategorie N$ acting on
$\kategorie K$.

At the first glance it also suffers from the fact that
the direction information might get lost. However, any annotation on
the orbit category is still available as a mapping from the flat orbit
category into the group. It is not necessarily a homomorphism anymore.
However, it can be used together with Lemma~\ref{lemma:18} to recreate the
annotation on the product category. If $A$ is an annotation of
$\orbitfaltigkeit{\kategorie K}{\Ob{\kategorie N}}$ and
– consequently – $\entfaltung{\orbitfaltigkeit{\kategorie K}{\Ob{\kategorie
      N}}}A{\Ob{\kategorie N}}$, and if there exist three arrows
$\klmorphismus a, \klmorphismus b, \klmorphismus c∈\klmorphismus
r[\orbitfaltigkeit{\kategorie K}{\Ob{\kategorie N}}]$ of the flat
orbit category with
$\klmorphismus a \bullet \klmorphismus b = \klmorphismus c$, then
$\klmorphismus c$ is redundant if there exists an arrow
$\klmorphismus d∈\Mor{\orbitfaltigkeit{\kategorie K}{\Ob{\kategorie
      N}}}{\morende \klmorphismus b}{\morende \klmorphismus b}$ such
that
$\klmorphismus c = \klmorphismus a * \klmorphismus b *\klmorphismus
d$. This is the case when four arrows
$\hat{\klmorphismus a}∈\klmorphismus a$,
$\hat{\klmorphismus b}∈\klmorphismus b$,
$\hat{\klmorphismus c}∈\klmorphismus c$ and
$\hat{\klmorphismus d}∈\klmorphismus d$ exist such that
$\hat{\klmorphismus c} = \hat{\klmorphismus a} * \hat{\klmorphismus b}
* \hat{\klmorphismus d}$, which is equivalent to
$\morende \hat{\klmorphismus b}\leq \morende \hat{\klmorphismus c}$.
We will explore these facts in the following sections.

\section{Flat category representations}\label{sec:flat-categ-repr}

As we have seen, we can express the category part of a representation
by a category, a groupal category and two mappings. This should be enough
in order to define representations that use the idea of a flat category.

In analogy to group extensions we can define also category
extensions. The isomorphism \eqref{eq:29} is a good
candidate. Actually, we have to resemble some results from
\cite{Schreier:1925,Schreier:1926} with respect to flat orbit
categories.

\begin{definition}\label{def:singleton-category-extension}
  Let $\kategorie K$ denote a category with concatenation $\bullet$
  and $\kategorie G$ another category with the
  concatenation $*$ and $\Ob\kategorie G = \menge 1$, and let
  \begin{subequations}
    \begin{align}\label{eq:56}
      \wabbildung{\abbildung{\moranfang_{\kategorie L}&}{\Morall{\kategorie K}\times\Morall{\kategorie G}}{\Ob\kategorie K}}
      {\paar{\klmorphismus a}{\klmorphismus g}}{\moranfang\klmorphismus a}\\
      \wabbildung{\abbildung{\morende_{\kategorie L}&}{\Morall{\kategorie K}\times\Morall{\kategorie G}}{\Ob\kategorie K}}
      {\paar{\klmorphismus a}{\klmorphismus g}}{\morende\klmorphismus a}\\
      \abbildung{\klmorphismus C&}{\Morall{\kategorie K}\times\Morall{\kategorie G}}{\Morall{\kategorie G}},\\
      \intertext{mappings, where $\klmorphismus C$ fulfils  \eqref{eq:12a} and \eqref{eq:36}, and}
      \abbildung{\klmorphismus n&}{\Morall{\kategorie K}\times\Morall{\kategorie K}}{\Morall{\kategorie G}},\\
      \wabbildung{\abbildung{{*_{\kategorie L}}&}{(\Morall{\kategorie K}\times\kategorie G)^2}
                       {\Morall{\kategorie K}\times\kategorie G}
      \\
                             &}{\paar[big]{\paar{\klmorphismus a}{\klmorphismus x}}
                               {\paar{\klmorphismus b}{\klmorphismus y}}}
                         {\paar{\klmorphismus a\bullet\klmorphismus b}
        {\klmorphismus n\paar{\klmorphismus a}{\klmorphismus b}*\klmorphismus C\paar{\klmorphismus b}{\klmorphismus x}*\klmorphismus y}},\nonumber
    \end{align}
  two partial mappings such that the equations
  \begin{align}
    \klmorphismus C\paar{\klmorphismus a}{\id_1} &= \id_1\\
    {\klmorphismus n}\paar{\id_x}{\id_x}
    &= {\klmorphismus n}\paar{\klmorphismus a}{\id_{\morende\klmorphismus a}}
      ={\klmorphismus n}\paar{\id_{\moranfang\klmorphismus a}}{\klmorphismus a }
      = \id_{1}\label{eq:46}
  \end{align}
  \end{subequations}
  hold and
  $\kategorie L\definiert\tupel{\Ob\kategorie K,\Morall{\kategorie
      K}\times\Morall{\kategorie G}, \moranfang_{\kategorie
      L},\morende_{\kategorie L},*_{\kategorie L}}$ is a small
  category.
  Then, the category $\kategorie L$ is called a
  \defindex{singleton category extension} of $\kategorie K$ by
  $\kategorie G$. This is denoted by
  $\kategorie E\tupel{\kategorie K, \klmorphismus n,\klmorphismus
    C,\kategorie G}\definiert\kategorie L$.
\end{definition}

\noindent For convenience we identify the category $\kategorie L$ with
the singleton category extension that generates it. This can be used
to define flat category representations as another kind of an
unfoldable factor structure.

\begin{definition}\label{def:flat-representations}
  Let $\kategorie K$ denote a category,
  $\kategorie G$ a right groupal category, such that the singleton category extension
  $\kategorie L\definiert\kategorie E\tupel{\kategorie K, \klmorphismus n, \klmorphismus C,
    \orbitfaltigkeit{\kategorie G}{\Ob\kategorie G}}$ exists,
  and let
  \begin{subequations}
    \begin{align}
      \abbildung{A&}{\Morall{\kategorie K}}{\Ob\kategorie G},\\
      \wabbildung{\abbildung{A'&}{\Morall{\kategorie K}\times
                                 {\Morall{\orbitfaltigkeit{\kategorie G}{\Ob\kategorie G}}}}
                                 {\Ob{\kategorie G}}\nonumber\label{eq:25}\\
                                &}{\paar{\klmorphismus a}{\klmorphismus x^{\Ob\kategorie G}}}
                                  {\morende\klmorphismus x(\moranfang\klmorphismus x)^{-1}A(\klmorphismus a)}
    \end{align}
  \end{subequations}
  two mappings.  Then, the tuple
  $\tupel{\kategorie K,A,\klmorphismus n,\klmorphismus C, \kategorie
    G}$ is called a \defindex{flat category representation} of
  $\entfaltung{\kategorie L}{A'}{\Ob\kategorie G}$ via
  $\tupel{\kategorie L,A',\Ob\kategorie G}$, iff
  $\tupel{\kategorie L,A',\Ob\kategorie G}$ is a representation.
\end{definition}

\begin{corollary}
  In equation~\eqref{eq:25} $A'$ is well-defined i.e.\ for every vertex $g∈\Ob\kategorie G$ the
  following equation holds:
  \begin{equation}
    \label{eq:26}
    A'\paar{\klmorphismus a}{\klmorphismus x^{\Ob\kategorie G}} =
    A'\paar[big]{\klmorphismus a}{(\klmorphismus x^g)^{\Ob\kategorie G}} 
  \end{equation}
\end{corollary}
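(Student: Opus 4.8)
The plan is to unfold the definition of $A'$ on both sides of \eqref{eq:26} and to reduce the claim to a one-line cancellation inside the object group $\Ob\kategorie G$. First I would observe that, by \eqref{eq:25}, the left-hand side equals $\morende\klmorphismus x\,(\moranfang\klmorphismus x)^{-1}A(\klmorphismus a)$ while the right-hand side equals $\morende(\klmorphismus x^g)\,(\moranfang(\klmorphismus x^g))^{-1}A(\klmorphismus a)$. Since both carry the same trailing factor $A(\klmorphismus a)\in\Ob\kategorie G$, it suffices to prove that the prefix $\morende\klmorphismus x\,(\moranfang\klmorphismus x)^{-1}$ is unchanged when $\klmorphismus x$ is replaced by an arbitrary translate $\klmorphismus x^g$ with $g\in\Ob\kategorie G$; this is exactly the assertion that $A'$ is constant on the orbit $\klmorphismus x^{\Ob\kategorie G}$, i.e.\ that it depends only on the orbit and not on the chosen representative.

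The key step is to record how the action of $g$ interacts with the source and target maps of $\kategorie G$. Because $\kategorie G$ is a right groupal category, the operation inducing the action is simultaneously an automorphism action on $\kategorie G$ and the group operation on $\Ob\kategorie G$. As an automorphism, $g$ preserves source and target, so $\moranfang(\klmorphismus x^g)=(\moranfang\klmorphismus x)^g$ and $\morende(\klmorphismus x^g)=(\morende\klmorphismus x)^g$; and on objects the left-associative action is simply right multiplication in the group, i.e.\ $y^g=yg$. Substituting these identities turns the prefix of the right-hand side into $(\morende\klmorphismus x\,g)\bigl((\moranfang\klmorphismus x)\,g\bigr)^{-1}$.

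Finally I would carry out the cancellation in $\Ob\kategorie G$: the two copies of $g$ cancel, giving $(\morende\klmorphismus x\,g)\bigl((\moranfang\klmorphismus x)\,g\bigr)^{-1}=\morende\klmorphismus x\,g\,g^{-1}(\moranfang\klmorphismus x)^{-1}=\morende\klmorphismus x\,(\moranfang\klmorphismus x)^{-1}$, so the prefix is independent of $g$. Re-attaching $A(\klmorphismus a)$ then yields \eqref{eq:26}, and hence $A'$ is well defined on orbits. There is no real obstacle here; the one point worth stating explicitly is the source/target compatibility $\moranfang(\klmorphismus x^g)=(\moranfang\klmorphismus x)g$ and $\morende(\klmorphismus x^g)=(\morende\klmorphismus x)g$, which rests on the defining property of a right groupal category that the inducing operation restricts to the group multiplication on $\Ob\kategorie G$. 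Once this is in hand, the remainder is the standard group identity $yg(y'g)^{-1}=y(y')^{-1}$.
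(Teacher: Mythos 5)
Your argument is correct and is essentially identical to the paper's own proof, which is exactly the computation $\morende\klmorphismus x^g(\moranfang\klmorphismus x^g)^{-1} = \morende\klmorphismus x\,g\,g^{-1}(\moranfang\klmorphismus x)^{-1}=\morende\klmorphismus x(\moranfang\klmorphismus x)^{-1}$. You merely spell out the source/target compatibility of the group action more explicitly than the paper does.
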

\begin{proof}
  $\morende\klmorphismus x^g(\moranfang\klmorphismus x^g)^{-1} = \morende\klmorphismus xgg^{-1}(\moranfang\klmorphismus x)^{-1}=\morende\klmorphismus x(\moranfang\klmorphismus x)^{-1}$
  \qed
\end{proof}

\noindent Additionally to the indirect definitions above, we can find also
axiomatic descriptions of singleton category extensions and flat
category representations.

\begin{theorem}
  Let $\kategorie K$ denote a category with concatenation $\bullet$
  and $\kategorie G$ another category with
  $\Ob\kategorie G = \menge 1$ and concatenation $*$, and let
  $\klmorphismus C$, $\moranfang_{\kategorie L}$,
  $\morende_{\kategorie L}$ mappings and $\klmorphismus n$ and
  $*_{\kategorie L}$ partial mappings according to the properties~\eqref{eq:56} to~\eqref{eq:46} from Definition
  \ref{def:singleton-category-extension}.
  Then the structure 
  $\kategorie L\definiert\tupel{\Ob\kategorie K,\Morall{\kategorie
      K}\times\Morall{\kategorie G}, \moranfang_{\kategorie
      L},\morende_{\kategorie L},*_{\kategorie L}}$ is a
  \defindex{singleton category extension} of $\kategorie K$ by
  $\kategorie G$ iff the following equations hold:
  \begin{subequations}
    \begin{align}
      \paar{\id_{\moranfang\klmorphismus a}}{\klmorphismus x} *_{\kategorie L}\paar{\klmorphismus a}{\id_1}
      &=\paar[big]{\klmorphismus a}{\klmorphismus C\paar{\klmorphismus a}{\klmorphismus x}}\label{eq:47}\\
      \klmorphismus n\paar{\klmorphismus a}{\klmorphismus b \bullet \klmorphismus c}
      *\klmorphismus n\paar{\klmorphismus b}{\klmorphismus c}
      &= \klmorphismus n\paar{\klmorphismus a \bullet\klmorphismus b}{\klmorphismus c}
        * \klmorphismus C\paar{\klmorphismus c}{\klmorphismus n\paar{\klmorphismus a}{\klmorphismus b}}\label{eq:43} \\
      \klmorphismus C\paar{\klmorphismus a}{\klmorphismus x * \klmorphismus y}
      &= \klmorphismus C\paar{\klmorphismus a}{\klmorphismus x} * \klmorphismus  C\paar{\klmorphismus a}{\klmorphismus y}\label{eq:44}\\
      \klmorphismus C\paar{\klmorphismus a \bullet \klmorphismus b}{\klmorphismus x}
      *\klmorphismus n\paar{\klmorphismus a}{\klmorphismus b}
      &= \klmorphismus n\paar{\klmorphismus a}{\klmorphismus b}*\klmorphismus C\paar[big]{\klmorphismus b}{\klmorphismus C\paar{\klmorphismus a}{\klmorphismus x}}\label{eq:45}
    \end{align}
  \end{subequations}
\end{theorem}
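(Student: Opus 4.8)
The assertion is a biconditional whose substance is the associativity of $*_{\kategorie L}$. The plan is therefore to split off the parts that hold unconditionally and then to match associativity against the three identities \eqref{eq:43}, \eqref{eq:44} and \eqref{eq:45}.

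First I would record what the standing hypotheses of Definition~\ref{def:singleton-category-extension} already give for free. The maps $\moranfang_{\kategorie L},\morende_{\kategorie L}$ are prescribed, and two arrows of $\kategorie L$ are composable exactly when the underlying arrows of $\kategorie K$ are, so the only category axioms left to check are the identity laws and associativity. Expanding $\paar{\id_x}{\id_1}*_{\kategorie L}\paar{\klmorphismus a}{\klmorphismus x}$ and $\paar{\klmorphismus a}{\klmorphismus x}*_{\kategorie L}\paar{\id_{\morende\klmorphismus a}}{\id_1}$ with the defining formula for $*_{\kategorie L}$ and using \eqref{eq:46} together with $\klmorphismus C\paar{\klmorphismus a}{\id_1}=\id_1$ and $\klmorphismus C\paar{\id_x}{\klmorphismus x}=\klmorphismus x$ (both contained in the hypothesis that $\klmorphismus C$ satisfies \eqref{eq:12a} and \eqref{eq:36}) returns $\paar{\klmorphismus a}{\klmorphismus x}$ in either case, so $\paar{\id_x}{\id_1}$ is the identity at $x$ irrespective of \eqref{eq:43}--\eqref{eq:45}. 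The same mechanism shows that \eqref{eq:47} is an unconditional consequence of the defining formula and \eqref{eq:46}. Everything thus reduces to the claim that associativity of $*_{\kategorie L}$ is equivalent to the conjunction of \eqref{eq:43}, \eqref{eq:44} and \eqref{eq:45}.

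For this equivalence I would evaluate both bracketings of $\paar{\klmorphismus a}{\klmorphismus x}*_{\kategorie L}\paar{\klmorphismus b}{\klmorphismus y}*_{\kategorie L}\paar{\klmorphismus c}{\klmorphismus z}$, with $\klmorphismus a\bullet\klmorphismus b\bullet\klmorphismus c$ defined. Their $\kategorie K$-components coincide because $\bullet$ is associative in $\kategorie K$, so the two products agree if and only if their $\kategorie G$-components agree, namely
\begin{align*}
  &\klmorphismus n\paar{\klmorphismus a\bullet\klmorphismus b}{\klmorphismus c}
    *\klmorphismus C\paar{\klmorphismus c}{\klmorphismus n\paar{\klmorphismus a}{\klmorphismus b}*\klmorphismus C\paar{\klmorphismus b}{\klmorphismus x}*\klmorphismus y}*\klmorphismus z\quad\text{and}\\
  &\klmorphismus n\paar{\klmorphismus a}{\klmorphismus b\bullet\klmorphismus c}
    *\klmorphismus C\paar{\klmorphismus b\bullet\klmorphismus c}{\klmorphismus x}*\klmorphismus n\paar{\klmorphismus b}{\klmorphismus c}*\klmorphismus C\paar{\klmorphismus c}{\klmorphismus y}*\klmorphismus z.
\end{align*}
Call this the \emph{master equation}. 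For the ``if'' direction I would rewrite the first line with \eqref{eq:44} to distribute $\klmorphismus C\paar{\klmorphismus c}{-}$ over the $*$-product, and then apply \eqref{eq:43} to turn $\klmorphismus n\paar{\klmorphismus a\bullet\klmorphismus b}{\klmorphismus c}*\klmorphismus C\paar{\klmorphismus c}{\klmorphismus n\paar{\klmorphismus a}{\klmorphismus b}}$ into $\klmorphismus n\paar{\klmorphismus a}{\klmorphismus b\bullet\klmorphismus c}*\klmorphismus n\paar{\klmorphismus b}{\klmorphismus c}$; on the second line \eqref{eq:45} turns $\klmorphismus C\paar{\klmorphismus b\bullet\klmorphismus c}{\klmorphismus x}*\klmorphismus n\paar{\klmorphismus b}{\klmorphismus c}$ into $\klmorphismus n\paar{\klmorphismus b}{\klmorphismus c}*\klmorphismus C\paar{\klmorphismus c}{\klmorphismus C\paar{\klmorphismus b}{\klmorphismus x}}$. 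After these substitutions the two lines are literally equal, which proves associativity.

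For the ``only if'' direction I would specialise the master equation. Taking $\klmorphismus a=\klmorphismus b=\id_{\moranfang\klmorphismus c}$ collapses every $\klmorphismus n$-term to $\id_1$ and, after setting $\klmorphismus z=\id_1$, leaves exactly the distributivity \eqref{eq:44}; taking $\klmorphismus x=\klmorphismus y=\id_1$ makes all $\klmorphismus C\paar{-}{\klmorphismus x}$ contributions trivial and yields the cocycle identity \eqref{eq:43}; and taking $\klmorphismus a=\id_{\moranfang\klmorphismus b}$ with $\klmorphismus y=\klmorphismus z=\id_1$ reduces it to \eqref{eq:45}. The main obstacle is that $*_{\kategorie L}$ is only a partial operation and $\kategorie G$ need not be cancellative, so these three axioms cannot be peeled off by cancelling common factors; each must instead be isolated by substituting identities so that the surrounding $\klmorphismus n$- and $\klmorphismus C$-terms collapse through \eqref{eq:46}, $\klmorphismus C\paar{\klmorphismus a}{\id_1}=\id_1$ and $\klmorphismus C\paar{\id_x}{\klmorphismus x}=\klmorphismus x$. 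Keeping these collapse rules and the composability constraints consistent across the three specialisations is the step that needs the most care.
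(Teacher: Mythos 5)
Your proposal is correct and follows essentially the same route as the paper: the ``if'' direction is the identical chain of rewritings of the two bracketings of a triple product using \eqref{eq:44}, \eqref{eq:43} and \eqref{eq:45}, and your three specialisations of the associativity equation (all loops trivial; all $\kategorie G$-components trivial; first factor an identity) are exactly the three triple products the paper associates in two ways to extract \eqref{eq:43}, \eqref{eq:44} and \eqref{eq:45}. Your additional observation that the identity laws and \eqref{eq:47} hold unconditionally from \eqref{eq:46} and the collapse rules for $\klmorphismus C$ matches the paper's treatment as well.
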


\begin{proof}
  Let us first assume that $\kategorie L$ is a singleton category extension of $\kategorie K$. Then we can prove
  the Equations~\eqref{eq:47}, \eqref{eq:43}, \eqref{eq:44} and~\eqref{eq:45}.
  Equation~\eqref{eq:47} is a direct consequence from the definition:
  \begin{align*}
      \paar{\id_{\moranfang\klmorphismus a}}{\klmorphismus x} *_{\kategorie L}\paar{\klmorphismus a}{\id_1}
      &=\paar[big]{\id_{\moranfang\klmorphismus a}\bullet\klmorphismus a}{\underbrace{\klmorphismus n\paar{\id_{\moranfang\klmorphismus a}}{\klmorphismus a}}_{{}=\id_1}*\klmorphismus C\paar{\klmorphismus a}{\klmorphismus x}*\id_1}
      =\paar[big]{\klmorphismus a}{\klmorphismus C\paar{\klmorphismus a}{\klmorphismus x}}
  \end{align*}
  Let us consider Equation~\eqref{eq:43}. From the transitivity of the category $\kategorie L$ follows
  \begin{align*}
    \paar[big]{\klmorphismus a \bullet \klmorphismus b \bullet \klmorphismus c}
    {\klmorphismus n\paar{\klmorphismus a}{\klmorphismus b\bullet \klmorphismus c}*
    \klmorphismus n \paar {\klmorphismus b}{\klmorphismus c}}
    &= \paar{\klmorphismus a}{\id_1}*_{\kategorie L}
      \paar[big]{\klmorphismus b \bullet \klmorphismus c}
    {\klmorphismus n \paar {\klmorphismus b}{\klmorphismus c}}\\
    &= \paar{\klmorphismus a}{\id_1}*_{\kategorie L}
      \paar[big]{\klmorphismus b}{\id_1}*_{\kategorie L}
      \paar[big]{\klmorphismus c}{\id_1}\\
    &=\paar[big]{\klmorphismus a\bullet \klmorphismus b}
      {\klmorphismus n\paar{\klmorphismus a}{\klmorphismus b}}*_{\kategorie L}
      \paar{\klmorphismus c}{\id_1}\\
    &=\paar[Big]{\klmorphismus a\bullet \klmorphismus b\bullet \klmorphismus c}
      {\klmorphismus n\paar{\klmorphismus a\bullet \klmorphismus b}{\klmorphismus c}*
      \klmorphismus C\paar[big]{\klmorphismus c}{\klmorphismus n\paar{\klmorphismus a}{\klmorphismus b}}}
  \end{align*}
  Equation~\eqref{eq:44} follows from
  \begin{align*}
    \paar{\klmorphismus a}{\klmorphismus C\paar{\klmorphismus a}{\klmorphismus x * \klmorphismus y}}
    &=
      \paar{\id_{\moranfang\klmorphismus a}}{\klmorphismus x*\klmorphismus y}*_{\kategorie L}
      \paar{\klmorphismus a}{\id_1}
      =
      \paar{\id_{\moranfang\klmorphismus a}}{\klmorphismus x}*_{\kategorie L}
      \paar{\id_{\moranfang\klmorphismus a}}{\klmorphismus y}*_{\kategorie L}
      \paar{\klmorphismus a}{\id_1}\\
    &=
      \paar{\id_{\moranfang\klmorphismus a}}{\klmorphismus x}*_{\kategorie L}
      \paar[big]{\klmorphismus a}{\klmorphismus  C\paar{\klmorphismus a}{\klmorphismus y}}
      = \paar[big]{\klmorphismus a}{\klmorphismus C\paar{\klmorphismus a}{\klmorphismus x}
      * \klmorphismus  C\paar{\klmorphismus a}{\klmorphismus y}}
  \end{align*}
  The same scheme can be used to prove Equation~\eqref{eq:45}:
  \begin{align*}
    \paar[big]{\klmorphismus a \bullet \klmorphismus b}{\klmorphismus C\paar{\klmorphismus a \bullet \klmorphismus b}{\klmorphismus x}
    *\klmorphismus n\paar{\klmorphismus a}{\klmorphismus b}}
    &=\paar{\id_{\moranfang\klmorphismus a}}{\klmorphismus x}*_{\kategorie L}\paar[big]{\klmorphismus a \bullet \klmorphismus b}{\klmorphismus n\paar{\klmorphismus a}{\klmorphismus b}}\\
    &=\paar{\id_{\moranfang\klmorphismus a}}{\klmorphismus x}*_{\kategorie L}
      \paar{\klmorphismus a}{\id_1}*_{\kategorie L}\paar{\klmorphismus b}{\id_1}\\
    &=\paar[big]{\klmorphismus a}{\klmorphismus C\paar{\klmorphismus a}{\klmorphismus x}}*_{\kategorie L}
      \paar{\klmorphismus b}{\id_1}\\
    &= \paar[Big]{\klmorphismus a \bullet\klmorphismus b}{\klmorphismus n\paar{\klmorphismus a}{\klmorphismus b}*\klmorphismus C\paar[big]{\klmorphismus b}{\klmorphismus C\paar{\klmorphismus a}{\klmorphismus x}}}
  \end{align*}

  Now, suppose that the Equations~\eqref{eq:47}, \eqref{eq:43},
  \eqref{eq:44} and~\eqref{eq:45} hold. We prove that $\kategorie L$
  is a category. Start and end of the arrows and the identities
  inherit their category structure from $\kategorie K$
  and~\eqref{eq:46}. It remains to show that the partial mapping
  $*_{\kategorie L}$ is a category concatenation, which means that we
  have to prove that it is associative:
  \begin{align*}
    \bigl(\paar{\klmorphismus a}{\klmorphismus x}*_{\kategorie L}
    \paar{\klmorphismus b}{\klmorphismus y}\bigr)*_{\kategorie L}
    \paar{\klmorphismus c}{\klmorphismus z}
    &= \paar[big]{\klmorphismus a\bullet\klmorphismus b}{
      \klmorphismus n\paar{\kategorie a}{\kategorie b}*
      \klmorphismus C\paar{\klmorphismus b}{\klmorphismus x}*\klmorphismus y}
      *_{\kategorie L}\paar{\klmorphismus c}{\klmorphismus z}\\
    &= \paar[Big]{\klmorphismus a\bullet\klmorphismus b\bullet\klmorphismus c}{
      \klmorphismus n\paar{\klmorphismus a\bullet\klmorphismus b}{\klmorphismus c}*
      \klmorphismus C\paar[big]{\klmorphismus c}{
      \klmorphismus n\paar{\kategorie a}{\kategorie b}*
      \klmorphismus C\paar{\klmorphismus b}{\klmorphismus x}*\klmorphismus y}*\klmorphismus z}\\
    \intertext{with \eqref{eq:44} we can rewrite this term as }
    &= \paar[Big]{\klmorphismus a\bullet\klmorphismus b\bullet\klmorphismus c}{
      \klmorphismus n\paar{\klmorphismus a\bullet\klmorphismus b}{\klmorphismus c}*
      \klmorphismus C\paar[big]{\klmorphismus c}{
      \klmorphismus n\paar{\kategorie a}{\kategorie b}}*
      \klmorphismus C\paar[big]{\klmorphismus c}{
      \klmorphismus C\paar{\klmorphismus b}{\klmorphismus x}}*
      \klmorphismus C\paar[big]{\klmorphismus c}{\klmorphismus y}*\klmorphismus z}\\
    \intertext{\eqref{eq:43} leads to }
    &= \paar[Big]{\klmorphismus a\bullet\klmorphismus b\bullet\klmorphismus c}{
      \klmorphismus n\paar{\kategorie a}{\kategorie b\bullet\klmorphismus c}*
      \klmorphismus n\paar{\klmorphismus b}{\klmorphismus c}*
      \klmorphismus C\paar[big]{\klmorphismus c}{
      \klmorphismus C\paar{\klmorphismus b}{\klmorphismus x}}*
      \klmorphismus C\paar[big]{\klmorphismus c}{\klmorphismus y}*\klmorphismus z}\\
  \intertext{with \eqref{eq:45} this is equivalent to}
    &=
    \paar[big]{\klmorphismus a\bullet\klmorphismus b\bullet\klmorphismus c}
      {\klmorphismus n\paar{\klmorphismus a}{\klmorphismus b\bullet\klmorphismus c}*
      \klmorphismus C\paar{\klmorphismus b\bullet\klmorphismus c}{\klmorphismus x}*
      \klmorphismus n\paar{\klmorphismus b}{\klmorphismus c}*
    \klmorphismus C\paar{\klmorphismus c}{\klmorphismus y}*\klmorphismus z}\bigr)\\
    &=
    \paar{\klmorphismus a}{\klmorphismus x}*_{\kategorie L}
    \paar[big]{\klmorphismus b\bullet\klmorphismus c}
    {\klmorphismus n\paar{\klmorphismus b}{\klmorphismus c}*
    \klmorphismus C\paar{\klmorphismus c}{\klmorphismus y}*\klmorphismus z}\bigr)\\
    &=
    \paar{\klmorphismus a}{\klmorphismus x}*_{\kategorie L}
    \bigl(\paar{\klmorphismus b}{\klmorphismus y}*_{\kategorie L}
    \paar{\klmorphismus c}{\klmorphismus z}\bigr)
  \end{align*}
  So the partial mapping $*_{\kategorie L}$ is a category
  concatenation. Now, it is obvious that $\kategorie L$ is a category.
  \qed
\end{proof}

\noindent For the flat category representation we need some axioms
that properly define the annotation.

\begin{theorem}\label{thm:flat-annotation-is-annotation}
  Let $\kategorie K$  be a category with concatenation $\bullet$,
  $\kategorie G$ a right groupal category, such that the singleton category extension
  $\kategorie L\definiert\kategorie E\tupel{\kategorie K, \klmorphismus n, \klmorphismus C,
    \orbitfaltigkeit{\kategorie G}{\Ob\kategorie G}}$ exists,
  and let
  \begin{subequations}
    \begin{align}
      \abbildung{A&}{\Morall{\kategorie K}}{\Ob\kategorie G},\\
      \wabbildung{\abbildung{A'&}{\Morall{\kategorie K}\times
                                 {\Morall{\orbitfaltigkeit{\kategorie G}{\Ob\kategorie G}}}}
                                 {\Ob{\kategorie G}}\nonumber\\
                  &}{\paar{\klmorphismus a}{\klmorphismus x^{\Ob\kategorie G}}}
                    {\morende\klmorphismus x(\moranfang\klmorphismus x)^{-1}A(\klmorphismus a)}\label{eq:52}
    \end{align}
  \end{subequations}
  two mappings.  Then, the tuple
  $\tupel{\kategorie K,A,\klmorphismus n,\klmorphismus C, \kategorie
    G}$ is a {flat category representation} of
  $\entfaltung{\kategorie L}{A'}{\Ob\kategorie G}$ via
  $\tupel{\kategorie L,A',\Ob\kategorie G}$, iff the following equations hold:
  \begin{subequations}
    \begin{align}
      A'\paar{\klmorphismus a}{\klmorphismus x}
      &= A'\paar{\id_{\morende \klmorphismus a}}{\klmorphismus x} A'\paar{\klmorphismus a}{\id_1}\label{eq:42}\\
      \morende\klmorphismus n\paar{\klmorphismus a}{\klmorphismus b}\bigl(\moranfang\klmorphismus n\paar{\klmorphismus a}{\klmorphismus b}\bigr)^{-1}
      &= A(\klmorphismus b)A(\klmorphismus a)\bigl(A(\klmorphismus a\bullet\klmorphismus b)\bigr)^{-1}\label{eq:34}\\
      A(\klmorphismus b)\morende{\klmorphismus x}(\moranfang{\klmorphismus x})^{-1}
      &=
      \morende\klmorphismus C\paar{\klmorphismus b}{\klmorphismus x}
        \bigl(\moranfang\klmorphismus C\paar{\klmorphismus b}{\klmorphismus x}\bigr)^{-1}
        A(\klmorphismus b)\label{eq:48}
    \end{align}
  \end{subequations}
\end{theorem}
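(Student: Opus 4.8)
The plan is to unwind the definitions. By Definition~\ref{def:flat-representations} the tuple $\tupel{\kategorie K,A,\klmorphismus n,\klmorphismus C,\kategorie G}$ is a flat category representation precisely when $\tupel{\kategorie L,A',\Ob\kategorie G}$ is a representation, that is, when $A'$ is an annotation: a contravariant homomorphism from $\kategorie L$ into the group $\Ob\kategorie G$. So the statement reduces to proving that $A'$ satisfies $A'(\klmorphismus f *_{\kategorie L}\klmorphismus g)=A'(\klmorphismus g)\,A'(\klmorphismus f)$ on every composable pair if and only if \eqref{eq:42}, \eqref{eq:34} and \eqref{eq:48} hold. As a bookkeeping device I abbreviate $\delta(\klmorphismus z)\definiert\morende\klmorphismus z\,(\moranfang\klmorphismus z)^{-1}$ for $\klmorphismus z\in\Morall{\orbitfaltigkeit{\kategorie G}{\Ob\kategorie G}}$; this is well defined by the corollary following Definition~\ref{def:flat-representations}, and choosing representatives that compose in $\kategorie G$ (possible by semi-regularity of the $\Ob\kategorie G$-action) shows it is itself contravariant, $\delta(\klmorphismus u*\klmorphismus v)=\delta(\klmorphismus v)\,\delta(\klmorphismus u)$. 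With this notation the defining formula \eqref{eq:52} reads $A'\paar{\klmorphismus a}{\klmorphismus x}=\delta(\klmorphismus x)\,A(\klmorphismus a)$.

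For the forward direction I would assume $A'$ is an annotation and specialise the homomorphism identity to three canonical composites. Since an annotation sends identities to the neutral element, $A'\paar{\id_x}{\id_1}=1$ forces $A(\id_x)=1$. Factoring $\paar{\klmorphismus a}{\klmorphismus x}=\paar{\klmorphismus a}{\id_1}*_{\kategorie L}\paar{\id_{\morende\klmorphismus a}}{\klmorphismus x}$ gives \eqref{eq:42}; the composite $\paar{\klmorphismus a}{\id_1}*_{\kategorie L}\paar{\klmorphismus b}{\id_1}=\paar{\klmorphismus a\bullet\klmorphismus b}{\klmorphismus n\paar{\klmorphismus a}{\klmorphismus b}}$ gives the cocycle relation \eqref{eq:34}; and $\paar{\id_{\moranfang\klmorphismus b}}{\klmorphismus x}*_{\kategorie L}\paar{\klmorphismus b}{\id_1}=\paar{\klmorphismus b}{\klmorphismus C\paar{\klmorphismus b}{\klmorphismus x}}$ gives the equivariance relation \eqref{eq:48}. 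Each of these reductions collapses its bookkeeping factor using only \eqref{eq:46}, the axiom $\klmorphismus C\paar{\klmorphismus a}{\id_1}=\id_1$, the identity $A(\id)=1$, and the definition of $\delta$.

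For the converse I would verify the homomorphism identity on an arbitrary composable pair directly. Writing $\paar{\klmorphismus a}{\klmorphismus x}*_{\kategorie L}\paar{\klmorphismus b}{\klmorphismus y}=\paar{\klmorphismus a\bullet\klmorphismus b}{\klmorphismus w}$ with $\klmorphismus w=\klmorphismus n\paar{\klmorphismus a}{\klmorphismus b}*\klmorphismus C\paar{\klmorphismus b}{\klmorphismus x}*\klmorphismus y$, contravariance of $\delta$ yields
\[
  A'\paar{\klmorphismus a\bullet\klmorphismus b}{\klmorphismus w}
  =\delta(\klmorphismus y)\,\delta\bigl(\klmorphismus C\paar{\klmorphismus b}{\klmorphismus x}\bigr)\,\delta\bigl(\klmorphismus n\paar{\klmorphismus a}{\klmorphismus b}\bigr)\,A(\klmorphismus a\bullet\klmorphismus b).
\]
Substituting \eqref{eq:34} in the form $\delta\bigl(\klmorphismus n\paar{\klmorphismus a}{\klmorphismus b}\bigr)=A(\klmorphismus b)A(\klmorphismus a)\bigl(A(\klmorphismus a\bullet\klmorphismus b)\bigr)^{-1}$ and \eqref{eq:48} in the form $\delta\bigl(\klmorphismus C\paar{\klmorphismus b}{\klmorphismus x}\bigr)=A(\klmorphismus b)\,\delta(\klmorphismus x)\,A(\klmorphismus b)^{-1}$, the $A(\klmorphismus a\bullet\klmorphismus b)$ and $A(\klmorphismus b)$ factors telescope and leave $\delta(\klmorphismus y)A(\klmorphismus b)\delta(\klmorphismus x)A(\klmorphismus a)$, which is exactly $A'\paar{\klmorphismus b}{\klmorphismus y}\,A'\paar{\klmorphismus a}{\klmorphismus x}$. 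Identity preservation of $A'$ then follows from \eqref{eq:42} (equivalently from $A(\id)=1$), so $A'$ is an annotation.

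The main obstacle is organisational rather than conceptual: one must first isolate the displacement map $\delta$ and prove its contravariance on $\orbitfaltigkeit{\kategorie G}{\Ob\kategorie G}$, and then keep the non-commutative conjugation in \eqref{eq:48} in the correct order when telescoping, since $\Ob\kategorie G$ need not be abelian. It is also worth checking that every intermediate composite actually exists in $\kategorie L$, but this is automatic: $\kategorie L$ is assumed to be a category, so the factorisations above merely reassociate composites that are already defined, and associativity may be used freely.
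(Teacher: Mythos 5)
Your proof is correct and follows essentially the same route as the paper: both reduce the claim to ``$A'$ is an annotation'', obtain the three identities by evaluating $A'$ on the composites $\paar{\klmorphismus a}{\id_1}*_{\kategorie L}\paar{\id_{\morende\klmorphismus a}}{\klmorphismus x}$, $\paar{\klmorphismus a}{\id_1}*_{\kategorie L}\paar{\klmorphismus b}{\id_1}$ and $\paar{\id_{\moranfang\klmorphismus b}}{\klmorphismus x}*_{\kategorie L}\paar{\klmorphismus b}{\id_1}$, and prove the converse by the same telescoping computation on a general composable pair. The only notable difference is presentational: the paper imports \eqref{eq:42} and \eqref{eq:34} from Corollaries~\ref{cor:eq:41} and~\ref{cor:eq:31} (which are stated in the orbit-category setting of Lemma~\ref{lemma:18}), whereas you derive them in place from the annotation property, which is arguably cleaner in the abstract setting of the theorem; your explicit isolation of the displacement map and its contravariance is also a helpful addition rather than a deviation.
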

\begin{proof}
  If $\tupel{\kategorie K,A,\klmorphismus n,\klmorphismus C, \kategorie
    G}$ is a flat representation of $\entfaltung{\kategorie L}{A'}{\Ob\kategorie G}$, then
  Equation~\eqref{eq:42} follows from Corollary~\ref{cor:eq:41}, and
  Equation~\eqref{eq:34} is a direct consequence of Corollary~\ref{cor:eq:31}.  Equation~\eqref{eq:48} can also be easily proved:
  \begin{align*}
    A(\klmorphismus b) \morende\klmorphismus x(\moranfang\klmorphismus x)^{-1}
    &=1A(\klmorphismus b) \morende\klmorphismus x(\moranfang\klmorphismus x)^{-1}A(\id_{\moranfang\klmorphismus b})\\
      &=A'\paar{\klmorphismus b}{\id_1}A'\paar{\id_{\moranfang\klmorphismus b}}{\klmorphismus x}\\
    &=A'\bigl(\paar{\id_{\moranfang\klmorphismus b}}{\klmorphismus x}*_{\kategorie L}\paar{\klmorphismus b}{\id_1})\\
    &=A'\paar[big]{\klmorphismus b}{\klmorphismus C\paar{\klmorphismus b}{\klmorphismus x}}\\
    &=\morende\klmorphismus C\paar{\klmorphismus b}{\klmorphismus x}
        \bigl(\moranfang\klmorphismus C\paar{\klmorphismus b}{\klmorphismus x}\bigr)^{-1}
        A(\klmorphismus b)  
  \end{align*}

  Now, Suppose Equations~\eqref{eq:42}, \eqref{eq:34} and~\eqref{eq:48} hold. We have to prove that $A'$ is an annotation.
  \begin{align*}
    A'\bigl(\paar{\klmorphismus a}{\klmorphismus x}*_{\kategorie L}\paar{\klmorphismus b}{\klmorphismus x}\big)
    &= A'\paar{\klmorphismus a\bullet\klmorphismus b}{
      \klmorphismus n\paar{\klmorphismus a}{\klmorphismus b}*
      \klmorphismus C\paar{\klmorphismus b}{\klmorphismus x}*
      \klmorphismus y}\\
    &= \morende\klmorphismus y(\moranfang\klmorphismus y)^{-1}
      \morende\klmorphismus C\paar{\klmorphismus b}{\klmorphismus x}
      \bigl(\moranfang\klmorphismus C\paar{\klmorphismus b}{\klmorphismus x}\bigr)^{-1}
      \morende\klmorphismus n\paar{\klmorphismus a}{\klmorphismus b}
      \bigl(\moranfang\klmorphismus n\paar{\klmorphismus a}{\klmorphismus b}\bigr)^{-1}
      A(\klmorphismus a\bullet\klmorphismus b)\\
    &= \morende\klmorphismus y(\moranfang\klmorphismus y)^{-1}
      \morende\klmorphismus C\paar{\klmorphismus b}{\klmorphismus x}
      \bigl(\moranfang\klmorphismus C\paar{\klmorphismus b}{\klmorphismus x}\bigr)^{-1}
      A(\klmorphismus b)A(\klmorphismus a)\\
    &= \morende\klmorphismus y(\moranfang\klmorphismus y)^{-1}A(\klmorphismus b)
      \morende{\klmorphismus x}(\moranfang{\klmorphismus x})^{-1}A(\klmorphismus a)\\
    &= A'\paar{\klmorphismus b}{\klmorphismus y}A'\paar{\klmorphismus a}{\klmorphismus x}.
  \end{align*}
  So $A'$ is an annotation from $\kategorie L$ into the group $\Ob\kategorie G$ and the tuple
  $\tupel{\kategorie L,A',\Ob\kategorie G}$ is a representation, which proves the theorem.
  \qed
\end{proof}

\noindent Now, we can prove that small categories that are uniquely
representable by irreducible arrows give rise to flat category
representations.

\begin{lemma}\label{lemma:flat-representations}
  Let $\kategorie K$ be a category, $\gruppe\leq \Aut\kategorie K$
  a translative automorphism group that acts normal on $\kategorie K$
  such that $\orbitfaltigkeit{\kategorie K}{\gruppe}$ is uniquely
  representable by irreducible arrows and
  $\tupel{\orbitfaltigkeit{\kategorie K}{\gruppe},A,\gruppe}$ a
  representation of $\kategorie K$.
  Under the conditions of Lemma~\ref{lemma:16} and
  Lemma~\ref{lemma:18} for every $x∈\Ob\kategorie K$, the tuple
  $\tupel{\klmorphismus r[\orbitfaltigkeit{\kategorie
      K}{\gruppe}],A\eingeschrmenge{\Morall{\klmorphismus
        r[\orbitfaltigkeit{\kategorie
          K}{\gruppe}]}},\hat{\klmorphismus n},\klmorphismus C,
    \kategorie G_x}$ is a flat category
  representation via $\tupel{\orbitfaltigkeit{\kategorie K}{\gruppe},A,\gruppe}$.
\end{lemma}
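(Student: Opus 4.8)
The plan is to check the tuple directly against the axiomatic criterion of Theorem~\ref{thm:flat-annotation-is-annotation}. First I would note that the required singleton category extension $\kategorie L\definiert\kategorie E\tupel{\klmorphismus r[\orbitfaltigkeit{\kategorie K}{\gruppe}],\hat{\klmorphismus n},\klmorphismus C,\orbitfaltigkeit{\kategorie G_x}{\Ob\kategorie G_x}}$ exists: it is precisely the product category of Lemma~\ref{lemma:18} carrying the concatenation~\eqref{eq:18}, which that lemma identifies with $\orbitfaltigkeit{\kategorie K}{\gruppe}$ and which is therefore a category. Conceptually the whole statement then amounts to transporting the annotation $A$ along the isomorphism~\eqref{eq:29}, since $A'\paar{\klmorphismus a}{\klmorphismus x}=\morende\klmorphismus x(\moranfang\klmorphismus x)^{-1}A(\klmorphismus a)=A(\klmorphismus x)A(\klmorphismus a)=A(\klmorphismus a*\klmorphismus x)$; but to stay within the stated framework I would instead verify Equations~\eqref{eq:42}, \eqref{eq:34} and~\eqref{eq:48}.

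Equation~\eqref{eq:42} is immediate from the definition~\eqref{eq:52} of $A'$ together with $A(\id)=1$: one gets $A'\paar{\id_{\morende\klmorphismus a}}{\klmorphismus x}=\morende\klmorphismus x(\moranfang\klmorphismus x)^{-1}$ and $A'\paar{\klmorphismus a}{\id_1}=A(\klmorphismus a)$, whose product is $A'\paar{\klmorphismus a}{\klmorphismus x}$. For Equation~\eqref{eq:34} I would use Corollary~\ref{cor:eq:31}: since $\klmorphismus a,\klmorphismus b$ are irreducible we have $\klmorphismus r\klmorphismus a=\klmorphismus a$, $\klmorphismus r\klmorphismus b=\klmorphismus b$ and $\klmorphismus a\bullet\klmorphismus b=\klmorphismus r(\klmorphismus a*\klmorphismus b)$, so~\eqref{eq:31} collapses to $A(\klmorphismus b)A(\klmorphismus a)\bigl(A(\klmorphismus a\bullet\klmorphismus b)\bigr)^{-1}=A\bigl(\hat{\klmorphismus n}\paar{\klmorphismus a}{\klmorphismus b}\bigr)$, and it remains to match this with the left-hand displacement. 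For Equation~\eqref{eq:48} I would apply $A$ to the defining identity~\eqref{eq:12a} of $\klmorphismus C$, which for the loop $\klmorphismus x$ at $\moranfang\klmorphismus b$ reads $\klmorphismus x*\klmorphismus b=\klmorphismus b*\klmorphismus C\paar{\klmorphismus b}{\klmorphismus x}$; contravariance of $A$ turns it into $A(\klmorphismus b)A(\klmorphismus x)=A\bigl(\klmorphismus C\paar{\klmorphismus b}{\klmorphismus x}\bigr)A(\klmorphismus b)$, which is~\eqref{eq:48} once the two loop annotations are replaced by the corresponding displacements. Theorem~\ref{thm:flat-annotation-is-annotation} then yields the claim.

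The common and only substantial point in both~\eqref{eq:34} and~\eqref{eq:48} is the displacement identity $A(\klmorphismus m)=\morende\klmorphismus m(\moranfang\klmorphismus m)^{-1}$ for a loop $\klmorphismus m$ of a vertex category, evaluated on a representative in $\kategorie G_x$. This is where the hypothesis that $\tupel{\orbitfaltigkeit{\kategorie K}{\gruppe},A,\gruppe}$ be a representation \emph{of} $\kategorie K$ enters: its unfolding reconstructs $\kategorie K$, which forces $A$ to restrict on each vertex category to the natural annotation, and the value of the natural annotation on a loop is exactly the group displacement between its endpoints under the identification furnished by the semi-regular action. I expect this identity, together with the observation that both sides of~\eqref{eq:34} and~\eqref{eq:48} are independent of the chosen orbit representative (a well-definedness recorded in Equation~\eqref{eq:26}), to be the main obstacle; the surrounding manipulations are purely formal consequences of contravariance and the earlier corollaries.
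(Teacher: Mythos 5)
Your proposal is correct and in substance matches the paper's own argument: the paper also reduces to the case where $A$ is a natural annotation (so that loop annotations equal the endpoint displacements in the vertex category), builds the extension via Lemma~\ref{lemma:18}, and then uses Corollary~\ref{cor:eq:31} and the commutation relation \eqref{eq:12a} to establish that $A'$ is an annotation. The only difference is organizational — you route the verification through the three axioms of Theorem~\ref{thm:flat-annotation-is-annotation}, whereas the paper carries out the equivalent computation directly in one chain of equalities — and you correctly isolate the displacement identity for natural annotations as the one non-formal step.
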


\begin{proof}
  As $A$ is determined by a natural annotation of
  $\orbitfaltigkeit{\bigl(\entfaltung{(\orbitfaltigkeit{\kategorie
        K}{\gruppe})}{A}{\gruppe}\bigr)}{\gruppe}$, it is sufficient
  to prove the lemma for the case where $A$ is a natural annotation.

  Let us first assume that $A$ is a natural annotation.
  Lemma~\ref{lemma:18}, tells us that the category $\kategorie L$ with
  $\Ob\kategorie L\definiert\Ob(\klmorphismus
  r[\orbitfaltigkeit{\kategorie K}{\gruppe}])$,
  $\Mor{\kategorie L}xy\definiert\Mor{\klmorphismus
    r[\orbitfaltigkeit{\kategorie
\      K}{\gruppe}]}xy\times\Morall{\orbitfaltigkeit{\kategorie
      G_x}{\gruppe}}$ and the concatenation from Lemma~\ref{lemma:18}
  is a category. So we have to prove that $A'$ from
  equation~\eqref{eq:25} is an annotation. With the definition
  \begin{equation*}
    \wabbildung{\abbildung{A''}{\Morall{\kategorie K}\times{\Morall{\kategorie G_x}}}{\Ob{\kategorie G_x}}}
    {\paar{\klmorphismus r\klmorphismus a}{\klmorphismus x^{\Ob\kategorie G_x}}}{A'\paar{\klmorphismus r\klmorphismus a}{\klmorphismus x}}\tag{*}
  \end{equation*}
  we know that
  \begin{align*}
    A''\paar{\klmorphismus r\klmorphismus b^{\gruppe}}{\klmorphismus y^{\gruppe}}
    A''\paar{\klmorphismus r\klmorphismus a^{\gruppe}}{\klmorphismus x^{\gruppe}}
    &= \morende\klmorphismus y(\moranfang\klmorphismus y)^{-1}A(\klmorphismus r\klmorphismus b^{\gruppe})
       \morende\klmorphismus x(\moranfang\klmorphismus x)^{-1}A(\klmorphismus r\klmorphismus a^{\gruppe})\\
    \intertext{As $A$ is a natural annotation of $\kategorie K$, $A\eingeschrmenge{\kategorie G_x}$
    is also a natural annotation which coincides on $\Morall{\kategorie G_x}$ with the natural anotation $A_{\menge x}$ that is defined on $\kategorie G_x$.  So we can rewrite this:}
    &= A(\klmorphismus y^{\gruppe})A(\klmorphismus r\klmorphismus b^{\gruppe})
       A(\klmorphismus x^{\gruppe})A(\klmorphismus r\klmorphismus a^{\gruppe})\\
    A''\bigl(\paar{\klmorphismus r\klmorphismus a^{\gruppe}}{\klmorphismus x^{\gruppe}}
      *\paar{\klmorphismus r\klmorphismus b^{\gruppe}}{\klmorphismus y^{\gruppe}}\bigr)
    &= A''\paar{\klmorphismus r\klmorphismus a^{\gruppe}*\klmorphismus r\klmorphismus b^{\gruppe}}{\hat{\klmorphismus n}\paar{\klmorphismus r\klmorphismus a^{\gruppe}}{\klmorphismus r\klmorphismus b^{\gruppe}}*\klmorphismus C\paar{\klmorphismus r\klmorphismus b^{\gruppe}}{\klmorphismus x^{\gruppe}}*\klmorphismus y^{\gruppe}}\\
    &= A''\paar[big]{\klmorphismus r(\klmorphismus r\klmorphismus a^{\gruppe}*\klmorphismus r\klmorphismus b^{\gruppe})}{\hat{\klmorphismus n}\paar{\klmorphismus r\klmorphismus a^{\gruppe}}{\klmorphismus r\klmorphismus b^{\gruppe}}*\klmorphismus C\paar{\klmorphismus r\klmorphismus b^{\gruppe}}{\klmorphismus x^{\gruppe}}*\klmorphismus y^{\gruppe}}\\
    \intertext{Then there exist arrows $\klmorphismus n', \klmorphismus c', \klmorphismus y'∈\Morall{\kategorie K}$ with $\klmorphismus {n'} ^{\gruppe} = \hat{\klmorphismus n}\paar{\klmorphismus r\klmorphismus a^{\gruppe}}{\klmorphismus r\klmorphismus b^{\gruppe}}$, ${\klmorphismus c'}^{\gruppe} =  \klmorphismus C\paar{\klmorphismus r\klmorphismus b^{\gruppe}}{\klmorphismus x^{\gruppe}}$ and $\klmorphismus {y'}^{\gruppe} = \klmorphismus y$ such that}
    &=\morende{\klmorphismus y'}(\moranfang\klmorphismus y')^{-1}\morende\klmorphismus c'(\moranfang\klmorphismus c')^{-1}\underbrace{\morende\klmorphismus n'(\moranfang\klmorphismus n')^{-1}A\bigl(\klmorphismus r(\klmorphismus r\klmorphismus a^{\gruppe}*\klmorphismus r\klmorphismus b^{\gruppe})\bigr)}_{{}=A(\klmorphismus r\klmorphismus a^{\gruppe} * \klmorphismus r\klmorphismus b^{\gruppe})}\\
    \intertext{As $A$ is a natural annotation, we can rewrite this:}
    &=A(\klmorphismus y^{\gruppe})A\bigl(\klmorphismus c'^{\gruppe}\bigr)A(\klmorphismus r\klmorphismus a^{\gruppe}*\klmorphismus r\klmorphismus b^{\gruppe})\\
    &=A(\klmorphismus y^{\gruppe})A\bigl(\klmorphismus C\paar{\klmorphismus r\klmorphismus b^{\gruppe}}{\klmorphismus x^{\gruppe}}\bigr)A(\klmorphismus r\klmorphismus b^{\gruppe})A(\klmorphismus r\klmorphismus a^{\gruppe})\\
    \intertext{And with $\klmorphismus r\klmorphismus b^{\gruppe} * \klmorphismus C\paar{\klmorphismus r\klmorphismus b^{\gruppe}}{\klmorphismus x^{\gruppe}} = \klmorphismus x^{\gruppe}* \klmorphismus r\klmorphismus b^{\gruppe}$ we get}
    &=A(\klmorphismus y^{\gruppe})A(\klmorphismus r\klmorphismus b^{\gruppe})A(\klmorphismus x^{\gruppe})A(\klmorphismus r\klmorphismus a^{\gruppe})
  \end{align*}
  So $A''$ is an annotation. Furthermore $A''=A$.
  \qed
\end{proof}

\begin{corollary}
  Let $\tupel{\kategorie K,A,\gruppe}$ denote a representation where
  $\kategorie K$ is uniquely representable by irreducible arrows. Then there exists a flat representation $\tupel{\klmorphismus r[\orbitfaltigkeit{\kategorie
      K}{\gruppe}],A\eingeschrmenge{\Morall{\klmorphismus
        r[\orbitfaltigkeit{\kategorie
          K}{\gruppe}]}},\hat{\klmorphismus n},\klmorphismus C,
    \kategorie G_x}$ of $\entfaltung{\kategorie K}{A}{\gruppe}$.
\end{corollary}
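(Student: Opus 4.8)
The plan is to read this statement as Lemma~\ref{lemma:flat-representations} applied from the viewpoint of the unfolding rather than of the orbit category, so that almost all of the work is already done. Given the representation $\tupel{\kategorie K,A,\gruppe}$, I would first pass to its unfolding $\kategorie U\definiert\entfaltung{\kategorie K}{A}{\gruppe}$. By Lemma~\ref{lemma:L159} the group $\gruppe$ carries an induced right associative automorphism action on $\kategorie U$, and by the corollary in Section~\ref{representation} the orbit category $\orbitfaltigkeit{\kategorie U}{\gruppe}$ is isomorphic to the original base $\kategorie K$ via the projection $π$. The crucial point is that this isomorphism is annotation preserving, so that the representation $\tupel{\orbitfaltigkeit{\kategorie U}{\gruppe},A,\gruppe}$ is identified with $\tupel{\kategorie K,A,\gruppe}$ and the flat orbit category $\klmorphismus r[\orbitfaltigkeit{\kategorie U}{\gruppe}]$ with $\klmorphismus r[\kategorie K]$; this is what justifies the suggestive notation $\orbitfaltigkeit{\kategorie K}{\gruppe}$ in the statement.

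Next I would verify the hypotheses of Lemma~\ref{lemma:flat-representations} for the pair $\paar{\kategorie U}{\gruppe}$. Semi-regularity of the induced action is immediate from its definition, since $\paar{x}{h}^g=\paar{x}{hg}$ forces $g=1$. The condition on the orbit category is free: because $\orbitfaltigkeit{\kategorie U}{\gruppe}\isomorph\kategorie K$ and $\kategorie K$ is uniquely representable by irreducible arrows by hypothesis, so is $\orbitfaltigkeit{\kategorie U}{\gruppe}$, and the partial mappings $\klmorphismus r$, $\klmorphismus n$ and $\klmorphismus C$ of Definition~\ref{def:r-n} and Definition~\ref{def:normal-group-action-category} transport along the isomorphism. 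Once translativity and right-normality of the induced action on $\kategorie U$ are secured, Lemma~\ref{lemma:16} supplies the flat orbit category with its concatenation $\bullet$ and Lemma~\ref{lemma:18} the product decomposition, so Lemma~\ref{lemma:flat-representations} produces the flat category representation $\tupel{\klmorphismus r[\orbitfaltigkeit{\kategorie U}{\gruppe}],A\eingeschrmenge{\Morall{\klmorphismus r[\orbitfaltigkeit{\kategorie U}{\gruppe}]}},\hat{\klmorphismus n},\klmorphismus C,\kategorie G_x}$ via $\tupel{\orbitfaltigkeit{\kategorie U}{\gruppe},A,\gruppe}$.

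Finally I would read off the target of this flat category representation. By Definition~\ref{def:flat-representations} it is the unfolding of $\tupel{\orbitfaltigkeit{\kategorie U}{\gruppe},A,\gruppe}$, and since $\orbitfaltigkeit{\kategorie U}{\gruppe}\isomorph\kategorie K$ this unfolding is $\kategorie U=\entfaltung{\kategorie K}{A}{\gruppe}$ itself. Transporting every piece of data back along the isomorphism $\orbitfaltigkeit{\kategorie U}{\gruppe}\isomorph\kategorie K$ then yields exactly the tuple claimed in the corollary, as a flat category representation of $\entfaltung{\kategorie K}{A}{\gruppe}$.

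The hard part is the clause suppressed above: checking that the induced action on $\kategorie U$ is translative and right-normal, these being the genuine structural hypotheses of Lemma~\ref{lemma:flat-representations} that are not part of the literal wording here. Semi-regularity is trivial, but translativity requires the vertex categories $\kategorie G_{\paar{x}{h}}$ of $\kategorie U$ to be $\gruppe$-equivariantly isomorphic, which at the level of $\kategorie K$ reflects the mutual isomorphy of the vertex monoids $\Mor{\kategorie K}{x}{x}$; right-normality is the corresponding condition permitting left loop factors to be pushed past an arrow to the right, as in Definition~\ref{def:normal-group-action-category}. I would therefore either invoke these as inherited from the ambient setting in which $\kategorie K$ already occurs as an orbit category, or, in the self-contained reading, record them as standing assumptions; everything else is a formal consequence of Lemma~\ref{lemma:flat-representations} together with the unfolding–orbit-category correspondence of Section~\ref{representation}.
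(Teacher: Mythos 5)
Your proposal is correct and follows exactly the route the paper intends: the corollary is stated without an explicit proof, as an immediate application of Lemma~\ref{lemma:flat-representations} to the unfolding $\entfaltung{\kategorie K}{A}{\gruppe}$ equipped with its induced $\gruppe$-action (Lemma~\ref{lemma:L159}) and the identification $\orbitfaltigkeit{(\entfaltung{\kategorie K}{A}{\gruppe})}{\gruppe}\isomorph\kategorie K$ from Section~\ref{representation}. Your closing observation is also accurate: translativity and right-normality of the induced action are genuine hypotheses of Lemma~\ref{lemma:flat-representations} that the corollary's wording omits (they do not follow for an arbitrary representation, since the vertex monoids of $\kategorie K$ need not be compatibly isomorphic), so they must either be inherited from the ambient setting or recorded as standing assumptions --- a defect of the statement rather than of your argument.
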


\noindent Now, that we know that we can unfold a flat representation via a
representation, it would be handy to have a direct description of the
unfolding of a flat representation. First we define it, afterwards we
prove that the definition is correct.

\begin{definition}
  Let
  $\tupel{\kategorie K,A,\klmorphismus n,\klmorphismus C,\kategorie G}$ be a flat category
  representation. Then, the structure $\entfaltung{\kategorie K}{A,\klmorphismus n,\klmorphismus C}{\kategorie G}$ with
  \begin{subequations}
    \begin{align}
      \Ob{\entfaltung{\kategorie K}{A,\klmorphismus n,\klmorphismus C}{\kategorie G}}
      &\definiert \Ob(\kategorie K \times \kategorie G),\\
      \Mor{\entfaltung{\kategorie K}{A,\klmorphismus n,\klmorphismus C}{\kategorie G}}[big]{\paar xg}{\paar yh}
      &\definiert \Menge{\paar{\klmorphismus a}{\klmorphismus g}∈\Morall{\kategorie K\times \kategorie G}}{\morende \klmorphismus g = h, \moranfang\klmorphismus g = A(\klmorphismus a)g}\text{ and}\\
      \paar{\klmorphismus a}{\klmorphismus g}*_{\entfaltung{\kategorie K}{A,\klmorphismus n,\klmorphismus C}{\kategorie G}} \paar{\klmorphismus b}{\klmorphismus h}
      &\definiert \paar{\klmorphismus a * \klmorphismus b}{\klmorphismus n\paar{\klmorphismus a}{\klmorphismus b}*\klmorphismus C\paar{\klmorphismus b}{\klmorphismus g}*\klmorphismus h}
    \end{align}
  \end{subequations}
  whenever the concatenations are defined in $\kategorie K$ and $\kategorie G$, is called \defindex{unfolding} of $\tupel{\kategorie K,A,\klmorphismus n,\klmorphismus C,\kategorie G}$.
\end{definition}

\begin{lemma}
  Let
  $\tupel{\kategorie K,A,\klmorphismus n,\klmorphismus C,\kategorie
    G}$ be a flat category representation together with a mapping
  \begin{subequations}
    \begin{equation}
      \label{eq:28}
      \wabbildung{\abbildung{A'}{\Morall{\kategorie K}\times
          \Morall{\orbitfaltigkeit{\kategorie G}{\Ob\kategorie G}}}
        {\Ob\kategorie G}}
      {\paar{\klmorphismus a}{\klmorphismus x^{\Ob\kategorie G}}}
      { \morende\klmorphismus x(\moranfang \klmorphismus x)^{-1}A(\klmorphismus a)}.
    \end{equation}
    Then, $A'$ is a category annotation of $\kategorie E(\kategorie K,\klmorphismus n,\klmorphismus
    C,\orbitfaltigkeit{\kategorie G}{\Ob\kategorie
      G})$, and the pair of mappings
    \begin{align}
      \wabbildung{\abbildung{Φ&}{\Ob\kategorie K\times\Ob\kategorie G}
                                {\Ob\kategorie K\times\Ob\kategorie G}}
                                {\paar{a}{g}}
                                {\paar{a}{g}}\\
      \wabbildung{\abbildung{Ψ&}{\Morall{\kategorie K}\times\Morall{\kategorie G}}
                                {\Morall{\kategorie K}\times
                                \Morall{\orbitfaltigkeit{\kategorie G}{\Ob\kategorie G}}\times
                                \Morall{\kategorie G}\nonumber\\&}}
      {\paar{\klmorphismus a}{\klmorphismus g}}
      {\paar[Big]{\paar{\klmorphismus a}{\klmorphismus g^{\Ob\kategorie G}}}
      {\bigl(A(\klmorphismus a)\bigr)^{-1}\moranfang\klmorphismus g}}
    \end{align}
  \end{subequations}
  form an isomorphism between the unfoldings 
  $\entfaltung{\kategorie K}{A,\klmorphismus n,\klmorphismus
    C}{\kategorie G}$ and
  $\entfaltung{\kategorie E\tupel{\kategorie K,\klmorphismus n,\klmorphismus
    C,\orbitfaltigkeit{\kategorie G}{\Ob\kategorie
      G}}}{A'}{\Ob\kategorie G}$.
\end{lemma}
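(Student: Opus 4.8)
The plan is to handle the two claims in turn. That $A'$ is an annotation is immediate: by hypothesis $\tupel{\kategorie K,A,\klmorphismus n,\klmorphismus C,\kategorie G}$ is a flat category representation, so Definition~\ref{def:flat-representations} asserts exactly that $\tupel{\kategorie L,A',\Ob\kategorie G}$ is a representation, where $\kategorie L\definiert\kategorie E\tupel{\kategorie K,\klmorphismus n,\klmorphismus C,\orbitfaltigkeit{\kategorie G}{\Ob\kategorie G}}$ and $A'$ is the mapping \eqref{eq:28} (which coincides with \eqref{eq:25}); being the annotation of a representation, $A'$ is a contravariant homomorphism, hence an annotation of $\kategorie L$, and one may alternatively invoke Theorem~\ref{thm:flat-annotation-is-annotation}. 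In particular $\entfaltung{\kategorie L}{A'}{\Ob\kategorie G}$ is a genuine category, and it remains to verify that $\paar{Φ}{Ψ}$ is an isomorphism onto it.

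I would first establish bijectivity. The map $Φ$ is the identity on $\Ob\kategorie K\times\Ob\kategorie G$ and is trivially bijective. For $Ψ$ I would write down the inverse directly: given an arrow $\paar{\paar{\klmorphismus a}{\klmorphismus x^{\Ob\kategorie G}}}{k}$ of $\entfaltung{\kategorie L}{A'}{\Ob\kategorie G}$, semi-regularity of the action of $\Ob\kategorie G$ on $\kategorie G$ (presupposed for $\orbitfaltigkeit{\kategorie G}{\Ob\kategorie G}$ to exist) guarantees that the orbit $\klmorphismus x^{\Ob\kategorie G}$ contains a unique representative $\klmorphismus g$ with $\moranfang\klmorphismus g=A(\klmorphismus a)k$; setting $Ψ^{-1}\paar{\paar{\klmorphismus a}{\klmorphismus x^{\Ob\kategorie G}}}{k}\definiert\paar{\klmorphismus a}{\klmorphismus g}$ inverts $Ψ$, since conversely $(A(\klmorphismus a))^{-1}\moranfang\klmorphismus g=k$ recovers the group coordinate.

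Next I would check the source and target conditions. Expanding the definitions, the source of $\paar{\klmorphismus a}{\klmorphismus g}$ in $\entfaltung{\kategorie K}{A,\klmorphismus n,\klmorphismus C}{\kategorie G}$ is $\paar{\moranfang\klmorphismus a}{(A(\klmorphismus a))^{-1}\moranfang\klmorphismus g}$, which $Φ$ fixes, while by Definition~\ref{def:Entfaltung} the source of $Ψ\paar{\klmorphismus a}{\klmorphismus g}$ is the same pair. For the target the decisive step is the cancellation $A'\paar{\klmorphismus a}{\klmorphismus g^{\Ob\kategorie G}}\cdot(A(\klmorphismus a))^{-1}\moranfang\klmorphismus g=\morende\klmorphismus g(\moranfang\klmorphismus g)^{-1}A(\klmorphismus a)(A(\klmorphismus a))^{-1}\moranfang\klmorphismus g=\morende\klmorphismus g$, obtained from the defining formula \eqref{eq:28} of $A'$; this gives $\morende(Ψ\paar{\klmorphismus a}{\klmorphismus g})=\paar{\morende\klmorphismus a}{\morende\klmorphismus g}=Φ(\morende\paar{\klmorphismus a}{\klmorphismus g})$. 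The same computation shows that two arrows are composable in the domain exactly when their $Ψ$-images are composable in the codomain, since both conditions reduce to $\morende\klmorphismus a=\moranfang\klmorphismus b$ together with $\moranfang\klmorphismus h=A(\klmorphismus b)\morende\klmorphismus g$.

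The last and hardest step is the multiplicativity $Ψ(\paar{\klmorphismus a}{\klmorphismus g}*\paar{\klmorphismus b}{\klmorphismus h})=Ψ\paar{\klmorphismus a}{\klmorphismus g}*Ψ\paar{\klmorphismus b}{\klmorphismus h}$. Conceptually $Ψ$ merely repackages the single $\kategorie G$-arrow $\klmorphismus g$ as the pair (its orbit, its start recorded as a group element), so the task is to show this repackaging commutes with composition. The first coordinate is harmless: both sides carry $\klmorphismus a*\klmorphismus b$, and the orbit component of the $\kategorie L$-product $\paar{\klmorphismus a}{\klmorphismus g^{\Ob\kategorie G}}*_{\kategorie L}\paar{\klmorphismus b}{\klmorphismus h^{\Ob\kategorie G}}$ agrees with the orbit of the domain's second coordinate $\klmorphismus n\paar{\klmorphismus a}{\klmorphismus b}*\klmorphismus C\paar{\klmorphismus b}{\klmorphismus g}*\klmorphismus h$, because the projection $\kategorie G\to\orbitfaltigkeit{\kategorie G}{\Ob\kategorie G}$ is a functor and intertwines the representative-level data $\klmorphismus n,\klmorphismus C$ with their orbit-level counterparts used in $*_{\kategorie L}$ (Definition~\ref{def:singleton-category-extension}). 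I expect the real obstacle to be the group coordinate: one must check that the element $k$ returned by the codomain rule $\paar{\klmorphismus A}{k}*\paar{\klmorphismus B}{A'(\klmorphismus A)k}=\paar{\klmorphismus A*\klmorphismus B}{k}$ of Definition~\ref{def:Entfaltung} equals the element $(A(\klmorphismus a*\klmorphismus b))^{-1}\moranfang\bigl(\klmorphismus n\paar{\klmorphismus a}{\klmorphismus b}*\klmorphismus C\paar{\klmorphismus b}{\klmorphismus g}*\klmorphismus h\bigr)$ produced by $Ψ$ on the domain composite. This I would settle by substituting the cocycle relation \eqref{eq:34} for $\klmorphismus n$ and the intertwining relation \eqref{eq:48} for $\klmorphismus C$ into the formula \eqref{eq:28} for $A'$, exactly as these identities were used in Theorem~\ref{thm:flat-annotation-is-annotation} to prove that $A'$ is multiplicative; the careful tracking of the factors $\moranfang$ and $\morende$ in $\kategorie G$, and of the order of multiplication in the possibly non-abelian group $\Ob\kategorie G$, is the part where the argument is most delicate.
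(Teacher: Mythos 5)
Your proposal follows essentially the same route as the paper's proof: $A'$ is an annotation by Theorem~\ref{thm:flat-annotation-is-annotation}, bijectivity of $Ψ$ comes from picking the unique orbit representative with prescribed start vertex (the paper's inverse $Ψ'$), the source/target check reduces to the cancellation $\morende\klmorphismus x(\moranfang\klmorphismus x)^{-1}A(\klmorphismus a)\bigl(A(\klmorphismus a)\bigr)^{-1}\moranfang\klmorphismus x=\morende\klmorphismus x$, and multiplicativity is settled by substituting \eqref{eq:34} and \eqref{eq:48} (equivalently \eqref{eq:31} and \eqref{eq:12b}) into \eqref{eq:28}. The only difference is that you outline the final multiplicativity computation rather than carrying it out, but the identities you name are exactly the ones the paper's chain of equalities uses, so the plan is sound.
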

\begin{proof}
  For convenience we write $\kategorie L\definiert\entfaltung{\kategorie K}{A,\klmorphismus n,\klmorphismus
    C}{\kategorie G}$ and $\kategorie M\definiert\entfaltung{\kategorie E(\kategorie K,\klmorphismus n,\klmorphismus
    C,\orbitfaltigkeit{\kategorie G}{\Ob\kategorie
      G})}{A'}{\Ob\kategorie G}$.

  With Theorem~\ref{thm:flat-annotation-is-annotation} the mapping $A'$ is an annotation of $\kategorie M$.
  
  Furthermore,
  \begin{align*}
    Φ\bigl(\moranfang_{\kategorie L}\paar{\klmorphismus a}{\klmorphismus g}\bigr) 
    &= Φ\paar[Big]{\moranfang_{\kategorie K}\klmorphismus a}{\bigl(A(\klmorphismus a)\bigr)^{-1}\moranfang_{\kategorie G}\klmorphismus g}\\
    &= \paar[Big]{\moranfang_{\kategorie K}\klmorphismus a}{\bigl(A(\klmorphismus a)\bigr)^{-1}\moranfang_{\kategorie G}\klmorphismus g}\\
    &= \moranfang_{\kategorie M}\paar[Big]{\paar{\klmorphismus a}{\klmorphismus g^{\Ob\kategorie G}}}
      {\bigl(A(\klmorphismus a)\bigr)^{-1}\moranfang_{\kategorie G}\klmorphismus g}\\
    &= \moranfang_{\kategorie M}Ψ\paar{\klmorphismus a}{\klmorphismus g}\\
    Φ\bigl(\morende_{\kategorie L}\paar{\klmorphismus a}{\klmorphismus g}\bigr) 
    &= Φ\paar{\morende_{\kategorie K}\klmorphismus a}
      {\morende_{\kategorie G}\klmorphismus g}\\
    &= \paar[Big]{\morende_{\kategorie K}\klmorphismus a}
      {\underbrace{\morende_{\kategorie G}\klmorphismus g(\moranfang_{\kategorie G}\klmorphismus g)^{-1}A(\klmorphismus a)
      }_{=A'\paar{\klmorphismus a}{\klmorphismus g^{\Ob\kategorie G}}}
      \bigl(A(\klmorphismus a)\bigr)^{-1}\moranfang_{\kategorie G}\klmorphismus g}\\
    &= \morende_{\kategorie M}\paar[Big]{\paar{\klmorphismus a}{\klmorphismus g^{\Ob\kategorie G}}}
      {\bigl(A(\klmorphismus a)\bigr)^{-1}\moranfang_{\kategorie G}\klmorphismus g}\\
    &= \morende_{\kategorie M}Ψ\paar{\klmorphismus a}{\klmorphismus g}\\
  \end{align*}
  \begin{align*}
  Ψ\bigl(\paar{\klmorphismus a}{\klmorphismus g}*_{\kategorie L}\paar{\klmorphismus b}{\klmorphismus h}\bigr)
    &=Ψ\paar{\klmorphismus a*_{\kategorie K}\klmorphismus b}
      {\klmorphismus n\paar{\klmorphismus a}{\klmorphismus b}*_{\kategorie G}\klmorphismus C\paar{\klmorphismus b}{\klmorphismus g}*_{\kategorie G}\klmorphismus h}\\
    &=\paar[bigg]{\paar[Big]{\klmorphismus a*_{\kategorie K}\klmorphismus b}
      {\bigl(\klmorphismus n\paar{\klmorphismus a}{\klmorphismus b}*_{\kategorie G}\klmorphismus C\paar{\klmorphismus b}{\klmorphismus g}*_{\kategorie G}\klmorphismus h\bigr)^{\Ob\kategorie G}}}
      {\\&\qquad\bigl(A(\klmorphismus a*_{\kategorie K}\klmorphismus b)\bigr)^{-1}\moranfang(\klmorphismus n\paar{\klmorphismus a}{\klmorphismus b}*_{\kategorie G}\klmorphismus C\paar{\klmorphismus b}{\klmorphismus g}*_{\kategorie G}\klmorphismus h)}\\
    &=\paar[bigg]{\paar[Big]{\klmorphismus a*_{\kategorie K}\klmorphismus b}
      {\bigl(\klmorphismus n\paar{\klmorphismus a}{\klmorphismus b}*_{\kategorie G}\klmorphismus C\paar{\klmorphismus b}{\klmorphismus g}*_{\kategorie G}\klmorphismus h\bigr)^{\Ob\kategorie G}}}
      {\\&\qquad\underbrace{
    \bigl(A(\klmorphismus a*_{\kategorie K}\klmorphismus b)\bigr)^{-1}
      \moranfang(\klmorphismus n\paar{\klmorphismus a}{\klmorphismus b})
    \bigl(\morende(\klmorphismus n\paar{\klmorphismus a}{\klmorphismus b})\bigr)^{-1}
    }_{\bigl(A(\klmorphismus b)A(\klmorphismus a)\bigr)^{-1}\text{\rlap{\qquad (c.f.\ Eq.~\eqref{eq:31})}}}
    \moranfang\bigl(\klmorphismus C\paar{\klmorphismus b}{\klmorphismus g}*_{\kategorie G}\klmorphismus h\bigr)}\\
    \intertext{Since $\Ob\kategorie G$ acts translatively on $\kategorie G$, this can be rewritten into}
  &=\paar[Big]{
    \paar[big]{
      \klmorphismus a*_{\kategorie K}
      \klmorphismus b
    }{
      \klmorphismus n
      \paar{
        \klmorphismus a
      }{
        \klmorphismus b
      }^{\Ob\kategorie G}
      *_{\orbitfaltigkeit{\kategorie G}{\Ob\kategorie G}}
      \klmorphismus C\paar{\klmorphismus b}{\klmorphismus g}^{\Ob\kategorie G}
      *_{\orbitfaltigkeit{\kategorie G}{\Ob\kategorie G}}
      \klmorphismus h^{\Ob\kategorie G}
    }
  }{\\&\qquad
    \bigl(A(\klmorphismus a)\bigr)^{-1}
    \underbrace{
    \bigl(A(\klmorphismus b)\bigr)^{-1}
    \moranfang\klmorphismus C\paar{\klmorphismus b}{\klmorphismus g}
    \bigl(\morende\klmorphismus C\paar{\klmorphismus b}{\klmorphismus g}\bigr)^{-1}
    }_{=\Bigl(A'\paar[big]{\klmorphismus b}{\klmorphismus C\paar{\klmorphismus b}{\klmorphismus g}}\Bigr)^{-1}
    =\bigl(A(\klmorphismus b)\morende\klmorphismus g(\moranfang\klmorphismus g)^{-1}\bigr)^{-1}
    }
    \moranfang(\klmorphismus h)}\\
    \intertext{We can apply Eq.~\eqref{eq:12b} in order to prove the equivalence under the brace. Then, we get}
    &=\paar[Big]{
      \paar{\klmorphismus a}{\klmorphismus g^{\Ob\kategorie G}}
      *_{\kategorie E\tupel{\kategorie K,\klmorphismus n,\klmorphismus
    C,\orbitfaltigkeit{\kategorie G}{\Ob\kategorie
      G}}}
      \paar{\klmorphismus b}{\klmorphismus h^{\Ob\kategorie G}}
      }{\\&\qquad\bigl(A(\klmorphismus a)\bigr)^{-1}
    \moranfang{\klmorphismus g}
    \underbrace{
    (\morende\klmorphismus g)^{-1}
    \bigl(A(\klmorphismus b)\bigr)^{-1}\moranfang\klmorphismus h)
    }_{=1}}\\
    \intertext{The brace can be omitted as
    $\morende_{\kategorie L}\paar{\klmorphismus a}{\klmorphismus g} =
    \moranfang_{\kategorie L}\paar{\klmorphismus b}{\klmorphismus h}$,
    and thus $\morende\klmorphismus g = \bigl(A(\klmorphismus b)\bigr)^{-1}\moranfang{\klmorphismus h}$.}
    &=\paar[Big]{
      \paar{\klmorphismus a}{\klmorphismus g^{\Ob\kategorie G}}
      *_{\kategorie E\tupel{\kategorie K,\klmorphismus n,\klmorphismus
    C,\orbitfaltigkeit{\kategorie G}{\Ob\kategorie
      G}}}
      \paar{\klmorphismus b}{\klmorphismus h^{\Ob\kategorie G}}
      }{\bigl(A(\klmorphismus a)\bigr)^{-1}\moranfang\klmorphismus g}\\
    &= \paar[Big]{\paar{\klmorphismus a}{\klmorphismus g}}{\bigl(A(\klmorphismus a)\bigr)^{-1}\moranfang\klmorphismus g}
      *_{\kategorie M}
    \paar[Big]{\paar{\klmorphismus b}{\klmorphismus h}}{\bigl(A(\klmorphismus b)\bigr)^{-1}\moranfang\klmorphismus h}\\
    &= Ψ\paar{\klmorphismus a}{\klmorphismus g}*_{\kategorie M}Ψ\paar{\klmorphismus b}{\klmorphismus h}
  \end{align*}

  So it remains to show the bijectivity of $Ψ$ from which follows the
  bijectivity of $Φ$. The injectivity follows directly from
  Eq.~$(*)$ below. Consider the mapping
  \begin{align*}
      \wabbildung{\abbildung{Ψ'&} {\Morall{\kategorie K}\times
                                \Morall{\orbitfaltigkeit{\kategorie G}{\Ob\kategorie G}}\times
                                 \Morall{\kategorie G}}
    {\Morall{\kategorie K}\times\Morall{\kategorie G}}
                                 \nonumber\\&}
      {\paar[Big]{\paar{\klmorphismus a}{\klmorphismus g^{\Ob\kategorie G}}}{h}}\tag{*}
    {\paar{\klmorphismus a}{\klmorphismus g'}\text{ where }
    \klmorphismus g'∈\klmorphismus g^{\Ob\kategorie G}\text{ with }
    \moranfang\klmorphismus g' = \bigl(A(\klmorphismus a)\bigr)h}.
  \end{align*}
  This mapping is well-defined since $\Ob\kategorie G$ acts
  translatively on $\kategorie G$. Obviously, the equation
  $Ψ\Bigl(Ψ'\paar[big]{\paar{\klmorphismus a}{\klmorphismus
      g^{\Ob\kategorie G}}}{h}\Bigr)= \paar[big]{\paar{\klmorphismus
      a}{\klmorphismus g^{\Ob\kategorie G}}}{h}$ holds for all arrows
  of $\kategorie M$. So $Ψ'$ is the inverse of $Ψ$ which proves the 
  bijectivity.
  \qed
\end{proof}

\noindent Now, we can consider the base structure of our representation as a
reflexive and transitive binary relation with some additional
decorations. If we know that the unfolded structure is a transitive relation,
it may be more helpful to preserve antisymmetry than transitivity.

Before we discuss this in detail, let us have some philosophical
remarks: In \cite{Schreier:1926,Schreier:1925}, Two mappings are
considered: $A^B$ which corresponds to $\klmorphismus C\paar BA$ and
$A_{B,B'}$ which corresponds to $\klmorphismus n\paar B{B'}$, here.
Both group extensions as well as unfoldings are reconstructions of
factored structures. And both constructions consider the factor
structure as transversal of the unfolded structure: They describe, how
the elements of the transversal are related to each other and how the
equivalence classes of the kernel of the canonical homomorphism are
mapped to each other.

In case of group extensions, the kernel of the canonical homomorphism
is defined by the partition of right cosets of a normal
subgroup. These are mapped to each other by internal homomorphisms
($A^B$) where the neutral elements of the group operations of the
cosets are the elements of a transversal.  The relationship between
the transversal elements is encoded by the elements $A_{B,B'}$. These
elements also encode how larger cycles are factored into smaller
cycles.

When we build representations with orbit categories, the category
structure is mainly encoded in the orbit category. The relationship
between the transversal elements and the location of particular arrows
are encoded in the annotation. So, the offset correction behaves
differently in unfoldings than $A_{B,B'}$ for Schreier's group extensions.
The relationship of arrows between the
elements of the same vertex congruence class is encoded in the
corresponding vertex monoid and the annotation.

During the transition to a flat representation the category structure
of the vertex monoids is transferred to the unfolding group, resulting
in a right-groupal category. So the connection between the individual
loops and the arrows between different vertices gets destroyed. This
must be reconstructed using the two operators $\klmorphismus C$ and
$\klmorphismus n$. So $\klmorphismus C$ plays the role of an
homomorphism and $\klmorphismus n$ encodes how the annotations match
each other.

As we will see later, the natural annotations of the images of
$\klmorphismus n$ change by inner automorphisms of the annotation
group when the underlying transversal changes. In contrast the
corresponding mapping for Schreier's group extensions may result in
arbitrary group elements when the underlying transversal changes.

In order to describe this, we introduce isomorphisms between flat
category representations.

\begin{definition}\label{def:flat-representations-isomorphism}
  Under the conditions of Lemma~\ref{lemma:flat-representations} two
  flat category representations
  $\tupel{\kategorie K,A,\hat{\klmorphismus
      n},\klmorphismus C, \kategorie G}$ and
  $\tupel{\kategorie L,B,\hat{\klmorphismus
      m},\klmorphismus D, \kategorie H}$ are called
  \defindex{isomorphic} if there exist a category isomorphism
  $\abbildung{φ}{\kategorie L}{\kategorie K}$, a groupal category isomorphism
  $\abbildung{ψ}{\kategorie H}{\kategorie G}$, and a mapping
  $\abbildung{h}{\Ob\kategorie L}{\Aut_{CAT}{\kategorie G}}$ from the
  vertices of category $\kategorie L$ into the set of category
  automorphisms on $\kategorie G$ such that for all
  $\klmorphismus a,\klmorphismus b∈\kategorie L$ and
  $\klmorphismus x∈\kategorie H$ and all vertices $x∈\Ob\kategorie H$ and $f,g∈\Ob\kategorie G$ the following equations hold:
  \begin{subequations}
    \begin{align}
      h(x)(\klmorphismus a)^{φ(g)}
      &= h(x)(\klmorphismus a^g)\label{eq:32}\\
      B(\klmorphismus a)
      &= h(\morende\klmorphismus a)^{-1}(1)A\bigl(φ(\klmorphismus a)\bigr)h(\moranfang\klmorphismus a)(1)\label{eq:49}\\
      \klmorphismus m\paar{\klmorphismus a}{\klmorphismus b}
      &= h(\morende\klmorphismus b)^{-1}(\klmorphismus n\paar{φ\klmorphismus a}{φ\klmorphismus b})\label{eq:50}\\
      \klmorphismus D\paar{\klmorphismus a}{\klmorphismus x}
      &= h(\morende\klmorphismus a)^{-1}\Bigl(\klmorphismus C\paar[big]{φ\klmorphismus a}{h(\moranfang\klmorphismus a)(\klmorphismus x)}\Bigr)\label{eq:51}
    \end{align}
  \end{subequations}
  In this case the triplet $\tupel{φ,ψ,h}$ is called \defindex{isomorphism}.
\end{definition}

\noindent The flat cateogry representations form a category together with the isomorphisms. This is shown next.

\begin{lemma}\label{lemma:flat-categ-concat-isomorphism}
  The concatenation of two isomorphisms of flat categories is also an isomorphism.
\end{lemma}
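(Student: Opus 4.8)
The plan is to compose the two isomorphisms componentwise, transporting the twisting datum of the inner isomorphism into the target groupal category by conjugation. Write the outer isomorphism as $\tupel{φ_1,ψ_1,h_1}$, going from the flat representation over $\tupel{\kategorie L,\kategorie H}$ to the one over $\tupel{\kategorie K,\kategorie G}$ (so $φ_1\colon\kategorie L\to\kategorie K$, $ψ_1\colon\kategorie H\to\kategorie G$, $h_1\colon\Ob\kategorie L\to\Aut_{CAT}\kategorie G$), and the inner isomorphism as $\tupel{φ_2,ψ_2,h_2}$, from the representation over $\tupel{\kategorie M,\kategorie I}$ to the one over $\tupel{\kategorie L,\kategorie H}$. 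First I would set $Φ\definiert φ_1\circ φ_2$ and $Ψ\definiert ψ_1\circ ψ_2$; since a composite of category isomorphisms is a category isomorphism and a composite of groupal category isomorphisms is again groupal, $Φ\colon\kategorie M\to\kategorie K$ and $Ψ\colon\kategorie I\to\kategorie G$ have the required types. The only genuine choice is the twist $H\colon\Ob\kategorie M\to\Aut_{CAT}\kategorie G$, for which I would take $H(x)\definiert h_1\bigl(φ_2(x)\bigr)\circ\bigl(ψ_1\circ h_2(x)\circ ψ_1^{-1}\bigr)$. Each factor lies in $\Aut_{CAT}\kategorie G$: the first by definition of $h_1$, and $ψ_1\circ h_2(x)\circ ψ_1^{-1}$ because $ψ_1$ is a category isomorphism $\kategorie H\to\kategorie G$ and $h_2(x)\in\Aut_{CAT}\kategorie H$; hence $H(x)\in\Aut_{CAT}\kategorie G$.

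For the verification I would read each of \eqref{eq:49}, \eqref{eq:50} and \eqref{eq:51} as an identity holding after transporting the $\kategorie H$-valued (resp.\ $\kategorie I$-valued) data along $ψ_1$ (resp.\ $ψ_2$). The mechanical device is the conjugation identity $ψ_1\circ h_2(x)^{-1}=\bigl(ψ_1\circ h_2(x)\circ ψ_1^{-1}\bigr)^{-1}\circ ψ_1$, together with the facts that $φ_2$ preserves $\moranfang$ and $\morende$ and that $ψ_1$ preserves identities, composition and the neutral object. Substituting the equation for $\tupel{φ_2,ψ_2,h_2}$, applying $ψ_1$, and then substituting the equation for $\tupel{φ_1,ψ_1,h_1}$ with argument $φ_2(\cdot)$ collapses the two twists into the single twist $H$. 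For \eqref{eq:50} and \eqref{eq:51} this is a plain composition of automorphisms and drops out immediately, since $H(\morende\,\cdot)^{-1}=\bigl(ψ_1 h_2(\morende\,\cdot)ψ_1^{-1}\bigr)^{-1}\circ h_1(φ_2\morende\,\cdot)^{-1}$ is exactly the nested application produced by the chaining. Finally \eqref{eq:32} for the composite follows by applying \eqref{eq:32} for $\tupel{φ_2,ψ_2,h_2}$ and then for $\tupel{φ_1,ψ_1,h_1}$, using that $Φ=φ_1\circ φ_2$ is the intertwiner of the two induced group actions.

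The hard part will be the two-sided annotation equation \eqref{eq:49}. Here one must show that evaluating the composite automorphism $H(x)=h_1(φ_2 x)\circ\bigl(ψ_1 h_2(x)ψ_1^{-1}\bigr)$ at the neutral object factors as the product of the evaluations of its two constituents, i.e.\ $H(x)(1)=h_1(φ_2 x)(1)\cdot\bigl(ψ_1 h_2(x)ψ_1^{-1}\bigr)(1)$ in the group $\Ob\kategorie G$, and dually for the inverse terms $H(\morende\,\cdot)^{-1}(1)$. This is precisely where it matters that the automorphisms in the image of $h$ are the structure-preserving shifts supplied by Lemma~\ref{lemma:vertex-categories-move-neutral-element} rather than arbitrary category automorphisms: because they respect the right-groupal multiplication and because $ψ_1$ is a \emph{groupal} (not merely categorical) isomorphism, $ψ_1$ distributes over the group product and maps the neutral object to the neutral object, so that the one-sided evaluation-at-$1$ terms recombine correctly into $H(\moranfang\,\cdot)(1)$ and $H(\morende\,\cdot)^{-1}(1)$. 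Once this compatibility is recorded, the remaining steps are routine bookkeeping, and one concludes that $\tupel{Φ,Ψ,H}$ satisfies all of \eqref{eq:32}--\eqref{eq:51} of Definition~\ref{def:flat-representations-isomorphism}, hence is an isomorphism of flat category representations.
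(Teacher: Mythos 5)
Your construction agrees with the paper's: the paper also composes the two category isomorphisms and the two groupal isomorphisms, takes the new twist to be the composite $h'(x)$ followed by $h(φ'(x))$ (you write this as $h_1(φ_2 x)\circ ψ_1 h_2(x)ψ_1^{-1}$, which is the same thing made type-correct by transporting the inner twist along $ψ_1$ — the paper silently identifies $\kategorie H$ with $\kategorie G$ here), and then verifies \eqref{eq:32}--\eqref{eq:51} by exactly the substitute-and-collapse computation you describe, including the step you single out for \eqref{eq:49}, namely that $h(y)$ acts on objects as left multiplication by $h(y)(1)$ so that the evaluations at the neutral element recombine. So this is essentially the same proof, with your version being slightly more careful about where the $ψ$'s must be inserted.
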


\begin{proof}
  Let 
  $\tupel{\kategorie K,A,\hat{\klmorphismus
      n},\klmorphismus C, \kategorie G}$,
  $\tupel{\kategorie L,B,\hat{\klmorphismus
      m},\klmorphismus D, \kategorie H}$ 
  and
  $\tupel{\kategorie M,C,\hat{\klmorphismus
      l},\klmorphismus E, \kategorie I}$ flat category representations.

  Let further the triplets
  $\abbildung{φ}{\kategorie L}{\kategorie K}$,
  $\abbildung{ψ}{\kategorie H}{\kategorie G}$,
  $\abbildung{h}{\Ob\kategorie L}{\Aut_{CAT}{\kategorie G}}$ and  
  $\abbildung{φ'}{\kategorie M}{\kategorie L}$, 
  $\abbildung{ψ'}{\kategorie I}{\kategorie H}$,
  $\abbildung{h'}{\Ob\kategorie M}{\Aut_{CAT}{\kategorie H}}$ isomorphisms.

  Then for the mappings
  \begin{align*}
    \wabbildung{
    \abbildung{\hat{φ}&}{\kategorie M}{\kategorie K}}
                                   {\klmorphismus a}
                                   {φ\bigl(φ'(\klmorphismus a)\bigr)}\\
    \wabbildung{\abbildung{\hat{ψ}&}{\kategorie I}{\kategorie G}}
                                   {\klmorphismus a}
                                   {ψ\bigl(ψ'(\klmorphismus a)\bigr)}\\
    \wabbildung{\abbildung{\hat h&}{\Ob\kategorie M}{\Aut_{CAT}{\kategorie G}}}
                                   {x}
                                   {h'(x)\circ h\bigl(φ(x)\bigr)}
  \end{align*}
  The following equations hold:
  \begin{align*}
    \hat h(x)(\klmorphismus b^g)
    &= h\bigl(φ(x)\bigr)\bigl(h'(x)(\klmorphismus b^g)\bigr)
    = h\bigl(φ(x)\bigr)\bigl(h'(x)(\klmorphismus b)^{φ'(g)}\bigr)\\
    &= h\bigl(φ(x)\bigr)\bigl(h'(x)(\klmorphismus b)\bigr)^{φ\bigl(φ'(g)\bigr)}
      = h\bigl(φ(x)\bigr)\bigl(h'(x)(\klmorphismus b)\bigr)^{\hat {φ}(g)}\\
    &= \hat h(x)(\klmorphismus b^g)^{\hat {φ}(g)}\\
    C(\klmorphismus a)
    &=h'(\morende\klmorphismus a)^{-1}(1)B\bigl(φ'(\klmorphismus a)\bigr)h'(\moranfang\klmorphismus a)(1)\\
    &=h'(\morende\klmorphismus a)^{-1}(1)h(\morende φ'\klmorphismus a)^{-1}(1)A\bigl(φ(φ'\klmorphismus a)\bigr)h(\moranfang φ'\klmorphismus a)(1)h'(\moranfang\klmorphismus a)(1)\\
    &=h'(\morende\klmorphismus a)^{-1}\bigl(h(\morende φ'\klmorphismus a)^{-1}(1)\bigr)A\bigl(φ(φ'\klmorphismus a)\bigr)h(\moranfang φ'\klmorphismus a)\bigl(h'(\moranfang\klmorphismus a)(1)\bigr)\\
    &=\Bigl(h'(\morende\klmorphismus a)\circ h\bigl(φ'(\morende \klmorphismus a)\bigr)\Bigr)^{-1}(1)
      A\bigl(φ(φ'\klmorphismus a)\bigr)
      \Bigl(h'(\moranfang\klmorphismus a)\circ h\bigl( φ'(\moranfang\klmorphismus a)\bigr)\Bigr)(1)\\
    &=\bigl(\hat h(\morende\klmorphismus a)\bigr)^{-1}(1)
      A\bigl(\hat {φ}(\klmorphismus a)\bigr)
      \hat h(\moranfang\klmorphismus a)(1)\\
    \hat{\klmorphismus l}\paar{\klmorphismus a}{\klmorphismus b}
      &= h'(\morende\klmorphismus b)^{-1} \bigl(\hat{\klmorphismus m}\paar{φ'\klmorphismus a}{φ'\klmorphismus b}\bigr)\\
      &= h'(\morende\klmorphismus b)^{-1}\biggl(h\bigl(\morende(φ'\klmorphismus b)\bigr)^{-1}\Bigl(\hat{\klmorphismus n}\paar[big]{φ(φ'\klmorphismus a)}{φ(φ'\klmorphismus b)}\Bigr)\biggr)\\
    &= \hat h(\morende\klmorphismus b)^{-1}\bigl(\hat{\klmorphismus n}\paar{\hat{φ}\klmorphismus a}{\hat {φ}\klmorphismus b}\bigr)\\
      \klmorphismus E\paar{\klmorphismus a}{\klmorphismus x}
      &= h'(\morende\klmorphismus a)^{-1}\Bigl(\klmorphismus D\paar[big]{φ'\klmorphismus a}{h'(\moranfang\klmorphismus a)(\klmorphismus x)}\Bigr)\\
      \klmorphismus E\paar{\klmorphismus a}{\klmorphismus x}
      &= h'(\morende\klmorphismus a)^{-1}\Biggl(h\bigl(\morende(φ'\klmorphismus a)\bigr)^{-1}\biggl(\klmorphismus C\paar[Big]{φ(φ'\klmorphismus a)}{h\bigl(\moranfang(φ'\klmorphismus a)\bigr)\bigl(h'(\moranfang\klmorphismus a)(\klmorphismus x)\bigr)}\biggr)\Biggr)\\
      &= \Bigl(h'(\morende\klmorphismus a)\circ h\bigl(\morende(φ'\klmorphismus a)\bigr)\Bigr)^{-1}\Biggl(\klmorphismus C\paar[bigg]{\hat {φ}(\klmorphismus a)}{\Bigl(h'(\moranfang\klmorphismus a)\circ h\bigl(\moranfang(φ'\klmorphismus a)\bigr)\Bigr)(\klmorphismus x)}\Biggr)\\
      &= \bigl(\hat h(\morende\klmorphismus a)\bigr)^{-1}\Bigl(\klmorphismus C\paar[big]{\hat {φ}(\klmorphismus a)}{\hat h(\moranfang\klmorphismus a)(\klmorphismus x)}\Bigr)\\
  \end{align*}
  Obviously $\hat{ψ}$ and $\hat {φ}$ are isomorphisms, which fulfil
  Equations~(\ref{eq:32}) to (\Ref{eq:51}). So
  $\tupel{\kategorie K,A,\hat{\klmorphismus n},\klmorphismus C,
    \kategorie G}$, is isomorphic to
  $\tupel{\kategorie M,C,\hat{\klmorphismus l},\klmorphismus E,
    \kategorie I}$.
  \qed
\end{proof}

\noindent In fact, the isomorphisms are invertible. We leave the proof to the
interested reader. We use the term “isomorphism”, here, because two
isomorphic flat category representations unfold into isomorphic
categories. So they are isomorphic in the sense that they represent
essentially the same thing.

\begin{lemma}
  The unfoldings of two isomorphic flat category representations are isomorphic categories.
\end{lemma}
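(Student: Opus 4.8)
The plan is to construct the required isomorphism of unfoldings \emph{directly} from the isomorphism $\tupel{φ,ψ,h}$ of the two flat category representations, say $\tupel{\kategorie K,A,\hat{\klmorphismus n},\klmorphismus C,\kategorie G}$ and $\tupel{\kategorie L,B,\hat{\klmorphismus m},\klmorphismus D,\kategorie H}$. Since $φ$ runs from $\kategorie L$ to $\kategorie K$ and $ψ$ from $\kategorie H$ to $\kategorie G$, I would build a pair $\paar{Φ}{Ψ}$ running from $\entfaltung{\kategorie L}{B,\hat{\klmorphismus m},\klmorphismus D}{\kategorie H}$ to $\entfaltung{\kategorie K}{A,\hat{\klmorphismus n},\klmorphismus C}{\kategorie G}$. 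On objects the natural candidate is $Φ\paar{x}{k}\definiert\paar[big]{φ(x)}{h(x)(ψ(k))}$, and on arrows $Ψ\paar{\klmorphismus a}{\klmorphismus g}\definiert\paar[big]{φ(\klmorphismus a)}{h(\morende\klmorphismus a)(ψ(\klmorphismus g))}$, where I use that each $h(x)$ is a category automorphism of $\kategorie G$ and that $ψ$ is a groupal isomorphism, so that both images again lie in $\kategorie G$. The vertex-indexed twist by $h$ is needed precisely because the annotations $A$ and $B$ do not agree under $ψ$ alone but only up to the correction terms $h(\morende\klmorphismus a)^{-1}(1)$ and $h(\moranfang\klmorphismus a)(1)$.

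First I would verify that $Ψ$ is compatible with source and target, i.e.\ that $\moranfang\bigl(Ψ\paar{\klmorphismus a}{\klmorphismus g}\bigr)=Φ\bigl(\moranfang\paar{\klmorphismus a}{\klmorphismus g}\bigr)$ and likewise for $\morende$. The target computation is immediate from the functoriality of $φ$, $ψ$ and $h(\morende\klmorphismus a)$, since $\morende$ of both components is transported accordingly. The source computation is the interesting one: unfolding the source of $\paar{\klmorphismus a}{\klmorphismus g}$ in the second representation produces the factor $ψ\bigl(B(\klmorphismus a)\bigr)^{-1}$, while the source in the first unfolding produces $\bigl(A(φ\klmorphismus a)\bigr)^{-1}$, and these have to be reconciled exactly through the annotation-matching equation~\eqref{eq:49} together with the compatibility~\eqref{eq:32} of the automorphisms $h(\cdot)$ with the group action.

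The main step, and the one I expect to be the real obstacle, is that $Ψ$ respects concatenation. Here I would expand $Ψ\bigl(\paar{\klmorphismus a}{\klmorphismus g}*\paar{\klmorphismus b}{\klmorphismus h}\bigr)$ by the unfolding concatenation of the second representation, whose $\kategorie H$-component is $\hat{\klmorphismus m}\paar{\klmorphismus a}{\klmorphismus b}*\klmorphismus D\paar{\klmorphismus b}{\klmorphismus g}*\klmorphismus h$, and compare it with $Ψ\paar{\klmorphismus a}{\klmorphismus g}*Ψ\paar{\klmorphismus b}{\klmorphismus h}$, whose $\kategorie G$-component is governed by $\hat{\klmorphismus n}$ and $\klmorphismus C$. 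Using $\morende(\klmorphismus a*\klmorphismus b)=\morende\klmorphismus b$ and then rewriting $\hat{\klmorphismus m}$ via~\eqref{eq:50} and $\klmorphismus D$ via~\eqref{eq:51}, the two $\kategorie G$-components should agree once the intermediate automorphisms $h(\moranfang\klmorphismus b)$ and $h(\morende\klmorphismus a)$ telescope against each other, again invoking~\eqref{eq:32}. This cocycle-style cancellation is where the bulk of the work lies, because the unfolding concatenation interleaves the structure maps $\hat{\klmorphismus n},\klmorphismus C$ with (non-commutative) group multiplication, and one must keep careful track of which vertex indexes which copy of $h$.

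Finally, bijectivity follows with little further effort: as $φ$ and $ψ$ are isomorphisms and every $h(x)$ is a category automorphism, the inverse of $\paar{Φ}{Ψ}$ is obtained componentwise, with $Φ^{-1}\paar{x'}{g'}=\paar[big]{φ^{-1}(x')}{ψ^{-1}\bigl(h(φ^{-1}(x'))^{-1}(g')\bigr)}$ and an analogous formula on arrows, the two-sided inverse property being a direct check. Hence $\paar{Φ}{Ψ}$ is an isomorphism of the two unfoldings. As an alternative route one could instead pass through the earlier identification of each flat unfolding $\entfaltung{\kategorie K}{A,\hat{\klmorphismus n},\klmorphismus C}{\kategorie G}$ with the unfolding of its associated singleton category extension via $\tupel{\kategorie L,A',\Ob\kategorie G}$, reducing the claim to an isomorphism of ordinary representations; but the direct construction above keeps the bookkeeping closest to the defining equations~\eqref{eq:32}, \eqref{eq:49}, \eqref{eq:50} and~\eqref{eq:51}.
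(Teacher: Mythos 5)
Your proposal is correct and follows essentially the same route as the paper: you construct exactly the map $\paar{\klmorphismus a}{\klmorphismus g}\mapsto\paar[big]{φ(\klmorphismus a)}{h(\morende\klmorphismus a)(ψ(\klmorphismus g))}$ that the paper uses, and you invoke the same defining equations \eqref{eq:32}, \eqref{eq:49}, \eqref{eq:50} and \eqref{eq:51} for the source/target and concatenation checks. The only cosmetic difference is that the paper first treats the case of trivial $φ$ and $ψ$ (isolating the vertex-indexed twist by $h$) and then composes with $φ$ and $ψ$, whereas you carry out the verification in one pass.
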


\begin{proof}
  We use the same notations as in Lemma~\ref{def:flat-representations-isomorphism}.
  At first we consider the case where $φ$ and $ψ$ are trivial. Then Equations~\eqref{eq:32} to~\eqref{eq:51} are reduced to
  \begin{align*}
    h(x)(g)^f
    &= h(x)(g^f)= h(x)(gf)=h(x)(1)gf\\
    B(\klmorphismus a)
    &= h(\morende\klmorphismus a)^{-1}(1)A(\klmorphismus a)h(\moranfang\klmorphismus a)(1)\\
    \hat{\klmorphismus m}\paar{\klmorphismus a}{\klmorphismus b}
    &= h(\morende\klmorphismus b)^{-1}(\hat{\klmorphismus n}\paar{\klmorphismus a}{\klmorphismus b})\\
    \klmorphismus D\paar{\klmorphismus a}{\klmorphismus x}
    &= h(\morende\klmorphismus a)^{-1}\Bigl(\klmorphismus C\paar[big]{\klmorphismus a}{h(\moranfang\klmorphismus a)(\klmorphismus x)}\Bigr)
  \end{align*}
  Then, the unfoldiding $\entfaltung{\kategorie K}{B,\hat{\klmorphismus m},\klmorphismus D}{\kategorie G}$ is defined by
  \begin{align*}
    \Ob{\entfaltung{\kategorie K}{B,\hat{\klmorphismus m},\klmorphismus D}{\kategorie G}}
    &\definiert \Ob(\kategorie K \times \kategorie G),\\
    \Mor{\entfaltung{\kategorie K}{B,\hat{\klmorphismus m},\klmorphismus D}{\kategorie G}}[big]{\paar xg}{\paar yh}
    &\definiert \Menge{\paar{\klmorphismus a}{\klmorphismus g}∈\Morall{\kategorie K\times \kategorie G}}{\morende \klmorphismus g = h, \moranfang\klmorphismus g = B(\klmorphismus a)g}\text{ and}\\
    \paar{\klmorphismus a}{\klmorphismus g}*_{\entfaltung{\kategorie K}{B,\hat{\klmorphismus m},\klmorphismus D}{\kategorie G}} \paar{\klmorphismus b}{\klmorphismus h}
    &\definiert \paar{\klmorphismus a * \klmorphismus b}{\hat{\klmorphismus m}\paar{\klmorphismus a}{\klmorphismus b}*\klmorphismus D\paar{\klmorphismus b}{\klmorphismus g}*\klmorphismus h}
  \end{align*}
  Now, we substitute $B$, $\hat{\klmorphismus m}$ and $\klmorphismus \klmorphismus D$ according to the Equations~\eqref{eq:32} to~(\ref{eq:51}):
  \begin{align*}
    \Mor{{\entfaltung{\kategorie K}{
    B,\hat{\klmorphismus m},\klmorphismus D}{\kategorie G}}}[big]{
    &\paar xg}{\paar yh}\\
    &= \Menge{\paar{\klmorphismus a}{\klmorphismus g}∈\Morall{\kategorie K\times \kategorie G}}
      {\morende \klmorphismus g = h,
      \moranfang\klmorphismus g = h(\morende\klmorphismus a)^{-1}(1)A(\klmorphismus a)h(\moranfang\klmorphismus a)(1)g}\\
    &= \Menge{\paar{\klmorphismus a}{\klmorphismus g}∈\Morall{\kategorie K\times \kategorie G}}{h(\morende\klmorphismus a)(\morende \klmorphismus g) = h(\morende\klmorphismus a)(h),
      h(\morende\klmorphismus a)(\moranfang\klmorphismus g) = A(\klmorphismus a)h(\moranfang\klmorphismus a)(g)}\\
    &= \Menge[big]{\paar[big]{\klmorphismus a}{h(\morende\klmorphismus a)^{-1}(\klmorphismus g)}}
      {\paar{\klmorphismus a}{\klmorphismus g}∈\Morall{\kategorie K\times \kategorie G},
      \morende \klmorphismus g = h,
      \moranfang\klmorphismus g = A(\klmorphismus a)g}\\
    \paar{\klmorphismus a}{\klmorphismus g}
    &*_{(\entfaltung{\kategorie K}{B,\hat{\klmorphismus m},\klmorphismus D}{\kategorie G})} \paar{\klmorphismus b}{\klmorphismus h}\\
    &= \paar[bigg]{\klmorphismus a * \klmorphismus b}{h(\morende\klmorphismus b)^{-1}(\hat{\klmorphismus n}\paar{\klmorphismus a}{\klmorphismus b})
      *h(\morende\klmorphismus b)^{-1}\Bigl(\klmorphismus C\paar[big]{\klmorphismus b}{h(\moranfang\klmorphismus b)(\klmorphismus g)}\Bigr)
      *\klmorphismus h}\\
    &= \paar[bigg]{\klmorphismus a * \klmorphismus b}{h(\morende\klmorphismus b)^{-1}\Bigl(\hat{\klmorphismus n}\paar{\klmorphismus a}{\klmorphismus b}
      *\klmorphismus C\paar[big]{\klmorphismus b}{h(\moranfang\klmorphismus b)(\klmorphismus g)}
      *h(\morende\klmorphismus b)(\klmorphismus h)\Bigr)}
  \end{align*}
  So, obviously, the mapping
  \begin{align*}
    \wabbildung{\abbildung{Φ}{\Morall{{\entfaltung{\kategorie K}{
    B,\hat{\klmorphismus m},\klmorphismus D}{\kategorie G}}}}{\Morall{{\entfaltung{\kategorie K}{
    A,\hat{\klmorphismus n},\klmorphismus C}{\kategorie G}}}}}
    {\paar{\klmorphismus a}{\klmorphismus g}}
    {\paar[big]{\klmorphismus a}{h(\morende\klmorphismus a)(\klmorphismus g)}}
  \end{align*}
  is a bijection and compatible with the category concatenation. Thus,
  it raises a category isomorphism between
  $\entfaltung{\kategorie K}{ B,\hat{\klmorphismus m},\klmorphismus
    D}{\kategorie G}$ and
  $\entfaltung{\kategorie K}{ A,\hat{\klmorphismus n},\klmorphismus
    C}{\kategorie G}$.

  As the isomorphisms $φ$ and $ψ$ are compatible with the category
  structure and $ψ$ is also an isomorphism with respect to the group
  operation, we can substitute them in the corresponding equations
  without changing their syntactic applicability. Thus
  \begin{align*}
    \wabbildung{\abbildung{Ψ}{\Morall{{\entfaltung{\kategorie L}{
    B,\hat{\klmorphismus m},\klmorphismus D}{\kategorie H}}}}{\Morall{{\entfaltung{\kategorie K}{
    A,\hat{\klmorphismus n},\klmorphismus C}{\kategorie G}}}}}
    {\paar{\klmorphismus a}{\klmorphismus g}}
    {\paar[big]{φ\klmorphismus a}{h(\morende\klmorphismus a)(ψ\klmorphismus g)}}
  \end{align*}
  raises a category isomorphism between
  $\entfaltung{\kategorie L}{ B,\hat{\klmorphismus m},\klmorphismus
    D}{\kategorie H}$ and
  $\entfaltung{\kategorie K}{ A,\hat{\klmorphismus n},\klmorphismus
    C}{\kategorie G}$.
  \qed
\end{proof}

\noindent The isomorphism $Φ$ in the previous proof corresponds to a change of
the transversal for a natural annotation. Equation~\eqref{eq:49} tells
us that the natural annotations of the unfoldings of any image of the
mapping $\hat{\klmorphismus n}$ are related by inner automorphisms of the
group $\Ob\kategorie G$: The equations 

\begin{gather*}
  B(\klmorphismus b)B(\klmorphismus a)
  =\morende\klmorphismus m\paar{\klmorphismus a}{\klmorphismus b}
    \bigl(\moranfang\klmorphismus m\paar{\klmorphismus a}{\klmorphismus b}\bigr)^{-1}
    B(\klmorphismus a\bullet\klmorphismus b)\\
  A(φ\klmorphismus b)A(φ\klmorphismus a)
  =\morende\klmorphismus n\paar{φ\klmorphismus a}{φ\klmorphismus b}
    \bigl(φ\moranfang\klmorphismus a\paar{φ\klmorphismus a}{φ\klmorphismus b}\bigr)^{-1}
    A(φ\klmorphismus a\bullet\klmorphismus b)
    \intertext{lead to the equations}
  \begin{multlined}
    ψ(\morende\klmorphismus m\paar{\klmorphismus a}{\klmorphismus b})
    ψ\bigl(\moranfang\klmorphismus m\paar{\klmorphismus a}{\klmorphismus b}\bigr)^{-1}
    h\bigl(\morende\klmorphismus (a\bullet\klmorphismus b)\bigr)^{-1}(1)A
    \bigl(φ(\klmorphismus a\bullet\klmorphismus b)\bigr)h\bigl(\moranfang(\klmorphismus a\bullet\klmorphismus b)\bigr)(1)=
    \\
    = h(\morende\klmorphismus b)^{-1}(1)A\bigl(φ(\klmorphismus b)\bigr)h(\moranfang\klmorphismus b)(1)
    h(\morende\klmorphismus a)^{-1}(1)A\bigl(φ(\klmorphismus
    a)\bigr)h(\moranfang\klmorphismus a)(1),\text{ and}
  \end{multlined}\\
  \morende\klmorphismus n\paar{φ\klmorphismus a}{φ\klmorphismus b}
  \bigl(\moranfang\klmorphismus n\paar{φ\klmorphismus a}{φ\klmorphismus b}\bigr)^{-1}
  A
  \bigl(φ(\klmorphismus a\bullet\klmorphismus b)\bigr)
  = A\bigl(φ(\klmorphismus b)\bigr)
  A\bigl(φ(\klmorphismus a)\bigr)
\intertext{These can be combined into a single equation connecting $\klmorphismus n$ and $\klmorphismus m$.}
  ψ(\morende\klmorphismus m\paar{\klmorphismus a}{\klmorphismus b})
  ψ\bigl(\moranfang\klmorphismus m\paar{\klmorphismus a}{\klmorphismus b}\bigr)^{-1}
  = h(\morende\klmorphismus b)^{-1}(1)
  \morende\klmorphismus n\paar{φ\klmorphismus a}{φ\klmorphismus b}
  \bigl(\moranfang\klmorphismus n\paar{φ\klmorphismus a}{φ\klmorphismus b}\bigr)^{-1}
  h(\morende\klmorphismus b)(1)
\end{gather*}

This is especially important for those arrows
$\klmorphismus a,\klmorphismus b$ where
$\klmorphismus n\paar{\klmorphismus a}{\klmorphismus b}$ is a loop. In
this case being a loop does not depend on the choice of the
transversal elements of a natural annotation. So, we can consider the
pair of arrows $\paar{\klmorphismus a}{\klmorphismus b}$ to be
consistent. On the other hand, we would call a pair of arrows
$\klmorphismus a,\klmorphismus b$ a long pair if
$\klmorphismus n\paar{\klmorphismus a}{\klmorphismus b}\neq 1$. In
case of a po-group a long pair is essentially a pair of arrows such
that for all convex transversals at least one of the two or their
concatenation comes from or points to a vertex outside of the convex
transversal.

\section{Flat representations}\label{sec:flat-representations}

Being a long pair does not necessarily mean that one of the arrows is
redundant. Examples of po-groups with uniquely representable orbit
categories can easily be constructed. On the other hand the orbit
category
$\orbitfaltigkeit{\paar{\rationalezahlen\times\rationalezahlen}{\leq})}{(12\ganzzahlen\times12\ganzzahlen)}$
has a system of pairwise consistent arrows that – in combination with a vertex category – represent the whole category itself.

Now, we will construct a structure that represents the antisymmetry of
the flat orbit category in the flat category representation. This
means, that we must restrict ourselves to partial categories, where
the concatenation is not always defined. In our case, each partial
category is constructed with a category in mind.

\begin{definition}
  Let $\kategorie K$ denote a category with concatenation $*$. Any subgraph
  $\kategorie P$ of $\kategorie K$ together with a partial binary
  opertator $\cdot$ called \defindex{concatenation} is called
  \defindex{partial subcategory} of $\kategorie K$, if the following
  conditions are met:
  \begin{enumerate}    
  \item Every vertex has an identity loop.
  \item For any two arrows
    $\klmorphismus a,\klmorphismus b∈\Morall{\kategorie P}$ the
    concatenation
    $\klmorphismus a\cdot \klmorphismus b=\klmorphismus
    a*\klmorphismus b$ exists iff the concatenation
    $\klmorphismus a*\klmorphismus b∈ \Morall{\kategorie P}$ exists in
    $\kategorie P$.
  \end{enumerate}

  A partial subcategory $\kategorie P$ of $\kategorie K$ is called
  \defindex{full} if there exists a homomorphism from the path
  category $\pfadkategorie{\kategorie P}$ onto $\kategorie K$ that is
  compatible with the concatenation in $\kategorie P$.

  The partial subcategory $\kategorie P$ is called \defindex{fully defining} if for the
  congruence relation $\equiv$ defined on the path category
  $\pfadkategorie{\kategorie P}$ and generated by formula
  \begin{equation}
    \klmorphismus a *\klmorphismus b \equiv \klmorphismus a\cdot \klmorphismus b
  \end{equation}
  the factor category $\faktorisiert {\pfadkategorie {\kategorie P}} {{\equiv}}$ is isomorphic to $\kategorie K$.

  It is called \defindex{flat defining} if for the 
  congruence relation $\bowtie$ generated by the formula
  \begin{equation}
    \label{eq:30}
    \forall x∈\Ob\kategorie P, \klmorphismus x∈\Mor{\klmorphismus P}{x}{x}: \klmorphismus x\bowtie \id_x
  \end{equation}
  the category $\faktorisiert {\pfadkategorie {\kategorie P}} {({\equiv}\vee {\bowtie})}$ is isomorphic to $\kategorie K$.
\end{definition}

\noindent With
Lemma~\ref{lemma:simele-flat-orbit-category-when-simple} and
\ref{lemma:flat-categories-no-parallel-arrows-in-circles} we can tell,
when uniquely representable orbit categories of po-groups give rise to
simple partial categories.

Our goal is now, that we want to express flat category representations
in the sense of partial categories. This means we somehow need a way
to reconstruct the flat category representation. If a flat category
representation
$\tupel{\kategorie K,A,\klmorphismus n,\klmorphismus C,\kategorie G}$
has a simple vertex category $\kategorie G$, there is at most one
arrow between any two vertices in $\kategorie G$ and thus, the mapping
$\klmorphismus n$ can be reconstructed according to
Equation~\ref{eq:34} from the annotation $A$ and the category
concatenation. Then,
$\klmorphismus n\paar{\klmorphismus a}{\klmorphismus b}$ is the arrow
between $\morende\klmorphismus b$ and
$\morende\klmorphismus b^{A(\klmorphismus b)A(\klmorphismus
  a)A\bigl(\klmorphismus r(\klmorphismus a * \klmorphismus
  b)\bigr)^{-1}}$. Applying this rule recursively, we can extend this
construction to the concatenation of any paths in $\kategorie P$. This
implies that the partial subcategory must include all concatenated arrows
$\klmorphismus a$ for which the annotation
$A(\klmorphismus b * \klmorphismus c)$ differs from the product
$A(\klmorphismus c)A(\klmorphismus b)$ of all possible combinations
$\klmorphismus a = \klmorphismus b * \klmorphismus c$.

In the same way we can reconstruct the mapping $\klmorphismus C$ from
$\klmorphismus C\eingeschrmenge{\Morall{\kategorie
    P}\times\Morall{\kategorie G}}$, by iteratively applying
$\klmorphismus C\eingeschrmenge{\Morall{\kategorie
    P}\times\Morall{\kategorie G}}$ on all arrows of the paths
according to Equation~\ref{eq:45}. This is well-defined if
$\kategorie G$ is a simple category.

This means that under certain conditions (e.g. the above mentioned
ones) we can define a flat representation based on a partial subcategory.

Given a category $\kategorie K$ and an annotation
$\abbildung A{\kategorie K}{\gruppe}$ into a group then for each
partial subcategory $\kategorie P\leq \kategorie K$ we call
$A\eingeschrmenge{\kategorie P}$ \defindex{annotation} of
$\kategorie P$.

\begin{definition}
  Let
  $\tupel{\kategorie K,A,\klmorphismus n, \klmorphismus C, \kategorie
    G}$ be a flat category representation and $\kategorie P$ be a partial
  category of $\kategorie K$. Then we call the tuple
  $\tupel{\kategorie P,A,\klmorphismus n, \klmorphismus C, \kategorie
    G}$ \defindex{flat representation}. It is called
  \defindex{simple}, iff $\kategorie P$ is a simple graph, and it is
  called \defindex{faithful} iff $A$ is faithful.

  If $\kategorie Q \leq \kategorie P$ is a partial subcategory of
  $\kategorie K$ and a subgraph of $\kategorie P$, the tuple
  $\tupel{\kategorie Q, A\eingeschrmenge{\kategorie Q}, \klmorphismus
    n, \klmorphismus C, \kategorie G}$ is called flat
  subrepresentation of
  $\tupel{\kategorie P,A,\klmorphismus n, \klmorphismus C, \kategorie
    G}$.
\end{definition}

We say a flat category representation
$\tupel{\kategorie K, \hat A,\hat{\klmorphismus n}, \hat{\klmorphismus
    C}, \kategorie G}$ is called a \defindex{completion} of a flat
representation
$\tupel{\kategorie P,A,\klmorphismus n, \klmorphismus C,\kategorie
  G}$, if there exists an embedding
$\abbildung{φ}{\kategorie P}{\kategorie K}$ such that $\kategorie P$
is a fully defining partial subcategory of $\kategorie K$ and
$A = φ\circ \hat A\eingeschrmenge{φ[\Morall{\kategorie P}]}$,
$\klmorphismus n = (φ\times φ)\circ\hat{\klmorphismus
  n}\eingeschrmenge{φ[P]\times φ[P]}$, and
$\klmorphismus C = (φ\times \id)\circ\hat{\klmorphismus
  C}\eingeschrmenge{φ[P]\times \kategorie G}$. So that for the
partial mappings $\klmorphismus n$, $\hat{\klmorphismus n}$,
$\klmorphismus C$, $\hat{\klmorphismus C}$ and the annotations $A$ and
$\hat A$ the following diagrams commute.

\noindent{%
  \hfill
  \begin{tikzcd}
    {\Morall{\kategorie P}\times\Morall{\kategorie P}}\arrow{r}{φ\times φ}\arrow{rd}{\klmorphismus n}&
    {\Morall{\kategorie{K}}\times\Morall{\kategorie K}}\arrow{d}{\hat {\klmorphismus n}} \\
    &\Morall{\kategorie G}
  \end{tikzcd}
  \hfill
  \begin{tikzcd}
    {\Morall{\kategorie P}\times\Morall{\kategorie G}}\arrow{r}{φ\times\id}\arrow{rd}{\klmorphismus C}&
    {\Morall{\kategorie{K}}\times\Morall{\kategorie G}}\arrow{d}{\hat {\klmorphismus C}} \\
    &\Morall{\kategorie G}
  \end{tikzcd}
  \hfill\\\strut
  \hfill
  \begin{tikzcd}
    {\kategorie P}\arrow{r}{φ}\arrow{rd}{A}&
    \kategorie{K} \arrow{d}{\hat A} \\
    &\Ob\kategorie G
  \end{tikzcd}
  \hfill
  \strut
}
  
Obviously a flat category representation of a simple
category is faithful. Consequently its flat representations are also
faithful.

The largest category that can be generated by a graph is its
path category. This implies that there is a homomorphism from the path
category of $\kategorie P$ into $\kategorie K$. As the annotation of
the concatenation of two arrows is the product of the annotations of
both of them, it is uniquely defined. Thus, a faithful completion of a
simple flat representation is the representation with the smallest
possible category with respect to vertex-injective category
homomorphisms that preserve the kernel of the annotation. Every
category that is a non-injective image of the faithful completion must
identify two arrows with different annotations.

\noindent{%
  \hfill
  \begin{tikzcd}
    {\kategorie P}\arrow{r}{id}\arrow{rd}{A}&
    \pfadkategorie {\kategorie P} \arrow{r}{φ} &
    \kategorie{K} \arrow{ld}{A} \\
    & 
    \gruppe&
  \end{tikzcd}
  \hfill
  \strut
}

There are other properties of representations, flat category
representations and flat representations that might be useful for
us. If $\tupel{\kategorie K,A,\gruppe}$ is a representation or $\tupel{\kategorie K, A,\klmorphismus n, \klmorphismus C, \kategorie G}$ is a flat representation, it is called
\begin{description}[{antisymmetrically $\gruppe[S]$-complete}]
\item[faithful] if the annotation is faithful,  
\item[simple] if the partial subcategory is simple
\item[ordered] if the category $\kategorie K$ is an ordered set
\item[{$\gruppe[S]$-symmetric}] if $\gruppe[S]\leq \Aut\kategorie K$ is
  a transitive automorphism group of the partial subcategory,
\item[{translatively $\gruppe[S]$-symmetric}] if $\gruppe[S]$ is a
  translative automorphism group of the partial subcategory and the
  representation is $\gruppe[S]$-symmetric.
\item[antisymmetric] if $A(\klmorphismus a)A(\klmorphismus b) = 1$
  implies $\morende \klmorphismus a \neq \moranfang \klmorphismus b$
  or $\moranfang\klmorphismus a\neq\morende \klmorphismus b$ in $\kategorie K$.
\item[complete wrt. $\kategorie K$] if there is a completion and an
  automorphism from the completion into the category $\kategorie K$.
\item[{antisymmetrically $\gruppe[S]$-complete}] If it is antisymmetric,
  $\gruppe[S]$-symmetric and complete, and it is not a proper flat subrepresentation of any  antisymmetric,
  $\gruppe[S]$-symmetric and complete flat representation.
\end{description}

A po-group is a relational structure. Thus, we specialise on
relational fundamental systems. This implies that we consider such
flat representations whose category is a simple graph: simple flat
representations. In order to keep the direction information we should also focus
on antisymmetric flat representations. As in a po-group every group
element acts as an automorphism on the order relation we want this
behaviour also for the factor group and the corresponding relation.

The $\gruppe[S]$-symmetry is of special interest for us, as this is
the property which ensures that the factorisation is
structure-preserving for convex normal subgroups of po-groups and
$\ell$-groups. The factorisation of a po-group is symmetric with
respect to its factor group. So there is still some hope to preserve
this property during “simplification” of the orbit category to a
simple partial subcategory.

\begin{lemma}
  Let $\kategorie K = \Halbord{\gruppe}{\leq}$ denote a po-group,
  $\gruppe[N]\normalteilervon \gruppe$ a normal subgroup of $\gruppe$.
  Then, every simple flat category representation
  $\tupel{\klmorphismus r[\orbitfaltigkeit {\kategorie
      K}{\gruppe[N]}],A,\klmorphismus n,\klmorphismus C,\gruppe[N]}$
  of $\kategorie K$ is translatively
  $\faktorisiert{\gruppe}{\gruppe[N]}$-symmetric.
\end{lemma}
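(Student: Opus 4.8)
The plan is to produce the acting group explicitly as the right regular action of $\gruppe$ descended to $\faktorisiert{\gruppe}{\gruppe[N]}$, and then to check in turn that it supplies an automorphism group of the partial subcategory $\kategorie P\definiert\klmorphismus r[\orbitfaltigkeit{\kategorie K}{\gruppe[N]}]$ which is transitive, semi-regular and translative.

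First I would set up the action. Viewing $\kategorie K=\Halbord{\gruppe}{\leq}$ as a thin category, every right translation $R_g\colon x\mapsto xg$ is an order automorphism, so $g\mapsto R_g$ is a homomorphism $\gruppe\to\Aut\kategorie K$. The central observation is that normality of $\gruppe[N]$ lets each $R_g$ descend to the orbit category: since $\gruppe[N]\normalteilervon\gruppe$, one computes $R_g\bigl(x^{\gruppe[N]}\bigr)=(xg)^{\gruppe[N]}$ on objects, and for an arrow orbit represented by a pair $x\leq y$ the diagonal right translation by $n\in\gruppe[N]$ is carried to the diagonal right translation by $g^{-1}ng\in\gruppe[N]$, so $R_g$ is well defined on arrow orbits and commutes with the concatenation inherited from $\kategorie K$. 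Hence $R_g$ induces an automorphism $\bar R_g$ of $\orbitfaltigkeit{\kategorie K}{\gruppe[N]}$, and $\bar R_g=\id$ exactly when $R_g$ fixes every orbit, i.e.\ when $g\in\gruppe[N]$. Thus $g\gruppe[N]\mapsto\bar R_g$ is a faithful left-associative action of $\faktorisiert{\gruppe}{\gruppe[N]}$ on the orbit category.

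Next I would push the action to the flat orbit category. Because each $\bar R_g$ is a category automorphism, it maps irreducible arrows to irreducible arrows, and by uniqueness of the representation $\klmorphismus a=\klmorphismus r\klmorphismus a*\klmorphismus n\klmorphismus a$ it commutes with $\klmorphismus r$; therefore it restricts to an automorphism of $\kategorie P$ with respect to the concatenation $\bullet$ of Lemma~\ref{lemma:16}. This gives $\faktorisiert{\gruppe}{\gruppe[N]}\leq\Aut\kategorie P$, so the flat representation is $\faktorisiert{\gruppe}{\gruppe[N]}$-symmetric once transitivity is verified. On objects the action is precisely the right regular representation of $\faktorisiert{\gruppe}{\gruppe[N]}$ on its own cosets, $x\gruppe[N]\mapsto xg\gruppe[N]$, which is transitive (take $g=x^{-1}y$) and free (a fixed point forces $g\in\gruppe[N]$). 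Freeness on objects yields semi-regularity on all of $\kategorie P$, since any $\bar R_g$ fixing an arrow fixes its source object.

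Finally, translativity is essentially automatic once transitivity is in hand: for any two objects $a,b$ the orbits $a^{\faktorisiert{\gruppe}{\gruppe[N]}}$ and $b^{\faktorisiert{\gruppe}{\gruppe[N]}}$ both exhaust $\Ob\kategorie P$, so the two generated subcategories of Definition~\ref{def:translative} each coincide with $\kategorie P$, and the identity provides a category isomorphism commuting with every $\bar R_g$. Together with semi-regularity this shows that $\faktorisiert{\gruppe}{\gruppe[N]}$ acts translatively, and combined with $\faktorisiert{\gruppe}{\gruppe[N]}$-symmetry the representation is translatively $\faktorisiert{\gruppe}{\gruppe[N]}$-symmetric. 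I expect the only genuine work to lie in the well-definedness of $\bar R_g$ on arrow orbits and its commuting with $\klmorphismus r$, where the normality of $\gruppe[N]$ enters essentially; transitivity, freeness, and the collapse of the translativity condition under transitivity are then immediate.
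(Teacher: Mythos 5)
Your proposal is correct and follows essentially the same route as the paper: descend the right-translation action of $\gruppe$ to the orbit category via normality of $\gruppe[N]$, push it down to the flat orbit category by showing it commutes with $\klmorphismus r$, and then read off transitivity, semi-regularity and translativity. The only cosmetic difference is that you justify the commutation with $\klmorphismus r$ via preservation of irreducibility and uniqueness of the representation, whereas the paper invokes the simplicity of $\klmorphismus r[\orbitfaltigkeit{\kategorie K}{\gruppe[N]}]$ directly; both are valid under the stated hypotheses.
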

\begin{proof}
  Recall, $\gruppe$ is translative, and so is $\gruppe[N]$. Let
  $g∈\gruppe$ and $n,\hat n∈\gruppe[N]$ denote three elements of the group
  and its normal subgroup. All three act on $\gruppe$ as order
  automorphisms.  For any arrows
  $\klmorphismus a∈\Morall{\kategorie K}$ there exists a group element
  $\bar n ∈\gruppe[N]$ such that
  $ (\moranfang\klmorphismus a^n)^{(g\hat n)} =
  (\moranfang\klmorphismus a)^{(g\hat n)\bar n}$ holds. The same is true
  if we exchange left and right. As either $n$ or $\bar n$, and –
  independently from them – $\hat n$ can be chosen freely, this leads to
  the equation
  $ (\moranfang\klmorphismus a)^{\gruppe[N](g\gruppe[N])} =
  (\moranfang\klmorphismus a)^{(g\gruppe[N])\gruppe[N]} =
  \moranfang(\klmorphismus a^{g\gruppe[N]})$ from the corresponding set
  inclusions. In the same way we get the equations

  \begin{align*}
    (\morende\klmorphismus a)^{\gruppe[N](g\gruppe[N])}
    &= \morende(\klmorphismus a^{g\gruppe[N]})
    &&\text{and}\\
    \klmorphismus a^{\gruppe[N](g\gruppe[N])} * \klmorphismus b^{\gruppe[N](g\gruppe[N])}
    &= (\klmorphismus a * \klmorphismus b)^{g\gruppe[N]}
    &&\text{iff $\klmorphismus a * \klmorphismus b$ exists.}
  \end{align*}
  Consequently the induced action of
  $\faktorisiert{\gruppe}{\gruppe[N]}$ is an automorphism action on
  $\orbitfaltigkeit{\kategorie K}{\gruppe[N]}$.

  As $\klmorphismus r[\orbitfaltigkeit{\kategorie K}{\gruppe[N]}]$ is
  a simple category, applying $\klmorphismus r$ commutes with the
  action of $\faktorisiert{\gruppe}{\gruppe[N]}$, which shows that
  $\faktorisiert{\gruppe}{\gruppe[N]}$ acts regular on
  $\klmorphismus r[\orbitfaltigkeit{\kategorie K}{\gruppe[N]}]$. So
  this is a translative group action.
  \qed
\end{proof}

This allows us to introduce the well-known factorisation results from
classical theory of po-groups into this framework. First we want
to mention the fact, that for every po-group
$\kategorie K = \Halbord{\gruppe}{\leq}$ and every of its convex
normal subgroups $\gruppe[N]\normalteilervon\gruppe$ the orbit
category $\orbitfaltigkeit{\kategorie K}{\gruppe[N]}$ is isomorphic to
the factor group $\faktorisiert{\kategorie K}{\gruppe[N]}$, when the
latter is considered as a category. This isomorphism is fully
defined by its restriction to the objects.

\begin{theorem}
  The factorisation of a po-group
  $\kategorie K=\Halbord{\gruppe}{\leq}$ by a convex normal subgroup
  $\gruppe[N]$ together with the natural annotation give rise to a
  faithful translative antisymmetrically
  $\faktorisiert{\gruppe}{\gruppe[N]}$-complete flat representation of
  $\kategorie K$.
\end{theorem}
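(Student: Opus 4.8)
The plan is to exhibit the required structure explicitly as the flat representation $\tupel{\klmorphismus r[\orbitfaltigkeit{\kategorie K}{\gruppe[N]}], A_T, \klmorphismus n, \klmorphismus C, \gruppe[N]}$ attached to the natural annotation $A_T$, and then to verify the five adjectives \emph{faithful}, \emph{simple}, \emph{translatively $\faktorisiert{\gruppe}{\gruppe[N]}$-symmetric}, \emph{antisymmetric} and \emph{complete}, followed by the maximality clause, each time feeding off the reconstruction machinery built up in the earlier sections.

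First I would record the structural input coming from convexity. Since $\gruppe[N]$ is a convex normal subgroup of the po-group $\gruppe$, the factor $\faktorisiert{\gruppe}{\gruppe[N]}$ is again a po-group (the classical fact recalled in the introduction); in particular its order is antisymmetric. The remark immediately preceding the theorem then identifies $\orbitfaltigkeit{\kategorie K}{\gruppe[N]}$ with this factor po-group as a category, and since a po-group has only identity loops it is uniquely representable by irreducible arrows and coincides with its flat orbit category $\klmorphismus r[\orbitfaltigkeit{\kategorie K}{\gruppe[N]}]$, a simple antisymmetric category. By Lemma~\ref{lemma:orderd-group-translative} the group $\gruppe[N]$ acts translatively on $\kategorie K$, by the corollary to Lemma~\ref{lemma:subgroup-of-groupal-is-right-normal} it acts right-normally, so Lemma~\ref{lemma:flat-representations} supplies a genuine flat category representation with vertex category $\kategorie G_x\isomorph\Halbord{\gruppe[N]}{\leq}$, and I would take the simple partial subcategory $\kategorie P\definiert\klmorphismus r[\orbitfaltigkeit{\kategorie K}{\gruppe[N]}]$ itself.

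The easy adjectives then follow quickly. Because $\kategorie P$ is a simple graph the representation is \emph{simple}, and as already observed in the text a flat category representation of a simple category is \emph{faithful}. \emph{Antisymmetry} is precisely the antisymmetry of the factor order: were $A_T(\klmorphismus a)A_T(\klmorphismus b)=1$ to hold for a would-be antiparallel pair $\klmorphismus a\colon x\to y$ and $\klmorphismus b\colon y\to x$ with $y=\morende\klmorphismus a=\moranfang\klmorphismus b$ and $x=\moranfang\klmorphismus a=\morende\klmorphismus b$, then in $\faktorisiert{\gruppe}{\gruppe[N]}$ we would obtain $x\leq y\leq x$, hence $x=y$, contradicting the presence of two distinct non-loop arrows. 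The \emph{translative $\faktorisiert{\gruppe}{\gruppe[N]}$-symmetry} is exactly the statement of the lemma immediately preceding this theorem, which I would simply cite. \emph{Completeness} follows from the reconstruction results of Section~\ref{reconstruction}: the full flat category representation serves as a completion, and combining the reconstruction theorem $\entfaltung{(\orbitfaltigkeit{\kategorie K}{\gruppe[N]})}{A_T}{\gruppe[N]}\isomorph\kategorie K$ with the isomorphism of Lemma~\ref{lemma:18} between the two kinds of unfolding shows that the unfolding of this completion is isomorphic to $\kategorie K$, which furnishes the required automorphism onto $\kategorie K$.

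The main obstacle is the maximality clause, i.e.\ that the representation is not a proper flat subrepresentation of any antisymmetric, $\faktorisiert{\gruppe}{\gruppe[N]}$-symmetric and complete flat representation. Here I would argue that $\kategorie P$ already carries every comparability of the factor po-group, so a strictly larger partial subcategory $\kategorie P'$ of the same flat orbit category would have to adjoin an arrow antiparallel to an existing one; by the antisymmetry computation above the natural annotation of such a pair multiplies to the identity, so $\kategorie P'$ could not stay antisymmetric, while completeness of the smaller representation guarantees that no genuinely new arrow is needed for the reconstruction. Hence no admissible proper extension exists and the representation is maximal. Assembling the six verifications yields that the factorisation of $\kategorie K=\Halbord{\gruppe}{\leq}$ by $\gruppe[N]$ together with the natural annotation is a faithful, translative, antisymmetrically $\faktorisiert{\gruppe}{\gruppe[N]}$-complete flat representation of $\kategorie K$.
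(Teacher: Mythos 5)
Your proposal is correct and follows essentially the same route as the paper's (very terse) proof: exhibit the representation built from the orbit category of the convex normal subgroup together with the natural annotation, then check each adjective using the machinery of the earlier sections (convexity makes the factor a po-group, hence a simple antisymmetric orbit category with only identity loops; faithfulness and antisymmetry come from simplicity of that category; translative $\faktorisiert{\gruppe}{\gruppe[N]}$-symmetry is the preceding lemma; completeness comes from the reconstruction/unfolding results). The one substantive difference is that you also argue the maximality clause hidden in ``antisymmetrically $\faktorisiert{\gruppe}{\gruppe[N]}$-complete'', which the paper's proof passes over in silence; since you take $\kategorie P$ to be the entire flat orbit category $\klmorphismus r[\orbitfaltigkeit{\kategorie K}{\gruppe[N]}]$, maximality among its partial subcategories is in fact immediate, and your detour through adjoining antiparallel arrows is unnecessary (there is nothing left to adjoin). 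Apart from that, your argument is a more detailed and more careful version of what the paper intends.
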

\begin{proof}
  
  The factorisation and its natural annotation define the
  representation
  $\tupel{\orbitfaltigkeit{\kategorie
      K}{\gruppe[N]},A_{\gruppe[N]},\gruppe[N]}$.  It is complete,
  shown by the unfolding, it is antisymmetric, the underlying simple category of the partial subcategory is
  an ordered set, which we know from the theory of po-groups. Consequently the representation is anti-symmetrical
  and faithful. Finally it is translatively
  $\faktorisiert{\gruppe}{\gruppe[N]}$-symmetric as shown in the
  previous lemma.
  \qed
\end{proof}

Given a category $\kategorie K$, its partial subcategories
together with the canonical embeddings form an ordered set
$\Halbord P\leq$, actually a complete meet-semilattice. The partial
categories which contain all vertices form a proper subset
$\Halbord{P_V}{\leq\eingeschrmenge {P_V}}$ of $\Halbord P\leq$. As the
discrete category is always a partial subcategory, there are simple
partial categories among the elements of $P_V$.

When we fix the annotation $A$ and the two partial mappings
$\klmorphismus n$ and $\klmorphismus C$, we can order the set of flat
representations of a given representation according to the order
relation $\leq\eingeschrmenge {P_V}$ of the partial subcategory of the
representations. We will call this order \defindex{induced order} in
this context. In order to find the best candidates of flat
representations, we can analyse this ordered set if we find some
maximal or minimal flat representations.

\begin{description}
\item[faithfulness] If a mapping is faithful, its restriction to a
  subset is also faithful. Thus the faithful flat representations form
  an order ideal. If a non-faithful flat representation has a faithful
  flat subrepresentation, this can happen only by removing conflicting
  arrows. Consequently the union of a chain of faithful
  representations is also faithful, which implies that there are
  maximal faithful flat representations.
\item[simplicity] Obviously the simple flat representations form an
  order ideal, too. The union of the partial categories of a chain of simple
  flat representations is simple. So also the ideal of simple flat
  representations has maximal elements.
\item[ordering] The intersection of a set of ordered sets is an
  ordered set. Thus, the ordered flat representations together with
  the maximal flat representation form a closure system.
\item[{$\gruppe[S]$-symmetric}] The discrete category is also
  symmetric with respect to the symmetric group of its vertices. Given a
  fixed permutation group $\gruppe[S]$, $\gruppe[S]$-symmetric partial
  categories below a given $\gruppe[S]$-symmetric (partial subcategory)
  form a closure system: The intersection of a set of
  $\gruppe[S]$-symmetric partial subcategories is always also
  $\gruppe[S]$-symmetric. Consequently the simple
  $\gruppe[S]$-symmetric partial categories together with the maximal
  flat representation form a closure system.
\item[{translatively $\gruppe[S]$-symmetric}] As the action on the
  vertices is fixed the intersection of a set of translatively
  $\gruppe[S]$-symmetric partial categories is still translatively
  $\gruppe[S]$-symmetric. Thus the translatively
  $\gruppe[S]$-symmetric flat representations form a closure system.
\item[antisymmetric] The antisymmetric partial subcategories form an
  order ideal. When we consider a chain in this ideal, its union is
  still antisymmetric.
\item[complete wrt. $\kategorie K$] If a flat representation is not
  complete with respect to a category $\kategorie K$, then all its
  flat subrepresentations are also not complete. So the complete
  subrepresentations form an order filter.
\item[{antisymmetrically $\gruppe[S]$-complete}] If there exist
  antisymmetrically $\gruppe[S]$-complete flat representations they
  have maximal elements.
\end{description}

So the set of (simple) faithful antisymmetric translatively
$\faktorisiert {\gruppe}{\gruppe[N]}$-complete flat representations of
$\Halbord {\gruppe}{\leq}$ is either empty or has maximal elements.

We call a maximal faithful, antisymmetric, translatively
$\faktorisiert {\gruppe}{\gruppe[N]}$-complete flat representation
\defindex{cyclically fundamental}.

This catalogue of features has the advantage, that different
structures can be chosen depending on the intended use. For
completeness, we should also consider approaches as the abridged
annotation introduced in \cite{Borchmann:2009}, which considers the
neighbourhood relation (also: Hasse relation) of a preordered
set. However, this does not necessarily exist for every po-group.


\section{Further results and applications to music theory}\label{sec:furth-results-appl}

This article provides a fundamental construction for a certain type of
factorisations of po-groups and similar small categories into
orbits of certain automorphism groups. This is a starting point for
further investigations. For example for the mathemusical applications
it is helpful to describe group extensions by means of multiple
annotations of (flat) representation. This is available as unpublished
result by the author.

There are examples of factorisations of po-groups that do not
permit simple flat representations. There exist ideas to characterise
the existence of such representations by means of similarity or
generalised neighbourhood relations.

The structures introduced in this article are inspired by ideas
published in \cite{Zickwolff1991}. For simple small categories there
exists a direct mapping between our structures and the ones developed
by Monika Zickwolff.

The class of representations can be equipped with morphisms in a
similar way as the isomorphisms are defined in
\cite{BorchmannDipl,Borchmann:2009} and
Definition~\ref{def:flat-representations-isomorphism}, leading to the
category of representations. Unfolding isomorphic representations
leads to isomorphic groups.

Furthermore there are different applications of representations when
it comes to fundamental systems of tone structures.  The following
examples are based on the notion of a tone system: A set of tones,
equipped with a group of differences, called intervals. For details we
refer to \cite{Schlemmer:2015,schlemmer:2013,Tobias:2010}.

\subsection{Chroma Systems}
Chroma intervals can be considered a factorised interval group. They can be used to describe the cyclic notion of the intervals
of a tone system in music theory. Examples include:

\begin{itemize}
\item The physical frequency or physical frequency space. The pitch of
  a tone can be roughly described as a frequency or the period time of
  a periodic vibration. Both lead to the po-group
  $\OrdGruppe{\reellezahlen}{\cdot}{\multinverses}1{\leq}$, though they
  are dually ordered. For many music theoretical considerations this
  po-groups can be factored by the octave relation, which leads to a
  representation that is isomorphic to
  $\tupel[big]{\orbitfaltigkeit{\Halbord{\poskegel\reellezahlen}{\leq}}{2^{\ganzzahlen}},o,2^{\ganzzahlen}}$
  with $o(x) \definiert 2^{\lfloor log_2x\rfloor}$, which is
  isomorphic via the logarithm to
  $\tupel[big]{\orbitfaltigkeit{\Halbord{\reellezahlen}{\leq}}{\ganzzahlen},o',\ganzzahlen}$
  with $o'(x) \definiert \lfloor x\rfloor$.
\item The Shepard tones\cite{shepard:2346} with their intervals and
  the relation that expresses that one tone $t_1$ is more likely
  perceived lower than another tone $t_2$ form an application of a
  flat representation
  $\tupel{\kategorie S,o',\klmorphismus n',\klmorphismus
    C',\Halbord{\ganzzahlen}{\leq}}$ of
  $\tupel{\orbitfaltigkeit{\reellezahlen}{\ganzzahlen},o',\ganzzahlen}$
  with $\Ob{\kategorie S} = \Ob{\orbitfaltigkeit{\Halbord{\reellezahlen}{\leq}}{\ganzzahlen}}$ and
  $\Mor{\kategorie S}{t_1}{t_2} = \Menge{\paar
    xy∈\Mor{\orbitfaltigkeit{\Halbord{\reellezahlen}{\leq}}{\ganzzahlen}}xy}{0\leq
    t_2-t_1< 1/2}$.
\item The $n$-tone equal temperament ($n$-TET) is often represented as a flat
  representation of $\Halbord{\ganzzahlen}{\leq}$ via the orbit
  category
  $\orbitfaltigkeit{\Halbord{\ganzzahlen}{\leq}}{n\ganzzahlen}$. In
  contrast to other temperaments it can often be modelled as a
  $\ganzzahlen_n$-symmetric flat representation.

  Musical scales can be modelled as a (non-necessary symmetric)
  $m$-TET and a mapping into a $\ganzzahlen_n$-symmetric $n$-TET. This
  mapping is usually not a homomorphism as the group structure is not
  preserved. However the category structure from the scale can be
  reconstructed from the category structure of the enclosing $n$-TET
  tone system.

  For example the diatonic scale is often be considered as a
  $\ganzzahlen_7$-symmetric $7$-TET that is embedded into a
  $\ganzzahlen_{12}$-symmetric $12$-TET.
\item The torus of chromas is a $2$-dimensional chroma system that
  describes the Tonnetz of major thirds and fifths. A natural model
  for it would be an antisymmetrically
  $\ganzzahlen_1\times\ganzzahlen_{12}\times\ganzzahlen_3$-complete
  flat representation
  \[\tupel[big]{\kategorie P,A,\klmorphismus n,\klmorphismus C,
      \Halbord{\ganzzahlen}{\leq}\times
      12\Halbord{\ganzzahlen}{\leq}\times
      2\Halbord{\ganzzahlen}{\leq}}\] where $\kategorie P$ is a
  partial subcategory
  of
  \[\orbitfaltigkeit{\bigl(\Halbord{\ganzzahlen}{\leq}\times\Halbord{\ganzzahlen}{\leq}\times\Halbord{\ganzzahlen}{\leq}\bigr)}{(\ganzzahlen
      \times 12\ganzzahlen \times 3\ganzzahlen)}.\]

\item Musical events have finite durations. When we describe them, we
  have the properties onset, duration and end. This allows to
  represent them as points in a 2-dimensional subspace of a
  3-dimensional space. One possibility is to represent each event by
  onset and duration. The typical operations are translation of the
  onset and stretching a passage by a certain time factor. Both
  together form the group $AGL_1(\reellezahlen)$ of affine operations
  on the real line. As each event could be generated from $(1,1)$ by
  one of the affine operations, we can model the space of musical
  events as $\Halbord[big]{AGL_1(\reellezahlen)}{\leq}$ where $\leq$
  is a lexicographic product order, in which the duration is
  infinitesimal with respect to the onset.

  So the rhythmic space is an example where non-commutative po-groups
  play an important role in music theory. Typical rhythmic patters can
  come from the rhythm itself, or from an external structure like
  measures. The latter can be divided into parts or grouped into
  larger ensembles (typical groups consist of 3, 4, 6 or 8
  measures). These generate subgroups of the translational subgroup of
  $AGL_1(\reellezahlen)$. Rhythmic patterns can be described as subsets in the orbit
  category of $AGL_1(\reellezahlen)$ by one of these subgroups.
\end{itemize}

Chroma systems describe the aspect of being a factor structure of a
tone system. In this sense they cannot distuingish between different
diatonic modes. 

\subsection{Fundamental Systems in the Narrow Sense}\label{sec:Fundamentalsysteme}

Given a partition on a set. A transversal is a set that contains
exactly one element of each class of the partition.  Given a tone
system and a normal subgroup of its interval group that acts interval
preserving on the tone system. In that case we can consider a
transversal of the orbits of the tones together with a sufficient set
of positive intervals as a scale. Such a fundamental system can be
considered as a tile that can be repeated in all directions in order
to generate the tone system.

Examples:
\begin{itemize}
\item Special cases of the diatonic scale as C-major or
  d-minor scales are often considered using partial categories which
  are not symmetric, and where all arrows start in one tone (the
  reference tone). This gives rise to the modern view on the so called
  „church modes“.
\item Leitton in chroma systems
\item Dominant sevenths chords in chroma systems
\item In Jazz music the notion “scale” is linked with the chroma
  supply that may be used in improvisation in a certain tonality. Jazz
  scales are constructed using two complementing orders: the order in
  the physical pitch space, which lead to a chroma system and harmonic
  orderings based on the tonality.
\end  {itemize}

Fundamental systems tend to have a larger variety of intervals than
chroma systems. For example, The diatonic C major mode in $12$-tet has
a minor $7$th as interval in its fundamental system. This interval is
reduced to a negative major second in the corresponding chroma
system.

\subsection{Extended Scales}

Fundamental systems as discussed above contain information about
intervals. In a tone system two tones are identical
if they have the same interval to common third tone. This will not
happen if we reconstruct a tone system from fundamental system as in
section~\ref{sec:Fundamentalsysteme}. But, why two tiles must not
overlap? In the real world overlapping tiles can form a strong
connection when glued together. Actually what we call gluing in
mathematics can be better compared to welding in real
live. Nevertheless, it is easy to define a binary relation $σ$
according to the rules
\begin{align}
  t_1\mathrel{σ} t_2 ⇔ ∃t_3: δ(t_1,t_3) = δ(t_2,t_3).
\end{align}
It is easy to prove that $σ$ is an equivalence relation. Factorisation
by $σ$ leads to the desired result.

In musical discussions sometimes scale steps beyond $7$ occur. While
the octave can still be considered as a representation of the prime or
first scale step, the numbers $9$ and above tones that are different
from their equivalent scale steps below $8$. However, these
representations live in very small parts of musical compositions
(often only one chord). Some short time later the same notes may be
described by different numbers. This suggests that music theorists
consider the tone system to be generated by substructures that cannot
be described by transversals of the orbits. As they still generate the
whole tone system the “fundamental” system must generate overlapping
tiles which are glued together during composition or performance.

\section{Further research topics}\label{sec:furth-rese-topics}
Some questions have already been raised in the previous sections. They
are not repeated, here.

So far flat representations depend on categories that are uniquely
representable by minimal arrows. In case of po-groups this implies
that the order must be Archimedian. However, if the unfolding also
reconstructs infinitesimal elements, this condition can be dropped. As
the order relation of the positive cone is dually isomorphic to the
one of the negative cone via the group inversion antiautomorphism, the
relevant information is already encoded in the vertex category. It is
an open question how this information can be made accessible for
sufficiently simple algorithms.

In a po-group each principal ideal of
an element is dually isomorphic to the principal filter of the same
element and with respect to incoming and outgoing arrows.
The order filters of a po-group form a monoid where the positive cone
is the neutral element. The elementwise product of two order filters
is again an order filter. Together with the subset relation they form
a monoidal category..

Actually the vertex annotation of a representation of an
$\ell$-group can be understood as the defining elements of principal
order filters in the vertex categories. This gives an order relation
on the arrows between two vertices in the orbit category. It can be
used to define flat representations based on valued
categories on the category of order filters in the vertex
category.
%
So, flat representations of arbitrary po-groups can be considered.
As long as we have no parallel arrows with the same
annotation in the representation this setting can be easily extended
to the case where the vertex categories are right groupal
categories. If the restriction of parallel arrows with the same
annotation can be lifted this type of annotation would be an analogy
to the annotation as described in \cite{Zickwolff1991}.

The order filters of a cancelable po-monoid together with the set
inclusion morphisms and the block operation form a simple monoidal
category $OF$. Thus, flat representations could be modelled based on
$OF$-valued categories.

\printbibliography



\end{document}